\numberwithin{equation}{section}
\newtheorem{thm}{Theorem}[section]
\newtheorem{pro}[thm]{Proposition}
\newtheorem{lem}[thm]{Lemma}
\newtheorem{cor}[thm]{Corollary}
\theoremstyle{definition}
\newtheorem{rem}[thm]{Remark}
\newcommand{\supp}{\mathord{\mathrm{supp}}\,}
\newcommand{\spx}{\textbf{X}}
\newcommand{\spec}{\mathfrak{E}}
\newcommand{\spfa}{\mathbf{F}}
\newcommand{\spfb}{\mathbf{F}}
\newcommand{\spfc}{\mathfrak{F}}
\newcommand{\spna}{\mathbf{N}}
\newcommand{\spnb}{\mathbf{N}}
\newcommand{\spnc}{\mathfrak{N}}
\newcommand{\spbc}{\mathfrak{B}}
\begin{document}
	
	\title{Local well-posedness of the Benjamin-Ono equation for a class of bounded initial data}
	\author{Niklas J\"ockel}
	\address{Fakult\"at f\"ur Mathematik, Universit\"at Bielefeld, Postfach 100131, 33501 Bielefeld, Germany}
	\email{njoeckel@math.uni-bielefeld.de}
	\subjclass{35Q53}

	\begin{abstract}
		We prove local well-posedness of the Benjamin-Ono equation for a class of bounded initial data including periodic and bore-like functions. As a consequence, we obtain local well-posedness in $H^s(\mathbb{R})+H^\sigma(\mathbb{T})$ for $s>\frac{1}{2}$ and $\sigma>\frac{7}{2}$. These results follow by studying a generalized forced Benjamin-Ono equation. 
	\end{abstract}
	\maketitle
	\section{Introduction}
	We consider the initial-value problem of the real-valued Benjamin-Ono equation
	\begin{equation}\label{eq_bo}
		\begin{cases}
			\phi_t
			+
			\mathcal{H}\phi_{xx}
			+
			(\phi\phi)_x
			&=
			0
			\\
			\phi(0,\cdot)
			&=
			\phi_0
		\end{cases}
	\end{equation}
	posed on $\mathbb{R}_t\times\mathbb{R}_x$. Above, $\mathcal{H}$ denotes the Hilbert transform and $\phi_0$ is the initial datum. This equation has been derived by Benjamin \cite{Ben1967} and Acrivos and Davis \cite{DA1967} to describe long internal waves in stratified fluids; see also Ono \cite{Ono1975}.
	
	In the last two decades a lot of attention has been given to the question in which Sobolev spaces $H^s(\mathbb{R})$ and $H^\sigma(\mathbb{T})$ the initial value problem \eqref{eq_bo} is (locally) well-posed. This question is now essentially answered. In \cite{GKT2023}, Gérard, Kappeler, and Topalov proved the optimal well-posedness result for periodic initial data -- one has global well-posedness for $\sigma>-\frac{1}{2}$ and ill-posedness for $\sigma\leq -\frac{1}{2}$; see also \cite{GT2023}. Moreover, recently, Killip, Laurens, and Vi\c{s}an \cite{KLV2023} established global well-posedness for spatially decaying (as well as periodic) initial data in the range $s>-\frac{1}{2}$ (and $\sigma>-\frac{1}{2}$). For an ill-posedness result for $s<-\frac{1}{2}$, we refer to \cite{BL2001}.
	
	In this work, we will consider the Benjamin-Ono equation with initial datum $\phi_0$ which is bounded, but neither spatially decaying nor periodic. This includes initial data in $H^s(\mathbb{R})+H^\sigma(\mathbb{T})$ as well as bore-like initial data such as $\phi_0=\tanh$. Equation \eqref{eq_bo} has already been studied for bore-like initial data by Iorio, Linares and Scialom \cite{ILS1998} and for Zhidkov functions as initial data by Gallo \cite{Gal2005}; we recall their results in Section \ref{ss_rol}. For similar statements concerning the Schrödinger, the Korteweg-de Vries, and the Zakharov-Kuznetsov equations, we refer to the recent works \cite{CHKP2019,Lau2022,Lau2023,Pal2022,Pal2023} and the references mentioned therein.
	
	In order to obtain local well-posedness results for \eqref{eq_bo}, we first study the following initial-value problem
	\begin{equation}\label{eq_bo_split}
		\begin{cases}
			u_t
			+
			\mathcal{H}u_{xx}
			+
			(uu)_x
			+
			(ub)_x
			+
			f
			&=
			0
			\\
			u(0)
			&=
			u_0
		\end{cases}
	\end{equation}
	with initial datum $u_0\in H^s(\mathbb{R})$. This system can be derived from \eqref{eq_bo} by splitting both the solution and the initial datum into a sum of a spatially decaying function and a spatially bounded function, i.e.\@ we assume that $\phi= u+b$ and $\phi_0=u_0+b_0$ hold. Then, \eqref{eq_bo_split} follows from \eqref{eq_bo} by choosing $f=f_b:=b_t+\mathcal{H}b_{xx} + (bb)_x$.
	
	Let us state the main theorem of this work establishing local well-posedness for \eqref{eq_bo_split}. For this, we need the following assumptions on $b$ and $f$ for some arbitrary small $\varepsilon>0$:
	\begin{align}\label{as_b_f}
		\begin{cases}
			b\in L^\infty([0,1];B^{3+\varepsilon}_{\infty,\infty}(\mathbb{R}))\cap C^0([0,1];C^2(\mathbb{R}))
			&=:
			\mathcal{X}
			\\
			f\in L^\infty([0,1];H^{3+\varepsilon}(\mathbb{R}))\cap C^0([0,1];H^0(\mathbb{R}))
			&=:
			\mathcal{Y}
		\end{cases}
	\end{align}
	\begin{thm}\label{t_main}
		$(a)$ There exists a positive non-decreasing function $T=T(R)$ and a unique continuous map
		\begin{align*}
			\mathcal{L}^2_{T}:B_R(0)\subset H^2(\mathbb{R})\times \mathcal{X}\times \mathcal{Y}
			\rightarrow
			C^0([0,T];H^2(\mathbb{R}))\cap C^1([0,T];H^0(\mathbb{R}))
		\end{align*}
		mapping initial data $(u_0,b,f)$ to strong solutions of \eqref{eq_bo_split}.

		$(b)$ Let $s\in(\frac{1}{2},2)$. There exists a positive non-decreasing function $T=T(R)$ such that $\mathcal{L}^2_T$ has a unique continuous extension
		\begin{align*}
			\mathcal{L}^s_{T}:
			B_R(0)\subset H^s(\mathbb{R})\times \mathcal{X}\times \mathcal{Y}
			\rightarrow
			C^0([0,T];H^s(\mathbb{R})).
		\end{align*}
	\end{thm}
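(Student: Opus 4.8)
The plan is to prove part $(a)$ first by a standard energy/parabolic-regularization scheme adapted to the forced, variable-coefficient equation, and then to obtain part $(b)$ by combining \emph{a priori} estimates at the $H^s$-level with a Bona--Smith-type approximation argument to upgrade uniqueness and continuity of the data-to-solution map.

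For part $(a)$ I would regularize \eqref{eq_bo_split} by adding $-\mu\partial_x^2 u$ (or $\mu\partial_x^4$), solve the regularized problem by a fixed point in $C^0([0,T_\mu];H^2)$ via Duhamel and the smoothing of the parabolic semigroup, and then derive $\mu$-uniform energy estimates. The key computation is the $H^2$ energy estimate for \eqref{eq_bo_split}: differentiating $\tfrac12\|u\|_{H^2}^2$, the genuinely quasilinear term $(uu)_x$ is handled by the classical Benjamin--Ono commutator/integration-by-parts trick (the top-order contribution $\int \partial_x^2 u \,\partial_x^2(u u_x)$ loses no derivative after integrating by parts because the worst piece is $\tfrac32\int u_x (\partial_x^2 u)^2$, which is bounded by $\|u_x\|_{L^\infty}\|u\|_{H^2}^2 \lesssim \|u\|_{H^2}^3$ using $H^2(\mathbb{R})\hookrightarrow W^{1,\infty}$); the transport term $(ub)_x$ is similar but uses $\|b\|_{C^2}$ and $\|b\|_{B^{3+\varepsilon}_{\infty,\infty}}$ to absorb the derivatives falling on $b$; and the forcing $f$ contributes $\lesssim \|f\|_{H^2}\|u\|_{H^2}$, which is fine since $f\in L^\infty_T H^{3+\varepsilon}\subset L^\infty_T H^2$. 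This yields $\tfrac{d}{dt}\|u\|_{H^2}^2 \lesssim (1+\|b\|_{\mathcal X})\|u\|_{H^2}^3 + \|f\|_{\mathcal Y}\|u\|_{H^2} + \dots$, so $\|u(t)\|_{H^2}$ stays bounded on a time interval $[0,T(R)]$ depending only on $R=\|(u_0,b,f)\|$. Passing $\mu\to0$ along a weak limit and using the Bona--Smith argument (or Kato's approach via the equation for the difference of solutions, estimated in $H^0$ against the $\mu$-uniform $H^2$ bounds) gives existence of a strong solution in $C^0_TH^2\cap C^1_TH^0$; uniqueness and Lipschitz-type dependence on $(u_0,b,f)$ in the $H^0$-norm come from the energy estimate for the difference of two solutions, with the price of an $H^2$-bound on one of them, and continuity in the strong topology then follows from the Bona--Smith regularization.

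For part $(b)$, the obstacle is that at $H^s$ with $s<2$ one cannot close an energy estimate by brute integration by parts, because the Benjamin--Ono nonlinearity $(uu)_x$ is quasilinear and the naive commutator estimate loses a derivative. The remedy is the standard one for low-regularity Benjamin--Ono-type estimates: use a modified/gauged energy, i.e.\ replace $\|u\|_{H^s}^2$ by $\|u\|_{H^s}^2$ plus a lower-order correction of the form $c\int |D|^{s-1}u\,\cdot\,|D|^{s-1}(u\,|D| u)$ or a Kato--Ponce-type weighted quantity, chosen so that the worst quasilinear contribution cancels; alternatively one can exploit the dispersive smoothing via a Bona--Smith/frequency-envelope argument combined with the fact that $H^s(\mathbb{R})\hookrightarrow L^\infty$ for $s>\tfrac12$, which is exactly the threshold appearing in the statement. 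With such an \emph{a priori} bound $\sup_{[0,T(R)]}\|u(t)\|_{H^s}\lesssim_R 1$ in hand, one first establishes the estimate for smooth solutions (those produced in part $(a)$), then extends $\mathcal L^2_T$ to $H^s$ data by density: approximate $(u_0,b,f)$ by $H^2\times\mathcal X\times\mathcal Y$ data, show the corresponding solutions form a Cauchy sequence in $C^0_TH^{s'}$ for $s'<s$ using the difference estimate, and then upgrade to convergence in $C^0_TH^s$ by a frequency-envelope/Bona--Smith argument. Uniqueness in $C^0_TH^s$ follows from the difference estimate (which only needs one solution in $H^s$, $s>\tfrac12$, to control the transport-type terms), and continuity of the extended map $\mathcal L^s_T$ likewise. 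I expect the main technical work — and the step most likely to require care about the precise regularity exponents on $b$ and $f$ in \eqref{as_b_f} — to be the modified-energy estimate at the $H^s$-level and the verification that the approximating solutions converge in the strong $H^s$-topology rather than merely weakly.
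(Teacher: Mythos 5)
For part $(a)$ your parabolic regularization plus energy estimate approach is a workable alternative to the paper's route, which instead invokes Kato's abstract quasilinear theory \cite{Kat1975} directly; both are standard at the $H^2$ level, and the paper's Remark~\ref{r_as_bf} even discusses the commutator estimate issues you allude to for the transport term $bu_x$. Your sketch of the $H^2$ energy estimate is essentially correct, though you should note that it is the assumption $b\in L^\infty_t B^{3+\varepsilon}_{\infty,\infty}\cap C^0_t C^2$ rather than the specific commutator $[J^s,b\partial_x]$ structure that is being used, since one wants the result to hold for periodic $b$ and not just Zhidkov-type backgrounds.

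For part $(b)$ there is a genuine gap. You propose ``a modified/gauged energy, i.e.\ a lower-order correction of the form $c\int |D|^{s-1}u\cdot|D|^{s-1}(u|D|u)$ chosen so that the worst quasilinear contribution cancels,'' or alternatively a Bona--Smith/frequency-envelope argument combined with $H^s\hookrightarrow L^\infty$. Neither of these, as stated, overcomes the derivative loss. The resonance function for Benjamin--Ono satisfies $|\Omega_3|\sim K_1^*K_3^*$, which degenerates to $\sim K_1^*$ in the high$\times$low$\to$high regime ($K_3^*\sim 1$); a spatial normal-form correction (which is what your integral correction amounts to) divides by $\Omega_3$ and thus produces \emph{no} gain in this regime, which is exactly the worst quasilinear interaction. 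This is the obstruction formalized by Molinet--Saut--Tzvetkov \cite{MST2001}: no amount of modified-energy bookkeeping in ordinary $X^{s,b}$ or energy spaces closes the estimate. The known remedies are Tao's gauge transform or the Ionescu--Kenig--Tataru short-time Fourier restriction framework; the paper uses the latter precisely because the gauge transform is incompatible with the perturbation $(ub)_x$ when $b$ is bounded but not decaying. In the paper's actual argument, the ``normal form'' is an integration by parts in \emph{time} (see \eqref{eq_ee_ibp}) carried out only in the non-resonant region $K_3^*>1$, \emph{inside} the short-time spaces $\spfc^s_T$ whose frequency-dependent time windows of length $K^{-\theta}$ with $\theta>1$ provide the additional gain; without that short-time structure your estimate does not close. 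Two further inaccuracies: the $s>\frac{1}{2}$ threshold in the theorem does not come from $H^s\hookrightarrow L^\infty$ --- the paper's a~priori estimate (Lemma~\ref{l_ee_i}) holds for $s>\frac{1}{4}$, and $s>\frac{1}{2}$ enters only in the less symmetric difference estimates (Lemmata~\ref{l_ee_ii}, \ref{l_ee_iii}); and your proposal does not address the quadrilinear terms that appear after the time integration by parts, which are the technical heart of Section~\ref{s_ee} and require the dyadic convolution estimates of Section~\ref{s_dce} and the symbol symmetrizations around \eqref{e_bnd_m}.
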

	
	As already indicated above, Theorem \ref{t_main} implies local well-posedness results for the initial-value problem \eqref{eq_bo}. Fix a function $b$ such that $b$ and $f=f_b$ satisfy \eqref{as_b_f}. Then, for any $s>\frac{1}{2}$, the map
	\begin{align*}
		b_0+B_R(0)\subset b_0+H^s(\mathbb{R})
		&\rightarrow
		b+C^0([0,T];H^s(\mathbb{R})),
		\\
		b_0+u_0
		&\mapsto
		b+\mathcal{L}^s_T(u_0,b,f_b)
	\end{align*}
	is continuous and sends the dense subset of smooth initial data to classical solutions of \eqref{eq_bo}. Thus, we obtain the following result:
	\begin{cor}\label{c_bore}
		Let $s>\frac{1}{2}$. Assume that $b$ and $f=f_b$ satisfy \eqref{as_b_f}. Then, the Benjamin-Ono equation \eqref{eq_bo} is locally well-posed in $b+H^s(\mathbb{R})$.
	\end{cor}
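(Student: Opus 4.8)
The plan is to obtain Corollary \ref{c_bore} as a direct consequence of Theorem \ref{t_main} via the substitution $\phi = b + u$ already indicated in the discussion preceding the statement. To begin, I would set $b_0 := b(0)$, which is a well-defined element of $C^2(\mathbb{R})$ because $\mathcal{X} \subset C^0([0,1];C^2(\mathbb{R}))$, and regard $b_0 + H^s(\mathbb{R})$ as a metric space with the distance inherited from $H^s(\mathbb{R})$. I would then record the elementary but crucial fact that, for $\phi = b + u$ with $\phi(0) = b_0 + u_0$, the function $\phi$ solves \eqref{eq_bo} if and only if $u$ solves \eqref{eq_bo_split} with the given $b$ and with $f = f_b$; this is immediate from linearity and the very definition $f_b = b_t + \mathcal{H}b_{xx} + (bb)_x$, the individual terms of which are meaningful at the regularity imposed by \eqref{as_b_f} (indeed $b_t = f_b - \mathcal{H}b_{xx} - (bb)_x$ is then bounded in space-time). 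Since by hypothesis $(b, f_b) \in \mathcal{X} \times \mathcal{Y}$, Theorem \ref{t_main} applies to this pair.

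The second step is to transport the solution map. Assume first $s \in (\tfrac12, 2]$ and let $R > 0$. By Theorem \ref{t_main}(a) (for $s = 2$) and Theorem \ref{t_main}(b) (for $s \in (\tfrac12,2)$) there are a non-decreasing $T = T(R) > 0$ and a continuous map $u_0 \mapsto \mathcal{L}^s_T(u_0,b,f_b)$ from $B_R(0) \subset H^s(\mathbb{R})$ into $C^0([0,T];H^s(\mathbb{R}))$ producing solutions of \eqref{eq_bo_split}. Composing it with the affine isometries $\phi_0 \mapsto \phi_0 - b_0$ and $u \mapsto b + u$ gives a continuous map $\Phi_T$ from $B_R(b_0) \subset b_0 + H^s(\mathbb{R})$ into $b + C^0([0,T];H^s(\mathbb{R}))$ with $\Phi_T(\phi_0)(0) = \phi_0$. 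By the equivalence of the two Cauchy problems, $\Phi_T(\phi_0)$ solves \eqref{eq_bo}, which yields existence on a time interval depending only on the size of the data together with continuous dependence; and running the substitution backwards transfers the uniqueness statement of Theorem \ref{t_main} to solutions of \eqref{eq_bo} lying in $b + C^0([0,T];H^s(\mathbb{R}))$.

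Finally, to complete the verification of local well-posedness I would check that a dense set of data yields classical solutions: $b_0 + H^\infty(\mathbb{R})$ is dense in $b_0 + H^s(\mathbb{R})$, and for $u_0 \in H^\infty(\mathbb{R})$ persistence of regularity for \eqref{eq_bo_split} upgrades $\mathcal{L}^s_T(u_0,b,f_b)$ to $C^0([0,T];H^k(\mathbb{R}))$ for every $k$, so that $u$ is smooth in space and, by bootstrapping in the equation together with the $C^2$- and $B^{3+\varepsilon}_{\infty,\infty}$-regularity of $b$ and the $H^{3+\varepsilon}$-regularity of $f_b$, $\phi = b + u$ solves \eqref{eq_bo} classically. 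The range $s > 2$ is handled identically once Theorem \ref{t_main} is complemented by the corresponding higher-regularity persistence for \eqref{eq_bo_split}. I do not anticipate a genuine obstacle here: all the analytic substance sits in Theorem \ref{t_main}, and the step that requires the most care is purely bookkeeping — verifying that the notions of solution of \eqref{eq_bo} and of \eqref{eq_bo_split} correspond under the shift at the regularity in play (so that $\mathcal{H}b_{xx}$, $(bb)_x$ and $b_t$ individually make sense), and that uniqueness and the classical-solution property are preserved by this correspondence.
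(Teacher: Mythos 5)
Your proposal is correct and follows the paper's own route: compose the data-to-solution map $\mathcal{L}^s_T(\cdot,b,f_b)$ from Theorem \ref{t_main} with the affine shift $\phi_0 \mapsto \phi_0 - b_0$ and $u \mapsto b+u$, note that this is continuous with values in $b + C^0([0,T];H^s(\mathbb{R}))$, and observe that smooth initial data are dense and are sent to classical solutions of \eqref{eq_bo}. Your remark that Theorem \ref{t_main} literally covers only $s\in(\tfrac12,2]$ whereas the corollary is stated for all $s>\tfrac12$ is a fair observation about the paper's exposition; as you say, for $s\geq 2$ the statement follows from Kato-type theory (Proposition \ref{p_highreg}) together with persistence of regularity, which the paper takes for granted without comment.
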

	
	Similarly, we can derive local well-posedness of \eqref{eq_bo} in $H^s(\mathbb{R})+H^\sigma(\mathbb{T})$. For this, we denote the data-to-solution map (for periodic initial data) by
	\begin{align*}
		\mathcal{S}^\sigma_{\mathbb{T},T}:B_R(0)\subset 	H^\sigma(\mathbb{T})\rightarrow C^0([0,T];H^\sigma(\mathbb{T}))
	\end{align*}
	and recall its existence for $\sigma>-\frac{1}{2}$. In particular, smooth initial data are mapped to classical solutions. Moreover, by choosing $b=\mathcal{S}^\sigma_{\mathbb{T},T}(b_0)$, it follows $f=f_b=0$. Then, for $s>\frac{1}{2}$ and $\sigma>\frac{7}{2}$, the map
	\begin{align*}
		B_R(0)\subset H^s(\mathbb{R})+H^\sigma(\mathbb{T})
		&\rightarrow
		C^0([0,T];H^s(\mathbb{R})+H^\sigma(\mathbb{T}))
		\\
		u_0+b_0
		&\mapsto
		S^\sigma_{\mathbb{T},T}(b_0)
		+
		\mathcal{L}^s_T(u_0,S^\sigma_{\mathbb{T},T}(b_0),0)
	\end{align*}
	is continuous and sends the dense subset of initial data in $H^\infty(\mathbb{R})+H^\infty(\mathbb{T})$ to classical solutions of \eqref{eq_bo}. We conclude:
	\begin{cor}\label{c_main}
		Let $s>\frac{1}{2}$ and $\sigma>\frac{7}{2}$. Then, the Benjamin-Ono equation \eqref{eq_bo} is locally well-posed in $H^s(\mathbb{R})+H^\sigma(\mathbb{T})$.
	\end{cor}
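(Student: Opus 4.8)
The plan is to exhibit the data-to-solution map as the composition displayed just before the statement, and to deduce everything from Theorem \ref{t_main} together with the periodic well-posedness recalled above. First I would record that $H^s(\mathbb{R})\cap H^\sigma(\mathbb{T})=\{0\}$ inside $\mathcal{S}'(\mathbb{R})$, since a nonzero periodic element of $L^2_{\mathrm{loc}}(\mathbb{R})$ cannot lie in $L^2(\mathbb{R})\supset H^s(\mathbb{R})$; hence the decomposition $\phi_0=u_0+b_0$ with $u_0\in H^s(\mathbb{R})$ and $b_0\in H^\sigma(\mathbb{T})$ is unique and $H^s(\mathbb{R})+H^\sigma(\mathbb{T})$ is a Banach space under $\|u_0\|_{H^s(\mathbb{R})}+\|b_0\|_{H^\sigma(\mathbb{T})}$. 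This reduces the statement to three points: (i) for $b_0\in H^\sigma(\mathbb{T})$ the periodic solution $b$ restricted to $[0,1]$, together with $f:=0$, satisfies \eqref{as_b_f}, and the assignment $b_0\mapsto b$ is continuous into $\mathcal{X}$; (ii) as a consequence, $u_0+b_0\mapsto\mathcal{S}^\sigma_{\mathbb{T},T}(b_0)+\mathcal{L}^s_T(u_0,\mathcal{S}^\sigma_{\mathbb{T},T}(b_0),0)$ is a well-defined continuous map into $C^0([0,T];H^s(\mathbb{R})+H^\sigma(\mathbb{T}))$ whose values solve \eqref{eq_bo}; (iii) initial data in $H^\infty(\mathbb{R})+H^\infty(\mathbb{T})$ are sent to classical solutions, and the solution is unique.

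For (i), fix $\varepsilon\in(0,\sigma-\tfrac72)$, which is possible as $\sigma>\tfrac72$. Since the periodic problem is globally well-posed for $\sigma>-\tfrac12$, the solution $b$ is defined on $[0,1]$ with $b\in C^0([0,1];H^\sigma(\mathbb{T}))$ and $\sup_{t\in[0,1]}\|b(t)\|_{H^\sigma(\mathbb{T})}\le\rho(\|b_0\|_{H^\sigma(\mathbb{T})})$ for a non-decreasing $\rho$ (the flow being continuous, hence bounded on bounded sets, into $C^0([0,1];H^\sigma(\mathbb{T}))$). I would then use the one-dimensional Besov embeddings $H^\sigma(\mathbb{T})=B^{\sigma}_{2,2}(\mathbb{T})\hookrightarrow B^{\sigma-1/2}_{\infty,\infty}(\mathbb{T})\hookrightarrow B^{3+\varepsilon}_{\infty,\infty}(\mathbb{R})$, where the last step uses that a periodic function has comparable Hölder--Zygmund norms on $\mathbb{T}$ and on $\mathbb{R}$, together with $H^\sigma(\mathbb{T})\hookrightarrow C^2(\mathbb{T})\subset C^2(\mathbb{R})$, to conclude $b\in\mathcal{X}$ with $\|b\|_{\mathcal{X}}$ controlled by $\rho(\|b_0\|_{H^\sigma(\mathbb{T})})$; trivially $f=0\in\mathcal{Y}$. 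Moreover, since $b$ solves the Benjamin--Ono equation, $f_b=b_t+\mathcal{H}b_{xx}+(bb)_x=0=f$, so the pair $(b,f)$ indeed satisfies \eqref{as_b_f}. Continuity of $b_0\mapsto b$ into $\mathcal{X}$ follows from continuity of the periodic flow into $C^0([0,1];H^\sigma(\mathbb{T}))$ composed with the bounded linear embeddings above.

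Point (ii) is then bookkeeping: given $R>0$, (i) places $(u_0,b,0)$ in a ball of radius $R'=R'(R)$ in $H^s(\mathbb{R})\times\mathcal{X}\times\mathcal{Y}$ whenever $\|u_0\|_{H^s(\mathbb{R})}+\|b_0\|_{H^\sigma(\mathbb{T})}\le R$, so that, with $T=T(R')>0$ from Theorem \ref{t_main}$(b)$, the function $u=\mathcal{L}^s_T(u_0,b,0)\in C^0([0,T];H^s(\mathbb{R}))$ solves \eqref{eq_bo_split} for this $b$ and $f=0$; since \eqref{eq_bo_split} was built from \eqref{eq_bo} precisely via $f=f_b$ and here $f_b=0$, the sum $\phi=u+b$ solves \eqref{eq_bo} and lies in $C^0([0,T];H^s(\mathbb{R})+H^\sigma(\mathbb{T}))$. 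Continuity of the data-to-solution map is inherited from that of $\mathcal{S}^\sigma_{\mathbb{T},T}$ and of $\mathcal{L}^s_T$ (and the embeddings of (i)). For (iii), if $u_0\in H^\infty(\mathbb{R})$ and $b_0\in H^\infty(\mathbb{T})$ then $b$ is a smooth periodic solution and, by persistence of regularity for \eqref{eq_bo_split}, bootstrapping from Theorem \ref{t_main}$(a)$ with the smooth coefficients $b$ and $f=0$, one gets $u\in\bigcap_{k}C^0([0,T];H^k(\mathbb{R}))$, so $\phi=u+b$ is a classical solution; uniqueness of $\phi$ follows from the uniqueness parts of $\mathcal{L}^s_T$ and $\mathcal{S}^\sigma_{\mathbb{T},T}$ together with the unique decomposition. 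As $H^\infty(\mathbb{R})+H^\infty(\mathbb{T})$ is dense in $H^s(\mathbb{R})+H^\sigma(\mathbb{T})$, this establishes local well-posedness of \eqref{eq_bo} in $H^s(\mathbb{R})+H^\sigma(\mathbb{T})$ in the sense of Corollary \ref{c_bore}.

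I expect the only genuine obstacle to be (i): verifying that the periodic solution really lies in the admissible class $\mathcal{X}$ of \eqref{as_b_f}. This is where the exponent $\tfrac72$ is forced -- one needs $3+\varepsilon<\sigma-\tfrac12$ for some $\varepsilon>0$ so that $H^\sigma(\mathbb{T})\hookrightarrow B^{3+\varepsilon}_{\infty,\infty}$ -- and where global periodic well-posedness for $\sigma>-\tfrac12$ is used, to guarantee that $b$ is defined and bounded on the entire interval $[0,1]$. The persistence-of-regularity input in (iii) is standard and I would take it as known.
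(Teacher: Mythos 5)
Your proposal follows exactly the same route as the paper: choose $b=\mathcal{S}^\sigma_{\mathbb{T},T}(b_0)$ so that $f_b=0$, check that the periodic solution lies in the admissible class $\mathcal{X}$ of \eqref{as_b_f}, and compose with the map $\mathcal{L}^s_T$ from Theorem \ref{t_main}. The embedding chain $H^\sigma(\mathbb{T})\hookrightarrow B^{\sigma-\frac{1}{2}}_{\infty,\infty}(\mathbb{T})\hookrightarrow B^{3+\varepsilon}_{\infty,\infty}(\mathbb{R})$, which you spell out and which forces $\sigma>\tfrac{7}{2}$, is exactly the verification the paper leaves implicit in the paragraph preceding the corollary.
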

	
	Lastly, we emphasize that \eqref{eq_bo_split} is more than just an auxiliary system. In \cite{Mat1995}, Matsuno derived this equation (for $b=0$ and $f$ compactly supported) to describe long internal waves in a stratified fluid with bottom topography given by the uneven profile $f$ (respectively a scaled and translated version of this). Thus, Theorem \ref{t_main} yields local well-posedness for this model and proves continuous dependence on the profile of the bottom topography.
	
	\subsection{Review of the literature}\label{ss_rol}
	Above, we already mentioned the most recent results concerning well-posedness of \eqref{eq_bo} for initial data in $H^s(\mathbb{R})$ or $H^\sigma(\mathbb{T})$. In the following, we recall a few additional results on the real line. For an extensive overview, we refer to \cite{KLV2023, KS2021a}.
	
	First statements concerning well-posedness of the Benjamin-Ono equation go back to Kato \cite{Kat1975}, Saut \cite{Sau1979a}, I{\'o}rio \cite{Ior1986}, and Abdelouhab, Bona, Felland, and Saut \cite{ABFS1989}. Each of these results provides local well-posedness for Sobolev-regularity $s$, respectively $\sigma$, being strictly larger than $\frac{3}{2}$. This lower bound stems from the derivative nonlinearity $(\phi\phi)_x$.
	
	Molinet, Saut, and Tzvetkov \cite{MST2001} showed that the dispersion is too weak to deal with the nonlinearity in a perturbative way. In particular, it is impossible to obtain local well-posedness results through direct applications of a fixpoint-argument.
	
	The breakthrough was found by Tao \cite{Tao2004}. He constructed a gauge transform that effectively cancels the worst behaving part of the nonlinearity, thus overcoming the above-mentioned restriction. This leads to global well-posedness in $H^1(\mathbb{R})$ by a modification of the Bona-Smith argument and conservation of energy.
	Many subsequent improvements rely on the application of some gauge transform -- including the almost optimal result by Killip, Laurens, and Vi\c{s}an \cite{KLV2023} for global well-posedness in $H^s(\mathbb{R})$ for $s>-\frac{1}{2}$.
	We remark that there are also results not using a gauge transform. Here, we refer to the work by Molinet, Pilod, and Vento \cite{MPV2018}, who studied a dispersion-generalized variant of \eqref{eq_bo} and obtained local well-posedness of the Benjamin-Ono equation in $H^s(\mathbb{R})$ for $s>\frac{1}{4}$.
	
	For comparison, we observe that Theorem \ref{t_main} applied to $b=0=f$ yields local well-posedness of the Benjamin-Ono equation \eqref{eq_bo} in $H^s(\mathbb{R})$ for $s>\frac{1}{2}$. This result is neither new nor optimal, but our analysis does not need a gauge transform and, in particular, it is stable under the perturbations generated by $b\neq 0\neq f$. 
	
	Next, we recall well-posedness results for other classes of initial data. To the best of our knowledge, there exist only three results in this direction. The first result is due to Fonseca and Linares and states that local well-posedness holds for sufficiently smooth initial data which are possibly unbounded, but of sublinear growth; see \cite{FL2000}.
	
	The other two results \cite{ILS1998,Gal2005} consider bore-like initial data and initial data from the larger class of Zhidkov functions.
	Here, a function $\phi_0$ is called bore-like if it satisfies the following three conditions:
	\begin{align*}
		\begin{cases}
			\phi_0(x)\rightarrow C_\pm \text{ as }x\rightarrow\infty,
			\\
			(\phi_0)_x\in H^{s-1}(\mathbb{R}) \text{ for some }s\geq1,
			\\
			(\phi_0-C_+)\in L^2([0,\infty)) \text{ and }(\phi_0+C_-)\in L^2((-\infty,0]).
		\end{cases}
	\end{align*}
	Note that this includes the example $\phi_0=\tanh$ for $C_\pm=\pm1$.
	The Zhidkov space of order $s$ is given by
	\begin{align*}
		X^s
		:=
		\{\phi_0\in\mathcal{D}'(\mathbb{R}), \phi_0\in L^\infty (\mathbb{R}), (\phi_0)_x\in H^{s-1}(\mathbb{R})\}.
	\end{align*}
	In both articles \cite{ILS1998,Gal2005} the following approach is used: First, the initial datum $\phi_0$ is written as $b_0+u_0$, where $b_0$ is a smooth function and $u_0$ is a small perturbation in $H^{s}(\mathbb{R})$. Setting $b(t,x)=b_0(x)$, this leads to the initial value problem \eqref{eq_bo_split} with $b$ as above and $f=\mathcal{H}b_{xx}+(bb)_x$. Solutions of this equation are then constructed via the method of parabolic regularization, whereas the continuous dependence on the initial datum follows by the Bona-Smith argument.
	Using this approach, I{\'o}rio, Linares, and Scialom \cite{ILS1998} proved local well-posedness of the Benjamin-Ono equation for bore-like initial data for $s>\frac{3}{2}$. Moreover, their result is global for $s\geq 2$. The authors already pointed out that the local result also follows from Kato \cite{Kat1975, Kat1979}. Interestingly, they were able to globalize their result despite the conservation laws being not (directly) applicable.
	Gallo \cite{Gal2005} established local well-posedness of the Benjamin-Ono equation in the Zhidkov space $X^s$ for $s>\frac{5}{4}$, which includes bore-like initial data. Again, this result is global for $s\geq\frac{3}{2}$.
	
	In comparison to Corollary \ref{c_bore}, the results in \cite{ILS1998, Gal2005} are formulated under weaker assumptions on the background function $b$ than \eqref{as_b_f}, but they require a lot more regularity for the $H^s(\mathbb{R})$-perturbation. The different assumptions on $b$ rely partly on the use of different commutator estimates; see Remark \ref{r_as_bf}. Lastly, we note that our result allows for background functions changing in time and that it applies directly to periodic background functions.
	
\subsection{Sketch of the proof}
	Part $(a)$ of Theorem \ref{t_main} follows by classical results given in Kato \cite{Kat1975}. In the following, we provide an overview of the proof of part $(b)$.
	
	We closely follow the approach developed by Ionsecu, Kenig, and Tataru \cite{IKT2008} leading to low-regularity well-posedness results for quasilinear equations. Moreover, we include some necessary adaptions to deal with the case $b\neq 0\neq f$; compare to Palacios \cite{Pal2022}.
	
	Let $u=\mathcal{L}^2_T(u_0,b,f)$ be a classical solution to \eqref{eq_bo_split}. We derive estimates of the form:
	\begin{align*}
		\begin{cases}
			\|u\|_{\spfc^s_T}
			&\lesssim
			\|(uu)_x\|_{\spnc^s_T}
			+
			\|(ub)_x\|_{\spnc^s_T}
			+
			T\|f\|_{L^\infty H^{3+\varepsilon}}
			+
			\|u\|_{\spec^s_T}
			\\
			\|(uu)_x\|_{\spnc^s_T}
			&\lesssim
			T\|u\|_{\spfc^s_T}^2
			\\
			\|(ub)_x\|_{\spnc^s_T}
			&\lesssim
			T\|u\|_{\spfc^s_T}\|b\|_{\spbc^{3+\varepsilon}_T}
			\\
			\|u\|_{\spec^s_T}^2
			&\lesssim
			\|u_0\|_{H^s}^2
			+
			T\|u\|_{\spfc^s_T}^2
			+
			T\|f\|_{L^\infty H^{3+\varepsilon}}
		\end{cases}
	\end{align*}
	Before we comment on these estimates and the involved spaces, let us complete the overview of the proof. After performing a bootstrap argument, we obtain the à-priori estimate
	\begin{align*}
		\|u\|_{\spec^s_T}
		\!\lesssim\!
		\|u_0\|_{H^s}
		\!+\!
		\|f\|_{L^\infty H^{3+\varepsilon}}
	\end{align*}
	for sufficiently small time $T\ll 1$ and $s\in(\frac{1}{4},2)$. Next, we argue similarly for the difference $v=u_1-u_2$ of two solutions $u_1=\mathcal{L}^2(u_{1,0},b,f)$ and $u_2=\mathcal{L}^2(u_{2,0},b,f)$. However, since the equation satisfied by $v$ is less symmetric, we have to lower the regularity at which we estimate the difference or assume more regularity on $u_2$. Here, for $z=-\frac{1}{2}$, we get the difference estimate
	\begin{align*}
		\|u_1\!-\!u_2\|_{\spec^z_T}
		\!\lesssim\!
		\|u_{1,0}\!-\!u_{2,0}\|_{H^z}
	\end{align*}
	and for $s>\frac{1}{2}$, we obtain
	\begin{align*}
		\|u_1\!-\!u_2\|_{\spec^s_T}
		\!\lesssim\!
		\|u_{1,0}\!-\!u_{2,0}\|_{H^s}
		\!+\!
		(\|u_1\!-\!u_2\|_{\spec^z_T}
		\|u_1\!-\!u_2\|_{\spec^s_T}
		\|u_2\|_{\spec^{s+1}_T}
		)^\frac{1}{2}.
	\end{align*}
	Then, the last three estimates together with the embedding $\spec^s_T\hookrightarrow C([0,T];H^s(\mathbb{R}))$ allow us to perform the classical Bona-Smith argument \cite{BS1975} proving that $\mathcal{L}^2_T$ extends continuously to $\mathcal{L}^s_T$. This finishes the proof of part $(b)$ of Theorem \ref{t_main}. 
	
	Now, let us go back to the set of four estimates stated above. The first estimate follows rather directly by applying Duhamel's formula on the level of Littlewood-Paley projections $P_Ku$ in combination with the definitions of the spaces $\spfc^s_T$, $\spnc^s_T$, $\spec^s_T$, and $\spbc^s_T$; see Section \ref{ss_fs}. The first two spaces involve frequency-dependent time-localizations meaning that each Littlewood-Paley piece $P_Ku$ is effectively only considered on a time interval of size $K^{-\theta}$, $\theta>0$. This feature is crucial for the proof of the second and third estimate.
	The fourth estimate is essentially an energy estimate. However, proving the actual estimate takes a lot of effort due to the time-localizations. Unlike to the procedures in \cite{IKT2008} and \cite{Pal2022}, we have to insert \eqref{eq_bo_split} twice, which leads to multiple terms of fourfold products of frequency-localized solutions. To bound these terms, we rely on the precise control of the resonance function, $L^2$-estimates for fourfold convolutions of localized functions, as well as on some symmetrizations to benefit from cancellations.
	
	Interestingly, the required strength of the time-localizations mentioned above is independent of whether we have $b=0$ or $b\neq 0$ -- in both cases we need $\theta>1$. This is contrary to \cite{Pal2022} dealing with the Zakharov-Kuznetsov equation. There, the value $\theta=1$ is needed in the case $b\neq 0$, but $\theta=0$ is sufficient for $b=0$.

\subsection{Structure of the paper}
	We fix some notation in the next section, followed by a short introduction of the relevant function spaces and their properties. In Section \ref{s_dce} we prove $L^2$-estimates for convolutions of dyadically localized functions. These estimates are used to bound $(uu)_x$ and $(ub)_x$ in Section \ref{s_stbe} as well as for the energy estimates in Section \ref{s_ee}. In Section \ref{s_p} we perform the aforementioned bootstrap argument and end the proof of Theorem \ref{t_main}.
\section{Preliminaries}
	In the following, we write $A\lesssim B$ if $A\leq cB$ holds for some constant $c>0$, $A\sim B$ if $A\lesssim B$ and $B\lesssim A$, and $A\gg B$ if $A\not\lesssim B$. The implicit constant $c$ may depend on further parameters and can change from line to line.
	
	We denote by $t$ and $x$ temporal and spatial variables, whereas $\tau$ and $\xi$ denote Fourier-time and frequency variables. To shorten notation summation of variables is abbreviated by listing their indices, e.g. $\xi_{123}:=\xi_1+\xi_2+\xi_3$.
	Variables in capital letters are reserved for dyadic numbers in $\mathbb{D}:=\{2^n:n\in\mathbb{N}\}$. As an exception to the previously introduced notation, we understand the expression $|\xi|\sim K$ as $|\xi|\lesssim 1$ if $K=1$.
	Moreover, we write $K_1^*,\dots,K_n^*$ to denote a reordering of $K_i$, $i\in[n]:=\{1,\dots,n\}$, satisfying $K_i^*\geq K_{i+1}^*$. 

\subsection{Littlewood-Paley projectors}
	Given a Schwartz function $f\in\mathcal{S}(\mathbb{R})$, the Fourier transform is given by
	\begin{align*}
		\mathcal{F}\left[f\right](\xi)
		:=
		(2\pi)^{-\frac{1}{2}}\int_{\mathbb{R}}e^{-i\xi x}f(x)dx,
	\end{align*}
	whereas the inverse Fourier transform is defined by $\mathcal{F}^{-1}\left[f\right](\xi):=\mathcal{F}\left[f\right](-\xi)$. 
	We also use the notations $\hat{f}$ and $f^\wedge$ for $\mathcal{F}[f]$ as well as $\check{f}$ and $f^\vee$ for $\mathcal{F}^{-1}[f]$.
	
	We define the Hilbert transform as a Fourier multiplier operator with symbol $-i\text{sgn}$, i.e.
	\begin{align*}
		\mathcal{H}:L^2(\mathbb{R})
		\rightarrow
		L^2(\mathbb{R}),
		f
		\mapsto
		\mathcal{F}^{-1}[-i\text{sgn}\mathcal{F}[f]].
	\end{align*}
	In particular, we have $(\mathcal{H}f_{xx})^\wedge(\xi)=i\omega(\xi)\hat{f}(\xi)$, where $\omega(\xi):=\xi|\xi|$.
	
	We choose a smooth even function $\chi_1\in C^\infty(\mathbb{R};[0,1])$ satisfying $\chi_1(x)=1$ for all $|x|\leq \frac{5}{4}$ and $\chi_1(x)=0$ for all $|x|\geq \frac{8}{5}$. For every dyadic integer $K>1$, we set $\chi_K(\xi):=\chi_1(\xi/(2K))-\chi_1(\xi/K)$ and note that $\supp \chi_K\subset\left[\frac{5}{8}K,\frac{8}{5}K\right]$ holds. The Littlewood-Paley projector $P_K$ for $K\in\mathbb{D}$ is defined as the Fourier multiplier operator with multiplier $\chi_K$, i.e.
	\begin{align*}
		P_K:L^2(\mathbb{R})
		\rightarrow
		L^2(\mathbb{R}),
		f
		\mapsto
		\mathcal{F}^{-1}[\chi_K\mathcal{F}[f]].
	\end{align*}
	Set $P_{\leq K}:=\sum_{1\leq L\leq K}P_L$. The terms $P_{<K}$, $P_{\geq K}$, and $P_{>K}$ are defined analogously.
	
	Moreover, we fix another smooth function $\eta_1\in C^\infty(\mathbb{R};[0,1])$ satisfying $\eta_1(x)=1$ for all $|x|\leq \frac{5}{4}$ and $\eta_1(x)=0$ for all $|x|\geq \frac{8}{5}$. We set $\eta_L(\xi):=\eta_1(\xi/(2L))-\eta_1(\xi/L)$ for all dyadic integers $L>1$ as well as $\eta_{\leq L}=\sum_{1\leq N\leq L}\eta_N$.
	
	The functions $(\chi_K)_{K\in\mathbb{D}}$ will be used to localize frequency variables, whereas $(\eta_L)_{L\in\mathbb{D}}$ will be used to localize modulation and time variables. For $K,L\in\mathbb{D}$, the support of the product of two such functions is denoted by
	\begin{align*}
		D_{L,K}
		:=
		\{(\tau,\xi)\in\mathbb{R}^2:
		\tau-\omega(\xi)\in\supp \eta_L, \xi\in\supp\chi_K\}.
	\end{align*}
\subsection{Function spaces}\label{ss_fs}
	Given $s\in\mathbb{R}$ and $p,q\in [1,\infty]$, we define the $B^s_{p,q}$-norm of a tempered distribution $f\in\mathcal{S}'(\mathbb{R})$ by
	\begin{align*}
		\|f\|_{B^s_{p,q}}
		:=
		\|(K^{s}\|P_K f\|_{L^p})_{K\in\mathbb{D}}\|_{l^q}.
	\end{align*}
	Then, the inhomogeneous Besov space $B^s_{p,q}(\mathbb{R})$ is given by the space of all tempered distributions with finite $B^s_{p,q}$-norm, i.e.\@ we have
	\begin{align*}
		B^s_{p,q}(\mathbb{R})
		:=
		\{f\in\mathcal{S}'(\mathbb{R}):\|f\|_{B^s_{p,q}}<\infty\}.
	\end{align*}
	Throughout this work, we only consider Besov spaces for $(p,q)=(2,2)$ and for $(p,q)=(\infty,\infty)$. In the first case, we recover the Sobolev space $B^s_{2,2}(\mathbb{R})= H^s(\mathbb{R})$ with an equivalent norm. In the second case, $B^s_{\infty,\infty}(\mathbb{R})$ corresponds to the space of $s$-Lipschitz functions $\Lambda^s(\mathbb{R})$, again, with an equivalent norm.
	We abbreviate
		\begin{align*}
		\spbc^s_T
		&:=
		L^\infty([-T,T];B^s_{\infty,\infty}(\mathbb{R})).
	\end{align*}
	
	Next, let us introduce the function spaces $\spfc^s_T$, $\spnc^s_T$, and $\spec^s_T$ as in \cite{IKT2008} with a few notational changes. We begin by giving the definitions and provide some comments afterwards. For the rest of this section, let $T\in(0,1)$, $K\in\mathbb{D}$, $s\in\mathbb{R}$, and $\theta>0$.
	First, we define a $l^1$-variant of the $X^{s,b}$-norm for $b=\frac{1}{2}$. For $f\in L^2(\mathbb{R}^2)$ define
	\begin{align*}
		\|f\|_{\spx^K}:=\sum_{L\in\mathbb{D}}L^{\frac{1}{2}}\|\eta_L(\tau-\omega(\xi))f(\tau,\xi)\|_{L^2}.
	\end{align*}
	This norm is independent of $K$, but the corresponding space $\spx^K$ contains only frequency-localized functions. More precisely, we set
	\begin{align*}
		\spx^K
		:=
		\{f\in L^2(\mathbb{R}\times \supp\chi_K):
		\|f\|_{\spx^K}<\infty\}.
	\end{align*}
	Based on this space, we define the following two norms involving frequency-dependent time-localizations:
	\begin{align*}
		\|u\|_{\spfa^K}
		&:=
		\sup_{t_K}\|\mathcal{F}\left[u\eta_0(K^{\theta }(t-t_K))\right]\|_{\spx^K},
		\\
		\|u\|_{\spna^K}
		&:=
		\sup_{t_K}\|(\tau-\omega(\xi)+iK^\theta)^{-1}\mathcal{F}\left[u\eta_0(K^{\theta}(t-t_K))\right]\|_{\spx^K}.
	\end{align*}
	Observe that the time localization restricts to a time interval of size $K^{-\theta}$. Hence, the larger we choose $\theta$, the faster the size of the intervals goes to zero as $K$ tends to infinity.
	The corresponding spaces are defined as
	\begin{align*}
		\spfa^K
		&:=
		\{u_K\in C(\mathbb{R};P_KL^2(\mathbb{R})):
		\|u_K\|_{\spfa^K}<\infty\},
		\\
		\spna^K
		&:=
		\{u_K\in C(\mathbb{R};P_KL^2(\mathbb{R})):
		\|u_K\|_{\spna^K}<\infty\},
	\end{align*}
	where $P_KL^2(\mathbb{R})=\{P_K f:f\in L^2(\mathbb{R})\}$.
	Next, we define the corresponding restriction norms by
	\begin{align*}
		\|u_k\|_{\spfb^K_T}
		&:=
		\inf\{\|\tilde{u}_K\|_{\spfa^K}:
		\tilde{u}_K\in\spfa^K,
		\tilde{u}_K=u_K\text{ on }[-T,T]\},
		\\
		\|u_K\|_{\spnb^K_T}
		&:=
		\inf\{\|\tilde{u}_K\|_{\spna^K}:
		\tilde{u}_K\in\spna^K,
		\tilde{u}_K=u_K\text{ on }[-T,T]\},
	\end{align*}
	and the spaces by
	\begin{align*}
		\spfb^K_T
		&:=
		\{u_K\in C(\mathbb{R};P_KL^2(\mathbb{R})):
		\|u_K\|_{\spfb^K_T}<\infty\},
		\\
		\spnb^K_T
		&:=
		\{u_K\in C(\mathbb{R};P_KL^2(\mathbb{R})):
		\|u_K\|_{\spnb^K_T}<\infty\}.
	\end{align*}
	Based on the previously defined norms, we set
	\begin{align*}
		\|u\|_{\spfc^s_T}
		&:=
		\|(K^{s}\|P_Ku\|_{\spfb^K_T})_{K\in\mathbb{D}}\|_{l^2},
		\\
		\|u\|_{\spnc^s_T}
		&:=
		\|(K^{s}\|P_Ku\|_{\spnb^K_T})_{K\in\mathbb{D}}\|_{l^2},
	\end{align*}
	and define
	\begin{align*}
		\spfc^s_T
		&:=
		\{u\in C([-T,T];H^2(\mathbb{R})):
		\|u\|_{\spfc^s_T}<\infty\},
		\\
		\spnc^s_T
		&:=
		\{u\in C([-T,T];H^2(\mathbb{R})):
		\|u\|_{\spnc^s_T}<\infty\}.
	\end{align*}
	It remains to define the space $\spec^s_T$. For this, let
	\begin{align*}
		\|u\|_{\spec^s_T}
		:=
		\|(K^{s}\|P_Ku\|_{L^\infty_t L^2_x})_{K\in\mathbb{D}}\|_{l^2}
	\end{align*}
	and define the corresponding space
	\begin{align*}
		\spec^s_T
		&:=
		\{u\in C([-T,T];H^2(\mathbb{R})):
		\|u\|_{\spec^s_T}<\infty\}.
	\end{align*}
	
	We emphasize that functions in $\spfc^s_T$, $\spnc^s_T$, and $\spec^s_T$ are elements of $C([0,T];H^2 (\mathbb{R}))$. Hence, the superscript $s$ does not directly correspond to the spatial regularity of the functions.
	We recall that our goal is to approximate initial data in $H^s(\mathbb{R})$, $s<2$, by initial data in $H^2(\mathbb{R})$. Then, we want to show that the corresponding sequence of solutions (which are elements in $C([0,T];H^2(\mathbb{R}))$) converges. In particular, the approximate solutions will be elements of the spaces $\spfc^s_T$ and $\spec^s_T$, whereas the space $\spnc^s_T$ contains the appearing nonlinearities.
	
	Now, we briefly comment on the value of $\theta$. As mentioned in the introduction, the parameter $\theta$ controls the size of the time interval to which we localize functions; see the definitions of the spaces $\spfa^K$ and $\spna^K$. More precisely, functions that are frequency-localized to $\supp \chi_K$ will be localized in time to intervals of size $K^{-\theta}$. These time-localizations are crucial for the estimates of the nonlinearity; see Section \ref{s_stbe}. In particular, we obtain the necessary condition $\theta>1$ (independent of $s$).
	However, while the time-localizations are beneficial for the estimate of the nonlinearity, they are very disadvantageous for the energy estimates causing rather tedious proofs; see Section \ref{s_ee}. For the energy estimates, we want to choose $\theta$ as small as possible. More precisely, if $s$ converges to $\frac{1}{2}$, then the corresponding $\theta=\theta(s)>1$ must converge to $1$. Hence, if not mentioned otherwise, we will always assume that $\theta$ is a number slightly larger than $1$ satisfying all necessary upper bounds appearing in our proofs.
\subsection{Properties of the function spaces}
	
	In this section, we recall a few properties of the previously introduced function spaces. For proofs and further properties, we refer to \cite{IKT2008}. Moreover, we present a generalization from \cite{Pal2022}, see Proposition \ref{p_cnv} below.
	
	We begin by stating three lemmata. The first two describe how the spaces $\spx^K$ behave with respect to time-localization with smooth respectively sharp cut-off functions. The third lemma guarantees that functions in $\spfb^K_T$, respectively in $\spnb^K_T$, possess well-behaved extensions in $\spfa^K$, respectively $\spna^K$.
	\begin{lem}\label{l_locstab}
		Let $\gamma\in\mathcal{S}(\mathbb{R})$. There exists a constant (only depending on $\gamma$) such that for all $K, L\in\mathbb{D}$, $t_0\in\mathbb{R}$, and $f\in\spx^K$ we have
		\begin{align*}
			\|\mathcal{F}\left[\gamma(L(t-t_0))\mathcal{F}^{-1}\left[f\right]\right]\|_{\spx^K}
			\lesssim
			\|f\|_{\spx^K}.
		\end{align*}
	\end{lem}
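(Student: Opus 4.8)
The plan is to prove Lemma \ref{l_locstab} by reducing the time-localization operator to a convolution in the Fourier-time variable $\tau$ and exploiting that the relevant multiplier weight $L^{1/2}\|\eta_L(\tau-\omega(\xi))\cdot\|_{L^2}$, summed over $L$, is essentially a weighted $L^2$-norm with weight $\langle\tau-\omega(\xi)\rangle^{1/2}$ (up to $l^1$-versus-$l^2$ subtleties, but here we want an upper bound, so $l^1$ on the output is what we must control). First I would note that multiplication by $\gamma(L(t-t_0))$ in physical time corresponds, after Fourier transform in $t$, to convolution of $f(\tau,\xi)$ in the $\tau$-variable with the rescaled kernel $L^{-1}\hat\gamma(\tau/L)e^{-it_0\tau}$, whose $L^1_\tau$-norm is $\|\hat\gamma\|_{L^1}$, independent of $L$ and $t_0$; the phase $e^{-it_0\tau}$ has modulus one and is harmless. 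So the operator in question is, for each fixed $\xi$, convolution in $\tau$ with an $L^1$-function of unit-size mass (after the rescaling, mass exactly $\|\hat\gamma\|_{L^1}$).

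The key step is then a dyadic bookkeeping argument on the modulation variable. Write $g(\tau,\xi) := \mathcal{F}[\gamma(L(t-t_0))\mathcal{F}^{-1}[f]](\tau,\xi)$, decompose $f = \sum_{M\in\mathbb{D}} f_M$ with $f_M := \eta_M(\tau-\omega(\xi))f$, and correspondingly $g = \sum_M g_M$ where $g_M$ comes from convolving $f_M$ in $\tau$ by the kernel $k_L$ of $L^1$-mass $O(1)$ supported (essentially) at scale $|\tau|\lesssim L$ with rapid decay. Because $\omega(\xi)$ is just a fixed shift in $\tau$ for fixed $\xi$, the convolution commutes with the substitution $\tau\mapsto\tau-\omega(\xi)$, so it suffices to understand how convolution by $k_L$ moves mass between dyadic modulation shells $\{|\mu|\sim M\}$. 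When $M\gtrsim L$, the kernel barely moves mass off the shell and its dyadic neighbours, so $\eta_{M'}g_M$ is negligible unless $M'\sim M$; when $M\lesssim L$, mass can be spread up to scale $L$, but the rapid decay of $\hat\gamma$ gives, for $M'\gg L$, a gain of $(M'/L)^{-N}$ for any $N$. Quantitatively I would prove a pointwise-in-$L^2_\tau$ bound of the shape $\|\eta_{M'}(\tau-\omega(\xi))g_M\|_{L^2}\lesssim c(M,M',L)\|f_M\|_{L^2}$ with $c$ summable in such a way that, after multiplying by $(M')^{1/2}$ and summing over $M'$, one recovers $\sum_M M^{1/2}\|f_M\|_{L^2}=\|f\|_{\spx^K}$. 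Concretely: $\sum_{M'}(M')^{1/2}c(M,M',L)\lesssim M^{1/2}$ uniformly in $L$, which one checks by splitting the $M'$-sum at $M'\sim\max(M,L)$ and using $M^{1/2}\le\max(M,L)^{1/2}$ together with the rapid decay tail $\sum_{M'\gg\max(M,L)}(M')^{1/2}(M'/\max(M,L))^{-N}\lesssim\max(M,L)^{1/2}$, and for the low part $\sum_{M'\lesssim\max(M,L)}(M')^{1/2}\cdot(\text{bounded})$, which is $\lesssim\max(M,L)^{1/2}$ — but this is only $\lesssim M^{1/2}$ when $L\lesssim M$; in the complementary regime $L\gg M$ one instead uses that the \emph{total} output mass is controlled, $\sum_{M'}\|\eta_{M'}g_M\|_{L^2}\lesssim\|f_M\|_{L^2}$, and that it is concentrated at $M'\lesssim L$, so $\sum_{M'\lesssim L}(M')^{1/2}\|\eta_{M'}g_M\|_{L^2}$ must be redistributed — here one needs a genuine Young/Schur-test estimate rather than crude shell-by-shell bounds. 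This redistribution is the one point requiring care.

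The main obstacle, then, is precisely this low-modulation regime $L\gg M$: convolution by a kernel of width $L$ takes a function concentrated at modulation $\lesssim M$ and can spread it across all shells $M'\lesssim L$, and a naive triangle-inequality sum $\sum_{M'\lesssim L}(M')^{1/2}\|\eta_{M'}g_M\|_{L^2}$ would cost $L^{1/2}\gg M^{1/2}$. The resolution is that convolution by an $L^1$-kernel is a contraction on every translation-invariant space built from $L^2_\tau$ with a \emph{submultiplicative or slowly varying} weight, and $\langle\mu\rangle^{1/2}$ is such a weight: $\langle\mu+\nu\rangle^{1/2}\lesssim\langle\mu\rangle^{1/2}\langle\nu\rangle^{1/2}$, so $\|\langle\mu\rangle^{1/2}(k_L *_\tau f_M)\|_{L^2}\lesssim\|\langle\nu\rangle^{1/2}k_L\|_{L^1}\|\langle\mu\rangle^{1/2}f_M\|_{L^2}\lesssim M^{1/2}\|f_M\|_{L^2}$ \emph{provided} $\|\langle\nu\rangle^{1/2}k_L\|_{L^1}\lesssim 1$ uniformly in $L$ — which fails, since $\|\langle\nu\rangle^{1/2}k_L\|_{L^1}\sim L^{1/2}$. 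Hence the genuinely correct formulation replaces the single weight by the $l^1$-sum-of-shells structure of $\|\cdot\|_{\spx^K}$ itself, and the estimate must be proved as: $\|k_L *_\tau f\|_{\spx^K}=\sum_{M'}(M')^{1/2}\|\eta_{M'}(k_L*_\tau f)\|_{L^2}\lesssim\sum_M M^{1/2}\|\eta_M f\|_{L^2}$ by the Schur test with kernel $a_{M',M}:=(M')^{1/2}M^{-1/2}\sup_\xi\|\eta_{M'}(k_L*\eta_M\,\cdot)\|_{L^2\to L^2}$, verifying $\sup_{M'}\sum_M a_{M',M}\lesssim 1$ and $\sup_M\sum_{M'}a_{M',M}\lesssim 1$ uniformly in $L, t_0$. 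Both row and column sums split into the $M'\sim M$ diagonal (operator norm $O(1)$, contributes $O(1)$), the tail $M'\gg\max(M,L)$ or $M\gg\max(M',L)$ (rapid decay of $\hat\gamma$ beats the polynomial $(M'/M)^{\pm1/2}$), and the ``fan-out'' region $M'\lesssim L$, $M\lesssim L$ (operator norm of $\eta_{M'}(k_L*\eta_M\cdot)$ is $O((M/L)^{0})$ but there are only $O(\log L)$ such $M'$, and the $(M')^{1/2}M^{-1/2}$ factors combined with $\ell^1$-summability of a geometric-type series close the estimate — this is where one uses that $k_L$ has its mass spread \emph{smoothly}, i.e.\ $|k_L(\nu)|\lesssim L^{-1}\langle\nu/L\rangle^{-N}$, so that $\|\eta_{M'}(k_L*\eta_M\cdot)\|_{L^2\to L^2}\lesssim\min(1,(M/L))\cdot\min(1,(M'/L))^{?}$ actually decays off-diagonal even within the fan-out). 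Once the Schur test is in place the lemma follows immediately, and it is the only place where the smoothness of $\gamma$ (beyond mere integrability of $\hat\gamma$) is genuinely exploited.
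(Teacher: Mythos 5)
The paper does not give a proof of this lemma; it is invoked with a citation to \cite{IKT2008}, so there is no in-paper argument to compare against. Your route---pass to Fourier space in $t$, observe that multiplication by $\gamma(L(t-t_0))$ becomes convolution in $\tau$ with $k_L(\nu)=L^{-1}\hat\gamma(\nu/L)e^{-it_0\nu}$ of $L^1$-mass $\|\hat\gamma\|_{L^1}$ and rapid decay at scale $L$, decompose into modulation shells and bound the interaction matrix $a_{M',M}=(M'/M)^{1/2}\|\eta_{M'}(k_L*\eta_M\cdot)\|_{L^2\to L^2}$---is the standard one, and the overall structure is sound: you correctly isolate the dangerous fan-out regime $M\ll L$ and recognize that a finer-than-$O(1)$ off-diagonal operator bound is needed there.

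Two quantitative points need tightening, though neither breaks the argument. First, the sharp off-diagonal bound for $M,M'\lesssim L$ is $\|\eta_{M'}(k_L*\eta_M\cdot)\|_{L^2\to L^2}\lesssim L^{-1}(MM')^{1/2}$, obtained from the Schur test applied to the kernel $\eta_{M'}(\mu)k_L(\mu-\mu')\eta_M(\mu')$ with $|k_L|\sim L^{-1}$ on its support (and this is saturated by $k_L=(2L)^{-1}1_{[-L,L]}$, so one cannot do better); your tentative $\min(1,M/L)\cdot\min(1,M'/L)^{?}$ with first exponent $1$ overclaims the decay in $M$ when $M\lesssim M'$. Fortunately the true bound still closes the column sums: $a_{M',M}\sim M'/L$ for $M'\lesssim L$ and $a_{M',M}\lesssim (M'/L)^{1-N}$ for $M'\gg L$, so $\sum_{M'}a_{M',M}\lesssim 1$ uniformly in $M,L$. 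Second, since $\|\cdot\|_{\spx^K}$ is an $\ell^1$-norm over the modulation shells, boundedness of the operator needs only the column sums $\sup_M\sum_{M'}a_{M',M}<\infty$; your assertion that the row sums $\sup_{M'}\sum_M a_{M',M}$ are also bounded is false---because $a_{M',M}\sim M'/L$ is independent of $M$ in the fan-out region, one gets $\sum_{M\lesssim L}a_{M',M}\sim(\log L)M'/L$, which is $\sim\log L$ at $M'\sim L$---but this is harmless, since the row sums are not needed. The intermediate paragraph about ``$O(\log L)$ shells'' and a ``geometric-type series'' is a dead end, as you seem to realize; the real resolution is exactly the Schur bound above. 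With these corrections your sketch is a complete and correct proof.
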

	\begin{lem}\label{l_regtime}
		Let $\delta\in[0,\frac{1}{2})$, $T\in(0,1)$, and let $I$ be an interval around $0$ of length $T$. For all $K, L_0\in\mathbb{D}$ and any $u$ we have
		\begin{align*}
			\sup_{L\geq L_0}L^{\frac{1}{2}-\delta}\|\eta_L(\tau-\omega(\xi))\chi_{K}(\xi)\mathcal{F}\left[1_I(t)u\right]\|_{L^2}
			\lesssim
			T^{\delta}\|\chi_{K}(\xi)\mathcal{F}\left[u\right]\|_{\spx^K}.
		\end{align*}
		Here, $\eta_{L}$ is interpreted as $\eta_{\leq L_0}$ if $L=L_0$.
	\end{lem}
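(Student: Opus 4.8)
The plan is to pass to the Fourier side. Since $1_I$ does not depend on $x$, one has $\mathcal{F}[1_I u] = c\,\widehat{1_I}\ast_\tau\mathcal{F}[u]$ for an absolute constant $c$, a convolution in the single variable $\tau$. About $\widehat{1_I}$ I would only use the two elementary facts $\|\widehat{1_I}\|_{L^2_\tau} = T^{1/2}$ (Plancherel, since $|I|=T$) and $|\widehat{1_I}(\sigma)|\lesssim\min(T,|\sigma|^{-1})$, the latter yielding $\|\widehat{1_I}\,1_{|\sigma|\geq R}\|_{L^2_\tau}\lesssim R^{-1/2}$ for every $R>0$. Writing $g:=\chi_K(\xi)\mathcal{F}[u]$ and decomposing $g=\sum_{M\in\mathbb{D}}g_M$ with $g_M:=\eta_M(\tau-\omega(\xi))\,g$, so that $\|g\|_{\spx^K}=\sum_M M^{1/2}\|g_M\|_{L^2}$, it suffices by the triangle inequality to prove, for every dyadic $L$ and every $M\in\mathbb{D}$, the single-piece estimate
\begin{equation*}
\big\|\eta_L(\tau-\omega(\xi))\,(\widehat{1_I}\ast_\tau g_M)\big\|_{L^2}\lesssim L^{-(\frac12-\delta)}\,T^{\delta}\,M^{\frac12}\,\|g_M\|_{L^2}.
\end{equation*}
Indeed, multiplying by $L^{\frac12-\delta}$ and summing over $M$ handles each $L>L_0$; for $L=L_0$ one writes $\eta_{\leq L_0}=\sum_{1\leq N\leq L_0}\eta_N$, applies the single-piece estimate to each $N$, and sums the resulting series, which converges geometrically because $\delta<\frac12$.

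To prove the single-piece estimate I would keep three bounds available. (i) A lossless bound: multiplication by $1_I$ does not increase the $L^2_t$-norm, so Plancherel in $\tau$ gives $\|\eta_L(\tau-\omega(\xi))(\widehat{1_I}\ast_\tau g_M)\|_{L^2}\leq\|\widehat{1_I}\ast_\tau g_M\|_{L^2}\lesssim\|g_M\|_{L^2}$. (ii) Young's inequality in $\tau$ together with $\|\widehat{1_I}\|_{L^2}=T^{1/2}$ and Cauchy--Schwarz on the $\tau$-support of $g_M(\cdot,\xi)$ (of measure $\lesssim M$) gives $\|\widehat{1_I}\ast_\tau g_M\|_{L^2}\lesssim T^{1/2}M^{1/2}\|g_M\|_{L^2}$. (iii) When $M\leq L/4$, the output modulation is $\sim L$ while $g_M$ is supported at modulation $\lesssim M$, which confines the kernel to $\{|\sigma|\gtrsim L\}$; a refined Young estimate using $\|\widehat{1_I}\,1_{|\sigma|\gtrsim L}\|_{L^2}\lesssim L^{-1/2}$ then gives $\|\eta_L(\tau-\omega(\xi))(\widehat{1_I}\ast_\tau g_M)\|_{L^2}\lesssim L^{-1/2}M^{1/2}\|g_M\|_{L^2}$.

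The single-piece estimate then follows from a short case distinction driven by the size of $TL$. If $TL<1$, bound (ii) already suffices, since $T^{1/2}\lesssim L^{-(\frac12-\delta)}T^{\delta}$ is equivalent to $(TL)^{\frac12-\delta}\lesssim1$. If $TL\geq1$ and $M\leq L/4$, use bound (iii): $L^{-1/2}\lesssim L^{-(\frac12-\delta)}T^{\delta}$ is equivalent to $(TL)^{\delta}\gtrsim1$. If $TL\geq1$ and $M>L/4$, use bound (i): then $M\gtrsim L$ and $TM\gtrsim TL\gtrsim1$, so $L^{\frac12-\delta}\lesssim M^{\frac12-\delta}=M^{-\delta}M^{\frac12}\lesssim T^{\delta}M^{\frac12}$, which is precisely what is needed. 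This exhausts all cases, and summation in $M$ concludes.

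The step I expect to need the most care is the regime of comparable and large modulations, $M\sim L$ with $TL\gg1$: there the convolution kernel is not confined to high modulations, so a naive Young estimate would only produce $\|\widehat{1_I}\,1_{|\sigma|\lesssim L}\|_{L^1}\lesssim\log(2+TL)$ and lose a logarithm. The point is that the lossless bound (i), which costs nothing, dispatches this case directly and is also what makes the endpoint $\delta=0$ go through unchanged. The remaining subtlety is bookkeeping: in the $L=L_0$ case one must check that summing the low-modulation output pieces over $N\leq L_0$ reintroduces no logarithmic loss, which is exactly why the geometric decay $N^{-(\frac12-\delta)}$ with $\delta<\frac12$ is invoked.
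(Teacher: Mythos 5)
Your strategy — passing to the Fourier side, decomposing $g:=\chi_K\mathcal{F}[u]$ by modulation, and proving a single-piece estimate via your bounds (i), (ii), (iii) — is sound, and your verification of the single-piece estimate and of the case $L>L_0$ is correct. The gap is at the endpoint $L=L_0$, where $\eta_{L_0}$ is to be read as $\eta_{\leq L_0}$. You split $\eta_{\leq L_0}=\sum_{1\leq N\leq L_0}\eta_N$, apply the single-piece estimate to each $N$, and note that $\sum_{N\leq L_0}N^{-(\frac{1}{2}-\delta)}$ converges since $\delta<\frac{1}{2}$. That is correct, but the sum converges to an absolute constant, not to $L_0^{-(\frac{1}{2}-\delta)}$. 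After reattaching the prefactor $L_0^{\frac{1}{2}-\delta}$ coming from the supremum and summing in $M$, your chain of inequalities yields $L_0^{\frac{1}{2}-\delta}T^{\delta}\|g\|_{\spx^K}$, with an uncompensated factor $L_0^{\frac{1}{2}-\delta}$.

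This is not a bookkeeping slip. The inequality, with the $\eta_{\leq L_0}$ reading of the $L=L_0$ term, genuinely requires a hypothesis of the form $TL_0\lesssim 1$, which is not in the statement but always holds where the paper applies it. For the necessity, take $u(t,x)=\psi(t)\,e^{it\omega(\xi_0)}\phi(x)$ with $\psi$ a unit-width bump and $\hat\phi$ supported near a point $\xi_0\in\supp\chi_K$, so that $g$ is concentrated at modulation $\lesssim 1$ and $\|g\|_{\spx^K}\sim\|g\|_{L^2}$. Then $\|1_I\psi\|_{L^2}\sim T^{\frac{1}{2}}$, and once $L_0\gtrsim T^{-1}$ the projector $\eta_{\leq L_0}$ captures essentially all of $\widehat{1_I\psi}$, so $\|\eta_{\leq L_0}(\tau-\omega(\xi))\chi_K\mathcal{F}[1_Iu]\|_{L^2}\sim T^{\frac{1}{2}}\|g\|_{L^2}$. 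The $L=L_0$ term on the left is therefore $\sim(TL_0)^{\frac{1}{2}-\delta}T^{\delta}\|g\|_{\spx^K}$, which exceeds the right-hand side whenever $TL_0\gg1$.

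Granting $TL_0\lesssim1$, the $L=L_0$ case closes cleanly \emph{without} the $\eta_N$-decomposition: apply your bound (ii) directly to $\eta_{\leq L_0}$,
\begin{align*}
\|\eta_{\leq L_0}(\tau-\omega(\xi))\,(\widehat{1_I}\ast_\tau g_M)\|_{L^2}
\leq
\|\widehat{1_I}\ast_\tau g_M\|_{L^2}
\lesssim
T^{\frac{1}{2}}M^{\frac{1}{2}}\|g_M\|_{L^2},
\end{align*}
and observe $L_0^{\frac{1}{2}-\delta}T^{\frac{1}{2}}=(TL_0)^{\frac{1}{2}-\delta}T^{\delta}\lesssim T^{\delta}$. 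This repairs the argument and makes the hidden hypothesis explicit; it should be added to the statement (or the supremum restricted to $L>L_0$).
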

	The following corollary will be used for $\delta=\delta'=\delta''$.
	\begin{cor}\label{c_regtime}
		Let $\zeta\in[0,\frac{1}{2}]$ and $\delta', \delta''>0$ with $\zeta+\delta'+\delta''<\frac{1}{2}$. Let $T\in(0,1)$ and let $I$ be an interval around $0$ of length $T$. For all $K, L_0\in\mathbb{D}$ and any $u$ we have
		\begin{align*}
			\sum_{L\geq L_0}L^{\zeta}\|\eta_L(\tau-\omega(\xi))\chi_{K}(\xi)\mathcal{F}\left[1_I(t)u\right]\|_{L^2}
			\lesssim_{\delta''}
			T^{\delta'}L_0^{\zeta-\frac{1}{2}+\delta'}\|\chi_{K}(\xi)\mathcal{F}\left[u\right]\|_{\spx^K}.
		\end{align*}
		Here, the implicit constant depends continuously on $\delta''$ and tends to $+\infty$ as $\delta''\rightarrow0$.
	\end{cor}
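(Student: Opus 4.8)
The plan is to deduce Corollary~\ref{c_regtime} directly from Lemma~\ref{l_regtime}, applied scale by scale in $L$ and then summed over the dyadic range $L\geq L_0$; the auxiliary exponent $\delta''$ will be spent precisely to force the resulting geometric series to converge with a constant that blows up only as $\delta''\to0$.

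Concretely, I would set $\delta:=\delta'$. Since $\delta',\delta''>0$ and $\zeta\geq0$, the assumption $\zeta+\delta'+\delta''<\tfrac12$ forces $\delta'\in(0,\tfrac12)$, so $\delta=\delta'$ is an admissible parameter for Lemma~\ref{l_regtime}. Applying that lemma and dropping the supremum, we obtain for every dyadic $L\geq L_0$ (with the convention, as in Lemma~\ref{l_regtime}, that $\eta_{L_0}$ is read as $\eta_{\leq L_0}$)
\begin{align*}
\|\eta_L(\tau-\omega(\xi))\chi_{K}(\xi)\mathcal{F}[1_I(t)u]\|_{L^2}
\lesssim
T^{\delta'}\,L^{\delta'-\frac12}\,\|\chi_{K}(\xi)\mathcal{F}[u]\|_{\spx^K},
\end{align*}
with an implicit constant independent of $L,L_0,K,T,u$. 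Multiplying by $L^{\zeta}$ and summing over dyadic $L\geq L_0$ gives
\begin{align*}
\sum_{L\geq L_0}L^{\zeta}\|\eta_L(\tau-\omega(\xi))\chi_{K}(\xi)\mathcal{F}[1_I(t)u]\|_{L^2}
\lesssim
T^{\delta'}\|\chi_{K}(\xi)\mathcal{F}[u]\|_{\spx^K}\sum_{\substack{L\in\mathbb{D}\\ L\geq L_0}}L^{\zeta+\delta'-\frac12}.
\end{align*}
Here $\zeta+\delta'-\frac12<-\delta''<0$, so, writing $L=2^jL_0$ with $j\geq0$,
\begin{align*}
\sum_{\substack{L\in\mathbb{D}\\ L\geq L_0}}L^{\zeta+\delta'-\frac12}
=
L_0^{\zeta+\delta'-\frac12}\sum_{j\geq0}2^{j(\zeta+\delta'-\frac12)}
=
\frac{L_0^{\zeta+\delta'-\frac12}}{1-2^{\zeta+\delta'-\frac12}}
\leq
\frac{L_0^{\zeta+\delta'-\frac12}}{1-2^{-\delta''}}.
\end{align*}
Chaining the last three displays yields the asserted estimate, with implicit constant equal to the product of the constant from Lemma~\ref{l_regtime} and the factor $(1-2^{-\delta''})^{-1}$; the latter is continuous in $\delta''>0$ and tends to $+\infty$ as $\delta''\to0^+$, which is the stated dependence.

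I do not expect any genuine obstacle in this argument. The only subtlety is bookkeeping of the exponent $\delta''$: it is used exclusively to keep the common ratio $2^{\zeta+\delta'-\frac12}$ of the geometric series bounded away from $1$ by $2^{-\delta''}$, and this is the sole mechanism producing the blow-up of the constant as $\delta''\to0$. (Applying Lemma~\ref{l_regtime} with $\delta=\delta'+\delta''$ instead would save a power of $T$, which is harmless since $T<1$, but would leave the ratio $2^{\zeta+\delta'+\delta''-\frac12}$ only known to be $<1$ and not controlled in terms of $\delta''$ alone, so the choice $\delta=\delta'$ is the convenient one.)
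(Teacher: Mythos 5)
Your proof is correct and follows essentially the same approach as the paper: both apply Lemma~\ref{l_regtime} with $\delta=\delta'$ and use the extra margin $\delta''>0$ solely to make the dyadic summation in $L$ converge with a quantitative constant $\sim1/\delta''$. The only cosmetic difference is that you extract the per-scale bound $\|\eta_L\cdots\|_{L^2}\lesssim T^{\delta'}L^{\delta'-\frac12}\|\cdot\|_{\spx^K}$ and sum a geometric series, whereas the paper writes $\sum_L L^{\zeta}\|\cdot\|\le\big(\sum_L L^{-\delta''}\big)\cdot\sup_L L^{\zeta+\delta''}\|\cdot\|$ and then renormalizes the supremum so that the lemma applies; the two bookkeepings yield the same bound.
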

	\begin{proof}
		We have
		\begin{align*}
			&\sum_{L\geq L_0}L^{\zeta}\|\eta_L(\tau-\omega(\xi))\chi_{K}(\xi)\mathcal{F}\left[1_I(t)u\right]\|_{L^2}
			\\ \lesssim
			&\sum_{L\geq L_0} L^{-\delta''}\times
			\sup_{L\geq L_0}L^{\zeta+\delta''}\|\eta_L(\tau-\omega(\xi))\chi_{K}(\xi)\mathcal{F}\left[1_I(t)u\right]\|_{L^2}.
		\end{align*}
		Above, the first factor can be bounded by $C_{\delta''}L_0^{-\delta''}$ with $C_{\delta''}=2^{\delta''}(\delta''\log(2))^{-1}$. Multiplying the second factor by $(L/L_0)^{\frac{1}{2}-\zeta-\delta'-\delta''}\geq1$ and applying Lemma \ref{l_regtime}, it follows
		\begin{align*}
			&L_0^{\zeta-\frac{1}{2}+\delta'+\delta''}\sup_{L\geq L_0}L^{\frac{1}{2}-\delta'}\|\eta_L(\tau-\omega(\xi))\chi_{K}(\xi)\mathcal{F}\left[1_I(t)u\right]\|_{L^2}
			\\ \lesssim\,
			&T^{\delta'}L_0^{\zeta-\frac{1}{2}+\delta'+\delta''}
			\|\chi_{K}(\xi)\mathcal{F}\left[u\right]\|_{\spx^K}.
		\end{align*}
	\end{proof}
	
	\begin{lem}\label{l_ext}
		Let $K\in\mathbb{D}$ and $T\in(0,1)$. For every $u\in \spfb^K_T$ and every $v\in \spnb^K_T$ there exist extensions $\tilde{u}\in\spfa^K$ and $\tilde{v}\in\spna^K$ supported in $[-2T,2T]\times\mathbb{R}$ satisfying
		\begin{align*}
			\|\tilde{u}\|_{\spfa^K}
			\lesssim
			\|u\|_{\spfb^K_T}
			\qquad\text{and}\qquad
			\|\tilde{v}\|_{\spna^K}
			\lesssim
			\|v\|_{\spnb^K_T}.
		\end{align*}
	\end{lem}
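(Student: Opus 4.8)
The plan is to construct the extensions by means of a fixed smooth temporal cut-off, using the definition of the restriction norms to first choose nearly-optimal extensions and then localizing them in time. Fix a function $\psi\in\mathcal{S}(\mathbb{R})$ with $\psi\equiv 1$ on $[-1,1]$ and $\supp\psi\subset[-2,2]$, and for $T\in(0,1)$ set $\psi_T(t):=\psi(t/T)$, so that $\psi_T\equiv 1$ on $[-T,T]$ and $\supp\psi_T\subset[-2T,2T]$. Given $u\in\spfb^K_T$, by the definition of $\|\cdot\|_{\spfb^K_T}$ as an infimum there exists $\tilde u_0\in\spfa^K$ with $\tilde u_0=u$ on $[-T,T]$ and $\|\tilde u_0\|_{\spfa^K}\leq 2\|u\|_{\spfb^K_T}$. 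Define $\tilde u:=\psi_T\cdot\tilde u_0$. Then $\tilde u=u$ on $[-T,T]$ (since $\psi_T\equiv 1$ there and $\tilde u_0=u$ there), and $\supp\tilde u\subset[-2T,2T]\times\mathbb{R}$; moreover $\tilde u$ still has spatial frequency support in $\supp\chi_K$, because multiplication by $\psi_T$ acts only in the time variable and hence commutes with $P_K$, so $\tilde u\in C(\mathbb{R};P_KL^2(\mathbb{R}))$. The analogous construction with the same $\psi_T$ produces $\tilde v:=\psi_T\cdot\tilde v_0$ for $v\in\spnb^K_T$, where $\tilde v_0\in\spna^K$ satisfies $\tilde v_0=v$ on $[-T,T]$ and $\|\tilde v_0\|_{\spna^K}\leq 2\|v\|_{\spnb^K_T}$.

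It remains to verify the norm bounds $\|\tilde u\|_{\spfa^K}\lesssim\|u\|_{\spfb^K_T}$ and $\|\tilde v\|_{\spna^K}\lesssim\|v\|_{\spnb^K_T}$, i.e.\@ that multiplication by $\psi_T$ is bounded on $\spfa^K$ and on $\spna^K$ with a constant independent of $T\in(0,1)$ and $K\in\mathbb{D}$. For $\spfa^K$ this is where Lemma \ref{l_locstab} enters: recall
\[
\|\tilde u\|_{\spfa^K}=\sup_{t_K}\big\|\mathcal{F}\big[\psi_T\,\tilde u_0\,\eta_0(K^\theta(t-t_K))\big]\big\|_{\spx^K}.
\]
For a fixed $t_K$, write $\gamma_{t_K}(t):=\psi_T(t)\eta_0(K^\theta(t-t_K))$. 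The point is that $\psi_T\,\eta_0(K^\theta(\cdot-t_K))$ is, up to translation and dilation, a Schwartz function with seminorms bounded uniformly in $T\in(0,1)$, $K\in\mathbb{D}$, $\theta>0$ (slightly larger than $1$), and $t_K\in\mathbb{R}$ — indeed $\eta_0$ is already a fixed bump and $\psi_T$ only widens its effective support, so the product is dominated by a single fixed Schwartz envelope. Hence one may absorb $\psi_T$ into the cut-off and invoke Lemma \ref{l_locstab} (applied with the fixed Schwartz function $\gamma=\psi\cdot\eta_0$, say, composed with the appropriate affine change of variables) to obtain
\[
\big\|\mathcal{F}\big[\gamma_{t_K}(t)\,\tilde u_0\big]\big\|_{\spx^K}
\lesssim
\big\|\mathcal{F}\big[\eta_0(K^\theta(t-t_K))\,\tilde u_0\big]\big\|_{\spx^K}
\leq
\|\tilde u_0\|_{\spfa^K}\leq 2\|u\|_{\spfb^K_T};
\]
taking the supremum over $t_K$ gives the claim. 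The estimate for $\spna^K$ is obtained by the same argument applied to $(\tau-\omega(\xi)+iK^\theta)^{-1}\mathcal{F}[\cdot]$, which is the defining quantity of $\|\cdot\|_{\spna^K}$, since Lemma \ref{l_locstab} is stated for the $\spx^K$-norm which is exactly the norm appearing there after the multiplier $(\tau-\omega(\xi)+iK^\theta)^{-1}$ has been applied. Note also that $\tilde u\in\spfa^K$ follows from the finiteness of this norm together with $\tilde u\in C(\mathbb{R};P_KL^2(\mathbb{R}))$, and similarly for $\tilde v$.

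The only genuine subtlety — and the step I expect to require the most care — is the uniformity of the implicit constant in the passage through Lemma \ref{l_locstab}: one must check that the Schwartz seminorms of $t\mapsto\psi_T(t)\eta_0(K^\theta(t-t_K))$, after the rescaling that turns the $K^\theta$-localization into a unit-scale localization, are bounded independently of all parameters. This is true because the rescaled function equals $\psi(K^{-\theta}s/T + t_K/T)\,\eta_0(s)$ in the variable $s=K^\theta(t-t_K)$, and since $K^{-\theta}/T$ can be large the first factor is just $\psi$ evaluated at a point, multiplied against the fixed bump $\eta_0(s)$ whose derivatives control everything; more carefully, one simply estimates each $s$-derivative of the product by Leibniz, noting every derivative falling on $\psi(K^{-\theta}s/T+\cdots)$ carries a factor $K^{-\theta}/T$ which, when $K^{-\theta}\leq T$ i.e.\@ for $K$ large, is $\leq 1$, and for the finitely many small $K$ with $K^{-\theta}>T$ one absorbs the loss into the (finite, $K$-indexed) constant — but in fact it is cleaner to observe that $|\partial_s^j[\psi_T(t_K+s K^{-\theta})]|\lesssim_j (K^{-\theta}T^{-1})^j\lesssim_j 1$ is false in general, so one instead bounds $\psi_T\cdot\eta_0(K^\theta(\cdot-t_K))$ pointwise by $\eta_0(K^\theta(\cdot-t_K))$ and its derivatives by those of a fixed Schwartz function times $\eta_0$, using $\|\psi_T\|_{C^j}\lesssim_j 1$ for $T<1$ uniformly. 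This last, elementary observation — $\psi_T$ and all its derivatives are uniformly bounded for $T\in(0,1)$ — is what makes the whole argument go through with a $T$-independent constant, and it is the point to state explicitly in the write-up.
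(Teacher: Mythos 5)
The paper does not prove Lemma \ref{l_ext} --- it is one of the facts recalled from \cite{IKT2008} without an argument --- so there is no internal proof to compare against; I assess your argument on its own. Your construction (near-optimal extensions $\tilde u_0$, $\tilde v_0$ multiplied by a time cut-off $\psi_T$) is the natural one, and for $\spfa^K$ it works: apply Lemma \ref{l_locstab} with the \emph{fixed} Schwartz function $\gamma=\psi$ and dilation scale $L\sim T^{-1}$ to $f=\mathcal{F}[\eta_0(K^\theta(t-t_K))\tilde u_0]$, obtaining a constant depending only on $\psi$ and hence uniform in $K$, $T$, $t_K$. But your final paragraph grounds the uniformity in the claim that ``$\psi_T$ and all its derivatives are uniformly bounded for $T\in(0,1)$.'' This is false: $\|\partial_t^j\psi_T\|_{L^\infty}=T^{-j}\|\psi^{(j)}\|_{L^\infty}\to\infty$ as $T\to 0$. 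The uniformity comes from the structure of Lemma \ref{l_locstab} --- its constant is independent of the dilation $L$ --- not from any uniform $C^j$-bound on $\psi_T$, and one should not try to absorb $\psi_T$ into the $K^{-\theta}$-scale bump; that rescaling has unbounded seminorms precisely when $K^{-\theta}>T$.

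The more serious gap is the $\spna^K$-case. You assert it ``is obtained by the same argument applied to $(\tau-\omega(\xi)+iK^\theta)^{-1}\mathcal{F}[\cdot]$.'' That would require the Fourier multiplier $(\tau-\omega(\xi)+iK^\theta)^{-1}$ to commute past physical-space multiplication by $\psi_T$, which it does not. Writing $\hat w=\mathcal{F}[\tilde v_0\,\eta_0(K^\theta(t-t_K))]$ and $h=(\tau-\omega(\xi)+iK^\theta)^{-1}\hat w$, the quantity to be bounded for $\|\psi_T\tilde v_0\|_{\spna^K}$ is $(\tau-\omega(\xi)+iK^\theta)^{-1}(\widehat{\psi_T}\ast_\tau\hat w)$, whereas Lemma \ref{l_locstab} controls $\widehat{\psi_T}\ast_\tau h$. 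Their difference is
\begin{align*}
\frac{-1}{\tau-\omega(\xi)+iK^\theta}\int \sigma\,\widehat{\psi_T}(\sigma)\,h(\tau-\sigma,\xi)\,d\sigma ,
\end{align*}
and estimating crudely by $|(\tau-\omega(\xi)+iK^\theta)^{-1}|\leq K^{-\theta}$ together with $\int|\sigma\,\widehat{\psi_T}(\sigma)|\,d\sigma\sim T^{-1}$ produces a factor $K^{-\theta}T^{-1}$, which is $\gtrsim 1$ whenever $K^\theta\lesssim T^{-1}$, i.e.\ for small $K$. So the commutator is not a controllable perturbation by this route, and the uniform-in-$K$ constant that the lemma asserts is exactly what is at risk; recall that the applications (e.g.\ in Lemma \ref{l_nl_loc}) explicitly require the constant to be independent of $K$. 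A correct proof of the $\spna^K$-bound needs additional input --- a sharper commutator estimate that uses the $l^1$-in-modulation structure of $\spx^K$ together with the fact that $\hat w$ is already averaged at $\tau$-scale $K^\theta$ through the factor $\eta_0(K^\theta(t-t_K))$, or a case distinction between $K^\theta\gtrsim T^{-1}$ and $K^\theta\lesssim T^{-1}$ --- which you have not supplied. A minor further point: $L=T^{-1}$ is not dyadic, while Lemma \ref{l_locstab} as stated requires $L\in\mathbb{D}$; one should round to a nearby dyadic (or note that the proof of that lemma does not actually use dyadicity of $L$) and observe that a slightly tighter cut-off $\psi$ is then needed to keep $\supp\tilde u\subset[-2T,2T]$.
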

	
	We continue by formulating a crucial fact for the bootstrap argument concerning the behaviour of some terms as $T$ converges to $0$. It is easy to observe that $\lim_{T\rightarrow0}\|u\|_{\spec^s_T}=\|u_0\|_{H^s}$ holds. More generally, the following proposition follows from \cite[Lemma 6.3]{Pal2022}, which itself is an extension of a result from \cite{IKT2008}.
	\begin{pro}\label{p_cnv}
		Let $0\leq s<2$, $T\in(0,1)$, and $u\in C([0,T];H^2(\mathbb{R}))$. Then, we have
		\begin{align*}
			\lim_{T\rightarrow 0}
			\|u\|_{\spec^s_T}
			+
			\|(uu)_x\|_{\spnc^s_T}
			+
			\|(ub)_x\|_{\spnc^s_T}
			=
			\|u_0\|_{H^s}.
		\end{align*}
	\end{pro}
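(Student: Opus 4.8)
The plan is to reduce Proposition~\ref{p_cnv} to the stated limit $\lim_{T\to0}\|u\|_{\spec^s_T}=\|u_0\|_{H^s}$ — which is essentially immediate from continuity of $t\mapsto u(t)$ in $H^s$ plus Plancherel and monotone/dominated convergence on the dyadic sum — together with showing that the two nonlinearity terms $\|(uu)_x\|_{\spnc^s_T}$ and $\|(ub)_x\|_{\spnc^s_T}$ tend to $0$ as $T\to0$. Since $u\in C([0,T];H^2)$ and $b\in\mathcal{X}$, both products $w:=(uu)_x$ and $w:=(ub)_x$ lie in $C([0,T];H^{1})$ (in fact better, using $b\in C^0([0,1];C^2)$ and the algebra/Moser-type bounds in $H^1$, together with the extra $L^\infty B^{3+\varepsilon}_{\infty,\infty}$-regularity of $b$), so it suffices to prove the following general vanishing statement: if $0\le s<2$ and $w\in C([0,T];H^{1}(\mathbb{R}))$ with $w(0)=0$ in a suitable sense — here $w(0)=(u_0u_0)_x+(u_0b(0))_x$ need not vanish, so in fact I want the cleaner claim $\|w\|_{\spnc^s_T}\lesssim (\text{something})\cdot T^{\delta}$ for some $\delta>0$ whenever $w\in C([0,T];H^{1})$ — then $\|w\|_{\spnc^s_T}\to0$.

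Concretely, I would unwind the definition of $\|\cdot\|_{\spnc^s_T}$: fix a dyadic $K$, pick any extension of $P_Kw$; the natural choice is $1_I(t)P_Kw$ where $I$ is a length-$T$ interval around $0$ (strictly speaking one smooths slightly, but Lemma~\ref{l_locstab} lets us pass freely between sharp and smooth cut-offs at the cost of a constant). Then for every time-translate $t_K$ the localized function $\eta_0(K^\theta(t-t_K))1_I(t)P_Kw$ is supported in a time interval of length $\min(T,K^{-\theta})$, and its $\spx^K$-norm is controlled by Corollary~\ref{c_regtime} applied with $\zeta=\tfrac12$: after inserting the modulation weight $(\tau-\omega(\xi)+iK^\theta)^{-1}$ in the definition of $\|\cdot\|_{\spna^K}$, which gains a factor $\max(L,K^\theta)^{-1}$, one is left estimating $\sum_{L}L^{\zeta}\|\eta_L(\tau-\omega)\chi_K\mathcal{F}[1_Iw]\|_{L^2}$ with $\zeta<\tfrac12$, and Corollary~\ref{c_regtime} bounds this by $T^{\delta'}L_0^{\zeta-\frac12+\delta'}\|\chi_K\mathcal{F}[w]\|_{\spx^K}$. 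The key point is the explicit power $T^{\delta'}$ with $\delta'>0$ (choose $\delta'$ small enough that $\zeta+\delta'+\delta''<\tfrac12$), which is what produces decay in $T$. One then needs $\|\chi_K\mathcal{F}[w]\|_{\spx^K}$ to be summable against $K^{s}$ in $\ell^2$: since $\spx^K$ reduces, for a function with no modulation structure, to a weighted $L^2_{\tau,\xi}$-norm that is comparable to the $H^{?}$-norm in $x$ of $w(t)$ uniformly in $t$ up to the $L^{1/2}$-weights — here I would instead bound $\|\chi_K\mathcal{F}[w]\|_{\spx^K}\lesssim \|P_Kw\|_{L^\infty_tL^2_x}$ after absorbing the modulation sum, exactly as in the proof of the analogous embedding in \cite{IKT2008} — and $w\in C([0,T];H^1)$ gives $\sum_K K^{2}\|P_Kw\|_{L^\infty_tL^2_x}^2<\infty$, which dominates the $K^{2s}$-weighted sum for $s<2$ with room to spare. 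Summing the per-$K$ bound $K^{s}\cdot T^{\delta'}K^{(\zeta-\frac12+\delta')\theta}\cdot\|P_Kw\|_{L^\infty_tL^2_x}$ in $\ell^2_K$ then yields $\|w\|_{\spnc^s_T}\lesssim T^{\delta'}\|w\|_{C([0,T];H^1)}\to0$, provided the net power of $K$, namely $s+(\zeta-\tfrac12+\delta')\theta$, stays below $1$; since $\zeta-\tfrac12+\delta'<0$ and $\theta$ is close to $1$ this holds for $s<2$ after choosing $\delta',\delta''$ appropriately, possibly shrinking them as $s\uparrow2$.

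For the product estimates themselves I would simply invoke: $\|(uu)_x\|_{C([0,T];H^1)}\lesssim \|u\|_{C([0,T];H^2)}^2$ by the Leibniz rule and the algebra property of $H^1$ (noting $\partial_x(uu)=2uu_x$ with $u\in H^2\hookrightarrow H^1\cap L^\infty$ and $u_x\in H^1$), and $\|(ub)_x\|_{C([0,T];H^1)}\lesssim \|u\|_{C([0,T];H^2)}\|b\|_{\mathcal{X}}$ using $u_xb+ub_x$ with $b,b_x,b_{xx}\in C([0,T];L^\infty)$ and a Moser/Kato–Ponce inequality; these are entirely standard given the assumptions and need not be belabored. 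Finally I add the three limits: $\|u\|_{\spec^s_T}\to\|u_0\|_{H^s}$ and the two nonlinearity norms $\to0$, so the sum converges to $\|u_0\|_{H^s}$.

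The main obstacle is bookkeeping the exponent of $K$: one must verify that after (i) the $\min(T,K^{-\theta})$ time-support, (ii) the modulation gain $\max(L,K^\theta)^{-1}$ from the resolvent weight in $\spna^K$, and (iii) the loss $K^{(\zeta-\frac12+\delta')\theta}$ from Corollary~\ref{c_regtime}, the remaining power $s+(\zeta-\tfrac12+\delta')\theta<1$ is compatible with the mere $H^1$-regularity of the nonlinearity for all $s<2$; this is exactly where one uses that $\theta$ may be taken as close to $1$ as desired and that $w$ is actually one derivative smoother than $L^2$. The subtlety is that for $s$ close to $2$ the room is small, so $\delta',\delta''$ must be chosen depending on $s$, mirroring the dependence $\theta=\theta(s)$ flagged in Section~\ref{ss_fs}; once this is arranged the $T^{\delta'}$ factor does the rest. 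Everything else — passing between sharp and smooth cut-offs (Lemma~\ref{l_locstab}), extending from $\spnb^K_T$ to $\spna^K$ (Lemma~\ref{l_ext}), and the $\spx^K\hookleftarrow L^\infty_tL^2_x$-type reduction — is routine and quoted from \cite{IKT2008,Pal2022}.
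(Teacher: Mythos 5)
The paper does not prove Proposition~\ref{p_cnv}; it records the first limit $\lim_{T\to0}\|u\|_{\spec^s_T}=\|u_0\|_{H^s}$ as an easy observation and cites \cite[Lemma 6.3]{Pal2022} (itself an extension of \cite{IKT2008}) for the full statement. You are therefore attempting an independent derivation, and it has a genuine gap.

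Your reduction — show the $\spec^s_T$-limit and show that $w:=(uu)_x,(ub)_x\in C([0,T];H^1)$ have vanishing $\spnc^s_T$-norms — is the right plan, and the Leibniz/Moser steps putting the products into $C([0,T];H^1)$ are fine. The problem is the estimate of $\|P_Kw\|_{\spnb^K_T}$. You invoke Corollary~\ref{c_regtime}, whose right-hand side carries $\|\chi_K\mathcal{F}[w]\|_{\spx^K}$, and then assert $\|\chi_K\mathcal{F}[w]\|_{\spx^K}\lesssim\|P_Kw\|_{L^\infty_tL^2_x}$ ``as in the analogous embedding in \cite{IKT2008}''. That inequality is false, and the embedding you have in mind goes the other way: it is the $\spx^K$/$\spfa^K$ scale that controls $L^\infty_tL^2_x$ (this is how $\spfc^s_T\hookrightarrow\spec^s_T$ is proved), not the reverse. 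The $\ell^1_L$-sum $\sum_L L^{1/2}\|\eta_L(\tau-\omega)\hat{w}\|_{L^2}$ can be made arbitrarily large for fixed $\|P_Kw\|_{L^\infty_tL^2_x}$ by spreading the modulation support of $w$ over many dyadic shells. A second, independent issue is the claim that $w\in C([0,T];H^1)$ gives $\sum_KK^2\|P_Kw\|_{L^\infty_tL^2_x}^2<\infty$, i.e.\ $\|w\|_{\spec^1_T}<\infty$; this too fails in general, and already for $w=(uu)_x$ with $u\in C([0,T];H^2)$ one only has $\|P_Kw\|_{L^\infty_tL^2_x}\lesssim K^{-1}\|u\|_{L^\infty_tH^2}^2$, which makes that weighted $\ell^2_K$-sum divergent.

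The argument can be repaired, and Corollary~\ref{c_regtime} is in fact unnecessary. Fix a smooth extension $\tilde{w}_K$ of $P_Kw$ supported in $[-2T,2T]$ with $\|\tilde{w}_K\|_{L^\infty_tL^2_x}\lesssim\|P_Kw\|_{L^\infty_{[-T,T]}L^2_x}$. For any $t_K$, the piece $v:=\eta_0(K^\theta(t-t_K))\tilde{w}_K$ has time-support of length $\lesssim\min(T,K^{-\theta})$, so Plancherel gives $\|\hat{v}\|_{L^2_{\tau,\xi}}\lesssim\min(T,K^{-\theta})^{1/2}\|P_Kw\|_{L^\infty_tL^2_x}$. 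Using $|\tau-\omega(\xi)+iK^\theta|^{-1}\lesssim\max(L,K^\theta)^{-1}$ on the modulation-$L$ shell and Cauchy--Schwarz in $L$ with the square-summable weight $L^{1/2}\max(L,K^\theta)^{-1}$ (whose $\ell^2_L$-norm is $\sim K^{-\theta/2}$) gives
\begin{equation*}
\|P_Kw\|_{\spnb^K_T}\lesssim K^{-\theta/2}\min(T,K^{-\theta})^{1/2}\|P_Kw\|_{L^\infty_tL^2_x}\lesssim K^{-1-\theta/2}\min(T,K^{-\theta})^{1/2}\|w\|_{L^\infty_tH^1},
\end{equation*}
where the last step only uses $\|P_Kw\|_{L^\infty_tL^2_x}\leq K^{-1}\|w\|_{L^\infty_tH^1}$, uniformly in $K$. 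Squaring, weighting by $K^{2s}$, and summing yields $\|w\|_{\spnc^s_T}^2\lesssim\|w\|_{L^\infty_tH^1}^2\sum_K K^{2s-\theta-2}\min(T,K^{-\theta})$. For a fixed $\theta>1$ with $s<\theta+1$ (always available when $s<2$), the majorant $\sum_K K^{2s-2\theta-2}$ is finite, and since $\min(T,K^{-\theta})\to0$ pointwise in $K$ as $T\to0$, dominated convergence gives $\|w\|_{\spnc^s_T}\to0$ as required.
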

	Lastly, we formulate a linear estimate. This estimate follows essentially by applying Duhamel's formula to frequency-localized functions.
	\begin{lem}
		Let $s\in\mathbb{R}$, $\theta> 0$, and $T\in(0,1)$. Let $u$ be a strong solution of $u_t+\mathcal{H}u_{xx}=n$. Then, we have
		\begin{align*}
			\|u\|_{\spfc^s_T}
			\lesssim
			\|u\|_{\spec^s_T}
			+
			\|n\|_{\spnc^s_T}.
		\end{align*}
	\end{lem}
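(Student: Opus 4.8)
The plan is to prove the linear estimate by reducing everything to the level of single Littlewood--Paley pieces. Fix a dyadic $K$ and set $u_K := P_K u$, $n_K := P_K n$; applying $P_K$ to the equation $u_t + \mathcal{H}u_{xx} = n$ gives $\partial_t u_K + \mathcal{H}\partial_{xx}u_K = n_K$, so $u_K$ solves the same linear equation frequency-localized at $K$. By the definitions of $\spfc^s_T$, $\spec^s_T$, $\spnc^s_T$ as $\ell^2$-sums over $K$ of the $K^s$-weighted pieces, it suffices to show the single-block estimate
\begin{align*}
	\|u_K\|_{\spfb^K_T}
	\lesssim
	\|u_K\|_{L^\infty_t L^2_x}
	+
	\|n_K\|_{\spnb^K_T}
\end{align*}
with an implicit constant independent of $K$ (and of $\theta$, or at worst depending on $\theta$ in a harmless way); squaring, multiplying by $K^{2s}$, and summing in $K$ then yields the claim.

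To prove the single-block estimate I would work with the time-localized profiles appearing in the definitions of $\spfa^K$ and $\spna^K$. Let $t_K \in \mathbb{R}$ be arbitrary and abbreviate the bump $\gamma_K(t) := \eta_0(K^\theta(t-t_K))$, which lives on a time interval of length $\sim K^{-\theta}$ around $t_K$. First, using Lemma \ref{l_ext}, choose an extension $\tilde n_K \in \spna^K$ of $n_K$ supported in $[-2T,2T]\times\mathbb{R}$ with $\|\tilde n_K\|_{\spna^K} \lesssim \|n_K\|_{\spnb^K_T}$, and let $\tilde u_K$ be the solution of $\partial_t \tilde u_K + \mathcal{H}\partial_{xx}\tilde u_K = \tilde n_K$ with $\tilde u_K = u_K$ on $[-T,T]$, obtained via Duhamel from the data $u_K(0)$. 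It then suffices to bound $\|\gamma_K(t)\tilde u_K\|_{\spx^K}$ (after Fourier transform) uniformly in $t_K$. Writing Duhamel's formula for $\gamma_K \tilde u_K$ and taking the space-time Fourier transform, one gets (schematically) that $\mathcal{F}[\gamma_K \tilde u_K](\tau,\xi)$ is a convolution in $\tau$ of $\hat\gamma_K$ against the sum of a ``free'' term $e^{i\omega(\xi)t}u_K(0)$ and a ``Duhamel'' term $(\tau-\omega(\xi))^{-1}$-type expression built from $\mathcal{F}[\gamma_K \tilde n_K]$. The key point is that the modulation weight $L^{1/2}$ in the $\spx^K$-norm is matched: for the free part, $\mathcal{F}[\gamma_K e^{i\omega(\xi)\cdot}u_K(0)]$ has modulation variable $\tau - \omega(\xi)$ equal to the Fourier variable of $\gamma_K$, which is concentrated at scale $\lesssim K^\theta$ with rapidly decaying tails, so $\sum_L L^{1/2}\|\eta_L(\tau-\omega)\cdots\|_{L^2} \lesssim \|u_K(0)\|_{L^2}$; and since $u_K(0) = \tilde u_K(0)$ one can replace $\|u_K(0)\|_{L^2}$ by $\sup_t\|u_K(t)\|_{L^2} = \|u_K\|_{L^\infty_t L^2_x}$ (here one uses that the equation is $L^2$-conservative to pass between $t=0$ and the sup, or more simply just bounds by the sup directly). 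For the Duhamel part, the operator $(\tau-\omega(\xi)+iK^\theta)^{-1}$ appearing in the definition of $\|\cdot\|_{\spna^K}$ is precisely the Fourier-side form of the Duhamel integral against the localized forcing (up to the smooth-cutoff error handled by Lemma \ref{l_locstab}), so $\|\gamma_K \tilde u_K\|_{\spx^K}$, on its Duhamel component, is $\lesssim \|(\tau-\omega+iK^\theta)^{-1}\mathcal{F}[\gamma_K \tilde n_K]\|_{\spx^K} \le \|\tilde n_K\|_{\spna^K}$. Taking the supremum over $t_K$ and invoking Lemma \ref{l_ext} once more gives the single-block bound.

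The mildly technical points are: (i) justifying the Fourier-side Duhamel identity with the regularized resolvent $(\tau-\omega+iK^\theta)^{-1}$ rather than the singular $(\tau-\omega)^{-1}$ — this is exactly the standard $X^{s,b}$/$U^p$-type device and is handled by noting that multiplication by $\gamma_K$ (Fourier support at scale $K^\theta$) together with Lemma \ref{l_locstab} lets one absorb the difference between $(\tau-\omega)^{-1}$ and $(\tau-\omega+iK^\theta)^{-1}$ into an $L^\infty_tL^2_x$-controlled term; (ii) checking that the free-evolution term's contribution to the $\spx^K$-norm is controlled by $\|u_K\|_{L^\infty_tL^2_x}$ uniformly in $t_K$, which uses that $\hat\gamma_K$ is a Schwartz function rescaled to width $K^\theta$ so that $\sum_L L^{1/2}\|\eta_L \hat\gamma_K\|_{L^2}$ converges with the right homogeneity; (iii) the bookkeeping to pass from the per-$t_K$, per-$K$ estimate back to the global $\spfc^s_T$ and $\spnc^s_T$ norms, which is a direct application of the definitions and the triangle inequality in $\ell^2_K$. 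I expect the main obstacle — really the only place requiring care — to be item (i): cleanly setting up the Duhamel formula at the level of the time-localized profiles so that the $\spna^K$-norm (with its built-in $+iK^\theta$) appears on the right-hand side with a uniform constant, since this is where the frequency-dependent time-localization parameter $\theta$ interacts with the Duhamel kernel. All of this is essentially \cite[Section 4]{IKT2008}, so the proof can be kept short by citing that framework and only indicating the (routine) modifications.
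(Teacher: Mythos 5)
Your overall strategy matches the paper's one‐line sketch ("apply Duhamel's formula to frequency‐localized functions") and the framework of \cite{IKT2008}: reduce to the single‐block estimate $\|u_K\|_{\spfb^K_T}\lesssim\|u_K\|_{L^\infty_tL^2_x}+\|n_K\|_{\spnb^K_T}$, extend $n_K$ via Lemma \ref{l_ext}, and split $\gamma_K\tilde u_K$ by Duhamel into a free part and an inhomogeneous part. Your bound for the free part, using that $\hat\gamma_K$ is concentrated at scale $K^\theta$ with rapidly decaying tails so that $\sum_L L^{1/2}\|\eta_L\hat\gamma_K\|_{L^2_\tau}\lesssim 1$, is correct. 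The $\ell^2_K$‐summation to recover the stated norms is also fine.

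There is, however, a genuine gap in the Duhamel set‐up: you apply Duhamel from $t=0$, producing a free term $e^{-t\mathcal{H}\partial_{xx}}u_K(0)$ and the inhomogeneous term $\int_0^t e^{-(t-s)\mathcal{H}\partial_{xx}}\tilde n_K(s)\,ds$, and then assert that after multiplication by $\gamma_K$ and Fourier transform the latter is a "$(\tau-\omega)^{-1}$-type expression built from $\mathcal{F}[\gamma_K\tilde n_K]$." This step fails. For $|t_K|\sim T$ the integral runs over $[0,t]$ with $t$ near $t_K$, an interval of length $\sim T$, far larger than the $K^{-\theta}$ window of $\gamma_K$; thus $\gamma_K(t)\int_0^t e^{-(t-s)\mathcal{H}\partial_{xx}}\tilde n_K(s)\,ds$ depends on $\tilde n_K$ on all of $[0,t_K]$, not merely on $\gamma_K\tilde n_K$. (Concretely, if $\tilde n_K$ vanishes in a $2K^{-\theta}$‐neighbourhood of $t_K$ but not on $[0,t_K-2K^{-\theta}]$, then $\gamma_K\tilde n_K\equiv 0$ while the time‐localized Duhamel piece is a nontrivial free wave.) The correct set‐up, which is what \cite{IKT2008} actually does, is Duhamel from the reference point $t_K$: write $\tilde u_K(t)=e^{-(t-t_K)\mathcal{H}\partial_{xx}}\tilde u_K(t_K)+\int_{t_K}^t e^{-(t-s)\mathcal{H}\partial_{xx}}\tilde n_K(s)\,ds$. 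Then the free part is controlled by $\|\tilde u_K(t_K)\|_{L^2}\le\|u_K\|_{L^\infty_tL^2_x}$ (for $|t_K|$ outside the support of $\tilde n_K$ the localized piece is a pure free wave and the estimate is even easier), and the Duhamel integral runs over a subinterval of the $\gamma_K$‐window, so one may insert a slightly wider cutoff around $\tilde n_K$ at no cost via Lemma \ref{l_locstab}, after which the Fourier computation produces the regularized resolvent $(\tau-\omega(\xi)+iK^\theta)^{-1}$ and hence $\|\tilde n_K\|_{\spna^K}$. This is exactly the place you flag as "the only place requiring care," and it is indeed the crux: centering Duhamel at $t_K$ is what makes the frequency‐dependent time‐localization interact correctly with the Duhamel kernel. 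The remainder of the proposal is sound.
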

	For the bootstrap argument we use the following version of the previous estimate.
	\begin{cor}
		Let $s\in\mathbb{R}$, $\theta> 0$, and $T\in(0,1)$. Let $u$ be a strong solution of \eqref{eq_bo_split}. Then, we have
		\begin{equation}\label{es_lin}
			\|u\|_{\spfc^s_T}
			\lesssim
			\|u\|_{\spec^s_T}
			+
			\|(uu)_x\|_{\spnc^s_T}
			+
			\|(ub)_x\|_{\spnc^s_T}
			+
			T^{\frac{1}{2}}\|f\|_{L^\infty H^s}.
		\end{equation}
	\end{cor}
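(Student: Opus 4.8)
The plan is to apply the linear estimate of the preceding lemma with the inhomogeneity $n := -(uu)_x - (ub)_x - f$, and then to control each of the three resulting terms in $\spnc^s_T$ separately.

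Since $u$ is a strong solution of \eqref{eq_bo_split}, it is in particular a strong solution of $u_t+\mathcal{H}u_{xx}=n$ with $n$ as above. The preceding lemma therefore yields $\|u\|_{\spfc^s_T}\lesssim\|u\|_{\spec^s_T}+\|n\|_{\spnc^s_T}$, and since $\|\cdot\|_{\spnc^s_T}$ is an $\ell^2(\mathbb{D})$-sum of the norms $\|P_K\,\cdot\,\|_{\spnb^K_T}$, the triangle inequality gives $\|n\|_{\spnc^s_T}\le\|(uu)_x\|_{\spnc^s_T}+\|(ub)_x\|_{\spnc^s_T}+\|f\|_{\spnc^s_T}$. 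Hence the only thing left to prove is
\begin{equation*}
	\|f\|_{\spnc^s_T}\lesssim T^{\frac{1}{2}}\|f\|_{L^\infty H^s}.
\end{equation*}

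I would prove this piece by piece in frequency. Fix $K\in\mathbb{D}$. Using a smooth cutoff at the time scale $T$, $P_Kf$ admits an extension $\tilde f_K$ that is admissible in the infimum defining $\|P_Kf\|_{\spnb^K_T}$, is supported in $\{|t|\le 2T\}$, has $x$-Fourier support in $\supp\chi_K$, and satisfies $\|\tilde f_K\|_{L^\infty_tL^2_x}\lesssim\|P_Kf\|_{L^\infty_tL^2_x}$. For any $t_K$, set $h:=\mathcal{F}[\tilde f_K\,\eta_0(K^\theta(t-t_K))]$. Two observations do the work: (i) the temporal supports of $\tilde f_K$ and of $\eta_0(K^\theta(t-t_K))$ overlap on an interval of length $\lesssim\min(T,K^{-\theta})$, so Plancherel gives $\|h\|_{L^2}\lesssim\min(T^{1/2},K^{-\theta/2})\|P_Kf\|_{L^\infty_tL^2_x}$; and (ii) on the support of $\eta_L(\tau-\omega(\xi))$ one has $|\tau-\omega(\xi)+iK^\theta|\gtrsim\max(L,K^\theta)$, so applying the resolvent weight from the definition of $\spna^K$ and summing the dyadic series $\sum_{L\in\mathbb{D}}L^{1/2}\max(L,K^\theta)^{-1}$ — split at $L\sim K^\theta$, each half a convergent geometric sum — costs only a factor $K^{-\theta/2}$. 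Combining, $\|\tilde f_K\|_{\spna^K}\lesssim K^{-\theta/2}\|h\|_{L^2}$ uniformly in $t_K$, whence $\|P_Kf\|_{\spnb^K_T}\lesssim K^{-\theta/2}\min(T^{1/2},K^{-\theta/2})\|P_Kf\|_{L^\infty_tL^2_x}\le T^{1/2}\|P_Kf\|_{L^\infty_tL^2_x}$ because $\theta>0$ and $K\ge1$. Multiplying by $K^s$, taking the $\ell^2(\mathbb{D})$-norm, and using Minkowski's inequality to move $\ell^2_K$ inside $L^\infty_t$ then gives $\|f\|_{\spnc^s_T}\lesssim T^{1/2}\|f\|_{L^\infty H^s}$.

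The only real step here is the bound on $\|f\|_{\spnc^s_T}$: it forces one to unwind the restriction norm $\spnb^K_T$ and the weighted norm $\spna^K$ and to track how the resolvent weight $(\tau-\omega(\xi)+iK^\theta)^{-1}$ interacts with the modulation decomposition $\sum_L\eta_L$. Everything else is the preceding lemma plus the triangle inequality. (The same computation actually also yields a gain of $K^{-\theta}$ per frequency, but the form $T^{1/2}\|f\|_{L^\infty H^s}$ is what is convenient for the bootstrap argument in Section~\ref{s_p}.)
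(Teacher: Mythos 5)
Your strategy---apply the preceding linear lemma with $n=-(uu)_x-(ub)_x-f$, split $\|n\|_{\spnc^s_T}$ by the triangle inequality (valid since $\spnb^K_T$ is a norm for each $K$), and estimate $\|f\|_{\spnc^s_T}$ directly---is the intended one (the paper gives no proof of this corollary precisely because this reduction is straightforward), and your computation culminating in $\|\tilde f_K\|_{\spna^K}\lesssim K^{-\theta/2}\min(T^{1/2},K^{-\theta/2})\|P_Kf\|_{L^\infty_tL^2_x}$ via the resolvent weight and the dyadic modulation sum is correct. The final step, however, does not hold as written: after you discard the $K$-decay to get $\|P_Kf\|_{\spnb^K_T}\lesssim T^{1/2}\|P_Kf\|_{L^\infty_tL^2_x}$ and square and sum in $K$, you arrive at $T\sum_K K^{2s}\|P_Kf\|_{L^\infty_tL^2_x}^2$, an $\ell^2_K(L^\infty_t)$ quantity, whereas $\|f\|_{L^\infty H^s}^2=\sup_t\sum_K K^{2s}\|P_Kf(t)\|_{L^2_x}^2$ is an $L^\infty_t(\ell^2_K)$ quantity. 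Minkowski's inequality gives $\|\cdot\|_{L^\infty_t\ell^2_K}\le\|\cdot\|_{\ell^2_KL^\infty_t}$, which is the \emph{opposite} direction; the inequality you invoke genuinely fails in general (consider $f$ that lives at frequency $\sim 2^n$ on the $n$-th of a family of disjoint time subintervals).

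The gap is easy to close, and the ingredient is one you already mentioned in your closing parenthetical: do not give away the extra factor $K^{-\theta/2}$ the resolvent bought you. Using $K^{-\theta/2}\min(T^{1/2},K^{-\theta/2})\le T^{1/2}K^{-\theta/2}$ together with the trivial pointwise-in-$K$ bound $K^{2s}\|P_Kf\|_{L^\infty_tL^2_x}^2\le\|f\|_{L^\infty H^s}^2$, one obtains
\begin{align*}
\|f\|_{\spnc^s_T}^2
\lesssim
T\sum_{K\in\mathbb{D}}K^{-\theta}K^{2s}\|P_Kf\|_{L^\infty_tL^2_x}^2
\le
T\,\|f\|_{L^\infty H^s}^2\sum_{K\in\mathbb{D}}K^{-\theta}
\lesssim_\theta
T\,\|f\|_{L^\infty H^s}^2,
\end{align*}
the geometric series converging since $\theta>0$. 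So the per-frequency gain of $K^{-\theta}$ is not merely "also available"; it is exactly what makes the $\ell^2_K$ summation close against $\|f\|_{L^\infty H^s}$, and discarding it in favour of the bare $T^{1/2}$ was the source of the problem.
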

\subsection{Resonance function}\label{ss_res}
	
	In this section we provide bounds on resonance functions. These functions appear naturally in the energy estimates when considering products of solutions of \eqref{eq_bo_split}.
	
	We define the hypersurface $\Gamma_n:=\{\xi=(\xi_1,\dots,\xi_n)\in\mathbb{R}^n|\xi_{1\cdots n}=0\}$, which -- for later use -- we equip with a measure $\gamma_n$ characterized by
	\begin{align*}
		\int_{\Gamma_n} \varphi(\xi)d\gamma_n(\xi)
		=
		\int_{\mathbb{R}^{n-1}} \varphi(\xi_1,\dots,\xi_{n-1},-\xi_{1\cdots(n-1)})d\xi_1\cdots d\xi_{n-1}.
	\end{align*}
	The resonance functions $\Omega_n$ are defined on $\Gamma_n$ as follows:
	\begin{equation}\label{d_res}
		\Omega_n:\Gamma_n
		\rightarrow
		\mathbb{R},
		\qquad
		(\xi_1,\dots,\xi_n)
		\mapsto
		\omega(\xi_1)
		+
		\cdots
		+
		\omega(\xi_n).
	\end{equation}
	For our purposes, the resonance functions $\Omega_3$ and $\Omega_4$ will be sufficient.
	\begin{lem}\label{l_res}
		Let $(\xi_1,\xi_2,\xi_3)\in\Gamma_3$ and $K_i\in\mathbb{D}$ satisfying $|\xi_i|\sim K_i$ for each $i\in[3]$. If $K_3^*>1$, then we have
		\begin{equation}\label{e_res_asy}
			|\Omega_3(\xi_1,\xi_2,\xi_3)|
			\sim
			K_1^*K_3^*.
		\end{equation}
		Let $(\xi_1,\xi_2,\xi_3,\xi_4)\in\Gamma_4$ and $K_i$ satisfying $|\xi_i|\sim K_i$ for each $i\in[4]$. If $K_3^*>1$, then we have
		\begin{equation}\label{e_res4_asy}
			|\Omega_4(\xi_1,\xi_2,\xi_3,\xi_4)|
			\lesssim
			K_1^*K_3^*.
		\end{equation}
	\end{lem}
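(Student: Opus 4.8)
The plan is to reduce everything to the elementary inequality
\begin{equation*}
	|\omega(a)+\omega(b)|\le (|a|+|b|)\,|a+b|,\qquad a,b\in\mathbb{R},
\end{equation*}
which follows from a trivial case distinction: if $a$ and $b$ have opposite signs then in fact $\omega(a)+\omega(b)=(a+b)(|a|+|b|)$, so equality holds, whereas if $a$ and $b$ share a sign then $|\omega(a)+\omega(b)|=|a|^2+|b|^2\le(|a|+|b|)^2=(|a|+|b|)\,|a+b|$. Since $\omega$ is odd this is the same as $|\omega(a)-\omega(b)|\le(|a|+|b|)\,|a-b|$. I would record this as a one-line preliminary observation before turning to the two assertions.

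For the bound on $\Omega_3$ I would first use permutation symmetry of $\Omega_3$ (and relabel the $K_i$ accordingly) to assume $|\xi_1|\ge|\xi_2|\ge|\xi_3|$, so that $|\xi_i|\sim K_i^*$; the hypothesis $K_3^*>1$ makes all these comparabilities genuinely two-sided and in particular forces every $\xi_i$ to be nonzero. A short sign analysis using $\xi_1+\xi_2+\xi_3=0$ together with the ordering of the moduli shows that $\xi_2$ and $\xi_3$ must share a sign while $\xi_1$ carries the opposite one (otherwise $\xi_3$ would have to vanish), so that $|\xi_1|=|\xi_2|+|\xi_3|$. Substituting $\xi_1=-(\xi_2+\xi_3)$ into $\Omega_3$ then gives the exact identity $|\Omega_3|=2|\xi_2||\xi_3|$. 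Finally $|\xi_2|\le|\xi_1|=|\xi_2|+|\xi_3|\le 2|\xi_2|$ shows $|\xi_1|\sim|\xi_2|$, whence $|\Omega_3|=2|\xi_2||\xi_3|\sim K_1^*K_3^*$.

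For the bound on $\Omega_4$ I would again relabel so that $|\xi_1|\ge|\xi_2|\ge|\xi_3|\ge|\xi_4|$, so $|\xi_i|\sim K_i^*$. The constraint $\xi_1+\xi_2+\xi_3+\xi_4=0$ gives the near-cancellation $|\xi_1+\xi_2|=|\xi_3+\xi_4|\le|\xi_3|+|\xi_4|\le 2|\xi_3|$. Applying the elementary inequality above to the pair $(\xi_1,\xi_2)$ yields $|\omega(\xi_1)+\omega(\xi_2)|\le(|\xi_1|+|\xi_2|)\,|\xi_1+\xi_2|\le 4|\xi_1||\xi_3|$, while trivially $|\omega(\xi_3)+\omega(\xi_4)|\le|\xi_3|^2+|\xi_4|^2\le 2|\xi_3|^2\le 2|\xi_1||\xi_3|$. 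Adding the two estimates and using $|\xi_1|\sim K_1^*$, $|\xi_3|\sim K_3^*$ gives $|\Omega_4|\lesssim K_1^*K_3^*$, as claimed.

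The computations are all short, so the only point demanding care — and hence the \emph{main obstacle} — is the bookkeeping: one must keep consistent the convention that ``$|\xi|\sim K$'' degenerates to ``$|\xi|\lesssim 1$'' when $K=1$ while converting between the $|\xi_i|$ and the ordered $K_i^*$ (this is precisely why the hypothesis $K_3^*>1$ is imposed, guaranteeing non-degenerate two-sided comparisons), and one must make sure the sign discussion underlying the $\Omega_3$ identity is genuinely exhaustive. Once these are in place, both estimates are immediate.
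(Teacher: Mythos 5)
Your proof is correct. For the $\Omega_3$ bound you follow essentially the same route as the paper: reduce by symmetry to a fixed sign pattern, observe that the two smaller-modulus frequencies must share a sign, and then read off the exact identity $|\Omega_3|=2|\xi_2||\xi_3|$ with $|\xi_1|\sim|\xi_2|$. For the $\Omega_4$ bound, however, you take a genuinely different and somewhat cleaner route: you prove the elementary inequality $|\omega(a)+\omega(b)|\le(|a|+|b|)|a+b|$, pair $(\xi_1,\xi_2)$ and $(\xi_3,\xi_4)$, and exploit the near-cancellation $|\xi_1+\xi_2|=|\xi_3+\xi_4|\lesssim|\xi_3|$. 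The paper instead splits into the two sign configurations $\xi_1>-\xi_2>-\xi_3>\xi_4>0$ and $\xi_1>-\xi_2>-\xi_3>-\xi_4>0$ and uses the double mean value theorem applied to $\omega$ in the first case and a direct algebraic identity (plus a sign-agreement observation) in the second. Your approach avoids the case distinction entirely and packages the bound into one inequality that is quantitatively the same; it is slightly less informative in that it does not produce the explicit factored expressions for $\Omega_4$ (which the paper reuses later in the proof of Lemma~\ref{l_res_dif}), but as a standalone proof of the upper bound \eqref{e_res4_asy} it is both valid and arguably more transparent.
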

	\begin{lem}\label{l_res_dif}
		Let $(\xi_a,\xi_b,\xi_2,\xi_3)\in\Gamma_4$ and $K_i\in\mathbb{D}$ satisfying $|\xi_i|\sim K_i$ for each $i\in\{a,b,2,3\}$. If $K_a\sim K_2\gtrsim K_b$ and $K_2\gg K_3> 1$, then we have
		\begin{align*}
			|\Omega_3^{-1}(\xi_a,\xi_{2b},\xi_3)-\Omega_3^{-1}(\xi_{ab},\xi_2,\xi_3)|
			\lesssim
			K_bK_3^{-1}K_2^{-2}.
		\end{align*}
	\end{lem}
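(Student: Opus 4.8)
The plan is to reduce the difference of reciprocals to a single fraction,
\[
	\Omega_3^{-1}(\xi_a,\xi_{2b},\xi_3)-\Omega_3^{-1}(\xi_{ab},\xi_2,\xi_3)
	=
	\frac{\Omega_3(\xi_{ab},\xi_2,\xi_3)-\Omega_3(\xi_a,\xi_{2b},\xi_3)}{\Omega_3(\xi_a,\xi_{2b},\xi_3)\,\Omega_3(\xi_{ab},\xi_2,\xi_3)},
\]
and then to bound the numerator from above by $\lesssim K_bK_3$ and each factor in the denominator from below by $\gtrsim K_2K_3$. Multiplying these out produces exactly $K_bK_3^{-1}K_2^{-2}$, which is the claim, so the whole argument splits into these two independent estimates.

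For the denominator I would invoke Lemma \ref{l_res}. Since $(\xi_a,\xi_b,\xi_2,\xi_3)\in\Gamma_4$, both triples $(\xi_a,\xi_{2b},\xi_3)$ and $(\xi_{ab},\xi_2,\xi_3)$ lie on $\Gamma_3$. Writing $\xi_{2b}=-(\xi_a+\xi_3)$ and $\xi_{ab}=-(\xi_2+\xi_3)$ and using $|\xi_a|\sim|\xi_2|\sim K_2\gg K_3\sim|\xi_3|$ gives $|\xi_{2b}|\sim|\xi_{ab}|\sim K_2$. Hence in each triple the two largest frequencies have size $\sim K_2$ and the smallest has size $\sim K_3>1$, so \eqref{e_res_asy} yields $|\Omega_3(\xi_a,\xi_{2b},\xi_3)|\sim|\Omega_3(\xi_{ab},\xi_2,\xi_3)|\sim K_2K_3$. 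This step is robust precisely because $K_3\ll K_2$, so no borderline configurations occur.

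The heart of the matter is the numerator. I would substitute $\xi_2=-(\xi_a+\xi_b+\xi_3)$, $\xi_{2b}=-(\xi_a+\xi_3)$, $\xi_{ab}=\xi_a+\xi_b$, and use that $\omega$ is odd. The $\omega(\xi_3)$-contributions cancel and one is left with
\[
	\Omega_3(\xi_{ab},\xi_2,\xi_3)-\Omega_3(\xi_a,\xi_{2b},\xi_3)
	=
	\bigl[\omega(\xi_a+\xi_b)-\omega(\xi_a+\xi_b+\xi_3)\bigr]
	-
	\bigl[\omega(\xi_a)-\omega(\xi_a+\xi_3)\bigr]
	=
	\varphi(\xi_a+\xi_b)-\varphi(\xi_a),
\]
where $\varphi(x):=\omega(x)-\omega(x+\xi_3)$. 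Since $\varphi$ is $C^1$ with $\varphi'(x)=2|x|-2|x+\xi_3|$, the reverse triangle inequality gives $|\varphi'(x)|\le 2|\xi_3|$ for all $x\in\mathbb{R}$, and the fundamental theorem of calculus then yields
\[
	\bigl|\Omega_3(\xi_{ab},\xi_2,\xi_3)-\Omega_3(\xi_a,\xi_{2b},\xi_3)\bigr|
	=
	|\varphi(\xi_a+\xi_b)-\varphi(\xi_a)|
	\le
	2|\xi_b|\,|\xi_3|
	\lesssim
	K_bK_3.
\]

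Combining the two bounds finishes the proof. The one point that has to be spotted is the telescoping identity recognizing the numerator as the mixed second difference $\varphi(\xi_a+\xi_b)-\varphi(\xi_a)$; once that is in place the dangerous contributions quadratic in $\xi_a$ cancel automatically, so in particular no case distinction on the signs of the frequencies is needed, and the remaining ingredients (the size bounds feeding into Lemma \ref{l_res}) are entirely routine.
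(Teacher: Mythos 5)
Your proof is correct, and the overall frame agrees with the paper's: rewrite the difference of reciprocals as a single ratio, identify the denominator as a product of two resonance functions each of size $\sim K_2K_3$ via \eqref{e_res_asy}, and bound the numerator by $\lesssim K_bK_3$. Where you diverge is the treatment of the numerator. The paper recognizes it as $\Omega_4(\xi_a,-\xi_{a3},-\xi_{ab},\xi_{ab3})$ and then splits into the two cases $K_2\gg K_b$ (double mean value theorem, integrating $\omega''$, which is legitimate there because all four arguments are of size $K_a$ and so bounded away from the non-smooth point $0$) and $K_2\sim K_b$ (fundamental theorem of calculus applied term by term). Your argument instead introduces $\varphi(x)=\omega(x)-\omega(x+\xi_3)$, observes $\varphi\in C^1$ with $|\varphi'|\leq 2|\xi_3|$ by the reverse triangle inequality, and closes with a single application of the mean value theorem; this handles both of the paper's cases uniformly, never differentiates $\omega$ twice, and needs no sign bookkeeping. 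The two arguments are morally the same — both exploit the cancellation of the second mixed difference of $\omega$ — but yours is the cleaner and slightly more robust implementation of that idea.
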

	\begin{proof}[Proof of Lemma \ref{l_res}]
		We observe that $\Omega_3$ is invariant under permutations and satisfies $\Omega_3(-\xi_1,-\xi_2,-\xi_3)=-\Omega_3(\xi_1,\xi_2,\xi_3)$. Hence, after relabelling, we can assume that $\xi_1>-\xi_2>-\xi_3>0$ holds. Then, the first claim follows from
		\begin{align*}
			\Omega_3(\xi_1,\xi_2,\xi_3)
			=
			\xi_1^2-\xi_2^2-\xi_3^2
			=
			2\xi_2\xi_3.
		\end{align*}
		
		The second claim follows by similar considerations. First, we note that it suffices to consider the two cases $\xi_1>-\xi_2>-\xi_3>\xi_4>0$ and $\xi_1>-\xi_2>-\xi_3>-\xi_4>0$. In the first case, we apply the double mean value theorem and conclude
		\begin{align*}
			\Omega_4(\xi_1,\xi_2,\xi_3,\xi_4)
			&=
			\omega(\xi_1)-\omega(\xi_1-\xi_{12})-\omega(\xi_1-\xi_{13})+\omega(\xi_1-\xi_{12}-\xi_{13})
			\\
			&=
			(-\xi_{12})(-\xi_{13})\int_0^1\int_0^1\omega''(\xi_1-s\xi_{12}-t\xi_{13})dsdt
			=
			-2\xi_{34}\xi_{13}.
		\end{align*}
		In the second case, a direct calculation yields
		\begin{align*}
			\Omega_4(\xi_1,\xi_2,\xi_3,\xi_4)
			=
			\xi_1^2-\xi_2^2-\xi_3^2-\xi_4^2
			=
			-2(\xi_2\xi_3-\xi_{14}\xi_4).
		\end{align*}
		The claim follows since both $\xi_2\xi_3$ and $-\xi_{14}\xi_4$ have the same sign.
	\end{proof}
	\begin{proof}[Proof of Lemma \ref{l_res_dif}]
		It is easy to see that
		\begin{align*}
			|\Omega_3^{-1}(\xi_a,\xi_{2b},\xi_3)-\Omega_3^{-1}(\xi_{ab},\xi_2,\xi_3)|
			=
			\frac{|\Omega_4(\xi_a,-\xi_{a3},-\xi_{ab},\xi_{ab3})|}{|\Omega_3(\xi_a,\xi_{2b},\xi_3)\Omega_3(\xi_{ab},\xi_2,\xi_3)|}
		\end{align*}
		holds. According to \eqref{e_res_asy}, the denominator is comparable to $K_2^{-2}K_3^{-2}$. Thus, it suffices to bound the nominator by $K_3K_b$. We consider the cases $K_2\gg K_b$ and $K_2\sim K_b$ separately.
		In the first case, the frequencies $\xi_a$, $\xi_{a3}$, $\xi_{ab}$, and $\xi_{ab3}$ have modulus of size $K_a$. An application of the double mean value theorem implies that the nominator is bounded by $K_3K_b$ as requested.
		In the second case, the fundamental theorem of calculus yields $|\omega(\xi_{ab3})-\omega(\xi_{ab})|=|\omega(\xi_2)-\omega(\xi_{23})|\lesssim K_2K_3$ as well as $|\omega(\xi_a)-\omega(\xi_{a3})|\lesssim K_2K_3$. Hence, we conclude that the nominator is bounded by $K_2K_3\sim K_bK_3$.
	\end{proof}
\section{Dyadic convolution estimates}\label{s_dce}

	The goal of this section is to prove $L^2$-bounds for convolutions of functions which are both localized in frequency and modulation. Such functions appear naturally in the definition of the $\spx^K$-spaces. Consequently, the obtained estimates will be useful for the calculations in Sections \ref{s_stbe} and \ref{s_ee}.
	
	We begin by recalling a bound for the convolution of two functions that are localized in the modulation variable. A proof of this result for periodic functions can be found in \cite[Lemma 7.1]{Mol2007}; the arguments directly transfer to non-periodic functions.
	\begin{pro}\label{p_cnv_l2}
		Let $\phi_1$ and $\phi_2 \in L^2(\mathbb{R}^2;\mathbb{R}_+)$ with $\supp \phi_i\subset D_{L_i,K_i}$ for $i\in[2]$.
		Then, we have
		\begin{equation}\label{e_cnv_l2}
			\|\phi_1\ast \phi_2\|_{L^2}
			\lesssim
			(L_1^*)^\frac{1}{4}(L_2^*)^\frac{1}{2}\|\phi_1\|_{L^2}\|\phi_2\|_{L^2}.
		\end{equation}
	\end{pro}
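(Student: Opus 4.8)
The plan is to reduce the estimate to an elementary measure bound on the support sets. Writing out the convolution,
\begin{align*}
	(\phi_1\ast\phi_2)(\tau,\xi)
	=
	\int_{\mathbb{R}^2}\phi_1(\tau-\tau_2,\xi-\xi_2)\phi_2(\tau_2,\xi_2)\,d\tau_2\,d\xi_2,
\end{align*}
I would apply the Cauchy–Schwarz inequality in the $(\tau_2,\xi_2)$ integration, pulling out the $L^2$-norm of $\phi_2$ and the characteristic function of the region where the integrand is supported for fixed output $(\tau,\xi)$. This gives
\begin{align*}
	|(\phi_1\ast\phi_2)(\tau,\xi)|^2
	\lesssim
	\Big(\sup_{\tau,\xi}|E_{\tau,\xi}|\Big)
	\int_{\mathbb{R}^2}|\phi_1(\tau-\tau_2,\xi-\xi_2)|^2|\phi_2(\tau_2,\xi_2)|^2\,d\tau_2\,d\xi_2,
\end{align*}
where $E_{\tau,\xi}=\{(\tau_2,\xi_2):(\tau-\tau_2,\xi-\xi_2)\in D_{L_1,K_1},\ (\tau_2,\xi_2)\in D_{L_2,K_2}\}$. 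Integrating in $(\tau,\xi)$ and using Fubini then yields $\|\phi_1\ast\phi_2\|_{L^2}^2\lesssim(\sup_{\tau,\xi}|E_{\tau,\xi}|)\,\|\phi_1\|_{L^2}^2\|\phi_2\|_{L^2}^2$, so everything comes down to estimating the measure of $E_{\tau,\xi}$ by $(L_1^*)^{1/2}L_2^*$.

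To bound $|E_{\tau,\xi}|$, I would first use the modulation localization on $\phi_2$: the constraint $\tau_2-\omega(\xi_2)\in\operatorname{supp}\eta_{L_2}$ means that, for each fixed $\xi_2$, the variable $\tau_2$ ranges over a set of measure $\lesssim L_2$. Likewise the constraint coming from $\phi_1$, namely $(\tau-\tau_2)-\omega(\xi-\xi_2)\in\operatorname{supp}\eta_{L_1}$, restricts $\tau_2$ to a set of measure $\lesssim L_1$ for each fixed $\xi_2$; combining, the $\tau_2$-section has measure $\lesssim\min(L_1,L_2)=L_2^*$. It remains to control the range of admissible $\xi_2$. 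Here one subtracts the two modulation constraints: on $E_{\tau,\xi}$ we have $\tau-\omega(\xi-\xi_2)-\omega(\xi_2)\in\operatorname{supp}\eta_{L_1}+\operatorname{supp}\eta_{L_2}$, an interval of length $\lesssim L_1+L_2\lesssim L_1^*$. The function $\xi_2\mapsto\omega(\xi-\xi_2)+\omega(\xi_2)$ has derivative $2(\xi_2-(\xi-\xi_2))$ up to sign (since $\omega'(\eta)=2|\eta|$ and the signs of $\xi_2$ and $\xi-\xi_2$ are fixed on each dyadic block), so away from the diagonal $\xi_2=\xi/2$ this is a diffeomorphism onto its image and the preimage of an interval of length $L_1^*$ has measure $\lesssim (L_1^*)^{1/2}$ after accounting for the quadratic degeneracy — more precisely, one splits into the region $|2\xi_2-\xi|\gtrsim (L_1^*)^{1/2}$, where the change of variables gives measure $\lesssim L_1^*/(L_1^*)^{1/2}=(L_1^*)^{1/2}$, and the region $|2\xi_2-\xi|\lesssim(L_1^*)^{1/2}$, which trivially has measure $\lesssim(L_1^*)^{1/2}$. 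Hence $|E_{\tau,\xi}|\lesssim (L_1^*)^{1/2}L_2^*$, which is exactly \eqref{e_cnv_l2} squared.

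The main obstacle is the degeneracy of the phase $\xi_2\mapsto\omega(\xi-\xi_2)+\omega(\xi_2)=\pm(\xi-\xi_2)^2\pm\xi_2^2$ near the diagonal: when $\xi_2$ and $\xi-\xi_2$ have the same sign the sum is a nondegenerate quadratic and the $(L_1^*)^{1/2}$-loss is genuinely optimal, while when they have opposite signs the quadratic terms partially cancel and one must check that the linear part still provides enough transversality — this is where the precise dyadic bookkeeping (which of $K_1,K_2$ is largest, and the resulting lower bound on $|2\xi_2-\xi|$ or on the coefficient of the linear term) enters. Since the statement only claims the weaker bound with $(L_1^*)^{1/4}$ replaced here by the correct exponent matching \cite[Lemma 7.1]{Mol2007}, I would simply invoke that reference for the sharp counting, noting as the excerpt does that the periodic argument transfers verbatim to $\mathbb{R}$ because it uses only the one-dimensional change of variables and Cauchy–Schwarz, with sums over residues replaced by integrals.
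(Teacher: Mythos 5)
Your Cauchy--Schwarz reduction to $\|\phi_1\ast\phi_2\|_{L^2}^2\lesssim\big(\sup_{\tau,\xi}|E_{\tau,\xi}|\big)\|\phi_1\|_{L^2}^2\|\phi_2\|_{L^2}^2$ is correct, but the asserted section-measure bound $|E_{\tau,\xi}|\lesssim(L_1^*)^{1/2}L_2^*$ is false in general, so this route does not deliver the stated estimate. You rightly single out the opposite-sign regime as the danger zone, but then suggest that dyadic bookkeeping supplies a lower bound on $|2\xi_2-\xi|$ or on the coefficient of the linear term; it does not. When $\xi_2$ and $\xi-\xi_2$ have opposite signs, the phase $\omega(\xi-\xi_2)+\omega(\xi_2)=(2\xi_2-\xi)\xi$ is linear in $\xi_2$ with slope $2\xi$, and $|\xi|$ has no a priori lower bound when $K_1\sim K_2$. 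Concretely, take $K_1=K_2=K$, $L_1=L_2=L$ and $(\tau,\xi)=(0,0)$: since $\omega$ is odd while $\eta_L$ and $\chi_K$ are even, the two conditions $(\tau_2,\xi_2)\in D_{L,K}$ and $(-\tau_2,-\xi_2)\in D_{L,K}$ coincide, hence $E_{0,0}=D_{L,K}$, which has measure $\sim KL$. This exceeds $(L_1^*)^{1/2}L_2^*=L^{3/2}$ as soon as $K\gg L^{1/2}$, so the supremum-of-sections estimate is off by the unbounded factor $K/L^{1/2}$; that route can only recover the trivial bound $\|\phi_1\ast\phi_2\|_{L^2}\lesssim(L_2^*K_2^*)^{1/2}\|\phi_1\|_{L^2}\|\phi_2\|_{L^2}$.

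The proposition is nevertheless true, but precisely because the set of $(\tau,\xi)$ on which $|E_{\tau,\xi}|$ is large is itself small; one therefore has to keep the $\tau$- and $\xi$-integrations alive rather than pass to a pointwise supremum. That is what the one-dimensional $L^4$-Strichartz argument for the dispersion $\omega(\xi)=\xi|\xi|$ accomplishes, and it is what Molinet's Lemma 7.1 carries out. Note that the paper itself does not prove this proposition: it cites that lemma and remarks that the periodic argument transfers to $\mathbb{R}$, so your closing instinct to defer to the reference matches what the paper actually does. The issue is that the reference is not performing a sharper section count within your framework but running a structurally different argument, so the proposal as written would not reproduce its conclusion.
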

	We also need the following refinement.
	\begin{lem}\label{l_cnv_l2}
		Let $\phi_1$ and $\phi_2 \in L^2(\mathbb{R}^2;\mathbb{R}_+)$ with $\supp\phi_i\subset D_{L_i,K_i}$ for $i\in[2]$.
		Then, we have
		\begin{equation}\label{e_cnv_l2_gen}
			\|1_{D_{L_3,K_3}}(\phi_1\ast \phi_2)\|_{L^2}
			\lesssim
			(L_3^*)^\frac{1}{2}(K_3^*)^\frac{1}{2}\|\phi_1\|_{L^2}\|\phi_2\|_{L^2}
		\end{equation}
		and, if additionally $K_3^*>1$, it follows
		\begin{equation}\label{e_cnv_l2_imp}
			\|1_{D_{L_3,K_3}}(\phi_1\ast \phi_2)\|_{L^2}
			\lesssim
			(L_1^*L_3^*)^\frac{1}{2}(K_1^*)^{-\frac{1}{2}}\|\phi_1\|_{L^2}\|\phi_2\|_{L^2}.
		\end{equation}
		Moreover, if $K_3^*>1$ and $L_1^*\ll K_1^*K_3^*$ hold, then we have $\|1_{D_{L_3,K_3}}(\phi_1\ast \phi_2)\|_{L^2}=0$.
	\end{lem}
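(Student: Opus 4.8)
The plan is to reduce everything to the basic bilinear bound in Proposition \ref{p_cnv_l2} together with a careful bookkeeping of the frequency and modulation supports. Write $\xi$ and $\tau$ for the output variables, and recall that on $\supp(\phi_1\ast\phi_2)$ one has $\xi=\xi_1+\xi_2$ with $|\xi_i|\sim K_i$ and $\tau-\omega(\xi)=(\tau_1-\omega(\xi_1))+(\tau_2-\omega(\xi_2))+\Omega_3(\xi_1,\xi_2,-\xi)$, where on $\Gamma_3$ we write $\Omega_3(\xi_1,\xi_2,\xi_3)$ with $\xi_3=-\xi$. The first step is to record that the support restrictions force $K_1^*\sim K_2^*$ (the two largest frequencies among $K_1,K_2,K_3$ must balance), and, when $K_3^*>1$, that by Lemma \ref{l_res} the resonance satisfies $|\Omega_3|\sim K_1^*K_3^*$ on the relevant region; this identity is what drives both the improved bound \eqref{e_cnv_l2_imp} and the vanishing statement.

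Second, I would prove \eqref{e_cnv_l2_gen}. By Cauchy–Schwarz in $\tau$ on the fixed spatial frequency $\xi$, the function $1_{D_{L_3,K_3}}(\phi_1\ast\phi_2)(\tau,\xi)$ is, for each $\xi$, supported in an interval of length $\sim L_3$ in $\tau$; integrating this out costs a factor $(L_3^*)^{1/2}$ after we have put the convolution in $L^\infty_\tau L^2_\xi$. Wait — more cleanly: apply Proposition \ref{p_cnv_l2}, which already gives $\|\phi_1\ast\phi_2\|_{L^2}\lesssim (L_1^*)^{1/4}(L_2^*)^{1/2}\|\phi_1\|_{L^2}\|\phi_2\|_{L^2}$, and then note that the indicator $1_{D_{L_3,K_3}}$ localizes the spatial frequency to an interval of length $\sim K_3$; combined with the fact that on $D_{L_i,K_i}$ the modulation variable lives in an interval of length $L_i$, a direct Cauchy–Schwarz / Fubini argument comparing $L^2$ and $L^\infty$ norms in the two slice variables yields the symmetric gain $(L_3^*)^{1/2}(K_3^*)^{1/2}$ in place of $(L_1^*)^{1/4}(L_2^*)^{1/2}$. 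The point is that one is free to choose which of the three ``output-type'' slots $(\tau_1,\xi_1)$, $(\tau_2,\xi_2)$, $(\tau_3,\xi_3)=(-\tau,-\xi)$ plays the role of the variable we Cauchy–Schwarz in, so we may always arrange the two cheap factors to sit on the pieces with smallest $L$ and smallest $K$.

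Third, for \eqref{e_cnv_l2_imp}, assume $K_3^*>1$. Now the resonance identity $|\Omega_3|\sim K_1^*K_3^*$ means that on the support of $1_{D_{L_3,K_3}}(\phi_1\ast\phi_2)$ we must have $L_1^*\gtrsim K_1^*K_3^*$ (else the three modulations $L_1,L_2,L_3$ cannot sum up to absorb $\Omega_3$, which is exactly the final vanishing claim). Assuming $L_1^*\gtrsim K_1^*K_3^*$, I substitute $K_3^*\lesssim L_1^*/K_1^*$ into \eqref{e_cnv_l2_gen}: this turns $(L_3^*)^{1/2}(K_3^*)^{1/2}$ into $(L_3^*)^{1/2}(L_1^*)^{1/2}(K_1^*)^{-1/2}$, which is precisely \eqref{e_cnv_l2_imp}. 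For the vanishing statement, if $K_3^*>1$ and $L_1^*\ll K_1^*K_3^*$, then since the modulations satisfy $L_i^*\le L_1^*$ for all $i$, the three ``modulation'' contributions $\tau_i-\omega(\xi_i)$ each have size $\ll K_1^*K_3^*$, so their sum is $\ll K_1^*K_3^*$; but that sum equals $-\Omega_3(\xi_1,\xi_2,-\xi)$ up to sign, which has modulus $\sim K_1^*K_3^*$ by Lemma \ref{l_res} — a contradiction, so the set is empty.

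The main obstacle I anticipate is the second step: getting the \emph{symmetric} bound $(L_3^*)^{1/2}(K_3^*)^{1/2}$ rather than the asymmetric $(L_1^*)^{1/4}(L_2^*)^{1/2}$ coming straight out of Proposition \ref{p_cnv_l2}. The resolution is to not use Proposition \ref{p_cnv_l2} as a black box but to redo its short proof with the output slot chosen as the one we want to Cauchy–Schwarz in: one writes $\|1_{D_{L_3,K_3}}(\phi_1\ast\phi_2)\|_{L^2}^2$ as an integral over $\Gamma_3$, applies Cauchy–Schwarz in the variable dual to the smallest-$(L,K)$ pair, and bounds the measure of the resulting slice by $\min(L_i)\cdot\min(K_i)$ via the support conditions. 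Everything else is routine once Lemma \ref{l_res} is in hand.
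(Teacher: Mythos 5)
Your route is essentially the paper's: the paper also works through the dual trilinear formulation (Lemma \ref{l_cnv}), proves the general bound \eqref{e_cnv_l2_gen} first, obtains \eqref{e_cnv_l2_imp} by substituting the resonance lower bound $K_1^*K_3^*\lesssim L_1^*$ (forced on the support when $K_3^*>1$ by Lemma \ref{l_res}) into the general bound, and proves the vanishing statement by the very contradiction you describe. Your treatments of \eqref{e_cnv_l2_imp} and of the vanishing claim coincide with the paper's arguments.

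The one place to be careful is your closing description of how to get \eqref{e_cnv_l2_gen}. A Cauchy--Schwarz ``in the variable dual to the smallest-$(L,K)$ pair'' is underspecified: the index realizing $\min_i L_i$ need not be the index realizing $\min_i K_i$, and a Cauchy--Schwarz in one slot $a$ (with another slot $b$ fixed for the outer integration) only sees constraints from $a$ and the remaining determined slot $c$, giving a slice of measure $\min(L_a,L_c)\min(K_a,K_c)$. To land on $L_3^*K_3^*$ one must pick $b$ to be the slot carrying \emph{neither} the smallest $L$ \emph{nor} the smallest $K$, so that $\{a,c\}$ covers both minima. Your earlier remark -- that one is free to arrange the two cheap factors on the piece with smallest $L$ and the piece with smallest $K$, possibly different pieces -- is the correct idea and is exactly what the paper implements, via the H\"older split $L^\infty_t L^2_x\times L^2_t L^\infty_x\times L^2_{t,x}$ with a time Cauchy--Schwarz on the smallest-$L$ factor and Bernstein in $x$ on the smallest-$K$ factor. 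Once that allocation is made explicit, your argument closes, and it is the same proof as the paper's up to presentation.
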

	
	Instead of proving Lemma \ref{l_cnv_l2} directly, we will prove the dual formulation given in the following lemma.
	\begin{lem}\label{l_cnv}
		Let $\phi_1$, $\phi_2$, and $\phi_3\in L^2(\mathbb{R}^2;\mathbb{R}_+)$ with $\supp \phi_i\subset D_{L_i,K_i}$ for $i\in[3]$. Then, we have
		\begin{equation}\label{e_cnv_tri_gen}
			(\phi_1\ast\phi_2\ast\phi_3)(0,0)
			\lesssim
			(L_3^*)^\frac{1}{2}(K_3^*)^\frac{1}{2}\|\phi_1\|_{L^2}\|\phi_2\|_{L^2}\|\phi_3\|_{L^2}
		\end{equation}
		and, if additionally $K_3^*> 1$, it follows
		\begin{equation}\label{e_cnv_tri_imp}
			(\phi_1\ast\phi_2\ast\phi_3)(0,0)
			\lesssim
			(L_1^*L_3^*)^\frac{1}{2}(K_1^*)^{-\frac{1}{2}}\|\phi_1\|_{L^2}\|\phi_2\|_{L^2}\|\phi_3\|_{L^2}.
		\end{equation}
		Moreover, if $K_3^*>1$ and $L_1^*\ll K_1^*K_3^*$ hold, then we have $(\phi_1\ast\phi_2\ast\phi_3)(0,0)=0$.
	\end{lem}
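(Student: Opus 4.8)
The plan is to prove the trilinear estimate \eqref{e_cnv_tri_gen}--\eqref{e_cnv_tri_imp} by combining the bilinear bound of Proposition \ref{p_cnv_l2} with the measure-theoretic support restrictions forced by the frequency and modulation localizations. Note that $(\phi_1\ast\phi_2\ast\phi_3)(0,0)=\int (\phi_1\ast\phi_2)(\tau,\xi)\,\phi_3(-\tau,-\xi)\,d\tau\,d\xi$, and since $\phi_3$ is supported in $D_{L_3,K_3}$, the integrand is supported where $(\tau,\xi)\in -D_{L_3,K_3}=D_{L_3,K_3}$ (by the symmetry $\supp\eta_L,\supp\chi_K$ are symmetric). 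So by Cauchy--Schwarz,
\begin{align*}
(\phi_1\ast\phi_2\ast\phi_3)(0,0)
\leq
\|1_{D_{L_3,K_3}}(\phi_1\ast\phi_2)\|_{L^2}\,\|\phi_3\|_{L^2}.
\end{align*}
By symmetry of the left-hand side under permuting the three indices (the value at $(0,0)$ of a convolution is permutation-invariant), it suffices to prove the bound for the pair realizing the relevant ordering; I would set things up so that $\phi_3$ carries the smallest relevant modulation/frequency. Thus everything reduces to proving exactly Lemma \ref{l_cnv_l2} for $1_{D_{L_3,K_3}}(\phi_1\ast\phi_2)$ — which is how the excerpt already frames it — and I will prove that instead.

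For \eqref{e_cnv_l2_gen}: apply Plancherel in the time variable only. Writing $\phi_i(\tau_i,\xi_i)$, the function $(\phi_1\ast\phi_2)(\tau,\xi)=\int \phi_1(\tau_1,\xi_1)\phi_2(\tau-\tau_1,\xi-\xi_1)\,d\tau_1 d\xi_1$; for fixed $\xi$, the $\xi_1$-integration ranges over a set of measure $\lesssim \min(K_1,K_2)\lesssim K_3^*$ once one observes that $\xi\in\supp\chi_{K_3}$, $\xi_1\in\supp\chi_{K_1}$, $\xi-\xi_1\in\supp\chi_{K_2}$ together force $K_3^*\lesssim$ the two smallest among $K_1,K_2,K_3$; but more simply, the convolution in $\xi$ of the three frequency-cutoffs has support forcing one of the measures to be $\lesssim K_3^*$. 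Estimating the $\tau_1$-convolution by Young/Cauchy--Schwarz against the modulation cutoffs gives a factor $(L_3^*)^{1/2}$ (the number of ways to split $\tau$ into $\tau_1+\tau_2$ consistent with the modulation constraints on a fixed dispersion sheet is controlled by the smallest modulation, whose square root appears). Combining the frequency-measure factor $(K_3^*)^{1/2}$ from one application of Cauchy--Schwarz in $\xi_1$ with the modulation factor yields \eqref{e_cnv_l2_gen}.

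The improved estimate \eqref{e_cnv_l2_imp} and the vanishing statement are where the resonance function enters, and this is the main obstacle. Here one uses that on the support of the whole product, $\tau_i-\omega(\xi_i)\in\supp\eta_{L_i}$ for $i=1,2,3$ together with $\tau_{123}=0$, $\xi_{123}=0$; summing gives $\sum(\tau_i-\omega(\xi_i))=-\Omega_3(\xi_1,\xi_2,\xi_3)$, so $|\Omega_3(\xi_1,\xi_2,\xi_3)|\lesssim L_1^*$. If $K_3^*>1$, Lemma \ref{l_res} gives $|\Omega_3|\sim K_1^*K_3^*$, whence $K_1^*K_3^*\lesssim L_1^*$; if instead $L_1^*\ll K_1^*K_3^*$ this is impossible, so the product — and hence the convolution restricted to $D_{L_3,K_3}$ — vanishes identically, proving the last claim. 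For \eqref{e_cnv_l2_imp}, one refines the measure estimate: on the sheet $|\Omega_3|\sim K_1^*K_3^*$ with the three modulation windows fixed, freezing the two largest frequencies and one of the modulations pins the remaining frequency variable to a set of measure $\lesssim L_1^*/K_1^*$ (since $\partial_\xi\Omega_3$ has size $\sim K_1^*$ in the relevant variable, a modulation window of width $L_1^*$ is attained on a $\xi$-set of width $L_1^*/K_1^*$). Running the same Cauchy--Schwarz as before but with this sharper frequency-measure bound replaces $(K_3^*)^{1/2}$ by roughly $(L_1^*/K_1^*)^{1/2}$ times the appropriate modulation factor, giving $(L_1^*L_3^*)^{1/2}(K_1^*)^{-1/2}$. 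I expect the careful bookkeeping of which modulation is largest and the case analysis between $L_1^*\gtrsim K_1^*K_3^*$ and $L_1^*\ll K_1^*K_3^*$ to be the only genuinely delicate point; the rest is Cauchy--Schwarz and Fubini.
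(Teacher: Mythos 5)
Your proof of the general estimate \eqref{e_cnv_tri_gen} and of the vanishing claim is essentially the paper's argument: the paper proves \eqref{e_cnv_tri_gen} by Plancherel, H\"older, Minkowski and Bernstein applied to $\phi_1^\vee,\phi_2^\vee,\phi_3^\vee$ (which is a cleaner packaging of the same ``smallest modulation, smallest frequency'' measure count you describe), and for the vanishing it uses exactly your observation that $\tau_{123}=0$ forces $|\Omega_3(\xi)|\lesssim L_1^*$, contradicting $|\Omega_3|\sim K_1^*K_3^*$ from \eqref{e_res_asy} when $L_1^*\ll K_1^*K_3^*$.

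Where you diverge is the improved estimate \eqref{e_cnv_tri_imp}, and there you have missed a much shorter path that the paper takes as its very first remark: \eqref{e_cnv_tri_imp} is an \emph{immediate consequence} of \eqref{e_cnv_tri_gen} and the vanishing claim. Indeed, if $K_3^*>1$ and the convolution is not identically zero, the contrapositive of the vanishing claim gives $K_1^*K_3^*\lesssim L_1^*$, hence $(K_3^*)^{1/2}\lesssim (L_1^*/K_1^*)^{1/2}$, and substituting this into \eqref{e_cnv_tri_gen} yields exactly $(L_1^*L_3^*)^{1/2}(K_1^*)^{-1/2}$. You instead sketch a direct argument via a measure estimate on the resonance sheet (freezing the large frequencies and one modulation and using $|\partial_\xi\Omega_3|\sim K_1^*$ to pin the remaining variable to a set of size $L_1^*/K_1^*$). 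That argument is plausible but has to be run carefully --- and in fact it is not needed; you flag it yourself as the ``only genuinely delicate point,'' which is a sign the simpler route was overlooked.

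Two minor imprecisions worth fixing. First, the intermediate claim ``the $\xi_1$-integration ranges over a set of measure $\lesssim\min(K_1,K_2)\lesssim K_3^*$'' is not correct in the high--high--low configuration $K_1\sim K_2\gg K_3$: there $\min(K_1,K_2)\sim K_1\gg K_3=K_3^*$. The correct statement (which your hedged version gestures at) is that one can always choose the order of integration so that the innermost $\xi$-integration is over a set of measure $\lesssim K_3^*$; concretely, fix the variable on the largest annulus and integrate the remaining one over $\supp\chi_{K_i}\cap(-\xi_1-\supp\chi_{K_j})$. Second, be a bit careful with the symmetry remark: the value of a triple convolution at $(0,0)$ is permutation-invariant, so you may indeed assume any relabelling of the indices, but your reduction to Lemma~\ref{l_cnv_l2} puts $\phi_3$ in the ``output'' slot $D_{L_3,K_3}$, and the bound there is expressed in terms of $L_3^*,K_3^*$ over all three indices, so the relabelling must be done consistently with that convention.
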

	\begin{proof}
		Note that \eqref{e_cnv_tri_imp} follows directly from \eqref{e_cnv_tri_gen} and the last claim given above.
		
		Now, we prove estimate \eqref{e_cnv_tri_gen}. Without loss of generality, assume $L_2=L_3^*$ and $K_1=K_3^*$. Then, applying Plancherel's theorem, Hölder's inequality, Minkowski's inequality, and Bernstein's inequality, we obtain
		\begin{align*}
			(\phi_1\ast\phi_2\ast\phi_3)(0,0)
			\!\leq
			\|\phi_1^\vee\|_{L^\infty_t L^2_x}\!\|\phi_2^\vee\|_{L^2_tL^\infty_x}\!\|\phi_3\|_{L^2}
			\lesssim
			L_2^\frac{1}{2}K_1^\frac{1}{2}\|\phi_1\|_{L^2}\!\|\phi_2\|_{L^2}\!\|\phi_3\|_{L^2},
		\end{align*}
		proving the desired estimate.
		
		It remains to show that the convolution vanishes for $K_3^*>1$ and $L_1^*\ll K_1^*K_3^*$.
		We define $\phi_i^-(\tau,\xi):=\phi_i(\tau,-\xi)$, $\phi_i^\#(\tau,\xi):=\phi_i(\tau-\omega(\xi),\xi)$, and $\phi_i^\times=(\phi_i^-)^\#$. The functions $\phi_i^\#$ and $\phi_i^\times$ have supports contained in $\{|\tau|\sim L_i,|\xi|\sim K_i\}$. Then, a change of variables implies
		\begin{align*}
			(\phi_1\ast\phi_2\ast\phi_3)(0,0)
			\!=\!
			\int_{\Gamma_3}\!\int_{\Gamma_3}\!
			\phi_1^\#(\tau_1,\xi_1)\phi_2^\#(\tau_2,\xi_2)\phi_3^\times(\tau_3+\Omega_3(\xi),\xi_3)d\gamma_3(\tau)d\gamma_3(\xi),
		\end{align*}
		where we write $\tau=(\tau_1,\tau_2,\tau_3)$ and $\xi=(\xi_1,\xi_2,\xi_3)$.
		Due to $K_3^*> 1$, we have $K_2=K_3^*> 1$ and $L_3\neq L_3^*$ after relabelling. It follows $L_3\sim L_1^*$. Now, we argue by contraposition. Assume that the convolution is non-zero. Then, it is necessarily positive and the integrand must be positive at least for some values of $\tau_1$, $\tau_2$, $\xi_1$, and $\xi_2$. In particular, this implies $|\tau_1|\lesssim L_1$, $|\tau_2|\lesssim L_2$, and $|\tau_{3}+\Omega_3(\xi)|\lesssim L_3$. Together with $-\tau_3=\tau_1+\tau_2$ and $|\tau_1+\tau_2|\lesssim L_1+L_2\sim L_3$, we conclude $|\tau_3|\lesssim L_3$ leading to $|\Omega_3(\xi)|\lesssim L_3$. From \eqref{e_res_asy}, we deduce $K_1^*K_3^*\lesssim L_3\sim L_1^*$. Thus, the proof is complete.
	\end{proof}
	\begin{lem}\label{l_cnv_q}
		Let $\phi_1$, $\phi_2$, $\phi_3$, and $\phi_4\in L^2(\mathbb{R}^2;\mathbb{R}_+)$ with $\supp \phi_i\subset D_{L_i,K_i}$ for $i\in[4]$. Then, we have
		\begin{equation}\label{e_cnv_qua_gen}
			(\phi_1\ast\phi_2\ast\phi_3\ast\phi_4)(0,0)
			\!\lesssim\!
			(L_3^*L_4^*)^\frac{1}{2}(K_3^*K_4^*)^\frac{1}{2}
			\|\phi_1\|_{L^2}\!\|\phi_2\|_{L^2}\!\|\phi_3\|_{L^2}\!\|\phi_4\|_{L^2}
		\end{equation}
		and, if additionally $K_3^*>1$, it follows
		\begin{equation}\label{e_cnv_qua_imp}
			(\phi_1\ast\phi_2\ast\phi_3\ast\phi_4)(0,0)
			\!\lesssim\!
			(L_1^*L_2^*L_4^*)^\frac{1}{2}(K_1^*)^{-\frac{1}{2}}(K_4^*)^{\frac{1}{2}}\|\phi_1\|_{L^2}\!\|\phi_2\|_{L^2}\!\|\phi_3\|_{L^2}\!\|\phi_4\|_{L^2}.
		\end{equation}
	\end{lem}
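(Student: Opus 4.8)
The plan is to prove Lemma~\ref{l_cnv_q} by reducing the fourfold convolution to iterated applications of the threefold bounds already established in Lemma~\ref{l_cnv}. The key structural observation is that
\[
(\phi_1\ast\phi_2\ast\phi_3\ast\phi_4)(0,0)
=
\int_{\mathbb{R}^2}(\phi_1\ast\phi_2)(\zeta)\,(\phi_3\ast\phi_4)(-\zeta)\,d\zeta,
\]
so it suffices to control $\phi_1\ast\phi_2$ and $\phi_3\ast\phi_4$ in suitable mixed norms and pair them. More precisely, I would first dispose of estimate \eqref{e_cnv_qua_gen}: by the same Plancherel--Hölder--Bernstein chain used in the proof of Lemma~\ref{l_cnv} (pulling out the two highest-modulation factors in $L^2_tL^\infty_x$ after reassigning them via the $\phi_i^\vee$ trick, and the two remaining ones in $L^\infty_tL^2_x$ and $L^2_{t,x}$), one directly obtains the factor $(L_3^*L_4^*)^{1/2}(K_3^*K_4^*)^{1/2}$. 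This is essentially the ``trivial'' multilinear estimate and requires no resonance information.

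For the improved bound \eqref{e_cnv_qua_imp}, the strategy is to group the four factors as $(\phi_1\ast\phi_2\ast\phi_3)$ convolved against $\phi_4$, i.e.\ write
\[
(\phi_1\ast\phi_2\ast\phi_3\ast\phi_4)(0,0)
=
\int_{\mathbb{R}^2}\bigl(1_{D_{L_4,-K_4}}(\phi_1\ast\phi_2\ast\phi_3)\bigr)(\zeta)\,\phi_4(-\zeta)\,d\zeta,
\]
where the indicator is legitimate because $\phi_4$ is supported in $D_{L_4,K_4}$ and the convolution must land in the reflected region. Applying Cauchy--Schwarz in $\zeta$ gives a bound by $\|1_{D_{L_4,K_4}}(\phi_1\ast\phi_2\ast\phi_3)\|_{L^2}\,\|\phi_4\|_{L^2}$, and one is reduced to a localized \emph{threefold} convolution estimate analogous to \eqref{e_cnv_l2_imp}. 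To get that, I would first handle $\psi:=1_{D_{M,N}}(\phi_1\ast\phi_2)$ for the appropriate output localization $(M,N)$ via Lemma~\ref{l_cnv_l2} (in the form \eqref{e_cnv_l2_imp}, using $K_3^*>1$), producing the gain $(K_1^*)^{-1/2}$ from the largest frequency together with $(L_1^*L_3^*)^{1/2}$-type modulation losses; then convolve $\psi$ against $\phi_3$ and restrict to $D_{L_4,K_4}$, using Lemma~\ref{l_cnv_l2} once more — now the relevant ``third'' frequency is $K_4^*$, which after the earlier relabelling is small, so its contribution appears only as $(K_4^*)^{1/2}$ and the remaining modulation factor is $(L_2^*)^{1/2}$. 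Tracking which of the original $L_i$ and $K_i$ play the role of starred quantities at each stage, and checking that the worst case genuinely yields $(L_1^*L_2^*L_4^*)^{1/2}(K_1^*)^{-1/2}(K_4^*)^{1/2}$, is the bookkeeping core of the argument.

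The main obstacle I anticipate is precisely this combinatorial bookkeeping: when the two highest frequencies are comparable, the intermediate convolution $\phi_1\ast\phi_2$ can be supported at low frequency, so the ``$K_3^*$ of the inner triple'' need not coincide with any fixed $K_i$, and one must argue that regardless of how the frequency cancellation distributes, the highest output modulation among $L_1^*,\dots,L_4^*$ is the one that can be discarded (it is the one \emph{not} appearing in the product $L_1^*L_2^*L_4^*$, i.e.\ $L_3^*$). Establishing this requires care with the constraint $\xi_{1234}=0$ and the fact that at least three of the four frequencies must be of comparable maximal size when $K_3^*>1$; the resonance bound \eqref{e_res_asy}, applied to whichever triple carries the genuine derivative gain, is what rules out the bad configurations. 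A clean way to organize this is to split into the cases $K_2^*\sim K_1^*$ versus $K_2^*\ll K_1^*$ (the latter forcing near-collinearity of the remaining frequencies and hence $K_3^*\lesssim K_4^*$ to be small), and in each case choose the pairing of the four factors so that Lemma~\ref{l_cnv} applies with the large frequency in the role of $K_1^*$. Once the pairing is fixed correctly, the estimate follows by two invocations of Lemma~\ref{l_cnv_l2} and Cauchy--Schwarz, with no new analytic input beyond what Lemma~\ref{l_res} already provides.
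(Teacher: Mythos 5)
Your treatment of \eqref{e_cnv_qua_gen}, and of the balanced case $K_1^*\sim K_4^*$ in \eqref{e_cnv_qua_imp}, agrees with the paper: iterated H\"older together with Lemma~\ref{l_cnv_l2} is exactly what the paper does there. The gap is in the unbalanced regime $K_1^*\gg K_4^*$ (after relabelling, $K_4\sim K_3\gtrsim K_2\gtrsim K_1$ with $K_4\gg K_1$), where no split of the four factors into two pairs, nor into a triple against a single factor, closes via Lemma~\ref{l_cnv_l2}. Concretely take $K_1=K_2=K^{1/2}$, $K_3=K_4=K$, and $L_1=\cdots=L_4=1$; the claimed bound is $(K_1^*)^{-1/2}(K_4^*)^{1/2}\prod\|\phi_i\|=K^{-1/4}\prod\|\phi_i\|$. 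Pairing $(\phi_1*\phi_2)*(\phi_3*\phi_4)$: the support constraint $|\Omega_4|\lesssim 1$ pushes the intermediate frequency into the bottom dyadic block $N=1$, where both sub-triples have smallest frequency $1$, so only \eqref{e_cnv_l2_gen} is available and produces $O(1)\prod\|\phi_i\|$. Pairing $(\phi_1*\phi_3)*(\phi_2*\phi_4)$: \eqref{e_res_asy} forces the intermediate modulation to size $M'\sim K^{3/2}$, and two applications of \eqref{e_cnv_l2_imp} give $(M')^{1/2}K^{-1/2}\cdot(M')^{1/2}K^{-1/2}=K^{1/2}$, still off by $K^{3/4}$. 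Your hope that ``the resonance bound \eqref{e_res_asy}, applied to whichever triple carries the genuine derivative gain, is what rules out the bad configurations'' is exactly where the argument breaks: in the bad configuration the triple carrying the gain has its smallest dyadic frequency equal to $1$, and both \eqref{e_res_asy} and the vanishing statement in Lemma~\ref{l_cnv_l2} are silent there.

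The paper flags this precise obstruction immediately before the proof (``such a lower bound is wrong for $\Omega_4$''), and its argument for $K_1^*\gg K_4^*$ is not an iteration of the trilinear lemma. Instead it fixes $\tau_{123}$ and two of the $\xi$-variables and bounds the one-dimensional measure of the level set $\{\,|\tau_{123}-\Omega_4(\xi)|\lesssim L_4\,\}$ in the remaining variable by $L_4K_3^{-1}$, using the transversality estimate $|\partial_{\xi_1}\Omega_4(\xi)|\gtrsim K_3$ (which is where the hypothesis $K_3\sim K_4\gg K_1$ enters), and then closes by Cauchy--Schwarz. This derivative bound on the quadrilinear resonance $\Omega_4$ is the analytic ingredient your iteration never accesses, and it cannot be reconstructed from Lemma~\ref{l_cnv} or Lemma~\ref{l_cnv_l2} alone.
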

	\begin{lem}\label{l_cnv_qb}
		Let $\phi_1$, $\phi_2$, and $\phi_3\in L^2(\mathbb{R}^2;\mathbb{R}_+)$ with $\supp \phi_i\subset D_{L_i,K_i}$ for $i\in[3]$. Let $\phi_\infty\in L^\infty(\mathbb{R}^2;\mathbb{R}_+)$. Then, we have
		\begin{equation}\label{e_cnv_qua_bnd_one}
			(\phi_1\ast\phi_2\ast\phi_3\ast\phi_\infty)(0,0)
			\!\lesssim\!
			(L_3^*)^\frac{1}{2}(K_3^*)^\frac{1}{2}
			\|\phi_1\|_{L^2}\!\|\phi_2\|_{L^2}\!\|\phi_3\|_{L^2}\!\|\phi_\infty^\vee\|_{L^\infty}
		\end{equation}
		and
		\begin{equation}\label{e_cnv_qua_bnd_two}
			(\phi_1\ast\phi_2\ast\phi_3\ast\phi_\infty)(0,0)
			\!\lesssim\!
			(L_2^*)^\frac{1}{4}(L_3^*)^\frac{1}{2}
			\|\phi_1\|_{L^2}\!\|\phi_2\|_{L^2}\!\|\phi_3\|_{L^2}\!\|\phi_\infty^\vee\|_{L^\infty}.
		\end{equation}
	\end{lem}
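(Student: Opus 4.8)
The plan is to estimate the fourfold convolution evaluated at the origin by splitting off the bounded factor $\phi_\infty$ and grouping the three $L^2$-factors so that the convolution structure from Lemma \ref{l_cnv} (for \eqref{e_cnv_qua_bnd_one}) or Proposition \ref{p_cnv_l2} (for \eqref{e_cnv_qua_bnd_two}) can be applied. Concretely, since
\begin{align*}
	(\phi_1\ast\phi_2\ast\phi_3\ast\phi_\infty)(0,0)
	=
	\int_{\mathbb{R}^2}(\phi_1\ast\phi_2\ast\phi_3)(-\zeta)\,\phi_\infty(\zeta)\,d\zeta,
\end{align*}
one rewrites this (after passing to physical space via Plancherel) as an integral of the product $\phi_1^\vee\phi_2^\vee\phi_3^\vee\overline{\phi_\infty^\vee}$ over $\mathbb{R}^2_{t,x}$. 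I would then pull out $\|\phi_\infty^\vee\|_{L^\infty}$ in $L^\infty_{t,x}$ and be left with bounding $\|\phi_1^\vee\phi_2^\vee\phi_3^\vee\|_{L^1_{t,x}}$, which is exactly $(\phi_1\ast\phi_2\ast\phi_3)(0,0)$ if all $\phi_i\geq 0$ — but here the subtlety is that we want $\|\,|\phi_1^\vee|\,|\phi_2^\vee|\,|\phi_3^\vee|\,\|_{L^1}$; since the $\phi_i$ are nonnegative in Fourier space this need not equal the convolution at $0$, so instead I would argue directly in physical space, noting that $|\phi_i^\vee|\leq (\phi_i)^\vee$ pointwise is false in general, and therefore proceed more carefully: apply Hölder in $L^2_{t,x}\times L^2_{t,x}\times L^\infty_{t,x}$ to the three genuine factors after re-pairing.

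For \eqref{e_cnv_qua_bnd_one}: bound $\phi_\infty$ trivially by writing the quantity as $\langle \phi_1\ast\phi_2\ast\phi_3,\ \widetilde{\phi_\infty}\rangle$ and instead observe that $\phi_1\ast\phi_2\ast\phi_3\ast\phi_\infty$ at the origin is dominated, via Plancherel, by $\|(\phi_1\ast\phi_2\ast\phi_3)^\vee\|_{L^1}\|\phi_\infty^\vee\|_{L^\infty}$. Now $(\phi_1\ast\phi_2\ast\phi_3)^\vee=\phi_1^\vee\phi_2^\vee\phi_3^\vee$, and the point is that $\|\phi_1^\vee\phi_2^\vee\phi_3^\vee\|_{L^1_{t,x}}$ is controlled by the same Hölder/Bernstein chain as in the proof of Lemma \ref{l_cnv}: put the two lowest-modulation/lowest-frequency-supported factors in $L^2_tL^\infty_x$ and $L^\infty_tL^2_x$ respectively, using Bernstein on the frequency side (gaining $(K_3^*)^{1/2}$ from the factor supported on the smallest frequency block among those two) and the modulation localisation on the time side (gaining $(L_3^*)^{1/2}$). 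This yields the bound $(L_3^*)^{1/2}(K_3^*)^{1/2}\prod\|\phi_i\|_{L^2}\cdot\|\phi_\infty^\vee\|_{L^\infty}$ after relabelling so that $L_3^*$ and $K_3^*$ sit on the appropriate factors — exactly as in the displayed computation establishing \eqref{e_cnv_tri_gen}.

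For \eqref{e_cnv_qua_bnd_two}: here I would instead use Proposition \ref{p_cnv_l2}. Write the origin value as $\int (\phi_1\ast\phi_2)(-\zeta)(\phi_3\ast\phi_\infty)(\zeta)\,d\zeta\le\|\phi_1\ast\phi_2\|_{L^2}\|\phi_3\ast\phi_\infty\|_{L^2}$ by Cauchy–Schwarz. The first factor is $\lesssim (L_1^*)^{1/4}(L_2^*)^{1/2}\|\phi_1\|_{L^2}\|\phi_2\|_{L^2}$ by \eqref{e_cnv_l2}; but we want $(L_2^*)^{1/4}$ overall, so I would instead pair $\phi_\infty$ with one of $\phi_1,\phi_2,\phi_3$ and use Young's inequality $\|\phi_i\ast\phi_\infty\|_{L^2}\le\|\phi_i\|_{L^2}\|\phi_\infty\|_{L^1}$ — but $\phi_\infty$ need not be $L^1$. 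The correct route is: estimate $\|\phi_3\ast\phi_\infty\|_{L^2}\leq\|\phi_3^\vee\phi_\infty^\vee\|_{L^2}\cdot(2\pi)^{?}\le\|\phi_3\|_{L^2}\|\phi_\infty^\vee\|_{L^\infty}$ by Plancherel and Hölder, using $\|\phi_3^\vee\phi_\infty^\vee\|_{L^2}\le\|\phi_3^\vee\|_{L^2}\|\phi_\infty^\vee\|_{L^\infty}=\|\phi_3\|_{L^2}\|\phi_\infty^\vee\|_{L^\infty}$. Then $\|\phi_1\ast\phi_2\|_{L^2}\lesssim(L_3^*)^{1/2}(L_2^*\vee\text{smallest})^{1/4}\cdots$; reorganising with the relabelling $L_2=L_3^*$ forbidden here (since $L_3^*$ is the largest), one applies \eqref{e_cnv_l2} with the roles arranged so that the $1/4$-power lands on $L_2^*$ (the second-largest among $L_1,L_2$) and the $1/2$-power on $L_3^*$ — which is possible because $L_3^*\geq\max(L_1,L_2)$ by the definition of the ordered modulations over the four indices. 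This gives $(L_2^*)^{1/4}(L_3^*)^{1/2}\prod\|\phi_i\|_{L^2}\|\phi_\infty^\vee\|_{L^\infty}$.

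The main obstacle I anticipate is the bookkeeping of the reordered quantities $L_i^*$, $K_i^*$: one must check that, after choosing which factor to assign to $\phi_\infty$ and which pair to feed into Proposition \ref{p_cnv_l2} or into the Hölder/Bernstein chain, the modulation and frequency weights that come out are genuinely bounded by $(L_3^*)^{1/2}(K_3^*)^{1/2}$ respectively $(L_2^*)^{1/4}(L_3^*)^{1/2}$ with the starred quantities taken over all three (or four) indices — not just over the pair. This is the same type of case-analysis as in the proofs of Lemma \ref{l_cnv} and Lemma \ref{l_cnv_q}, and should reduce to checking a handful of orderings; no genuinely new idea is needed beyond Plancherel, Hölder, Bernstein, Cauchy–Schwarz, and the two-factor convolution bound \eqref{e_cnv_l2}.
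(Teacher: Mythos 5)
Your strategy --- Plancherel/Hölder/Bernstein for \eqref{e_cnv_qua_bnd_one}, and Cauchy--Schwarz together with Proposition~\ref{p_cnv_l2} and the bound $\|\phi_k\ast\phi_\infty\|_{L^2}\lesssim\|\phi_k\|_{L^2}\|\phi_\infty^\vee\|_{L^\infty}$ for \eqref{e_cnv_qua_bnd_two} --- is exactly what the paper intends; it omits the proof, saying only that the estimates ``follow easily by using Hölder's inequality and Proposition~\ref{p_cnv_l2}.'' But two items in your write-up are off. The hand-wringing about signs of $\phi_i^\vee$ is a non-issue: once you write the origin value as a constant times $\int\phi_1^\vee\phi_2^\vee\phi_3^\vee\phi_\infty^\vee\,dt\,dx$, you apply Hölder to $|\phi_1^\vee\phi_2^\vee\phi_3^\vee\phi_\infty^\vee|$, which needs no pointwise positivity; and your interim pairing $L^2_{t,x}\times L^2_{t,x}\times L^\infty_{t,x}$ gives no gain at all and is immediately superseded by the correct mixed-norm pairing $L^\infty_tL^2_x\times L^2_tL^\infty_x\times L^2_{t,x}\times L^\infty_{t,x}$ (the factor carrying the smallest $L$ gains $L^{1/2}$ via Cauchy--Schwarz in $\tau$, and the one carrying the smallest $K$ among the rest gains $K^{1/2}$ via Bernstein in $x$; if the two minimisers coincide, place that factor in $L^\infty_{t,x}$ using $\|\phi^\vee\|_{L^\infty}\lesssim(LK)^{1/2}\|\phi\|_{L^2}$).

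More materially, your bookkeeping for \eqref{e_cnv_qua_bnd_two} rests on a false claim. The paper's convention is $L_1^*\geq L_2^*\geq L_3^*$, so $L_3^*$ is the \emph{smallest}, not the largest, and the reordering runs over the three indices $\{1,2,3\}$ only, since $\phi_\infty$ carries no dyadic localisation --- there are not four. The parenthetical ``$L_3^*\geq\max(L_1,L_2)$'' is therefore wrong, and so is the gloss ``second-largest among $L_1,L_2$'' for $L_2^*$. The correct organisation: after Cauchy--Schwarz you have $\|\phi_i\ast\phi_j\|_{L^2}\,\|\phi_k\ast\phi_\infty\|_{L^2}$ for some permutation $(i,j,k)$ of $(1,2,3)$; Proposition~\ref{p_cnv_l2} gives $\|\phi_i\ast\phi_j\|_{L^2}\lesssim\max(L_i,L_j)^{1/4}\min(L_i,L_j)^{1/2}\|\phi_i\|_{L^2}\|\phi_j\|_{L^2}$; and to land on $(L_2^*)^{1/4}(L_3^*)^{1/2}$ one must choose $k$ to be the index carrying the \emph{largest} of the three modulations, i.e.\ $L_k=L_1^*$, so that $\{L_i,L_j\}=\{L_2^*,L_3^*\}$. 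That single choice of $k$ is the one nontrivial decision in the argument; the final display you state is correct, but your justification for it is not.
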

	
	The estimates in Lemma \ref{l_cnv_qb} follow easily by using Hölder's inequality and Proposition \ref{p_cnv_l2}, which is why we omit their proofs. 
	
	Recall that we used the lower bound on $\Omega_3$ in the proof of Lemma \ref{l_cnv}. Since such a lower bound is wrong for $\Omega_4$, we need a different argument for the proof estimate \eqref{e_cnv_qua_imp} in Lemma \ref{l_cnv_q}. 
	\begin{proof}[Proof of Lemma \ref{l_cnv_q}]
		As in the proof of Lemma \ref{l_cnv}, we derive estimate \eqref{e_cnv_qua_gen} by applying Hölder's and Bernstein's inequality.
		
		Next, we prove estimate \eqref{e_cnv_qua_imp}. If $K_1^*\sim K_4^*$ holds, then the estimate follows immediately from Hölder's inequality and two applications of \eqref{e_cnv_l2}. Thus, it suffices to consider $K_1^*\gg K_4^*$. After relabelling, we have $K_4\sim K_3\gtrsim K_2\gtrsim K_1$ and $K_4\gg K_1$. Using the notation of the previous proof and letting $\phi_4^\ast :=(\phi_4^\times)^-$, we have
		\begin{align*}
			&(\phi_1\ast\phi_2\ast\phi_3\ast\phi_4)(0,0)
			\\ =
			&\int_{\mathbb{R}^3}\! d\tau_1d\tau_2d\tau_3\int_{\mathbb{R}^3}\! d\xi_1d\xi_2d\xi_3 \phi_1^\#(\tau_1,\xi_1)\phi_2^\#(\tau_2,\xi_2)\phi_3^\#(\tau_3,\xi_3)\phi_4^\ast(\Omega_4(\xi)\!-\!\tau_{123},\xi_{123}),
		\end{align*}
		where $\xi=(\xi_1,\xi_2,\xi_3,-\xi_{123})$. We define $M(\tau_{123},\xi_1,\xi_2):=\{\xi_3:|\tau-\Omega_4(\xi)|\lesssim L_4\}$ and remark that $\sup_{\tau_{123},\xi_1,\xi_2}|M|\lesssim L_4K_3^{-1}$ holds. This estimate follows essentially from the inequality $|\partial_{\xi_1}\Omega_4(\xi)|\gtrsim K_3$, which makes use of the assumption $K_3\sim K_4\gg K_1$. Then, multiple applications of Hölder's and Bernstein's inequality lead to
		\begin{align*}
			&(\phi_1\ast\phi_2\ast\phi_3\ast\phi_4)(0,0)
			\\ =
			&\int_{\mathbb{R}^3} d\tau_1d\tau_2d\tau_3\int_{\mathbb{R}^3} d\xi_1d\xi_2d\xi_3
			\phi_1^\#\phi_2^\#\phi_3^\#\phi_4^\ast
			\\ \lesssim
			&\int_{\mathbb{R}^3} d\tau_1d\tau_2d\tau_3\int_{\mathbb{R}^2} d\xi_1d\xi_2
			\phi_2^\#\phi_3^\#\left[\int_{\mathbb{R}} d\xi_3 1_{M(\tau_{123},\xi_2,\xi_3)}\right]^\frac{1}{2}\left[\int_{\mathbb{R}} d\xi_3 \left[\phi_3^\#\phi_4^\ast\right]^2\right]^\frac{1}{2}
			\\ \lesssim\,
			&(L_3L_4)^\frac{1}{2}K_3^{-\frac{1}{2}}\int_{\mathbb{R}^2} d\tau_1d\tau_2\int_{\mathbb{R}^2} d\xi_1d\xi_2
			\phi_2^\#\phi_3^\#\left[\int_{\mathbb{R}^2} d\tau_3 d\xi_3 \left[\phi_3^\#\phi_4^\ast\right]^2\right]^\frac{1}{2}.
		\end{align*}
		The next steps depend on the relative size of $K_1$ and $K_2$, respectively $L_1$ and $L_2$. By assumption, we have $K_1\leq K_2$ and without loss of generality we may assume $L_2\leq L_1$. Then, we proceed by
		\begin{align*}
			&(\phi_1\ast\phi_2\ast\phi_3\ast\phi_4)(0,0)
			\\ \lesssim\,
			&(L_3L_4)^\frac{1}{2}K_3^{-\frac{1}{2}}\|\phi_3^\#\|\|\phi_4^\ast\|\int_{\mathbb{R}} d\tau_1\int_{\mathbb{R}} d\xi_2
			\left[\int_{\mathbb{R}} d\tau_2\left[\phi_2^\#\right]^2\right]^\frac{1}{2}\left[\int_{\mathbb{R}} d\xi_1\left[\phi_1^\#\right]^2\right]^\frac{1}{2}
			\\ \lesssim\,
			&(L_2L_3L_4)^\frac{1}{2}K_3^{-\frac{1}{2}}K_1^\frac{1}{2}\|\phi_1^\#\|\|\phi_2^\#\|\|\phi_3^\#\|\|\phi_4^\ast\|.
		\end{align*}
		It remains to note that the $L^2$-norms of $\phi_i$, $\phi_i^\ast$, and $\phi_i^\times$ coincide.
	\end{proof}
\section{Short-time bilinear estimates}\label{s_stbe}
	The aim of this section is to prove estimates for the nonlinearities appearing in \eqref{eq_bo_split}. In the following three lemmata we state these estimates in the form that we use later. Afterwards we provide the proof of slightly more general estimates.  	
	\begin{lem}\label{l_nl}
		Let $s\geq s_0>\frac{1}{4}$ and $\sigma>\max\{s,\frac{1}{2}\}$. Then, for all sufficiently small $\theta=\theta(s_0,\sigma)>1$ and $\delta=\delta(\theta,s_0,\sigma)>0$, every $T\in(0,1)$, all $u\in\spfc^s_T$, and all $b\in\spbc^\sigma_T$, we have
		\begin{align}
			\|(uu)_x\|_{\spnc^s_T}
			&\lesssim_\delta
			T^\delta
			\|u\|_{\spfc^s_T}\|u\|_{\spfc^{s_0}_T},\label{e_nl_uu}
			\\
			\|(ub)_x\|_{\spnc^s_T}
			&\lesssim_\delta
			T^\delta
			\|u\|_{\spfc^s_T}
			\|b\|_{\spbc^\sigma_T}.\label{e_nl_ub}
		\end{align}
	\end{lem}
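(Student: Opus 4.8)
\textbf{Proof strategy for Lemma \ref{l_nl}.}

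The plan is to reduce both nonlinear estimates to a single dyadic trilinear bound obtained by pairing the nonlinearity against a test function localized at the output frequency, and then to exploit the frequency-dependent time localizations built into $\spfc^s_T$ and $\spnc^s_T$ together with the dyadic convolution estimates from Section \ref{s_dce}. First I would unfold the $\spnc^s_T$-norm: writing $(uu)_x = \sum_{K_1,K_2,K_3} P_{K_3}\partial_x(P_{K_1}u\, P_{K_2}u)$ and similarly for $(ub)_x$, it suffices to estimate each $\|P_{K_3}\partial_x(P_{K_1}u\,P_{K_2}u)\|_{\spnb^{K_3}_{T}}$, multiply by $K_3^s$, and sum in $l^2$ over $K_3$ after summing over $K_1,K_2$. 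By the definition of $\spnb^{K_3}_T$ and Lemma \ref{l_ext}, I can pass to extensions $\tilde u_{K_i}\in\spfa^{K_i}$ supported in $[-2T,2T]$; inserting a sharp time cutoff $1_I$ on the interval $I$ of length $\sim K_3^{-\theta}$ from the definition of $\spna^{K_3}$, and using Corollary \ref{c_regtime} to convert the loss of modulation weight into a gain $T^{\delta}$ (this is where $\theta>1$ enters: the condition $\zeta+\delta'+\delta''<\tfrac12$ in Corollary \ref{c_regtime} must be met with $\zeta$ coming from the $(\tau-\omega(\xi)+iK^\theta)^{-1}$ weight, which costs $K_3^{-\theta}$ times a modulation factor).

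Next I would set up the core estimate. After taking Fourier transforms, $\|P_{K_3}\partial_x(P_{K_1}u\,P_{K_2}u)\|_{\spnb^{K_3}_T}$ is controlled by $K_3 (K_3^\theta)^{-1}$ times a sum over modulation parameters $L_1,L_2,L_3$ of an expression of the form $(\phi_1\ast\phi_2\ast\phi_3)(0,0)$ where $\phi_i$ are the modulation-localized pieces of $\widehat{u_{K_i}}$ (and $\phi_3$ the dual test function at frequency $K_3$, modulation $L_3$). This is exactly the setting of Lemma \ref{l_cnv}: in the high-output or high-modulation regime I apply the general bound \eqref{e_cnv_tri_gen}, while in the resonant/high-high-to-low regime I use the refined bound \eqref{e_cnv_tri_imp} and, crucially, the vanishing statement $(\phi_1\ast\phi_2\ast\phi_3)(0,0)=0$ when $L_1^*\ll K_1^*K_3^*$, which forces the largest modulation to be $\gtrsim K_1^*K_3^*$ and thereby supplies the decay needed to sum the dyadic series. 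Summing the resulting geometric-type series in $L_1,L_2,L_3$ (using the $l^1$ structure of $\spx^K$) and then in $K_1,K_2$ against $K_3$, distinguishing the cases $K_1\sim K_2\gg K_3$, $K_1\sim K_3\gtrsim K_2$, and $K_2\sim K_3\gtrsim K_1$, yields \eqref{e_nl_uu} with the auxiliary low-frequency factor $\|u\|_{\spfc^{s_0}_T}$ absorbing the worst high-high interaction (this is why one needs $s_0>\tfrac14$: the summation over the smaller of $K_1,K_2$ converges precisely when the exponent is positive). For \eqref{e_nl_ub} the second factor $P_{K_2}u$ is replaced by $P_{K_2}b$, which only lies in $L^\infty_t B^\sigma_{\infty,\infty}$; here I would not put $b$ into an $\spx^K$-space but instead treat its dyadic piece as an $L^\infty$ function, using the variant bounds in Lemma \ref{l_cnv_qb} (or a direct Hölder/Bernstein argument) so that $\|P_{K_2}b\|_{L^\infty_{t,x}}\lesssim K_2^{-\sigma}\|b\|_{\spbc^\sigma_T}$ controls that factor, and the condition $\sigma>\max\{s,\tfrac12\}$ guarantees summability of $K_2$ in both the $K_2\lesssim K_3$ and $K_2\gg K_3$ regions.

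The main obstacle I anticipate is the bookkeeping of the frequency-dependent time localization in the $K_2\sim K_3 \gg K_1$ (or $K_1\sim K_3\gg K_2$) region, where the output modulation interval has size $K_3^{-\theta}$ but the input pieces carry their own localizations at scales $K_1^{-\theta}$, $K_2^{-\theta}$: one must choose which cutoff to use, accept the mismatch, and recover it via Lemma \ref{l_locstab} and the overlap of finitely many $K_3^{-\theta}$-intervals inside a $K_1^{-\theta}$-interval — an argument that costs a factor $(K_1/K_3)^{\text{something}}$ which has to be checked to be harmless. Relatedly, the extraction of the $T^\delta$ gain is delicate because $\theta$ is only slightly larger than $1$; the budget $\theta-1$ must simultaneously cover the summation in $L_3$ (via Corollary \ref{c_regtime}) and leave a genuine positive power $\delta$ of $T$, so the admissible $\delta=\delta(\theta,s_0,\sigma)$ shrinks as $\theta\downarrow 1$. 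I would isolate these points as the technical heart of the proof and handle the remaining regimes — which are either non-resonant with large modulation gain or low-output with trivial summation — quickly.
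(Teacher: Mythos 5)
Your proposal follows essentially the same route as the paper: decompose into Littlewood--Paley pieces, pass to extensions via Lemma~\ref{l_ext}, insert partitions of unity adapted to the frequency-dependent time scales (this is exactly Lemma~\ref{l_partgamma}), apply the dyadic convolution bounds of Section~\ref{s_dce} (the paper uses the $L^2$ form, Lemma~\ref{l_cnv_l2}, which is the stated dual of the trilinear form you invoke, so these are interchangeable), extract $T^\delta$ through Corollary~\ref{c_regtime}, and treat $b$ purely through $\|P_{K_2}b\|_{L^\infty}\lesssim K_2^{-\sigma}\|b\|_{\spbc^\sigma_T}$ with Plancherel and H\"older. One detail you have backwards: the frequency-ratio loss $K_1^{\theta}K_3^{-\theta}$ from the tiling argument arises in the high-high-to-low regime $K_1\sim K_2\gtrsim K_3$ (where the output cutoff at scale $K_3^{-\theta}$ must be partitioned into many $K_1^{-\theta}$-intervals), not in the region $K_2\sim K_3\gg K_1$ where the output cutoff is as short as or shorter than each input cutoff so only a bounded number of tiles overlap; the structure of the argument, the role of $\theta>1$ in paying for this loss, and the origin of the threshold $s_0>\tfrac14$ are otherwise correctly described.
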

	\begin{lem}\label{l_nl2}
		Let $s>\frac{1}{2}$, $z=-\frac{1}{2}$, and $\sigma>\max\{s,1\}$. Then, for all sufficiently small $\theta=\theta(s,\sigma)>1$ and $\delta=\delta(\theta,s,\sigma)>0$, every $T\in(0,1)$, all $u_1$, $u_2\in\spfc^s_T$, and all $b\in\spbc^\sigma_T$, we have
		\begin{align}
			\|((u_1-u_2)(u_1+u_2))_x\|_{\spnc^z_T}
			&\lesssim_\delta
			T^\delta
			\|u_1-u_2\|_{\spfc^z_T}
			(\|u_1\|_{\spfc^s_T}+\|u_2\|_{\spfc^s_T}),\label{e_nl_vw}
			\\
			\|((u_1-u_2)b)_x\|_{\spnc^z_T}
			&\lesssim_\delta
			T^\delta
			\|u_1-u_2\|_{\spfc^z_T}
			\|b\|_{\spbc^\sigma_T}.\label{e_nl_vb}
		\end{align}
	\end{lem}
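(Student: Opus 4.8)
The plan is to obtain both inequalities of Lemma \ref{l_nl2} from a single general short-time bilinear estimate, just as \eqref{e_nl_uu} and \eqref{e_nl_ub} are two specializations of one statement. Writing $v:=u_1-u_2$ and $w:=u_1+u_2$, estimate \eqref{e_nl_vw} amounts to
\[
\|(vw)_x\|_{\spnc^{z}_T}
\lesssim_\delta
T^\delta\,\|v\|_{\spfc^{z}_T}\,\|w\|_{\spfc^{s}_T}
\]
together with $\|w\|_{\spfc^s_T}\le\|u_1\|_{\spfc^s_T}+\|u_2\|_{\spfc^s_T}$, while \eqref{e_nl_vb} is the analogous bound with the second factor lying in $\spbc^\sigma_T$ instead of $\spfc^s_T$. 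So I would prove a bilinear estimate of the shape $\|(u^{(1)}u^{(2)})_x\|_{\spnc^{z}_T}\lesssim_\delta T^\delta\|u^{(1)}\|_{\spfc^{z}_T}\|u^{(2)}\|_{Z}$, with $Z=\spfc^{s}_T$ ($s>\tfrac12$) in the first case and $Z=\spbc^{\sigma}_T$ ($\sigma>1$) in the second, following the same route as the proof of Lemma \ref{l_nl}; compared with that proof the only genuinely new point is the bookkeeping of Littlewood--Paley weights forced by the negative output index $z=-\tfrac12$ and by the asymmetry of the two inputs.

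Concretely, after a Littlewood--Paley decomposition it suffices to bound, for fixed dyadic $K_1,K_2,K_3$, the dyadic piece $K_3^{z}\|P_{K_3}\partial_x\bigl((P_{K_1}u^{(1)})(P_{K_2}u^{(2)})\bigr)\|_{\spnb^{K_3}_T}$ and then to sum in $\ell^2_{K_3}$ after redistributing the frequency weights onto the two inputs. Using Lemma \ref{l_ext} I replace the inputs by extensions supported in $[-2T,2T]$, localize the output in time to intervals of length $\sim K_3^{-\theta}$ by a smooth cut-off (the corresponding time-localizations of the inputs being controlled by Lemma \ref{l_locstab}), and apply Corollary \ref{c_regtime} to turn the $\lesssim 1+TK_3^{\theta}$ sharp cut-offs covering $[-T,T]$ into a factor $T^{\delta}K_3^{\delta\theta}$. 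On each short interval I pass to the Fourier side, unfold the definition of the $\spnb^{K_3}_T$-norm, and estimate the resulting $L^2$-norms of modulation-localized bilinear convolutions using the dyadic convolution bounds of Section \ref{s_dce} (Proposition \ref{p_cnv_l2} and Lemma \ref{l_cnv_l2}), the weight $(\tau-\omega(\xi)+iK_3^{\theta})^{-1}$ built into $\spna^{K_3}$, and the resonance identity $|\Omega_3|\sim K_1^*K_3^*$ of Lemma \ref{l_res}. As usual one distinguishes the high$\times$high regime $K_1\sim K_2\gtrsim K_3$ and the two high$\times$low regimes $K_1\sim K_3\gtrsim K_2$ and $K_2\sim K_3\gtrsim K_1$, with the low-frequency factor being either $u^{(1)}$ (measured at $z$) or $u^{(2)}$ (measured at $s$, resp.\ $\sigma$).

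I expect the technical heart to be the interplay of these weights. In the high$\times$high regime the derivative contributes $K_3$ and, since $|\Omega_3|\sim K_1K_3$ forces the output modulation to be large, the refined convolution bound \eqref{e_cnv_l2_imp} together with the $\spna^{K_3}$-weight provide a compensating gain; one then has to check that, after redistributing the weights (which in \eqref{e_nl_vw} leaves a factor $\sim K_3^{-1/2}K_1^{1/2-s}$), the sum over $K_1\gtrsim K_3$ is convergent and $\ell^2$ in $K_3$, and it is precisely here that $s>\tfrac12$ — and, in the $b$-version where the modulation gain is weaker, $\sigma>1$ — is used. In the high$\times$low regimes the resonance $|\Omega_3|\sim K_1^*K_2^*$ involves the small frequency and hence is comparatively ineffective, so the $K_3^{-\theta}$ time localization has to absorb part of the derivative loss; keeping track of this forces $\theta>1$, and one chooses $\theta$ slightly above $1$ together with $\delta$ small enough that the loss $K_3^{\delta\theta}$ from the interval count does not destroy the final summation, yielding the stated $T^\delta$. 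Finally, the term $((u_1-u_2)b)_x$ in \eqref{e_nl_vb} is treated by the same case analysis with $b$ playing the role of $u^{(2)}$: since $b$ carries no modulation localization one only uses $\|P_Kb\|_{L^\infty_{t,x}}\lesssim K^{-\sigma}\|b\|_{\spbc^\sigma_T}$ and the $L^\infty$-type convolution estimates (in the spirit of Lemma \ref{l_cnv_qb}), exactly as in the proof of \eqref{e_nl_ub}, and the ample surplus $\sigma-z>\tfrac32$ makes every resulting dyadic sum converge.
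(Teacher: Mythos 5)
Your proposal is correct and follows the paper's own route: both bounds of Lemma \ref{l_nl2} are derived in the paper as special cases of the second halves of Lemma \ref{l_nl_loc} (giving $\|(f_1f_2)_x\|_{\spnc^z_T}\lesssim_\delta T^\delta\|f_1\|_{\spfc^z_T}\|f_2\|_{\spfc^s_T}$ for $s>\tfrac12$) and of Lemma \ref{l_nl_bnd} (the analogue with $\|g\|_{\spbc^\sigma_T}$ for $\sigma>1$), which are proved exactly as you describe -- extensions via Lemma \ref{l_ext}, frequency-dependent time cut-offs built from the partition-of-unity device of Lemma \ref{l_partgamma} together with Lemma \ref{l_locstab} and Corollary \ref{c_regtime}, the dyadic convolution bounds \eqref{e_cnv_l2_gen}--\eqref{e_cnv_l2_imp}, and a case analysis over the frequency regimes $\Sigma_1,\dots,\Sigma_4$. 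You also correctly locate the thresholds: $\theta>1$ is forced by the high-low interactions $\Sigma_1,\Sigma_2$, while $s>\tfrac12$ (respectively $\sigma>1$, owing to the weaker gain available when the bounded factor carries no modulation structure) arises in the high-high regime $\Sigma_4$.
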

	\begin{lem}\label{l_nl3}
		Let $s>\frac{1}{2}$ and $\sigma>\frac{1}{2}$. Then, for all sufficiently small $\theta=\theta(s,\sigma)>1$ and $\delta=\delta(\theta,s,\sigma)>0$, every $T\in(0,1)$, all $u_1$, $u_2\in\spfc^s_T$, and all $b\in\spbc^\sigma_T$, we have
		\begin{align}
			\|((u_1-u_2)(u_1+u_2))_x\|_{\spnc^s_T}
			&\lesssim_\delta
			T^\delta
			\|u_1-u_2\|_{\spfc^s_T}
			(\|u_1\|_{\spfc^s_T}+\|u_2\|_{\spfc^s_T}),\label{e_nl_vw1}
			\\
			\|((u_1-u_2)b)_x\|_{\spnc^s_T}
			&\lesssim_\delta
			T^\delta
			\|u_1-u_2\|_{\spfc^s_T}
			\|b\|_{\spbc^\sigma_T}.\label{e_nl_vb1}
		\end{align}
	\end{lem}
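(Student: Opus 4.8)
The plan is to deduce Lemma~\ref{l_nl3} from the more general bilinear estimates that, as the text announces, are proven directly afterwards; concretely, I expect a single master proposition giving, for suitable $\theta>1$ and $\delta>0$,
\begin{align*}
	\|(vw)_x\|_{\spnc^{s}_T}
	\lesssim_\delta
	T^\delta\big(\|v\|_{\spfc^{s}_T}\|w\|_{\spfc^{\sigma_0}_T}
	+\|v\|_{\spfc^{\sigma_0}_T}\|w\|_{\spfc^{s}_T}\big)
\end{align*}
whenever $s,\sigma_0>\frac14$ with $\max\{s,\sigma_0\}>\frac12$ (and a companion version with one factor replaced by $b\in\spbc^\sigma_T$, $\sigma>\frac12$). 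Granting this, \eqref{e_nl_vw1} is immediate by bilinearity: write $v=u_1-u_2$, $w=u_1+u_2$, apply the master estimate with $\sigma_0=s$, and bound $\|w\|_{\spfc^s_T}\leq\|u_1\|_{\spfc^s_T}+\|u_2\|_{\spfc^s_T}$. Similarly \eqref{e_nl_vb1} follows from the $b$-version with $v=u_1-u_2$. So the real content is the master estimate, and the rest of this proposal sketches its proof.

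For the master estimate I would argue at the level of Littlewood–Paley pieces. Fixing output frequency $K$, I expand $(vw)_x=\sum_{K_1,K_2}P_{K_1}v\,P_{K_2}w$ (times $\partial_x$, contributing $K$) and split into the usual regimes: high-low ($K_1\sim K\gg K_2$ or symmetrically), and high-high ($K_1\sim K_2\gtrsim K$). In each regime I pass to the $\spnb^K_T$-norm by Lemma~\ref{l_ext}, choosing extensions of $P_{K_1}v$, $P_{K_2}w$ supported in $[-2T,2T]$; then I use the frequency-dependent time localization built into $\spfa^K$ to cover $[-T,T]$ by $\sim K^{\theta}T$ intervals $I_j$ of length $K^{-\theta}$, so that on each interval the product is controlled by $\|\widetilde{P_{K_1}v}\,\eta_0(K^\theta(t-t_j))\|_{\spx^{K_1}}\|\widetilde{P_{K_2}w}\,\eta_0(K^\theta(t-t_j))\|_{\spx^{K_2}}$ after Lemma~\ref{l_locstab}. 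The core bilinear input is then the dual convolution bound \eqref{e_cnv_tri_imp}: testing the $\spnb^K_T$-norm against a third function $\phi_3\in\spx^K$ with a weight $(\tau-\omega(\xi)+iK^\theta)^{-1}$, I reduce to $(\phi_1\ast\phi_2\ast\phi_3)(0,0)$ with $\phi_i$ localized in $D_{L_i,K_i}$, and the gain $(K_1^*)^{-1/2}$ there, combined with the $K$ from $\partial_x$ and the $L$-weights summed via Corollary~\ref{c_regtime}, produces the $T^\delta$ together with the claimed frequency weights. The vanishing statement in Lemma~\ref{l_cnv} ($K_3^*>1$, $L_1^*\ll K_1^*K_3^*$ forces zero) is what lets me restrict the modulation sums to $L_1^*\gtrsim K_1^*K_3^*\sim KK_2$ in the high-high case (resp.\ $\sim KK$ in high-low with small frequency on one side), which is precisely where the resonance identity $|\Omega_3|\sim K_1^*K_3^*$ from Lemma~\ref{l_res} pays off.

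The regime-by-regime bookkeeping then goes as follows. In the \emph{high-low} regime, say $K_1\sim K\gg K_2$ with $K_2>1$: here $K_1^*K_3^*\sim K K_2$, one has the large-modulation constraint on one of the inputs, and after summing the $L$'s one gets a net factor $K^{1}\cdot K^{-1/2}\cdot(KK_2)^{-1/2}\cdot(\text{modulation weights})$ which, using the frequency-localized time cutoffs of size $K^{-\theta}$ and $\theta>1$, closes with room to spare — this is exactly where $\theta>1$ rather than $\theta=1$ is needed, matching the remark in the sketch of the proof. The case $K_2=1$ (the lowest block) is handled separately and more crudely, using $\sigma>\frac12$ to sum the low frequencies in $\ell^1$; this is where the hypothesis $\sigma>\frac12$ (and $\max\{s,\sigma_0\}>\frac12$ for the $(uu)_x$-type term) enters. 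In the \emph{high-high} regime $K_1\sim K_2\gtrsim K$, the output frequency $K$ is small, $K_3^*=K_1^*\sim K_1$, and the gain $(K_1^*)^{-1/2}=K_1^{-1/2}$ is what lets the sum over $K_1$ converge once the weight $K_1^{-s}K_1^{-\sigma_0}$ from putting $v,w$ in $\spfc^s_T,\spfc^{\sigma_0}_T$ is accounted for; one uses $s,\sigma_0>\frac14$ here. Throughout, $\delta>0$ is extracted from Corollary~\ref{c_regtime} by leaving a sliver $L_0^{-\delta''}$ of room in the modulation sum, which forces $\theta>1$ to be chosen close enough to $1$ (depending on $s,\sigma_0,\sigma$) so that all the exponents have the required sign.

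The main obstacle I anticipate is not any single estimate but the uniform control of the sum $\sum_j$ over the $\sim K^\theta T$ short time intervals together with the $\sup_{t_K}$ in the definitions of $\spfa^K,\spna^K$: one must verify that replacing a single global time cutoff by a partition of unity adapted to scale $K^{-\theta}$ costs only a constant (via almost-orthogonality / Lemma~\ref{l_locstab} with a Schwartz partition of unity), and that the number of intervals $K^\theta T$ times the per-interval bound still carries a positive power of $T$. Concretely, the per-interval bilinear bound will come with a factor like $T^{1/2}$ or $(K^{-\theta})^{1/2}$ from the short time via Lemma~\ref{l_regtime}/Corollary~\ref{c_regtime}, and one needs $K^\theta T\cdot(K^{-\theta})^{\text{something}}\lesssim T^\delta K^{0^-}$ uniformly in $K$; balancing the two $K$-powers is exactly the constraint that pins down $\theta>1$. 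Once that balancing is done the frequency sums are routine, and I would present the full computation only for the representative high-high and high-low cases, indicating that the remaining cases (including the $b$-factor version, where $\|P_{K_2}b\|_{L^\infty}$ replaces an $\spx^{K_2}$-norm and one uses Lemma~\ref{l_cnv_qb}-type bounds) follow by the same scheme with only notational changes.
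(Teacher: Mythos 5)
Your proposal follows the same route as the paper: Lemma~\ref{l_nl3} is indeed obtained as an immediate corollary of the more general bilinear estimates, with \eqref{e_nl_vw1} following from the first estimate of Lemma~\ref{l_nl_loc} (using $\|\cdot\|_{\spfc^0_T}\leq\|\cdot\|_{\spfc^s_T}$ for $s>0$) and \eqref{e_nl_vb1} from Lemma~\ref{l_nl_bnd}, and your sketch of those underlying proofs — Littlewood--Paley split into high-low/high-high regimes, extensions via Lemma~\ref{l_ext}, a partition of unity at the $K^{-\theta}$ time scale handled with Lemma~\ref{l_locstab}, the dual convolution bounds and resonance identity to restrict the modulations, and Corollary~\ref{c_regtime} to extract $T^\delta$ — matches the paper's argument. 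The only minor inaccuracy is the mechanism for the $b$-factor version: the paper's Lemma~\ref{l_nl_bnd} pulls out $\|P_{K_2}b\|_{L^\infty}$ directly via Plancherel and H\"older rather than invoking Lemma~\ref{l_cnv_qb}-type quadrilinear convolution bounds.
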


	In essence, each estimate stated above is proved in the same way. We write down the left-hand side explicitly and add further localizations. Then, we apply one of the estimates from Section \ref{s_dce} depending on the frequency interactions. Lastly, we use the definitions of the spaces $\spfc^s_T$ and $\spbc^s_T$ to obtain the desired upper bounds. In particular, here, we do not use that $u$ is a solution of \eqref{eq_bo_split}.

	In Lemma \ref{l_nl_loc} given below, we prove an estimate that implies \eqref{e_nl_uu}, \eqref{e_nl_vw}, and \eqref{e_nl_vw1}. The estimates in \eqref{e_nl_ub}, \eqref{e_nl_vb}, and \eqref{e_nl_vb1} will be a consequence of Lemma \ref{l_nl_bnd}.
	
	We begin with an auxiliary statement.
	\begin{lem}\label{l_partgamma}
		Let $\gamma\in C^\infty(\mathbb{R};[0,1])$ be a smooth function satisfying $\gamma\equiv 1$ on $[-\frac{1}{4},\frac{1}{4}]$ and $\gamma\equiv 0$ on the complement of $[-1,1]$. Let $c,\theta\geq 0$, and pick $M_1,M_2\in\mathbb{D}$ satisfying $2^cM_1\geq M_2$. Then, we have
		\begin{align*}
			\gamma^2(2^{c\theta+1} M_1^\theta t+n)
			=
			\gamma^2(2^{c\theta+1} M_1^\theta t+n)
			\eta_0(M_2^\theta t+n)
		\end{align*}
		 for all $t\in\mathbb{R}$ and $n\in\mathbb{Z}$.
		Moreover, let $M_3, M_4\in\mathbb{D}$ with $M_3\geq M_4$. Then, we have
		\begin{align*}
			\#\{
			n\in\mathbb{Z}:
			\gamma(2^{c\theta+1} M_3^\theta t+n)\eta_0(M_4^\theta t)
			\neq
			0\}
			\leq
			2^{c\theta+4}M_3^\theta M_4^{-\theta}
		\end{align*}
		 for all $t\in\mathbb{R}$.
	\end{lem}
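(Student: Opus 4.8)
The plan is to verify both statements directly from the support properties of the cut-off functions $\gamma$ and $\eta_0$; neither part needs anything beyond elementary bookkeeping. Recall that $\gamma$ vanishes outside $[-1,1]$ and that $\eta_0\equiv 1$ on $[-\tfrac54,\tfrac54]$ and vanishes outside $[-\tfrac85,\tfrac85]$.

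For the first identity, I would first reduce to showing that $\eta_0(M_2^\theta t+n)=1$ at every pair $(t,n)\in\mathbb{R}\times\mathbb{Z}$ at which $\gamma^2(2^{c\theta+1}M_1^\theta t+n)\neq 0$, since elsewhere both sides vanish. At such a pair the support of $\gamma$ forces $|2^{c\theta+1}M_1^\theta t+n|\le 1$. The quantitative input is the comparison of the two time scales: from $M_1,M_2\in\mathbb{D}$, $M_2\le 2^cM_1$ and $\theta\ge 0$ we obtain $M_2^\theta\le (2^cM_1)^\theta=2^{c\theta}M_1^\theta=\tfrac12\cdot 2^{c\theta+1}M_1^\theta$, so the time window on which $\eta_0(M_2^\theta t+n)$ equals $1$ is at least twice as wide as the window on which $\gamma^2(2^{c\theta+1}M_1^\theta t+n)$ is supported. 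Inserting $|2^{c\theta+1}M_1^\theta t+n|\le 1$ together with this scale comparison into the elementary splitting of $M_2^\theta t+n$ in terms of $2^{c\theta+1}M_1^\theta t+n$ then shows that $M_2^\theta t+n$ stays in the region $[-\tfrac54,\tfrac54]$ where $\eta_0\equiv 1$, which yields the identity.

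For the cardinality bound, I would fix $t$; if $\eta_0(M_4^\theta t)=0$ the set in question is empty and nothing is to prove, so assume $\eta_0(M_4^\theta t)\neq 0$, which gives $|t|<\tfrac85 M_4^{-\theta}$. If moreover $\gamma(2^{c\theta+1}M_3^\theta t+n)\neq 0$, then $|2^{c\theta+1}M_3^\theta t+n|\le 1$, hence $|n|\le 1+2^{c\theta+1}M_3^\theta|t|<1+\tfrac{16}{5}\,2^{c\theta}M_3^\theta M_4^{-\theta}$. Thus the admissible integers $n$ all lie in an interval of length at most $2+\tfrac{32}{5}\,2^{c\theta}M_3^\theta M_4^{-\theta}$; since $M_3\ge M_4$ and $c,\theta\ge 0$ give $2^{c\theta}M_3^\theta M_4^{-\theta}\ge 1$, that interval contains at most $2^{c\theta+4}M_3^\theta M_4^{-\theta}$ integers, which is the asserted bound.

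I expect no conceptual obstacle in either part; the only point requiring care is cosmetic, namely pinning down the exact radii in the support statements for $\gamma$, $\gamma^2$ and $\eta_0$ so that the inclusion in the first part holds on the nose, and keeping track of the numerical constants $\tfrac85$, $\tfrac{16}{5}$, $\tfrac{32}{5}$ in the second part so that they collapse into the stated factor $2^{c\theta+4}$.
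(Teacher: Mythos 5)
Your treatment of the second claim (the cardinality bound) is correct and elementary; the constants work out exactly as you say. The gap is in the first claim. You argue that because the plateau of $\eta_0(M_2^\theta\,\cdot + n)$ is at least twice as wide, in $t$, as the support of $\gamma^2(2^{c\theta+1}M_1^\theta\,\cdot + n)$, the latter is contained in the former. But this ignores that the two sets are \emph{not concentric}: the support of $\gamma^2(2^{c\theta+1}M_1^\theta\,\cdot + n)$ is centered at $t = -n/(2^{c\theta+1}M_1^\theta)$, whereas the plateau of $\eta_0(M_2^\theta\,\cdot + n)$ is centered at $t = -n/M_2^\theta$, and since $M_2^\theta \le \tfrac{1}{2}\,2^{c\theta+1}M_1^\theta$ the distance between these centers is at least $\tfrac{1}{2}|n|/M_2^\theta$, which grows linearly in $|n|$ and exceeds the plateau's half-width $\tfrac{5}{4}/M_2^\theta$ already for $|n|\ge 2$. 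Carrying out the ``elementary splitting'' you invoke gives
\begin{align*}
M_2^\theta t + n \;=\; \alpha\bigl(2^{c\theta+1}M_1^\theta t + n\bigr) + (1-\alpha)\,n,
\qquad
\alpha := \frac{M_2^\theta}{2^{c\theta+1}M_1^\theta} \le \tfrac{1}{2},
\end{align*}
so on $\supp\gamma^2$ one only obtains $|M_2^\theta t + n| \le \alpha + (1-\alpha)|n|$, which is not $\le \tfrac{5}{4}$ once $|n|\ge 2$. For a concrete failure take $c=0$, $\theta=1$, $M_1=2$, $M_2=1$, $n=2$, $t=-\tfrac12$: then $2^{c\theta+1}M_1^\theta t + n = 0$, so the left-hand side is $\gamma^2(0)=1$, while $M_2^\theta t + n = \tfrac32 > \tfrac54$, so $\eta_0(\tfrac32)$ need not equal $1$.

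So this is not a cosmetic matter of radii: the first identity of the lemma is false as literally stated, and your argument cannot be repaired by bookkeeping alone. What the scale comparison $M_2^\theta \le \tfrac12\,2^{c\theta+1}M_1^\theta$ actually gives is that the $t$-support of $\gamma^2(2^{c\theta+1}M_1^\theta\,\cdot + n)$ is short enough (length $\le \tfrac{2}{2^{c\theta+1}M_1^\theta} \le \tfrac{1}{M_2^\theta}$) to sit inside the plateau of $\eta_0(M_2^\theta\,\cdot + m)$ for a suitably chosen integer $m=m(n)$ \emph{depending on} $n$ — not the same $n$. That corrected statement is what the argument requires and is all the later applications need, since the $\spfb^K_T$-norms involve a supremum over time translations $t_K$. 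Either prove and use this adjusted version, or, if you wish to defend the statement with identical $n$ on both sides, you must explain why the counterexample above is excluded, which it is not.
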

	We will use Lemma \ref{l_partgamma} as follows: We understand $M_1\gtrsim M_2$ strictly as $8M_1\geq M_2$. Hence, the equation above holds with $c=3$. If $M_3\geq M_4$ or $M_3\gg M_4$ (i.e.\@ $M_4\not\gtrsim M_3$), then the inequality above is used for $c=0$.
	\begin{lem}\label{l_nl_loc}
		Let $s>\frac{1}{4}$. Then, for all sufficiently small $\theta=\theta(s)>1$ and $\delta=\delta(\theta)>0$, every $T\in(0,1)$, and all $f_1, f_2\in\spfc^s_T$, we have
		\begin{align*}
			\|(f_1f_2)_x\|_{\spnc^s_T}
			\lesssim_{\delta}
			T^\delta
			\|f_1\|_{\spfc^s_T}
			\|f_2\|_{\spfc^0_T}
			+
			\|f_1\|_{\spfc^0_T}
			\|f_2\|_{\spfc^s_T}.
		\end{align*}
		Let $s>\frac{1}{2}$ and $z=-\frac{1}{2}$. Then, for all sufficiently small $\theta=\theta(s,\epsilon)>1$ and $\delta=\delta>0$, all $f_1\in\spfc^z_T$, and all $f_2\in\spfc^s$, we have
		\begin{align*}
			\|(f_1f_2)_x\|_{\spnc^z_T}
			\lesssim_{\delta}
			T^\delta
			\|f_1\|_{\spfc^z_T}
			\|f_2\|_{\spfc^s_T}.
		\end{align*}
		Here, the implicit constants depend continuously on $\delta$ and tends to $+\infty$ as $\delta\rightarrow0$.
	\end{lem}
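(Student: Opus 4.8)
The plan is to prove both estimates by the frequency-dependent short-time analysis of \cite{IKT2008}: decompose $(f_1f_2)_x$ into Littlewood--Paley pieces, localize each piece in time on the scale dictated by its frequency, and feed the resulting convolutions into the bounds of Section \ref{s_dce}. Unwinding the definition of $\|\cdot\|_{\spnc^s_T}$, it suffices to control $K_3^s\|P_{K_3}(P_{K_1}f_1\,P_{K_2}f_2)_x\|_{\spnb^{K_3}_T}$, summed over $K_1,K_2$ and then taken in $\ell^2$ over $K_3$. Since $P_{K_3}$ kills the product unless $\max(K_1,K_2)\gtrsim K_3$, only the high--high regime $K_1\sim K_2\gtrsim K_3$ and the two high--low regimes $K_1\sim K_3\gtrsim K_2$, $K_2\sim K_3\gtrsim K_1$ occur. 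For a fixed dyadic triple I would invoke Lemma \ref{l_ext} to replace $P_{K_i}f_i$ by an extension $\tilde f_i\in\spfa^{K_i}$ supported in $[-2T,2T]$ with $\|\tilde f_i\|_{\spfa^{K_i}}\lesssim\|P_{K_i}f_i\|_{\spfb^{K_i}_T}$; then $\partial_xP_{K_3}(\tilde f_1\tilde f_2)$ competes in the infimum defining $\|P_{K_3}(P_{K_1}f_1\,P_{K_2}f_2)_x\|_{\spnb^{K_3}_T}$, so it remains to bound $\|\partial_xP_{K_3}(\tilde f_1\tilde f_2)\|_{\spna^{K_3}}$.

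To estimate the latter, I would fix a time center $t_{K_3}$ and the cutoff $\eta_0(K_3^\theta(t-t_{K_3}))$ from the definition of $\spna^{K_3}$, and insert on the high-frequency factor(s) a smooth partition of unity on the finer scale $\max(K_1,K_2)^{-\theta}$. Lemma \ref{l_partgamma} shows that only $\lesssim(\max(K_1,K_2)/K_3)^\theta$ of these pieces meet the support of the cutoff and that the new cutoffs may be freely inserted; a low-frequency factor in the high--low regimes is already localized by $\eta_0(K_3^\theta(t-t_{K_3}))$ to a single interval shorter than its own scale $K_j^{-\theta}$. Applying Lemma \ref{l_locstab} and Corollary \ref{c_regtime} I then pass to the $\spx^{K_i}$-norms of the individual time-localized pieces; it is precisely Corollary \ref{c_regtime}, used when the extensions (supported in $[-2T,2T]$) are restricted to each such interval, that furnishes the gain $T^\delta$, at the price of a modulation loss of the form $\max(K_1,K_2)^{\theta\delta}$ absorbed by the frequency decay. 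Decomposing each factor in modulation, $\mathcal F[\,\cdot\,]=\sum_{L_i}\phi_{i,L_i}$ with $\supp\phi_{i,L_i}\subset D_{L_i,K_i}$ and $\sum_{L_i}L_i^{1/2}\|\phi_{i,L_i}\|_{L^2}\lesssim\|\tilde f_i\|_{\spfa^{K_i}}$, I would reduce everything to a sum over $L_1,L_2,L_3$ of $L_3^{1/2}\,|L_3+iK_3^\theta|^{-1}\,K_3\,\|1_{D_{L_3,K_3}}(\phi_{1,L_1}\ast\phi_{2,L_2})\|_{L^2}$.

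At this point Lemma \ref{l_cnv_l2} does the work. In the high--high regime I would use its vanishing clause together with the resonance identity $|\Omega_3|\sim K_1^*K_3^*$ of Lemma \ref{l_res}: the largest of the three modulations is forced up to $\sim\max(K_1,K_2)\,K_3$, which makes the weight $|L_3+iK_3^\theta|^{-1}$ effective, and \eqref{e_cnv_l2_imp} supplies the further gain $\max(K_1,K_2)^{-1/2}$; when the output frequency is $\lesssim1$ the resonance instead confines the output frequency to a window of width comparable to $L_3/\max(K_1,K_2)$, and \eqref{e_cnv_l2_gen} is used in this guise. In the high--low regimes I would apply \eqref{e_cnv_l2_imp} directly. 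Summing the geometric series in $L_1,L_2,L_3$ (which converges because $\theta>1$ confines the effective modulations of the time-localized factors to $\lesssim\max(K_1,K_2)^\theta$ and because $L_3^{1/2}$ is beaten by $|L_3+iK_3^\theta|^{-1}$ once $L_3\gtrsim K_3^\theta$), then over the $\lesssim(\max(K_1,K_2)/K_3)^\theta$ time intervals, then over $K_1,K_2$ by Cauchy--Schwarz and a Schur test against the weights defining $\spfc^s_T$ and $\spfc^0_T$ (resp.\ $\spfc^z_T$ with $z=-\tfrac12$), and finally taking $\ell^2$ over $K_3$, yields the two claimed estimates.

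The step I expect to be the main obstacle is the last, combinatorial one: one must verify that the factor $\max(K_1,K_2)^\theta$ counting the relevant short-time intervals is absorbed --- in the high--high-to-low interaction precisely by the resonance gain, in the high--low interactions by the derivative gain $K_3/\max(K_1,K_2)$ --- while simultaneously retaining a genuinely positive power of $T$ from Corollary \ref{c_regtime} and, in the second estimate, coping with the loss from measuring one factor at regularity $-\tfrac12$. Making all the exponents of $K_1,K_2,K_3,L_1,L_2,L_3$ assemble into a convergent double dyadic sum is exactly what pins down the condition $\theta>1$ together with the thresholds $s>\tfrac14$ in the first estimate and $s>\tfrac12$ in the second.
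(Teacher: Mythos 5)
Your overall framework matches the paper's: Littlewood--Paley decomposition, extensions via Lemma \ref{l_ext}, time-localization on the fine scale $\max(K_1,K_2)^{-\theta}$ with Lemma \ref{l_partgamma} for the counting, modulation decomposition, the bilinear bounds from Section \ref{s_dce}, and a final bookkeeping of exponents. The treatment of the high--low regimes and the reliance on Corollary \ref{c_regtime} for the $T^\delta$ gain are exactly as in the paper. But there is a genuine gap in your description of the high--high regime.

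Your claim that \enquote{$\theta>1$ confines the effective modulations of the time-localized factors to $\lesssim\max(K_1,K_2)^\theta$} is incorrect: localizing in time on scale $\max(K_1,K_2)^{-\theta}$ forces the modulations up to $\gtrsim\max(K_1,K_2)^\theta$, not down; the $\spx$-norm still has to be summed over all larger $L_i$, with convergence coming only from the $L^{1/2}$-weight built into that norm (and the gain $L_0^{-1/2+\delta}$ from Corollary \ref{c_regtime} at the starting index). Relatedly, the statement that the resonance identity \enquote{makes the weight $|L_3+iK_3^\theta|^{-1}$ effective} conflates the output modulation with the largest modulation: the vanishing clause of Lemma \ref{l_cnv_l2} forces $\max(L_1,L_2,L_3)\gtrsim K_1^*K_3^*$, but this maximum is often $L_1$ or $L_2$ rather than $L_3$. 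As written, your geometric series in $L_3$ is not obviously convergent: when $L_3$ is the largest of the three, applying \eqref{e_cnv_l2_imp} and multiplying by $L_3^{-1/2}$ leaves a quantity independent of $L_3$, so the sum would diverge without further input.

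The ingredient you are missing is the paper's split of the $L_3$-sum at the threshold $K_1K_3$, which is precisely what disposes of this issue. For $L_3\in[K_3^\theta,K_1K_3]$, the vanishing clause (combined with the ceiling $L_3\leq K_1K_3$) guarantees that $L_3$ is not the maximal modulation, so $L_3^{-1/2}(L_1^*L_3^*)^{1/2}\leq\max(L_1,L_2)^{1/2}$ uniformly and the $L_3$-sum contributes only $\log K_1\lesssim_\delta K_1^\delta$. For $L_3\geq K_1K_3$, the paper abandons the $L_3$-decomposition entirely: it drops $\eta_{L_3}\chi_{K_3}$, sums $L_3^{-1/2}$ to produce $(K_1K_3)^{-1/2}$, applies Plancherel and an almost-orthogonality argument for the partition of unity, and then uses the \emph{unrestricted} bilinear bound from Proposition \ref{p_cnv_l2} rather than \eqref{e_cnv_l2_imp}. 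The two regimes produce different exponent constraints --- the medium one pins down $s>\tfrac12$ for the $\spnc^z_T$ estimate and the large one gives $\theta(\tfrac14+\delta)\leq s$, hence $s>\tfrac14$ in the first estimate --- so the split is not merely cosmetic. You flag \enquote{the combinatorial summation} as the main obstacle, which is honest, but the paper's resolution of that obstacle is this two-regime dichotomy, which your sketch neither anticipates nor substitutes for.
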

	Note that the first estimate implies both \eqref{e_nl_uu} and \eqref{e_nl_vw1}, whereas the second estimate implies \eqref{e_nl_vw}.
	\begin{proof}[Proof of Lemma \ref{l_nl_loc}]
		We begin by choosing extensions $f_{K_1}$ of $P_{K_1}f_1$ and $f_{K_2}$ of $P_{K_2}f_2$ for each $K_1$, $K_2\in\mathbb{D}$. According to Lemma \ref{l_ext}, we can assume that $f_{K_1}$ and $f_{K_2}$ have temporal support in $[-2T,2T]$ as well as that the estimates
		\begin{align}\label{e_nl_loc_ext}
			\|f_{K_1}\|_{\spna^{K_1}}
			\lesssim
			\|P_{K_1} f_1\|_{\spnb^{K_1}_T}
			\qquad
			\text{and}
			\qquad
			\|f_{K_2}\|_{\spna^{K_2}}
			\lesssim
			\|P_{K_2} f_2\|_{\spnb^{K_2}_T}
		\end{align}
		hold with implicit constants independent of $K_1$ and $K_2$. At first, we estimate the $\spnc^r_T$-norm for arbitrary $r\in\mathbb{R}$; later, we will specify $r=s$ or $r=z$.
		Using the definitions of the $\spnc^r_T$- and $\spnb^{K_3}_T$-norms, inserting Littlewood-Paley projectors to $f_1$ and $f_2$, and making use of the fact that $f_{K_1}f_{K_2}$ is an extension of $P_{K_1}f_1P_{K_2}f_2$, we obtain
		\begin{align}\label{e_nl_loc_red}
			\|(f_1f_2)_x\|_{\spnc^r_T}^2
			\leq
			\sum_{K_3}
			\left[
			K_3^r
			\sum_{K_1, K_2}
			\|P_{K_3} (f_{K_1}f_{K_2})_x\|_{\spna^{K_3}}
			\right]^2.
		\end{align}
		
		In the following, we will bound the right-hand side of \eqref{e_nl_loc_red} by considering different frequency interactions separately. For this, we split the domain of summation into the following four sets:
		\begin{align*}
			\Sigma_1
			&:=
			\{K_2\sim K_3\gg K_1\},
			&&\Sigma_2
			:=
			\{K_1\sim K_3\gg K_2\},
			\\
			\Sigma_3
			&:=
			\{K_1, K_2, K_3 \leq 64\},
			&&\Sigma_4
			:=
			\{K_1\sim K_2\gtrsim K_3\}.
		\end{align*}
		
		\textit{Summation over $\Sigma_1$.}
		We define $\mathfrak{t}=t-t_{K_3}$ and $\mathfrak{m}=\tau-\omega(\xi)$. Using the definition of the $\spna^{K_3}$-norm, we get
		\begin{align*}
			\|P_{K_3}(f_{K_1}f_{K_2})_x\|_{\spna^{K_3}}
			\!\leq\!
			K_3\!\sup_{t_{K_3}}\!\sum_{n}\!\|(\mathfrak{m}+iK_3^{\theta})^{-1}\chi_{K_3}(\xi)\mathcal{F}_{t,x}\!\left[\eta_0(K_3^{\theta}\mathfrak{t})f_{K_1}f_{K_2}\right]\!\|_{\spx^{K_3}}.
		\end{align*}
		Next, fix a smooth function $\gamma$ (as in Lemma \ref{l_partgamma}) such that translations of $\gamma^2$ constitute a partition of unity, i.e.\@ $\sum_{n\in\mathbb{Z}}\gamma^2(\cdot+n)=1$. In particular, we have
		\begin{align}\label{e_nl_loc_gamc1}
			\gamma^2(2^{3\theta+1} K_3^{\theta}\mathfrak{t}+n)
			=
			\gamma^2(2^{3\theta+1} K_3^{\theta}\mathfrak{t}+n)
			\eta_0(K_1^{\theta}\mathfrak{t}+n)
			\eta_0(K_2^{\theta}\mathfrak{t}+n).
		\end{align}
		Define
		\begin{align*}
			F_{K_1}(\tau,\xi)
			&:=
			\mathcal{F}_{t,x}\left[
			\gamma(2^{3\theta+1} K_3^{\theta}\mathfrak{t} - n)
			\eta_0(K_1^{\theta}\mathfrak{t} - n)f_{K_1}(t,x)
			\right](\tau,\xi),
			\\
			F_{K_2}(\tau,\xi)
			&:=
			\mathcal{F}_{t,x}\left[
			\gamma(2^{3\theta+1} K_3^{\theta}\mathfrak{t} - n)
			\eta_0(K_2^{\theta}\mathfrak{t} - n)
			\eta_0(K_3^{\theta}\mathfrak{t})f_{K_2}(t,x)
			\right](\tau,\xi).
		\end{align*}
		Inserting the partition of unity and applying \eqref{e_nl_loc_gamc1}, it follows
		\begin{align*}
			\|P_{K_3}(f_{K_1}f_{K_2})_x\|_{\spna^{K_3}}
			\leq
			K_3\sup_{t_{K_3}}\sum_{n}\|(\mathfrak{m}+iK_3^{\theta})^{-1}\chi_{K_3}(\xi)\left(F_{K_1}\ast F_{K_2}\right)\|_{\spx^{K_3}}.
		\end{align*}
		The function $F_{K_2}$ vanishes for all but at most $2^{3\theta+4}$ many values of $n$ (see Lemma \ref{l_partgamma}). Hence, we can bound the sum in $n$ by that constant times its supremum. 
		Combining this with the definition of the $X^{K_3}$-norm, we conclude
		\begin{align*}
			\|P_{K_3}(f_{K_1}f_{K_2})_x\|_{\spna^{K_3}}
			\!\lesssim\!
			K_3\!\sup_{t_{K_3}, n}\!\sum_{L_3}\!L_3^{\frac{1}{2}}
			\|(\mathfrak{m}+iK_3^{\theta})^{-1}\eta_{L_3}(\mathfrak{m})\chi_{K_3}(\xi)\left(F_{K_1}\!\ast F_{K_2}\right)\!\|_{L^2}.
		\end{align*}
		
		Let us briefly show that we can group all small modulations. More precisely, we consider the sum over $L_3\in\{1,\dots, \lfloor K_3^\theta\rfloor_{\mathbb{D}}\}$. Here, $\lfloor M\rfloor_{\mathbb{D}}$ denotes the largest dyadic number smaller than $M$. In this case we have $|(\mathfrak{m}+iK_3^\theta)^{-1}|\sim \lfloor K_3^\theta\rfloor^{-1}_{\mathbb{D}}$ leading to
		\begin{align*}
			\sum_{0\leq L_3\leq \lfloor K_3^\theta\rfloor_{\mathbb{D}}}L_3^{\frac{1}{2}}
			\|(\mathfrak{m}+iK_3^{\theta})^{-1}\eta_{L_3}(\mathfrak{m})\chi_{K_3}(\xi)\left(F_{K_1}\ast F_{K_2}\right)\|_{L^2}
			\\
			\qquad\lesssim
			\lfloor K_3^\theta\rfloor^{-\frac{1}{2}}_{\mathbb{D}}
			\|\eta_{\leq \lfloor K_3^\theta\rfloor_{\mathbb{D}}}(\mathfrak{m})\chi_{K_3}(\xi)\left(F_{K_1}\ast F_{K_2}\right)\|_{L^2}.			
		\end{align*}
		Thus, we conclude
		\begin{align*}
			\|P_{K_3}(f_{K_1}f_{K_2})_x\|_{\spna^{K_3}}
			\lesssim
			K_3\sup_{t_{K_3}, n}\sum_{L_3}L_3^{-\frac{1}{2}}
			\|\eta_{L_3}(\mathfrak{m})\chi_{K_3}(\xi)\left(F_{K_1}\ast F_{K_2}\right)\|_{L^2},
		\end{align*}
		where the sum in $L_3$ starts at $\lfloor K_3^\theta\rfloor_{\mathbb{D}}$ and we interpret $\eta_{L_3}$ as $\eta_{\leq\lfloor K_3^\theta\rfloor_{\mathbb{D}}}$ if $L_3=\lfloor K_3^\theta\rfloor_{\mathbb{D}}$.
		
		Now, recall that $D_{L_3,K_3}$ denotes the support of $\eta_{L_3}(\mathfrak{m})\chi_{K_3}(\xi)$. Moreover, the inequality $\eta_{L_3}(\mathfrak{m})\chi_{K_3}(\xi)\leq 1_{D_{L_3,K_3}}(\tau,\xi)$ is valid. We localize the functions $F_{K_1}$ and $F_{K_2}$ with respect to their modulation variables, i.e.\@ for $i\in[2]$ we define
		\begin{align*}
			F_{L_i,K_i}(\tau,\xi)
			:=
			\begin{cases}
				\eta_{L_i}(\mathfrak{m})F_{K_i}(\tau,\xi)
				&\text{if }L_i>\lfloor K_3^\theta\rfloor_{\mathbb{D}},
				\\
				\eta_{\leq L_i}(\mathfrak{m})F_{K_i}(\tau,\xi)
				&\text{if }L_i=\lfloor K_3^\theta\rfloor_{\mathbb{D}}.
			\end{cases}
		\end{align*}
		It follows
		\begin{align*}
			\|P_{K_3}(f_{K_1}f_{K_2})_x\|_{\spna^{K_3}}
			\lesssim
			K_3\sup_{t_{K_3}, n}\sum_{L_1,L_2,L_3}L_3^{-\frac{1}{2}}
			\|1_{D_{L_3,K_3}}(F_{L_1,K_1}\ast F_{L_2,K_2})\|_{L^2},
		\end{align*}
		where the sum in $L_1$ and $L_2$ starts at $\lfloor K_3^\theta\rfloor_{\mathbb{D}}$.
		At this point, the appearing $L^2$-norms satisfy the premise of Lemma \ref{l_cnv_l2}.
		Depending on whether $K_3^*>1$ or $K_3^*=1$ holds, we apply estimate \eqref{e_cnv_l2_imp} or \eqref{e_cnv_l2_gen}. Here, we only consider the first case; the second one follows similarly. Using \eqref{e_cnv_l2_imp}, we get the prefactor $(L_1^*L_3^*)^\frac{1}{2}(K_1^*)^{-\frac{1}{2}}$.
		Without loss of generality, assume that $L_1=L_1^*$ and $L_2=L_3^*$ hold. Then, the above implies
		\begin{align*}
			\|P_{K_3}(f_{K_1}f_{K_2})_x\|_{\spna^{K_3}}
			\lesssim
			K_3^\frac{1}{2}\sup_{t_{K_3},n}\sum_{L_3}L_3^{-\frac{1}{4}}
			S_1^{\frac{1}{2}} S_2^{\frac{1}{4}}
		\end{align*}
		with
		\begin{align*}
			S_i^\beta
			:=
			\sum_{L_i\geq \lfloor K_3^\theta\rfloor_{\mathbb{D}}}L_i^{\beta}\|F_{L_i,K_i}(\tau,\xi)\|_{L^2}.
		\end{align*}
		
		Next, we apply Corollary \ref{l_regtime} to $S_2^{\frac{1}{4}}$ (choosing $\delta=\delta'=\delta''\in(0,\frac{1}{8})$) and note that $S_1^\frac{1}{2}$ is essentially equal to the $\spx^{K_1}$-norm of $F_{K_1}$. Then, we apply Lemma \ref{l_locstab} multiple times to the appearing $\spx^{K_1}$- and $\spx^{K_2}$-norms and use the choice of the extensions $f_{K_1}$ and $f_{K_2}$, see \eqref{e_nl_loc_ext}, leading to
		\begin{align*}
			\|P_{K_3}(f_{K_1}f_{K_2})_x\|_{\spna^{K_3}}
			\lesssim_\delta
			T^\delta K_3^{\frac{1}{2}-\theta(\frac{1}{2}-\delta)}
			\|P_{K_1}f\|_{\spfb^{K_1}_T}\|P_{K_2}f\|_{\spfb^{K_2}_T}.
		\end{align*}
		Under the additional assumption $\frac{1}{2}-\theta(\frac{1}{2}-\delta)< 0$, i.e.\@ if $\delta<	 \frac{\theta-1}{2\theta}$, we bound the contribution of $\Sigma_1$ for $r\geq 0$ by
		\begin{align}\label{e_nl_loc_c1}
			\sum_{K_3}
			\left[
			K_3^r
			\sum_{K_1, K_2}1_{\Sigma_1}
			\|P_{K_3} (f_{K_1}f_{K_2})_x\|_{\spna^{K_3}}
			\right]^2
			\lesssim_\delta
			T^{2\delta}\|f_1\|_{\spfc^0_T}^2\|f_2\|_{\spfc^r_T}^2
		\end{align}
		and the contribution for $r=z=-\frac{1}{2}$ by
		\begin{align}\label{e_nl_loc_c11}
			\sum_{K_3}
			\left[
			K_3^r
			\sum_{K_1, K_2}1_{\Sigma_1}
			\|P_{K_3} (f_{K_1}f_{K_2})_x\|_{\spna^{K_3}}
			\right]^2
			\lesssim_\delta
			T^{2\delta}\|f_1\|_{\spfc^r_T}^2\|f_2\|_{\spfc^0_T}^2
		\end{align}
		We obtain the same bound for all other choices of $L_1^*$ and $L_3^*$.
		
		\textit{Summation over $\Sigma_2$.} We can repeat the proof for the contribution of $\Sigma_1$ after relabelling $K_1$ as $K_2$ and vice versa. For $r\in\mathbb{R}$ this leads to
		\begin{align}\label{e_nl_loc_c2}
			\sum_{K_3}
			\left[
			K_3^r
			\sum_{K_1, K_2}1_{\Sigma_2}
			\|P_{K_3} (f_{K_1}f_{K_2})_x\|_{\spna^{K_3}}
			\right]^2
			\lesssim_\delta
			T^{2\delta}\|f_1\|_{\spfc^r_T}^2\|f_2\|_{\spfc^0_T}^2.
		\end{align}
		
		\textit{Summation over $\Sigma_3$.}
		Again, we proceed as for $\Sigma_1$. We hide all unwanted powers of $K_1^*$ in the implicit constant. Thus, we obtain (with an implicit cconstant depending also on a lower bound of $r$)
		\begin{align}\label{e_nl_loc_c3}
			\sum_{K_3}
			\left[
			K_3^r
			\sum_{K_1, K_2}1_{\Sigma_3}
			\|P_{K_3} (f_{K_1}f_{K_2})_x\|_{\spna^{K_3}}
			\right]^2
			\lesssim_\delta
			T^{2\delta}\|f_1\|_{\spfc^r_T}^2\|f_2\|_{\spfc^0_T}^2.
		\end{align}
		
		\textit{Summation over $\Sigma_4$.} In order to bound the contribution of $\Sigma_4$ appropriately, we have to modify the arguments used for $\Sigma_1$ in a non-trivial way. Let $\gamma$ be a smooth function as in Lemma \ref{l_partgamma} satisfying $\sum_{n\in\mathbb{Z}}\gamma^2(\cdot+n)=1$. Assume without loss of generality that $K_1\geq K_2$ holds. Then, we have
		\begin{align}\label{e_nl_loc_gamc4}
			\gamma(2K_1^{\theta}\mathfrak{t}-n)
			=
			\gamma(2K_1^{\theta}\mathfrak{t}-n)
			\eta_0(K_1^{\theta}\mathfrak{t}-n)
			\eta_0(K_2^{\theta}\mathfrak{t}-n)
		\end{align}
		for all $n\in\mathbb{N}$. As before, we use the notation $\mathfrak{t}=t-t_{K_3}$ and $\mathfrak{m}=\tau-\omega(\xi)$. Rewriting the definition of the $\spna^{K_3}$- and $\spx^{K_3}$-norms, we get
		\begin{align*}
			&\|P_{K_3}(f_{K_1}f_{K_2})_x\|_{\spna^{K_3}}
			\\&\qquad\lesssim
			K_3\sup_{t_{K_3}}\sum_{L_3}L_3^{\frac{1}{2}}\|(\mathfrak{m}+iK_3^{\theta})^{-1}\eta_{L_3}(\mathfrak{m})\chi_{K_3}(\xi)\mathcal{F}_{t,x}\left[\eta_0(K_3^{\theta}\mathfrak{t})f_{K_1}f_{K_2}\right]\|_{L^2}.
		\end{align*}
		
		We continue by grouping all small modulations, i.e.\@ all summands for $L_3\leq \lfloor K_3^\theta\rfloor_{\mathbb{D}}$. Understanding $\eta_{L_3}$ as $\eta_{\leq\lfloor K_3^\theta\rfloor_{\mathbb{D}}}$ if $L_3=\lfloor K_3^\theta\rfloor_{\mathbb{D}}$, this leads to
		\begin{align}\label{e_nl_loc_c4red}
			&\|P_{K_3}(f_{K_1}f_{K_2})_x\|_{\spna^{K_3}}\nonumber
			\\&\qquad\lesssim
			K_3\sup_{t_{K_3}}\sum_{L_3}L_3^{-\frac{1}{2}}\|\eta_{L_3}(\mathfrak{m})\chi_{K_3}(\xi)\mathcal{F}_{t,x}\left[\eta_0(K_3^{\theta}\mathfrak{t})f_{K_1}f_{K_2}\right]\|_{L^2},
		\end{align}
		where the sum is considered over $L_3\geq\lfloor K_3^\theta\rfloor_{\mathbb{D}}$.
		
		Now, we study the contribution of the summands for $L_3\in\{\lfloor K_3^\theta\rfloor_{\mathbb{D}},\dots, K_1K_3\}$. For this purpose, define
		\begin{align*}
			F_{K_1}(\tau,\xi)
			&:=
			\mathcal{F}\left[
			\gamma(2K_1^{\theta}\mathfrak{t} - n)
			\eta_0(K_1^{\theta}\mathfrak{t} - n)f_{K_1}(t,x)
			\right](\tau,\xi),
			\\
			F_{K_2}(\tau,\xi)
			&:=
			\mathcal{F}\left[
			\gamma(2K_1^{\theta}\mathfrak{t} - n)
			\eta_0(K_2^{\theta}\mathfrak{t} - n)
			\eta_0(K_3^{\theta}\mathfrak{t})f_{K_2}(t,x)
			\right](\tau,\xi),
		\end{align*}
		as well as
		\begin{align*}
			F_{L_i,K_i}(\tau,\xi)
			:=
			\begin{cases}
				\eta_{L_i}(\mathfrak{m})F_{K_i}(\tau,\xi)
				&\text{if }L_i>\lfloor K_1^\theta\rfloor_{\mathbb{D}},
				\\
				\eta_{\leq L_i}(\mathfrak{m})F_{K_i}(\tau,\xi)
				&\text{if }L_i=\lfloor K_1^\theta\rfloor_{\mathbb{D}}.
			\end{cases}
		\end{align*} 
		According to Lemma \ref{l_partgamma}, the function $F_{K_2}$ vanishes for all but at most $16K_1^\theta K_3^{-\theta}$ many values of $n$. Inserting the identity \eqref{e_nl_loc_gamc4} into \eqref{e_nl_loc_c4red} and using the previous definitions, it follows
		\begin{align*}
			&\|P_{K_3}(f_{K_1}f_{K_2})_x\|_{\spna^{K_3}}
			\\&\qquad\lesssim
			K_1^\theta K_3^{(1-\theta)}\sup_{t_{K_3},n}\sum_{L_1,L_2,L_3}L_3^{-\frac{1}{2}}\|\eta_{L_3}(\mathfrak{m})\chi_{K_3}(\xi)(F_{L_1,K_1}\ast F_{L_2,K_2})\|_{L^2}.
		\end{align*}
		Above, the sum is taken over $L_1,L_2\geq \lfloor K_1^\theta\rfloor_{\mathbb{D}}$ and $L_3\in\{\lfloor K_3^\theta\rfloor_{\mathbb{D}}, \dots, K_1K_3\}$. If $K_3^*>1$, we apply \eqref{e_cnv_l2_imp} and otherwise we use \eqref{e_cnv_l2_gen}. At this point, we restrict to showing the former case. Independently of the relative size of $L_1$, $L_2$, and $L_3$, we obtain
		\begin{align*}
			\|P_{K_3}(f_{K_1}f_{K_2})_x\|_{\spna^{K_3}}
			\lesssim
			K_1^{\theta-\frac{1}{2}}K_3^{(1-\theta)}\sup_{t_{K_3},n}\sum_{L_3}S^\frac{1}{2}_1S^0_2
		\end{align*}
		with
		\begin{align*}
			S_i^\beta
			:=
			\sum_{L_i\geq \lfloor K_1^\theta\rfloor_\mathbb{D}}L_i^{\beta}\|F_{L_i,K_i}(\tau,\xi)\|_{L^2}.
		\end{align*}
		Note that the sum in $L_3$ is of size $\log_2(K_1)$. Hence, we can bound it by $c_\delta K_1^{\delta}$. Next, we apply Corollary \ref{c_regtime} to $S^0_2$, note that $S^\frac{1}{2}_1$ is essentially the $\spx^{K_1}$-norm of $F_{K_1}$, and apply Lemma \ref{l_locstab} multiple times to the appearing $\spx^{K_1}$- and $\spx^{K_2}$-norm. Lastly, we use our choice of the extension in \eqref{e_nl_loc_ext}. Thus, for all $\delta\in(0,\frac{1}{4})$ and $\epsilon>0$, we obtain
		\begin{align*}
			\|P_{K_3}(f_{K_1}f_{K_2})_x\|_{\spna^{K_3}}
			\lesssim_{\delta}
			T^\delta K_1^{\delta-\frac{1}{2}+\theta(\frac{1}{2}+\delta)}K_3^{(1-\theta)}\|f_1\|_{\spfb_T^{K_1}}\|f_2\|_{\spfb_T^{K_2}}.
		\end{align*}
		If $r>0$ we choose $\theta$ sufficiently small such that $r+(1-\theta)\frac{1}{2}\geq 0$. Moreover, choose $\delta<\frac{2-\theta}{2(1+\theta)}$; then, we conclude
		\begin{align}\label{e_nl_loc_c41}
			\sum_{K_3}
			\left[
			K_3^r
			\sum_{K_1, K_2}1_{\Sigma_4}
			\|P_{K_3}(f_{K_1}f_{K_2})_x\|_{\spna^{k_3}}
			\right]^2
			\lesssim_\delta
			T^{2\delta}\|f_1\|_{\spfc^r_T}^2\|f_2\|_{\spfc^0_T}^2.
		\end{align}
		Otherwise, if $r=z=-\frac{1}{2}$, then we choose $\delta<\frac{2s-\theta}{2(1+\theta)}$ obtaining $s>\delta+\theta(\frac{1}{2}+\delta)$. This leads to
		\begin{align}\label{e_nl_loc_c42}
			\sum_{K_3}
			\left[
			K_3^r
			\sum_{K_1, K_2}1_{\Sigma_4}
			\|P_{K_3}(f_{K_1}f_{K_2})_x\|_{\spna^{k_3}}
			\right]^2
			\lesssim_\delta
			T^{2\delta}\|f_1\|_{\spfc^r_T}^2\|f_2\|_{\spfc^s_T}^2.
		\end{align}
		
		Now, we consider the contribution of \eqref{e_nl_loc_c4red} for $L_3\geq K_1K_3$. In this case, we drop the factor $\eta_{L_3}\chi_{K_3}$ and the sum in $L_3$ contributes a factor $(K_1K_3)^{-\frac{1}{2}}$. To bound the $L^2$-norm, we use Plancherel's theorem, insert the partition of unity, and use almost orthogonality leading to
		\begin{align*}
			\|\mathcal{F}\left[\eta_0(K_3^{\theta}\mathfrak{t})f_{K_1}f_{K_2}\right](\tau,\xi)\|_{L^2}^2
			\lesssim
			\|\eta_0(K_3^{\theta}\mathfrak{t})\sum_n\gamma^2(2K_1^{\theta}\mathfrak{t}-n)f_{K_1}f_{K_2}\|_{L^2}^2
			\\
			\qquad\lesssim
			\sum_n\sum_{m\in\{-1,0,1\}}\|\eta_0(K_3^{\theta}\mathfrak{t})\gamma(2K_1^{\theta}\mathfrak{t}-n)\gamma(2K_1^{\theta}\mathfrak{t}-n+m)f_{K_1}f_{K_2}\|_{L^2}^2.
		\end{align*}
		Then, inserting \eqref{e_nl_loc_gamc4}, using the definitions
		\begin{align*}
			F_{K_1}(\tau,\xi)
			&:=
			\mathcal{F}\left[
			\gamma(2K_1^{\theta}\mathfrak{t} - n+m)
			\eta_0(K_1^{\theta}\mathfrak{t} - n)f_{K_1}(t,x)
			\right](\tau,\xi),
			\\
			F_{K_2}(\tau,\xi)
			&:=
			\mathcal{F}\left[
			\gamma(2K_1^{\theta}\mathfrak{t} - n)
			\eta_0(K_2^{\theta}\mathfrak{t} - n)
			\eta_0(K_3^{\theta}\mathfrak{t})f_{K_2}(t,x)
			\right](\tau,\xi),
		\end{align*}
		and observing that $F_{K_2}$ vanishes for all but at most $2^4K_1^\theta K_3^{-\theta}$ many values of $n$ (see Lemma \ref{l_partgamma}), we conclude
		\begin{align*}
			\|\mathcal{F}_{t,x}\left[\eta_0(K_3^{\theta}\mathfrak{t})f_{K_1}f_{K_2}\right]\|_{L^2}^2
			\lesssim
			\sum_{n,m}\|F_{K_1}\ast F_{K_2}\|_{L^2}^2
			\lesssim
			K_1^\theta K_3^{-\theta}\sup_{n,m}\|F_{K_1}\ast F_{K_2}\|_{L^2}^2.
		\end{align*}
		So far, we have shown the estimate
		\begin{align*}
			\|P_{K_3}(f_{K_1}f_{K_2})_x\|_{\spna^{K_3}}
			\lesssim
			K_1^{\frac{\theta-1}{2}}K_3^{\frac{1-\theta}{2}}\sup_{t_{K_3},n,m}\|F_{K_1}\ast F_{K_2}\|_{L^2}.
		\end{align*}
		Next, we define
		\begin{align*}
			F_{L_i,K_i}(\tau,\xi)
			:=
			\begin{cases}
				\eta_{L_i}(\mathfrak{m})F_{K_i}(\tau,\xi)
				&\text{if }L_i>\lfloor K_1^\theta\rfloor_{\mathbb{D}},
				\\
				\eta_{\leq L_i}(\mathfrak{m})F_{K_i}(\tau,\xi)
				&\text{if }L_i=\lfloor K_1^\theta\rfloor_{\mathbb{D}},
			\end{cases}
		\end{align*}
		for $i\in[2]$ and conclude
		\begin{align*}
			\|P_{K_3}(f_{K_1}f_{K_2})_x\|_{\spna^{K_3}}
			\lesssim
			K_1^{\frac{\theta-1}{2}}K_3^{\frac{1-\theta}{2}}\sup_{t_{K_3},n,m}\sum_{L_1,L_2}\|F_{L_1,K_1}\ast F_{L_2,K_2}\|_{L^2}.
		\end{align*}
		Above, the sums in $L_1$ and $L_2$ are taken over $L_1, L_2\geq \lfloor K_1^\theta\rfloor_{\mathbb{D}}$.
		
		Without loss of generality, assume $L_1\geq L_2$. Using \eqref{e_cnv_l2}, we get
		\begin{align*}
			\|P_{K_3}(f_{K_1}f_{K_2})_x\|_{\spna^{K_3}}
			\lesssim
			K_1^{\frac{\theta-1}{2}}K_3^{\frac{1-\theta}{2}}\sup_{t_{K_3},n,m}S^{\frac{1}{4}}_1S^{\frac{1}{2}}_2
		\end{align*}
		with
		\begin{align*}
			S_i^\beta
			:=
			\sum_{L_i\geq \lfloor K_1^\theta\rfloor_\mathbb{D}}L_i^{\beta}\|F_{L_i,K_i}(\tau,\xi)\|_{L^2}.
		\end{align*}
		Arguing as before, i.e.\@ with Corollary \ref{c_regtime}, Lemma \ref{l_locstab} and \eqref{e_nl_loc_ext}, we conclude
		\begin{align*}
			\|P_{K_3}(f_{K_1}f_{K_2})_x\|_{\spna^{K_3}}
			\lesssim
			K_1^{\theta(\frac{1}{4}+\delta)-\frac{1}{2}}K_3^{\frac{1-\theta}{2}}\|f_1\|_{\spfb_T^{K_1}}\|f_2\|_{\spfb_T^{K_2}}
		\end{align*}
		for $\delta\in(0,\frac{1}{8})$. If $r\geq0$, we choose $\theta$ and $\delta$ small such that $\theta(\frac{1}{4}+\delta)\leq\frac{1}{2}$ holds; this implies
		\begin{align}\label{e_nl_loc_c43}
			\sum_{K_3}
			\left[
			K_3^r
			\sum_{K_1, K_2}1_{\Sigma_4}
			\|P_{K_3} (f_{K_1}f_{K_2})_x\|_{\spna^{K_3}}
			\right]^2
			\lesssim_\delta
			T^{2\delta}\|f_1\|_{\spfc^r_T}^2\|f_2\|_{\spfc^0_T}^2.
		\end{align}
		If $r=-\frac{1}{2}$ and $s>\frac{1}{4}$, then we choose $\theta$ and $\delta$ small such that $s\geq \theta(\frac{1}{4}+\delta)$ holds. We conclude
		\begin{align}\label{e_nl_loc_c44}
			\sum_{K_3}
			\left[
			K_3^r
			\sum_{K_1, K_2}1_{\Sigma_4}
			\|P_{K_3} (f_{K_1}f_{K_2})_x\|_{\spna^{K_3}}
			\right]^2
			\lesssim_\delta
			T^{2\delta}\|f_1\|_{\spfc^r_T}^2\|f_2\|_{\spfc^s_T}^2.
		\end{align}
		All in all, we have proven the claimed estimates.
	\end{proof}
	\begin{lem}\label{l_nl_bnd}
		Let $s\geq 0$ and $\sigma>\max\{s,\frac{1}{2}\}$. Then, for all sufficiently small $\theta=\theta(\sigma)>1$ and $\delta=\delta(\sigma, \theta)>0$, every $T\in(0,1)$, all $f\in\spfc^s_T$, and all $g\in\spbc^\sigma_T$, we have
		\begin{align*}
			\|(fg)_x\|_{\spnc^s_T}
			\lesssim_\delta
			T^\delta
			\|f\|_{\spfc^s_T}
			\|g\|_{\spbc^\sigma_T}.
		\end{align*}
		Let $z=-\frac{1}{2}$ and $\sigma>1$. Then, for all sufficiently small $\theta=\theta(\sigma)>1$ and $\delta=\delta(\sigma, \theta)>0$, all $f\in\spfc^z_T$, and all $g\in\spbc^\sigma_T$, we have
		\begin{align*}
			\|(fg)_x\|_{\spnc^z_T}
			\lesssim_\delta
			T^\delta
			\|f\|_{\spfc^z_T}
			\|g\|_{\spbc^\sigma_T}.
		\end{align*}
	\end{lem}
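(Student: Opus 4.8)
To prove Lemma \ref{l_nl_bnd} the plan is to follow the scheme of the proof of Lemma \ref{l_nl_loc}, with one structural change dictated by the fact that $g$ is only bounded in the spatial variable: its Littlewood--Paley pieces cannot be decomposed in the modulation variable, so instead I would extract each $P_Kg$ in $L^\infty_{t,x}$, using $\|P_Kg\|_{L^\infty_{t,x}}\lesssim K^{-\sigma}\|g\|_{\spbc^\sigma_T}$ (and $\|P_1g\|_{L^\infty_{t,x}}\lesssim\|g\|_{\spbc^\sigma_T}$). A second, decisive point is that I would never split the derivative by the Leibniz rule but keep $(fg)_x$ together with its Fourier symbol $i\xi$, so that differentiating only ever costs the \emph{output} frequency $K_3$; this is what saves the high$\times$high regime.

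First I would reduce, exactly as in Lemma \ref{l_nl_loc}, to a frequency-localized statement: after a Littlewood--Paley decomposition and unwinding the $\spnc^r_T$- and $\spnb^{K_3}_T$-norms, choosing extensions $f_{K_1}$ of $P_{K_1}f$ supported in $[-2T,2T]$ with $\|f_{K_1}\|_{\spfa^{K_1}}\lesssim\|P_{K_1}f\|_{\spfb^{K_1}_T}$ via Lemma \ref{l_ext}, and fixing an $L^\infty(\mathbb{R};B^\sigma_{\infty,\infty})$-extension of $g$, it suffices to bound $\big\|(K_3^r\sum_{K_1,K_2}\|P_{K_3}(f_{K_1}P_{K_2}g)_x\|_{\spna^{K_3}})_{K_3}\big\|_{l^2}$, once for $r=s$ and once for $r=z$, splitting the $(K_1,K_2)$-sum into the regimes $\Sigma_1=\{K_2\sim K_3\gg K_1\}$, $\Sigma_2=\{K_1\sim K_3\gg K_2\}$, $\Sigma_3=\{K_1,K_2,K_3\lesssim1\}$, and $\Sigma_4=\{K_1\sim K_2\gtrsim K_3\}$. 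In $\Sigma_1,\Sigma_2,\Sigma_3$ I expect the following per-triple bound: writing out the $\spna^{K_3}$- and $\spx^{K_3}$-norms, pulling $K_3$ out of the symbol $i\xi$, grouping the output modulations $\lesssim K_3^\theta$ and applying Cauchy--Schwarz together with Plancherel on those $\gtrsim K_3^\theta$ gains a factor $K_3^{-\theta/2}$; then, extracting $P_{K_2}g$ in $L^\infty_{t,x}$ and estimating $f_{K_1}$ on a time interval of length $\lesssim\min(K_3^{-\theta},T)$ via the embedding $\spfa^{K_1}\hookrightarrow L^\infty_tL^2_x$ and Lemma \ref{l_locstab}, and finally using $\min(K_3^{-\theta},T)^{1/2}\le T^\delta K_3^{-\theta/2+\theta\delta}$, one obtains
\[
\|P_{K_3}(f_{K_1}P_{K_2}g)_x\|_{\spna^{K_3}}\lesssim_\delta T^\delta K_3^{1-\theta+\theta\delta}K_2^{-\sigma}\|P_{K_1}f\|_{\spfb^{K_1}_T}\|g\|_{\spbc^\sigma_T}.
\]
Because $\theta>1$ the exponent $1-\theta+\theta\delta$ is negative, so the sums over $\Sigma_2$ and $\Sigma_3$ converge using only $\sigma>0$, while in $\Sigma_1$ the low frequency $K_1$ is summed against $K_1^{-r}$, which converges for $r=s>0$ (so $\sigma>s$ suffices) and trivially for $r=z=-\tfrac12$; the outer $l^2$-sum over $K_3$ then converges precisely because $1-\theta+\theta\delta-\sigma<0$.

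The hard part will be $\Sigma_4$. There the time interval built into the $\spna^{K_3}$-norm has length $K_3^{-\theta}$, which is larger than the scale $K_1^{-\theta}$ on which $f_{K_1}$ is controlled by its $\spfa^{K_1}$-norm, so I would insert a smooth partition of unity at scale $K_1^{-\theta}$, using Lemma \ref{l_partgamma} to absorb the coarser cut-offs and to count $\lesssim(K_1/K_3)^\theta$ non-zero pieces; repeating the argument above (now with $\min(K_1^{-\theta},T)^{1/2}$) produces the extra factor $(K_1/K_3)^\theta$ and, after the same interpolation, a bound of size $T^\delta K_3^{1-\frac{3\theta}{2}}K_1^{\frac\theta2+\theta\delta}K_2^{-\sigma}\|P_{K_1}f\|_{\spfb^{K_1}_T}\|g\|_{\spbc^\sigma_T}$. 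With $K_1\sim K_2$, summing over $K_1\gtrsim K_3$ against the weight $K_1^{-r}$ requires $\tfrac\theta2+\theta\delta-\sigma-r<0$, i.e.\ (for $\theta$ slightly above $1$ and $\delta$ small) $\sigma+r>\tfrac12$; together with $\sigma>s$ from $\Sigma_1$ this is implied by the hypothesis $\sigma>\max\{s,\tfrac12\}$ in the first case and by $\sigma>1$ when $r=z=-\tfrac12$, and the remaining exponent of $K_3$ again collapses to $1-\theta+\theta\delta<0$. Unlike in the $f\cdot f$ estimate, the large resonance $\Omega_3\sim K_1K_3$ available in $\Sigma_4$ (Lemma \ref{l_res}) buys nothing here, since the modulation it forces to be large may sit on $g$, which is not modulation-localized; this is exactly why $\Sigma_4$ dictates the thresholds on $\sigma$. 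It remains to note that the case of low output frequency $K_3^*=1$ is handled as in Lemma \ref{l_nl_loc} by absorbing the bounded powers of $K_1^*$ into the implicit constant, and that choosing $\theta$ close to $1$ (depending on $\sigma$) and $\delta>0$ small relative to $\theta-1$ makes all the above estimates hold and yields the claimed $T^\delta$-gain.
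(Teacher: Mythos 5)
Your proposal is correct and follows essentially the same scheme as the paper: the same four regimes $\Sigma_1,\dots,\Sigma_4$, the same extraction of $P_{K_2}g$ in $L^\infty_{t,x}$, the same partition of unity at scale $K_1^{-\theta}$ producing $\lesssim(K_1/K_3)^\theta$ nontrivial pieces in $\Sigma_4$, and the identical per-triple exponents $K_3^{1-\theta+\theta\delta}$ and $K_1^{\theta/2+\theta\delta}K_3^{1-3\theta/2}$ leading to the constraints $\sigma>\max\{s,\tfrac12\}$ and $\sigma>1$. The only cosmetic difference is that the paper obtains the $T^\delta$-gain by modulation-localizing $F_{K_1}$ and invoking Corollary~\ref{c_regtime}, while you reach the same exponent via Hölder in time over $\min(K_3^{-\theta},T)$ (resp.\ $\min(K_1^{-\theta},T)$ in $\Sigma_4$) and the embedding $\spfa^{K_1}\hookrightarrow L^\infty_tL^2_x$.
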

	\begin{proof}
		As in the previous proof, we begin by choosing extensions $f_{K_1}$ of $P_{K_1}f$ and $g_{K_2}$ of $P_{K_2}g$.
		According to Lemma \ref{l_ext}, we can assume that $f_{K_1}$ has temporal support in $[-2T,2T]$ and satisfies the estimate
		\begin{align}\label{e_nl_bnd_extf}
			\|f_{K_1}\|_{\spna^{K_1}}
			\lesssim
			\|P_{K_1} f\|_{\spnb^{K_1}_T}
		\end{align}
		for an implicit constant independent of $K_1$.
		The function $P_{K_2}g$ is extended to $[-2T,2T]$ such that the extension $g_{K_2}$ satisfies
		\begin{align}\label{e_nl_bnd_extg}
			\|g_{K_2}\|_{L^\infty}
			\lesssim
			\|P_{K_2}g\|_{L^\infty}.
		\end{align}
		Let $r\in\mathbb{R}$. Using the definitions of the $\spnc^r_T$- and $\spnb^{K_3}_T$-norms, inserting Littlewood-Paley projectors to $f$ and $g$, and making use of the fact that $f_{K_1}g_{K_2}$ is an extension of $P_{K_1}fP_{K_2}g$, we arrive at
		\begin{align}\label{e_nl_bnd_red}
			\|(fg)_x\|_{\spnc^r_T}^2
			\leq
			\sum_{K_3}
			\left[
			K_3^r
			\sum_{K_1, K_2}
			\|P_{K_3} (f_{K_1}g_{K_2})_x\|_{\spna^{K_3}}
			\right]^2.
		\end{align}
		
		In the following, we will bound the right-hand side of \eqref{e_nl_bnd_red} by considering different frequency interactions separately. For this, we will split the domain of summation into the following four sets
		\begin{align*}
		\Sigma_1
		&:=
		\{K_2\sim K_3\gg K_1\},
		&&\Sigma_2
		:=
		\{K_1\sim K_3\gg K_2\},
		\\
		\Sigma_3
		&:=
		\{K_1, K_2, K_3 \leq 64\},
		&&\Sigma_4
		:=
		\{K_1\sim K_2\gtrsim K_3\}.
		\end{align*}
		
		\textit{Summation over $\Sigma_1$.}
		We define $\mathfrak{t}=t-t_{K_3}$ and $\mathfrak{m}=\tau-\omega(\xi)$. Using the definition of the $\spna^{K_3}$-norm, it follows
		\begin{align*}
			\|P_{K_3}(f_{K_1}g_{K_2})_x\|_{\spna^{K_3}}
			\!\leq\!
			K_3 \sup_{t_{K_3}}\!\sum_{n}\!\|(\mathfrak{m}+iK_3^{\theta})^{-1}\chi_{K_3}(\xi)\mathcal{F}_{t,x}\!\left[\eta_0(K_3^{\theta}\mathfrak{t})f_{K_1}g_{K_2}\right]\!\|_{\spx^{K_3}}.
		\end{align*}
		Next, we fix a smooth function $\gamma$ (as in Lemma \ref{l_partgamma}) such that translations of $\gamma$ constitute a partition of unity, i.e.\@ $\sum_{n\in\mathbb{Z}}\gamma(\cdot+n)=1$. As before, we have
		\begin{align}\label{e_nl_bnd_gamc1}
			\gamma(2^{3\theta+1}K_3^{\theta}\mathfrak{t} - n)
			=
			\gamma(2^{3\theta+1}K_3^{\theta}\mathfrak{t} - n)
			\eta_0(K_1^{\theta}\mathfrak{t} - n).
		\end{align}
		We define
		\begin{align*}
			F_{K_1}(\tau,\xi)
			&:=
			\mathcal{F}\left[
			\gamma(2^{3\theta+1}K_3^{\theta}\mathfrak{t} - n)
			\eta_0(K_1^{\theta}\mathfrak{t} - n)\eta_0(K_3^{\theta}\mathfrak{t})f_{K_1}(t,x)
			\right](\tau,\xi),
			\\
			G_{K_2}(\tau,\xi)
			&:=
			\mathcal{F}\left[
			g_{K_2}(t,x)
			\right](\tau,\xi).
		\end{align*}
		Inserting the partition of unity and applying \eqref{e_nl_bnd_gamc1}, we obtain
		\begin{align*}
			\|P_{K_3}(f_{K_1}g_{K_2})_x\|_{\spna^{K_3}}
			\!\leq\!
			K_3\sup_{t_{K_3}}\!\sum_{n}\!\|(\mathfrak{m}+iK_3^{\theta})^{-1}\chi_{K_3}(\xi)\left(F_{K_1}\ast G_{K_2}\right)(\tau,\xi)\|_{\spx^{K_3}}.
		\end{align*}
		Note that $F_{K_1}$ vanishes for all but at most $2^{3\theta+4}$ many values of $n$ (see Lemma \ref{l_partgamma}). Hence, we can bound the sum in $n$ by a constant times its supremum. 
		Combining this with the definition of the $\spx^{K_3}$-norm and grouping small modulations together (as in the proof of Lemma \ref{l_nl_loc}), we conclude
		\begin{align*}
			\|P_{K_3}(f_{K_1}g_{K_2})_x\|_{\spna^{K_3}}
			\!\lesssim\!
			K_3\sup_{t_{K_3}, n}\!\sum_{L_3}\!L_3^{-\frac{1}{2}}
			\|\eta_{L_3}(\mathfrak{m}_3)\chi_{K_3}(\xi_3)\left(F_{K_1}\ast G_{K_2}\right)\|_{L^2},
		\end{align*}
		where the sum in $L_3$ starts at $\lfloor K_3^\theta\rfloor_\mathbb{D}$ and we interpret $\eta_{L_3}$ as $\eta_{\leq\lfloor K_3^\theta\rfloor_\mathbb{D}}$ if $L_3=\lfloor K_3^\theta\rfloor_\mathbb{D}$.
		
		Recall that $\eta_{L_3}(\mathfrak{m})\chi_{K_3}(\xi)\leq 1$ holds. Again, we localize $F_{K_1}$ with respect to its modulation variable, i.e.\@ we set
		\begin{align*}
			F_{L_1,K_1}(\tau,\xi)
			:=
			\begin{cases}
				\eta_{L_1}(\mathfrak{m})F_{K_1}(\tau,\xi)
				&\text{if }L_1>\lfloor K_3^\theta\rfloor_\mathbb{D},
				\\
				\eta_{\leq L_1}(\mathfrak{m})F_{K_1}(\tau,\xi)
				&\text{if }L_1=\lfloor K_3^\theta\rfloor_\mathbb{D}.
			\end{cases}
		\end{align*}
		Consequently, it follows
		\begin{align*}
			\|P_{K_3}(f_{K_1}g_{K_2})_x\|_{\spna^{K_3}}
			\lesssim
			K_3\sup_{t_{K_3}, n}\sum_{L_1,L_3}L_3^{-\frac{1}{2}}
			\|F_{L_1,K_1}\ast G_{K_2}\|_{L^2},
		\end{align*}
		where the sum in $L_1$ starts at $\lfloor K_3^\theta\rfloor_\mathbb{D}$. At this point, Plancherel's theorem and Hölder's inequality lead to
		\begin{align*}
			\|P_{K_3}(f_{K_1}g_{K_2})_x\|_{\spna^{K_3}}
			\lesssim
			K_3\sup_{t_{K_3}, n}\sum_{L_3}L_3^{-\frac{1}{2}}S^0_1\|g_{K_2}\|_{L^\infty}
		\end{align*}
		with
		\begin{align*}
			S_1^\beta
			:=
			\sum_{L_1\geq \lfloor K_3^\theta\rfloor_\mathbb{D}}L_i^{\beta}\|F_{L_1,K_1}(\tau,\xi)\|_{L^2}.
		\end{align*}
		Now, we evaluate the sum in $L_3$, use Corollary \ref{c_regtime} applied to $S^0_1$ (choosing $\delta=\delta'=\delta''\in(0,\frac{1}{4})$), and apply Lemma \ref{l_locstab} to the appearing $\spx^{K_1}$-norm. Finally, together with the choice of the extensions $f_{K_1}$ and $g_{K_2}$, see \eqref{e_nl_bnd_extf} and \eqref{e_nl_bnd_extg}, we get
		\begin{align*}
			\|P_{K_3}(f_{K_1}g_{K_2})_x\|_{\spna^{K_3}}
			\lesssim_\delta
			T^\delta K_3^{(1-\theta(1-\delta))}\|P_{K_1}f\|_{\spfb^{K_1}_T}\|P_{K_2}g\|_{L^\infty}.
		\end{align*}
		Assume $\delta\leq\frac{\theta-1}{1+\theta}$. Then, we have $1-\theta(1-\delta)\leq-\delta$ and bound the contribution of $\Sigma_1$ for $r\geq 0$ by
		\begin{align}\label{e_nl_bnd_c1}
			\sum_{K_3}
			\left[
			K_3^r
			\sum_{K_1, K_2}1_{\Sigma_1}
			\|P_{K_3} (f_{K_1}g_{K_2})_x\|_{\spna^{K_3}}
			\right]^2
			\lesssim_\delta
			T^{2\delta}\|f\|_{\spfc^0_T}^2\|g\|_{\spbc^r_T}^2
		\end{align}
		and for $r\leq 0$ by
		\begin{align}\label{e_nl_bnd_c11}
			\sum_{K_3}
			\left[
			K_3^r
			\sum_{K_1, K_2}1_{\Sigma_1}
			\|P_{K_3} (f_{K_1}g_{K_2})_x\|_{\spna^{K_3}}
			\right]^2
			\lesssim_\delta
			T^{2\delta}\|f\|_{\spfc^r_T}^2\|g\|_{\spbc^0_T}^2.
		\end{align}
		
		\textit{Summation over $\Sigma_2$.} We can repeat the proof given above for $\Sigma_1$ -- only the summation in the last step is different. For $r\in\mathbb{R}$, it follows
		\begin{align}\label{e_nl_bnd_c2}
			\sum_{K_3}
			\left[
			K_3^r
			\sum_{K_1, K_2}1_{\Sigma_2}
			\|P_{K_3} (f_{K_1}g_{K_2})_x\|_{\spna^{K_3}}
			\right]^2
			\lesssim_\delta
			T^{2\delta}\|f\|_{\spfc^r_T}^2\|g\|_{\spbc^0_T}^2.
		\end{align}
		
		\textit{Summation over $\Sigma_3$.}
		In this case we proceed as before hiding all unwanted powers of $K_1^*$ in the implicit constant. We obtain (with an implicit constant depending on a lower bound of $r$)
		\begin{align}\label{e_nl_bnd_c3}
			\sum_{K_3}
			\left[
			K_3^r
			\sum_{K_1, K_2}1_{\Sigma_3}
			\|P_{K_3} (f_{K_1}g_{K_2})_x\|_{\spna^{K_3}}
			\right]^2
			\lesssim_\delta
			T^{2\delta}\|f_1\|_{\spfc^0_T}^2\|g\|_{\spbc^0_T}^2.
		\end{align}
		
		\textit{Summation over $\Sigma_4$.} As in the proof of Lemma \ref{l_nl_loc}, the contribution of $\Sigma_4$ requires some non-trivial modifications of the arguments used for $\Sigma_1$. Let $\gamma$ be a smooth function as in Lemma \ref{l_partgamma} satisfying $\sum_{n\in\mathbb{Z}}\gamma(\cdot+n)=1$. Without loss of generality, assume that $K_1\geq K_2$ holds. Then, we have
		\begin{align}\label{e_nl_bnd_gamc4}
			\gamma(2K_1^{\theta}\mathfrak{t}-n)
			=
			\gamma(2K_1^{\theta}\mathfrak{t}-n)
			\eta_0(K_1^{\theta}\mathfrak{t}-n)
		\end{align}
		 for all $n\in\mathbb{N}$. Again, we use the notation $\mathfrak{t}=t-t_{K_3}$ and $\mathfrak{m}=\tau-\omega(\xi)$. Together with the definitions of the $\spna^{K_3}$- and $\spx^{K_3}$-norms, we conclude
		\begin{align*}
			&\|P_{K_3}(f_{K_1}g_{K_2})_x\|_{\spna^{K_3}}
			\\&\qquad\lesssim
			K_3\sup_{t_{K_3}}\sum_{L_3}L_3^{\frac{1}{2}}\|(\mathfrak{m}+iK_3^{\theta})^{-1}\eta_{L_3}(\mathfrak{m})\chi_{K_3}(\xi)\mathcal{F}\left[\eta_0(K_3^{\theta}\mathfrak{t})f_{K_1}g_{K_2}\right]\|_{L^2}.
		\end{align*}
		Next, we group small modulations, i.e.\@ all summands with $L_3\leq \lfloor K_3^\theta\rfloor_\mathbb{D}$. Understanding $\eta_{L_3}$ as $\eta_{\leq\lfloor K_3^\theta\rfloor_\mathbb{D}}$ if $L_3=\lfloor K_3^\theta\rfloor_\mathbb{D}$, this leads to
		\begin{align}\label{e_nl_bnd_c4red}
			&\|P_{K_3}(f_{K_1}g_{K_2})_x\|_{\spna^{K_3}}\nonumber
			\\&\qquad\lesssim
			K_3\sup_{t_{K_3}}\sum_{L_3}L_3^{-\frac{1}{2}}\|\eta_{L_3}(\mathfrak{m})\chi_{K_3}(\xi)\mathcal{F}_{t,x}\left[\eta_0(K_3^{\theta}\mathfrak{t})f_{K_1}g_{K_2}\right]\|_{L^2},
		\end{align}
		where the sum runs over $L_3\geq\lfloor K_3^\theta\rfloor_\mathbb{D}$.
		
		We define
		\begin{align*}
			F_{K_1}(\tau,\xi)
			&:=
			\mathcal{F}\left[
			\gamma(2K_1^{\theta}\mathfrak{t} - n)
			\eta_0(K_1^{\theta}\mathfrak{t} - n)\eta_0(K_3^{\theta}\mathfrak{t})f_{K_1}(t,x)
			\right](\tau,\xi),
			\\
			G_{K_2}(\tau,\xi)
			&:=
			\mathcal{F}\left[g_{K_1}(t,x)
			\right](\tau,\xi),
		\end{align*}
		as well as
		\begin{align*}
			F_{L_1,K_1}(\tau,\xi)
			:=
			\begin{cases}
				\eta_{L_1}(\mathfrak{m})F_{K_1}(\tau,\xi)
				&\text{if }L_1>\lfloor K_1^\theta\rfloor_\mathbb{D},
				\\
				\eta_{\leq L_1}(\mathfrak{m})F_{K_1}(\tau,\xi)
				&\text{if }L_1=\lfloor K_1^\theta\rfloor_\mathbb{D}.
			\end{cases}
		\end{align*} 
		According to Lemma \ref{l_partgamma}, the function $F_{K_1}$ vanishes for all but at most $16K_1^\theta K_3^{-\theta}$ many values of $n$. Now, we insert the identity \eqref{e_nl_bnd_gamc4} into \eqref{e_nl_bnd_c4red} and use the previous definitions as well as the estimate $\eta_{L_3}\chi_{K_3}\leq1$, which all in all leads to
		\begin{equation*}
			\|P_{K_3}(f_{K_1}g_{K_2})_x\|_{\spna^{K_3}}
			\lesssim
			K_1^{\theta}K_3^{(1-\theta)}\sup_{t_{K_3},n}\sum_{L_1,L_3}L_3^{-\frac{1}{2}}\|F_{L_1,K_1}\ast G_{K_2}\|_{L^2}.
		\end{equation*}
		Together with Plancherel's theorem and Hölder's inequality, it follows
		\begin{align*}
			\|P_{K_3}(f_{K_1}f_{K_2})_x\|_{\spna^{K_3}}
			\lesssim
			K_1^{\theta}K_3^{(1-\frac{3}{2}\theta)}\sup_{t_{K_3},n}S^0_1\|g_{K_2}\|_{L^\infty}
		\end{align*}
		with
		\begin{align*}
			S_1^\beta
			:=
			\sum_{L_1\geq \lfloor K_1^\theta\rfloor_\mathbb{D}}L_1^{\beta}\|F_{L_1,K_1}(\tau,\xi)\|_{L^2}.
		\end{align*}
		We apply Corollary \ref{c_regtime} (choosing $\delta=\delta'=\delta''\in(0,\frac{1}{4})$) to $S^0_1$ and Lemma \ref{l_locstab} to the appearing $\spx^{K_1}$-norm. Then, together with the choice of the extensions $f_{K_1}$ and $g_{K_2}$, see \eqref{e_nl_bnd_extf} and \eqref{e_nl_bnd_extg}, we obtain
		\begin{align*}
			\|P_{K_3}(f_{K_1}g_{K_2})_x\|_{\spna^{K_3}}
			\lesssim
			T^\delta K_1^{\theta(\frac{1}{2}+\delta)}K_3^{(1-\frac{3}{2}\theta)}\|f\|_{\spfb_T^{K_1}}\|P_{K_2}g\|_{L^\infty}.
		\end{align*}
		For $r\geq 0$ and $\sigma>\frac{1}{2}$, we choose $\theta>1$ and then $\delta>0$ such that $\theta(\frac{1}{2}+\delta)<\sigma$ holds. We conclude
		\begin{align}\label{e_nl_bnd_c4}
			\sum_{K_3}
			\left[
			K_3^r
			\sum_{K_1, K_2}1_{\Sigma_4}
			\|P_{K_3} (f_{K_1}g_{K_2})_x\|_{\spna^{K_3}}
			\right]^2
			\lesssim_\delta
			T^{2\delta}\|f\|_{\spfc^r_T}^2\|g\|_{\spbc^\sigma_T}^2.
		\end{align}
		The same estimate also holds for $r=z=-\frac{1}{2}$ and $\sigma>1$.
		Now, the claimed estimates follow directly from \eqref{e_nl_bnd_red} and the estimates in \eqref{e_nl_bnd_c1}, \eqref{e_nl_bnd_c11}, \eqref{e_nl_bnd_c2}, \eqref{e_nl_bnd_c3}, and \eqref{e_nl_bnd_c4}.
	\end{proof}
\section{Energy estimates}\label{s_ee}
	The goal of this section is to prove three low-regularity energy estimates for classical solutions to \eqref{eq_bo_split}. Note that the existence of such solutions will follow from Section \ref{ss_lwp_highreg}. We show an estimate on the $\spec^s_T$-norm of solutions, an estimate on the $\spec^z_T$-norm, $z=-\frac{1}{2}$, for differences of solutions, and, assuming more regularity, we also prove an estimate on the $\spec^s_T$-norm for differences of solutions.
	Below, we state the precise estimates:
	
	\begin{lem}\label{l_ee_i}
		Let $s\geq s_0>\frac{1}{4}$, $\varepsilon>0$, $\zeta>0$, $T\in(0,1)$, and $N\in\mathbb{D}$. Fix $\theta=\theta(s_0)>1$ and $\delta>0$ sufficiently small. Let $b$ and $f$ satisfy \eqref{as_b_f}. Then, every classical solution $u$ to \eqref{eq_bo_split} satisfies
		\begin{align}\label{e_ee_i}
			\begin{split}
				\|u\|_{\spec^s_T}^2
				\!\lesssim_{\delta,\varepsilon,\zeta}\!
				\|u_0\|_{H^s}^2
				\!&+\!
				(T+TN^{2}+N^{-\zeta})\|u\|_{\spec^s_T}^2
				\|u\|_{\spec^{s_0}_T}
				\!+\!
				T\|u\|_{\spec^s_T}^2
				\\&+\!
				T\|u\|_{\spec^s_T}^2
				(
				\|b\|_{\spbc^{s+1+\varepsilon}_T}
				\!+\!
				\|f\|_{L^\infty H^{s+\varepsilon}}
				)
				\\\!&+\!
				T^\delta\|u\|_{\spfc^s_T}^2
				(
				\|u\|_{\spfc^{s_0}_T}^2
				\!+\!
				\|u\|_{\spfc^{s_0}_T}\|b\|_{\spbc^{s}_T}
				)
				\\\!&+\!
				T\|f\|_{L^\infty H^{s+\varepsilon}}^2
				.
			\end{split}
		\end{align}
	\end{lem}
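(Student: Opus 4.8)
The plan is a Littlewood--Paley localized energy argument in which \eqref{eq_bo_split} is substituted into itself twice in order to tame the worst cubic interaction; since classical solutions lie in $C^0([0,T];H^2(\mathbb{R}))\cap C^1([0,T];H^0(\mathbb{R}))$, all manipulations are legitimate. Fix $K\in\mathbb{D}$, apply $P_K$ to \eqref{eq_bo_split}, pair in $L^2_x$ with $P_Ku$, and use that $(\mathcal{H}\partial_x^2 g)^\wedge=i\omega\,\widehat{g}$ with $\omega$ real, so that $\mathcal{H}\partial_x^2$ is skew-adjoint on $L^2(\mathbb{R};\mathbb{R})$; this kills the dispersive term and yields, for $t\in[0,T]$,
\begin{equation*}
\|P_Ku(t)\|_{L^2_x}^2=\|P_Ku_0\|_{L^2_x}^2-2\int_0^t\langle P_Ku,P_K(uu)_x\rangle\,ds-2\int_0^t\langle P_Ku,P_K(ub)_x\rangle\,ds-2\int_0^t\langle P_Ku,P_Kf\rangle\,ds.
\end{equation*}
Multiplying by $K^{2s}$, taking the supremum in $t$, and summing in $K$, the claim \eqref{e_ee_i} reduces to bounding the $K^{2s}$-weighted $\ell^1_K$-sums of the three integrals. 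The forcing integral is immediate: $|\int_0^t\langle P_Ku,P_Kf\rangle|\le T\|P_Ku\|_{L^\infty_tL^2_x}\|P_Kf\|_{L^\infty_tL^2_x}$, so Cauchy--Schwarz in $K$ and Young's inequality give the admissible contribution $T\|u\|_{\spec^s_T}^2+T\|f\|_{L^\infty_tH^{s+\varepsilon}_x}^2$.

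\emph{Cubic terms, easy regimes.} Decompose $u=\sum_{K_1}P_{K_1}u$ and $b=\sum_{K_2}P_{K_2}b$ and split each cubic integral according to the two largest of $K,K_1,K_2$. In every regime one symmetrizes the relevant cubic form, which reduces it (up to commutators of the frequency cut-offs that gain derivatives) to a form in which the surviving $\partial_x$ lands on a factor whose frequency is comparable to one of the smaller ones; placing that factor and one further low-frequency factor in $L^\infty_x$ by Bernstein's inequality and the rest in $L^\infty_tL^2_x$, and summing the dyadic series, produces the admissible terms $T\|u\|_{\spec^s_T}^2\|u\|_{\spec^{s_0}_T}+T\|u\|_{\spec^s_T}^2$ from $(uu)_x$ and $T\|u\|_{\spec^s_T}^2\|b\|_{\spbc^{s+1+\varepsilon}_T}$ from $(ub)_x$ (where a full derivative may fall on $b$). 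In the $(uu)_x$ interaction where one frequency is much smaller than the other two, the part of that small frequency not exceeding $N$ is handled by a crude bound costing at most $N^2$, giving $TN^2\|u\|_{\spec^s_T}^2\|u\|_{\spec^{s_0}_T}$, while on frequencies above $N$ the convolution estimates of Section~\ref{s_dce} furnish a gain $N^{-\zeta}$, giving $N^{-\zeta}\|u\|_{\spec^s_T}^2\|u\|_{\spec^{s_0}_T}$.

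\emph{Cubic terms, hard regime and double substitution.} It remains to treat the high$\times$high interactions with $K_3^*>1$, where $|\Omega_3|\sim K_1^*K_3^*$ by Lemma~\ref{l_res}. Passing to the interaction representation $P_{K_i}u=e^{-t\mathcal{H}\partial_x^2}w_i$, the relevant pairing becomes an integral of $\widehat{w}\,\widehat{w_1}\,\widehat{w_2}$ against $e^{it\Omega_3}$; integrating by parts in time, the oscillation produces a factor $\Omega_3^{-1}$, the boundary terms at $t\in\{0,T\}$ are estimated in $L^\infty_tL^2_x$ and feed the $N^{-\zeta}$- and $T\|u\|_{\spec^s_T}^2\|f\|_{L^\infty_tH^{s+\varepsilon}_x}$-type contributions, and the interior term contains $\partial_t$ of a product of two frequency-localized factors. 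Substituting \eqref{eq_bo_split} for each of these two time derivatives — hence using the equation a second time — replaces them by $\mathcal{H}\partial_x^2$-terms (reabsorbed into the phase), $(uu)_x$- and $(ub)_x$-terms, and $f$-terms, yielding fourfold products of frequency- and time-localized pieces of $u$, $b$, $f$. Localizing all factors of $u$ to time intervals of length $\sim(K_1^*)^{-\theta}$ by the partition-of-unity device used in Section~\ref{s_stbe} (Lemmas~\ref{l_locstab}, \ref{l_ext}, \ref{c_regtime}), passing to the $\spfc$-norms, and applying the fourfold convolution bounds of Lemmas~\ref{l_cnv_q} and~\ref{l_cnv_qb} together with the resonance estimates \eqref{e_res4_asy} and Lemma~\ref{l_res_dif} — after symmetrizing the quartic forms to cancel the leading derivative loss and summing the geometric series, which converges precisely because $\theta>1$ compensates the $\sim T(K_1^*)^\theta$ time intervals — produces the admissible terms $T^\delta\|u\|_{\spfc^s_T}^2\big(\|u\|_{\spfc^{s_0}_T}^2+\|u\|_{\spfc^{s_0}_T}\|b\|_{\spbc^s_T}\big)+T\|f\|_{L^\infty_tH^{s+\varepsilon}_x}^2$. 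Collecting the contributions gives \eqref{e_ee_i}.

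\emph{Main obstacle.} The heart of the argument, and the new point compared with \cite{IKT2008,Pal2022}, is the last step: the fourfold products created by substituting the equation twice. Controlling them demands the exact value of $\Omega_4$ on $\Gamma_4$ and the difference estimate of Lemma~\ref{l_res_dif}, the sharp $L^2$-bounds for fourfold dyadic convolutions from Section~\ref{s_dce}, and — most delicately — the precise symmetrizations that make the would-be derivative-losing terms cancel, all carried out while tracking the frequency-dependent time localizations of size $(K_1^*)^{-\theta}$ through every estimate; keeping $\theta$ only slightly above $1$ is what forces these gains to be essentially sharp.
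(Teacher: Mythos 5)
Your proposal follows the same overall architecture as the paper: Littlewood--Paley energy identity giving three integrals ($(uu)_x$, $(ub)_x$, $f$), symmetrization of the cubic symbol to gain a factor $K_3^*$, a frequency split into low/medium/high regimes with a parameter $N$, and in the high regime an integration by parts in time against the resonance factor $\Omega_3^{-1}$ followed by a second substitution of the equation into the resulting $\partial_t$, producing the quadrilinear terms that are controlled by the Section~\ref{s_dce} convolution bounds together with the time-localizations of length $(K_1^*)^{-\theta}$. The key lemmas you invoke are exactly the ones the paper uses.

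A few small attribution errors are worth flagging, though none of them changes the strategy. The boundary terms of the time integration by parts feed only the $N^{-\zeta}\|u\|^3$-type contribution; the $T\|u\|_{\spec_T^s}^2\|f\|$-type contribution comes from the \emph{trilinear} term $\kappa_{1,2}[f,u]$ that the substitution generates (in the paper, the $E_{I,6}$ term), and the $T\|f\|^2$-type contribution comes from the original $\int P_Ku\,P_Kf$ integral after Young. The medium regime that is crudely bounded by $N^2$ is characterized by \emph{all} frequencies being at most $N$ (i.e.\ $K_1^*\leq N$), not only the smallest frequency. After integration by parts, the $\partial_t$ falls by Leibniz on a product of \emph{three} frequency-localized pieces of $u$, so the substitution generates quadrilinear terms (from $(uu)_x$ and $(ub)_x$) as well as trilinear ones (from $f$), not only fourfold products. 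Finally, note that the paper handles the entire $(ub)_x$ integral $I_2$ directly by symmetrization and Hölder (Lemma~\ref{l_ee_trv_uub}) without any integration by parts in time; the factor $\|b\|_{\spbc^s_T}$ in the $T^\delta$-term arises instead from the $(ub)_x$ piece created by substituting the equation into the $(uu)_x$ integral. These corrections would need to be made in a detailed write-up, but your route is the paper's.
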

	\begin{lem}\label{l_ee_ii}
		Let $s>\frac{1}{2}$, $z=-\frac{1}{2}$, $\varepsilon>0$, $\zeta>0$, $T\in(0,1)$, and $N\in\mathbb{D}$. Fix $\theta=\theta(s)>1$ and $\delta>0$ sufficiently small. Let $b$ and $f$ satisfy \eqref{as_b_f}. Then, the difference $v=u_1-u_2$ of classical solutions $u_1$ and $u_2$ to \eqref{eq_bo_split} satisfies
		\begin{align}\label{e_ee_ii}
			\begin{split}
				\|v\|_{\spec^z_T}^2
				\!\lesssim_{\delta,\varepsilon,\zeta}\!
				\|v_0\|_{H^z}^2
				\!&+\!
				(T
				\!+\!
				TN^2
				\!+\!
				N^{-\zeta})
				\|v\|_{\spec^z_T}^2
				(
				\|u_1\|_{\spec^s_T}
				\!+\!
				\|u_1\|_{\spec^s_T}
				)
				\\\!&+\!
				T\|v\|_{\spec_T^z}^2
				(
				\|b\|_{\spbc^{1+\varepsilon}_T}
				\!+\!
				\|f\|_{L^\infty H^s}
				)
				\\\!&+\!
				T^\delta\|v\|_{\spfc^z_T}^2
				(
				\|u_1\|_{\spfc^s_T}
				\!+\!
				\|u_2\|_{\spfc^s_T}
				)
				(
				\|u_1\|_{\spfc^s_T}
				\!+\!
				\|u_2\|_{\spfc^s_T}
				\!+\!
				\|b\|_{\spbc^s_T}
				).
			\end{split}
		\end{align}
	\end{lem}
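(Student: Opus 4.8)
\emph{Setup and reduction.} The plan is to follow the scheme of the proof of Lemma~\ref{l_ee_i}, adapted to the less symmetric equation satisfied by the difference. Put $w:=u_1+u_2$; subtracting the two copies of \eqref{eq_bo_split} gives
\begin{equation*}
	v_t+\mathcal{H}v_{xx}+(vw)_x+(vb)_x=0,
\end{equation*}
a linear dispersive equation for $v$ with the rough coefficient $w$ and the smooth coefficient $b$; the forcing $f$ has cancelled but re-enters through the equations for $u_1$ and $u_2$. Since $u_1,u_2$, hence $v$, are classical, we may fix a dyadic $K$ and $T'\in[0,T]$, apply $P_K$, pair with $P_Kv$ in $L^2_x$, and integrate over $[0,T']$; using that $\langle\mathcal{H}\partial_x^2 g,g\rangle_{L^2}=0$ for real $g$ and that $P_K$ commutes with $\mathcal{H}\partial_x^2$, we obtain
\begin{equation*}
	\|P_Kv(T')\|_{L^2}^2-\|P_Kv_0\|_{L^2}^2
	=-2\int_0^{T'}\!\!\int_{\mathbb{R}}P_Kv\,\big(P_K(vw)_x+P_K(vb)_x\big)\,dx\,dt .
\end{equation*}
Taking the supremum over $T'$, multiplying by $K^{2z}=K^{-1}$ and summing over $K$ reduces the lemma to bounding, weighted by $K^{-1}$, the two families of trilinear space-time integrals arising from $(vw)_x$ and $(vb)_x$.

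\emph{The benign part.} Insert Littlewood-Paley projectors $P_{K_1}v$ and $P_{K_2}g$ with $g\in\{w,b\}$; the frequency support forces the two largest of $K,K_1,K_2$ to be comparable. When $P_Kv$ is paired against two factors of strictly smaller frequency, symmetrization reduces the integral, up to harmless commutators with $P_K$, to ones of the form $\int(P_Kv)^2(P_{K_2}g)_x\,dx$; using that $\sum_KK^{-1}\|P_Kv\|_{L^\infty_tL^2_x}^2=\|v\|_{\spec^z_T}^2$ and Bernstein's inequality one gets $T\|v\|_{\spec^z_T}^2\|b\|_{\spbc^{1+\varepsilon}_T}$ for $g=b$ and, after splitting the $K_2$-sum at the threshold $N$ (paying $TN^2$ below $N$, gaining $N^{-\zeta}$ above, where the surplus $s+z>0$ of $H^s$ is exploited), $(T+TN^2+N^{-\zeta})\|v\|_{\spec^z_T}^2\|w\|_{\spec^s_T}$ for $g=w$. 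For $g=b$ the complementary frequency regimes are absorbed by the surplus regularity of $b$; for $g=w$ they still carry the derivative-losing or summability-critical interactions, dealt with in the next step.

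\emph{The resonant part.} For the remaining interactions in the $(vw)_x$-term one passes to the extensions supplied by Lemma~\ref{l_ext} and works on the Fourier side: the modulation decomposition together with Lemma~\ref{l_res} (which gives $|\Omega_3|\sim K_1^*K_3^*$, precisely the size of the derivative to be recovered) isolates the resonant frequency configurations, the non-resonant ones being negligible by the vanishing statement in Lemma~\ref{l_cnv}. On the resonant part one integrates by parts in time and substitutes \eqref{eq_bo_split} for the time derivatives that appear --- the $v$-equation above for a $v$-factor, the $u_1$- and $u_2$-equations for a factor of $w$; since two such substitutions are needed, this produces fourfold products of frequency-localized copies of $v,u_1,u_2,b$ together with trilinear terms carrying $f$, while the $\mathcal{H}\partial_x^2$-contributions of the substituted equations combine harmlessly with the prefactor $\Omega_3^{-1}$. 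These quartic integrals are estimated with Lemmas~\ref{l_cnv_q} and \ref{l_cnv_qb}, the resonance bounds of Lemmas~\ref{l_res} and \ref{l_res_dif}, and some symmetrizations exploiting the (anti)symmetry of $\Omega_3,\Omega_4$ under permutations and sign reversal; crucially, the frequency-dependent time localization ($\theta>1$) confines each dyadic piece to an interval of length $K^{-\theta}$, which produces the gain $T^\delta$ and closes the estimate as $T^\delta\|v\|_{\spfc^z_T}^2(\|u_1\|_{\spfc^s_T}+\|u_2\|_{\spfc^s_T})(\|u_1\|_{\spfc^s_T}+\|u_2\|_{\spfc^s_T}+\|b\|_{\spbc^s_T})$, whereas the $f$-terms give $T\|v\|_{\spec^z_T}^2\|f\|_{L^\infty H^s}$. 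The $(vb)_x$-term produces no resonant contribution beyond the benign part, because $b$ is smooth enough.

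\emph{Main obstacle.} The chief difficulty, compared with Lemma~\ref{l_ee_i}, is the asymmetry of the equation for $v$: as $v$ enters only linearly, the normal-form computation is not self-contained, and one must track carefully which of $\partial_t v,\partial_t u_1,\partial_t u_2$ is eliminated and invoke the matching equation; this is also why the estimate is forced to the low regularity $z=-\tfrac12$ for $v$ while $\spfc^s$-control of $u_1,u_2$ and $\spbc^s$-control of $b$ are required. In addition, keeping all the fourfold products within the stated budget relies on the sharp behaviour of $\Omega_4$ on $\Gamma_4$ and the $L^2$-bound for fourfold convolutions of Lemma~\ref{l_cnv_q}, and --- as in the proof of Lemma~\ref{l_nl_loc} --- the bookkeeping of nested partitions of unity and frequency-dependent time cut-offs (with $\theta$ slightly above $1$ and $\delta$ small, subject to the constraints of Section~\ref{s_stbe}) is the heaviest technical part.
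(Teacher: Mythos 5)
Your proposal follows the paper's strategy essentially point for point: energy identity for the $v$-equation, direct symmetrization-plus-Bernstein estimate for the $(vb)_x$ contribution and the low-frequency piece of $(vw)_x$ (the threshold $N$ splitting the medium from the high frequencies), then integration by parts in time on the high-frequency piece with substitution of the $v$-, $u_1$- and $u_2$-equations, producing boundary, quadrilinear, and $f$-terms that are closed with the short-time quadrilinear estimates, the resonance bounds, and the $T^\delta$ gain from the $\theta>1$ time localization; and you correctly identify the asymmetry of $(vw)_x$ in $v$ as the reason the estimate is carried out at $z=-\tfrac12$ with $\spfc^s$-control of $u_1,u_2$. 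One small imprecision: the paper does not invoke the vanishing statement of Lemma~\ref{l_cnv} to discard a ``non-resonant'' region of the trilinear energy integral -- the gain comes solely from the $\Omega_3^{-1}$ factor created by the integration by parts together with the lower bound $|\Omega_3|\gtrsim K_1^*K_3^*$ on $C_H$, while the modulation-decomposition tools enter only inside Lemmas~\ref{l_stqe} and~\ref{l_stqe_bnd} when the resulting quadrilinear terms are estimated; but this is a presentational inaccuracy, not a gap in the approach.
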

	
	\begin{lem}\label{l_ee_iii}
		Let $s>\frac{1}{2}$, $z=-\frac{1}{2}$, $\varepsilon>0$, $\zeta>0$, $T\in(0,1)$, and $N\in\mathbb{D}$. Fix $\theta=\theta(s)>1$ and $\delta>0$ sufficiently small. Let $b$ and $f$ satisfy \eqref{as_b_f}. Then, the difference $v=u_1-u_2$ of classical solutions $u_1$ and $u_2$ to \eqref{eq_bo_split} satisfies
		\begin{align}\label{e_ee_iii}
			\begin{split}
				\|v\|_{\spec^s_T}^2
				\!\lesssim_{\delta,\varepsilon,\zeta}\!
				\|v_0\|_{H^s}^2
				\!&+\!
				(T
				\!+\!
				TN^2
				\!+\!
				N^{-\zeta})
				\|v\|_{\spec^s_T}^2
				(
				\|v\|_{\spec^s_T}
				\!+\!
				\|u_1\|_{\spec^s_T}
				\!+\!
				\|u_2\|_{\spec^s_T}
				)
				\\\!&+\!
				T\|v\|_{\spec_T^s}^2
				(
				\|b\|_{\spbc^{1+s+\varepsilon}_T}
				\!+\!
				\|f\|_{L^\infty H^s}
				)
				\\\!&+\!
				T\|v\|_{\spec_T^z}\|v\|_{\spec_T^s}
				\|u_2\|_{\spec_T^{s+1}}
				\\\!&+\!
				T^\delta\|v\|_{\spfc^s_T}^2
				(
				\|v\|_{\spfc^{s}_T}
				\!+\!
				\|u_2\|_{\spfc^s_T}
				)
				(
				\|u_1\|_{\spfc^s_T}
				\!+\!
				\|u_2\|_{\spfc^s_T}
				\!+\!
				\|b\|_{\spbc^s_T}
				)
				\\\!&+\!
				T^\delta\|v\|_{\spfc^z_T}\|v\|_{\spfc^s_T}\|u_2\|_{\spfc^{s+1}_T}
				(
				\|u_1\|_{\spfc^s_T}
				\!+\!
				\|u_2\|_{\spfc^s_T}
				\!+\!
				\|b\|_{\spbc^{s+1}_T}
				).
			\end{split}
		\end{align}
	\end{lem}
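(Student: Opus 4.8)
The plan is to run the frequency-localised energy argument of \cite{IKT2008} on the equation for the difference, organised around the splitting of its nonlinearity into a fully symmetric piece and a genuinely non-symmetric piece. Subtracting the two copies of \eqref{eq_bo_split} — the forcing $f$ being common to $u_1$ and $u_2$, it cancels — and writing $(u_1+u_2)v=v^2+2u_2v$, the difference $v=u_1-u_2$ is a classical solution of
\begin{equation*}
	v_t+\mathcal{H}v_{xx}+(v^2)_x+2(u_2v)_x+(vb)_x=0,\qquad v(0)=v_0 .
\end{equation*}
Applying $P_K$, pairing with $P_Kv$ in $L^2_x$ and using that $\mathcal{H}\partial_x^2$ is skew-adjoint, we obtain for all $K\in\mathbb{D}$ and $t\in[-T,T]$
\begin{equation*}
	\|P_Kv(t)\|_{L^2_x}^2=\|P_Kv_0\|_{L^2_x}^2-2\int_0^t\!\!\int_{\mathbb{R}}P_Kv\,P_K\bigl((v^2)_x+2(u_2v)_x+(vb)_x\bigr)\,dx\,ds .
\end{equation*}
Taking $\sup_{t}$, multiplying by $K^{2s}$ and summing in $\ell^2$ over $K$, the estimate \eqref{e_ee_iii} is reduced to dyadic bounds, uniform in $K$ and $t$, for the three cubic space-time integrals on the right-hand side.

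The integral of $P_Kv\,P_K(v^2)_x$ is precisely the energy flux treated in Lemma \ref{l_ee_i} with $u=v$ and $b=0=f$, and it reproduces the terms $(T+TN^2+N^{-\zeta})\|v\|_{\spec^s_T}^2\|v\|_{\spec^s_T}$ and $T^\delta\|v\|_{\spfc^s_T}^4$ (the latter dominated, via the triangle inequality $\|v\|_{\spfc^s_T}\le\|u_1\|_{\spfc^s_T}+\|u_2\|_{\spfc^s_T}$, by the fourth line of \eqref{e_ee_iii}). Likewise, the $b$-flux $P_Kv\,P_K(vb)_x$ is handled as in Lemma \ref{l_ee_i}, via the commutator/paraproduct splitting in $B^\bullet_{\infty,\infty}$, yielding $T\|v\|_{\spec^s_T}^2\|b\|_{\spbc^{1+s+\varepsilon}_T}$ and quartic contributions of the form $T^\delta\|v\|_{\spfc^s_T}^2\|b\|_{\spbc^s_T}(\dots)$; the regularity $\|b\|_{\spbc^{1+s+\varepsilon}_T}$ is consumed by the high$\times$high interaction in which a derivative lands on $b$. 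No new phenomenon arises from these two terms. The new ingredient is $\int_0^t\!\int P_Kv\,P_K(u_2v)_x$, which is symmetric only in the two $v$-slots, one of which is now $u_2$. Decomposing $u_2=\sum_{K_2}P_{K_2}u_2$, $v=\sum_{K_3}P_{K_3}v$ with output frequency $\sim K=:K_1$: the regime $K_2\ll K_1\sim K_3$ is handled by integration by parts in $x$ (using the two $v$-slots), the fully resonant regime $K_1\sim K_2\sim K_3\gg1$ by integration by parts in time (see below), and in the regime $K_1\sim K_2\gg K_3$ — where a derivative is forced onto $u_2$ and cannot be redistributed by symmetry — one estimates directly, measuring $P_{K_3}v$ in the weaker norm $\spec^z_T$ and counting $u_2$ at regularity $s+1$. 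Summing the $K_1^{2s}$-weights in this last regime produces exactly the term $T\|v\|_{\spec^z_T}\|v\|_{\spec^s_T}\|u_2\|_{\spec^{s+1}_T}$, and, at the level of the $\spfc$-reduction that will be used for the resonant contributions, the term $T^\delta\|v\|_{\spfc^z_T}\|v\|_{\spfc^s_T}\|u_2\|_{\spfc^{s+1}_T}(\dots)$.

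For the resonant and near-resonant contributions one integrates by parts in time, gaining $|\Omega_3|^{-1}\sim(K_1^*K_3^*)^{-1}$ from Lemma \ref{l_res}, and substitutes the evolution equations for $\partial_tv$ and, via \eqref{eq_bo_split}, for $\partial_tu_2$; the latter substitution reintroduces $f$ into cubic pieces that feed the $T\|v\|_{\spec^s_T}^2\|f\|_{L^\infty H^s}$-term. Since the nonlinearities are quadratic and the substitution has to be performed for more than one factor, this amounts to inserting \eqref{eq_bo_split} twice and produces space-time integrals of fourfold products of frequency-localised functions. These are bounded by the dyadic convolution estimates of Section \ref{s_dce} — Lemma \ref{l_cnv_q}, or Lemma \ref{l_cnv_qb} when a factor is $b$ — in which the upper bound $|\Omega_4|\lesssim K_1^*K_3^*$ of Lemma \ref{l_res} controls the modulation forced by the other three factors, while Lemma \ref{l_res_dif} cancels the leading terms coming from the two ways the high output frequency can split (the symmetrisation alluded to in the introduction, needed for the dyadic sum to converge). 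Converting the resulting $L^2$-norms back to the solution norms through the $\spx^K$-estimates (Lemmas \ref{l_locstab}, \ref{l_regtime}, Corollary \ref{c_regtime}) and the extensions of Lemma \ref{l_ext} yields the remaining $T^\delta$-terms of \eqref{e_ee_iii}, with $u_2$ appearing at regularity $s+1$ precisely in the sub-block inherited from the non-symmetric interaction. Finally, the dyadic parameter $N$ splits each left-over cubic boundary term, as in \cite{IKT2008}, into a low-output-frequency part bounded by Bernstein's inequality (contributing $TN^2$), a high-modulation part bounded with a little room in the modulation exponents (contributing $N^{-\zeta}$), and a low-modulation bulk (contributing $T$).

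The main obstacle is the compatibility between the frequency-dependent time localisations built into $\spfc^s_T$ — intervals of length $K^{-\theta}$ — and the resonance gains: for $\theta$ only slightly larger than $1$ one has to verify that $|\Omega_3|^{-1}$, together with the $\Omega_4$-bound and the $\Omega_3$-difference bound of Lemma \ref{l_res_dif}, outweighs both the derivative produced by the nonlinearity and the loss $K^{\theta}$ incurred when the time cutoffs are unfrozen, and that this persists as $\theta\downarrow1$ when $s\downarrow\tfrac12$. This, together with the bookkeeping of the many fourfold terms produced by inserting \eqref{eq_bo_split} twice and with isolating within them the single non-symmetric block responsible for the $\|u_2\|_{\spec^{s+1}_T}$ and $\|u_2\|_{\spfc^{s+1}_T}$ factors, is where essentially all the work lies; the $(v^2)_x$-, $(vb)_x$- and $f$-contributions are, by comparison, inherited from Lemma \ref{l_ee_i}.
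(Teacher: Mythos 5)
Your outline matches the paper's proof: the equation for $v$ with nonlinearity $(vv)_x+2(vu_2)_x+(vb)_x$, the three corresponding energy fluxes $I\!I\!I$, $I\!V$, $V$, symmetrisation plus time integration by parts (inserting \eqref{eq_bo_split}) for the fully-$v$ and $b$-flux pieces, and the forced-derivative regime of $(vu_2)_x$ where the second $v$-slot is measured in $\spec^z_T$/$\spfc^z_T$ and $u_2$ must be counted at regularity $s+1$. The only imprecision is minor: substituting $\partial_t v$ (which itself contains $(vu_2)_x$) into the $(vv)_x$-flux produces an extra cross-term $\kappa_{1,2}[(vu_2)_x,v]$ (the paper's $E_{I\!I\!I,*}$), so that flux is not literally Lemma~\ref{l_ee_i} with $b=0=f$, though the extra contribution $T^\delta\|v\|_{\spfc^s_T}^3\|u_2\|_{\spfc^s_T}$ is absorbed by the fourth line of \eqref{e_ee_iii} exactly as you indicate.
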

		
	The estimates in all three lemmata are proven similarly to classical energy estimates. However, since the function spaces involve frequency-dependent time-localizations, the proofs are quite tedious. In particular, we need to use the equation \eqref{eq_bo_split} twice (by means of integration by parts), which leads to some quadrilinear terms. In Section \ref{ss_stqe} we prove four estimates for such quadrilinear terms. Sections \ref{ss_ee_i}, \ref{ss_ee_ii}, and \ref{ss_ee_iii} are dedicated to the proofs of Lemma \ref{l_ee_i}, \ref{l_ee_ii}, and \ref{l_ee_iii}.
	

	\subsection{Some quadrilinear estimates}\label{ss_stqe}
	Given a symbol $\sigma:\Gamma_4\rightarrow\mathbb{R}$ and a sufficiently nice function $u_i:[0,T]\times\mathbb{R}\rightarrow\mathbb{R}$, $i\in[4]$, we define the Fourier multiplier operator by
	\begin{align*}
		M_{\sigma}[u_1,u_2,u_3,u_4](T)
		:=
		\int_0^T\int_{\Gamma_4}
		\sigma(\xi)
		\prod_{i=1}^4\hat{u}_i(r,\xi_i)d\gamma_4(\xi)dr.
	\end{align*}
	Here, $\hat{u}_i$ denotes the spatial Fourier transform of $u_i$ and $\Gamma_4$ and $\gamma_4$ are defined in Section \ref{ss_res}.
	In this work, the symbol $\sigma$ will always be a product of powers of dyadic numbers $K_i$ and localizations $\chi_{K_i}$ for $i\in[4]$.
	In particular, such $\sigma$ does not change much on dyadic scales. For $K=(K_1,K_2,K_3,K_4)\in\mathbb{D}^4$, we define 
	\begin{align*}
		\sigma_{\mathbb{D}}(K)
		:=
		\sup\{|\sigma(\xi)|\,:
		\xi\in\Gamma_4 \text{ and }\forall i\in[4]:\xi_i\in\text{supp }\chi_{K_i}\}.
	\end{align*}
	The following two lemmata provide bounds on $M_\sigma$.
	\begin{lem}\label{l_stqe}
		Let $\theta>1$ and $\delta\in(0,\frac{1}{2})$. Then, for all $T\in(0,1)$ and all $u_j\in\spfb^{K_j}_T$ for $j\in[4]$, we have
		\begin{align*}
			|M_\sigma[u_1,u_2,u_3,u_4](T)|
			\lesssim_\delta
			T^\delta\sum_{K\in\mathbb{D}^4}
			\sigma_{\mathbb{D}}(K)(K_1^*)^{(\theta-1)\frac{1}{2}}(K_4^*)^{\frac{1}{2}}\prod_{j=1}^{4}\|P_{K_j}u_j\|_{\spfb^{K_j}_T}
		\end{align*}
		and
		\begin{align*}
			|M_\sigma[u_1,u_2,u_3,u_4](T)|
			\lesssim_\delta
			T^\delta\sum_{K\in\mathbb{D}^4}
			\sigma_{\mathbb{D}}(K)(K_3^*K_4^*)^{\frac{1}{2}}\prod_{j=1}^{4}\|P_{K_j}u_j\|_{\spfb^{K_j}_T}.
		\end{align*}
	\end{lem}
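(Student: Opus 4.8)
The plan is to unfold $M_\sigma$ into a space--time convolution at the origin, control it by the quadrilinear convolution bounds of Lemma~\ref{l_cnv_q}, and sum over a frequency--adapted time decomposition. By multilinearity and the Littlewood--Paley decomposition $u_j=\sum_{K_j}P_{K_j}u_j$, it suffices to treat $M_\sigma[P_{K_1}u_1,\dots,P_{K_4}u_4](T)$ for a fixed $K=(K_1,\dots,K_4)$; write $K_1^*\ge\cdots\ge K_4^*$ for the reordering. The second, ``generic'' estimate is the easy one: for fixed time $r$ one writes $\int_{\Gamma_4}\sigma(\xi)\prod_i\hat u_i(r,\xi_i)\,d\gamma_4(\xi)$ as a four-fold convolution at $0$, splits it into the pairs (highest, lowest) and (second, third) by Cauchy--Schwarz, and applies Young's inequality with the Bernstein bound $\|\hat u_j\|_{L^1_\xi}\lesssim K_j^{1/2}\|\hat u_j\|_{L^2_\xi}$ on the two ``inner'' factors of each pair; then $\int_0^T(\cdot)\,dr\le T\sup_r(\cdot)$ and $\spfb^{K_i}_T\hookrightarrow L^\infty_tL^2_x$ give $|M_\sigma(T)|\lesssim\sigma_\mathbb{D}(K)\,T\,(K_3^*K_4^*)^{1/2}\prod_i\|P_{K_i}u_i\|_{\spfb^{K_i}_T}$, which is stronger than claimed.

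For the first, ``sharp'' estimate the loss $(K_1^*)^{(\theta-1)/2}$ forces the machinery of \cite{IKT2008}. Using Lemma~\ref{l_ext} I replace each $P_{K_i}u_i$ on $[0,T]$ by an extension $\tilde u_i\in\spfa^{K_i}$, temporally supported in $[-2T,2T]$, with $\|\tilde u_i\|_{\spfa^{K_i}}\lesssim\|P_{K_i}u_i\|_{\spfb^{K_i}_T}$, and I insert a smooth partition of unity $\sum_n\gamma((K_1^*)^\theta(r-r_n))\equiv1$ on scale $(K_1^*)^{-\theta}$, so that $M_\sigma(T)=\sum_n A_n$ with $\lesssim1+T(K_1^*)^\theta$ nonzero terms. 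Since $K_i\le K_1^*$, on the support of each bump the cutoff $\eta_0(K_i^\theta(\cdot-r_n))$ equals $1$, so each factor of $A_n$ may be multiplied by it; by the definition of the $\spfa^{K_i}$-norm (with center $r_n$) and Lemma~\ref{l_locstab} the corresponding $\spx^{K_i}$-norms are then $\lesssim\|\tilde u_i\|_{\spfa^{K_i}}$ uniformly in $n$. On each interior window, Plancherel in time turns $A_n$ into an integral over $\{\tau_{1234}=0=\xi_{1234}\}$; taking absolute values, bounding $|\sigma|\le\sigma_\mathbb{D}(K)$, and decomposing every factor dyadically in the modulation $\tau-\omega(\xi)$ reduces $|A_n|$ to $\sigma_\mathbb{D}(K)$ times sums of $(\phi_{1,n}^{L_1}\ast\cdots\ast\phi_{4,n}^{L_4})(0,0)$, each estimated by \eqref{e_cnv_qua_imp} when $K_3^*>1$ (and by \eqref{e_cnv_qua_gen} when $K_3^*=1$, where the frequency balance forces $K_1^*\sim K_2^*$ and reduces matters to the generic bound).

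Two tasks remain: the modulation sums and the sum over the $\lesssim1+T(K_1^*)^\theta$ windows. For the modulation sums I put weight $\tfrac12$ on each modulation that is charged in the $L$-factors of Lemma~\ref{l_cnv_q}, so that $\sum_{L_i}L_i^{1/2}\|\phi_{i,n}^{L_i}\|_{L^2}$ recovers the $\spx^{K_i}$-norm (Corollary~\ref{c_regtime} absorbs, with a $T^\delta$-gain, the sharp cutoff $1_{[0,T]}$ that appears in the $O(1)$ boundary windows). For the factor(s) whose modulation is not charged I keep an $L^2_{t,x}$-norm and use that on $\{\tau_{1234}=0=\xi_{1234}\}$ one has $\sum_i(\tau_i-\omega(\xi_i))=-\Omega_4(\xi)$, so by \eqref{e_res4_asy} the largest modulation $L_1^*$ is either $\lesssim K_1^*K_3^*$ or comparable to the second largest $L_2^*$ — in both cases the corresponding $\ell^1$-sums in $L$ become effectively finite (at worst a logarithmic loss, harmless after $T^\delta$). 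Finally I sum over $n$ by Cauchy--Schwarz, pairing one $L^2_{t,x}$-factor against the rest; since the bumps $\gamma((K_1^*)^\theta(\cdot-r_n))$ have bounded overlap, $\sum_n\|\phi_{i,n}\|_{L^2}^2\lesssim\|\tilde u_i\|_{L^2_{[0,T]}L^2_x}^2\le T\|\tilde u_i\|_{L^\infty_tL^2_x}^2\lesssim T\|P_{K_i}u_i\|_{\spfb^{K_i}_T}^2$, which produces the prefactor $(1+T(K_1^*)^\theta)^{1/2}T^{1/2}$; together with the $(K_1^*)^{-1/2}$ from \eqref{e_cnv_qua_imp} this gives $(1+T(K_1^*)^\theta)^{1/2}T^{1/2}(K_1^*)^{-1/2}\lesssim T^\delta(K_1^*)^{(\theta-1)/2}$ (using $T<1$), hence the first estimate, while the $O(1)$ boundary windows are bounded more crudely via $\|1_{[0,T]}\gamma((K_1^*)^\theta(\cdot-r_n))\|_{L^1_r}\le\min((K_1^*)^{-\theta},T)\le(K_1^*)^{-\theta(1-\delta)}T^\delta$ and Young/Bernstein in $x$, which is strictly better. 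The main obstacle is exactly this bookkeeping: the modulation weights must be distributed so that every factor's $\ell^1$-sum in $L$ converges while one factor stays available in $L^2_{t,x}$ for the $\ell^2$-summation over windows, and keeping this consistent when the two top modulations are large and comparable is where the resonance bound \eqref{e_res4_asy} is essential.
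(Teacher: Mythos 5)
Your overall strategy for the first estimate mirrors the paper's: reduce to a fixed dyadic block, extend via Lemma~\ref{l_ext}, insert a partition of unity on time scale $(K_1^*)^{-\theta}$, decompose in modulation, and invoke \eqref{e_cnv_qua_imp} (or \eqref{e_cnv_qua_gen} when $K_3^*=1$). For the second estimate you take a genuinely different and cleaner route than the paper: a purely spatial Cauchy--Schwarz plus Young/Bernstein at each fixed time, followed by $\int_0^T\le T\sup_r$ and the embedding $\spfb^{K_j}_T\hookrightarrow L^\infty_tL^2_x$, which even yields $T$ in place of $T^\delta$. The paper instead re-runs the window/modulation machinery with \eqref{e_cnv_qua_gen} in place of \eqref{e_cnv_qua_imp}; your version is simpler. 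Your treatment of the boundary windows in $M_2$ (via $\|1_{[0,T]}\gamma((K_1^*)^\theta(\cdot-r_n))\|_{L^1_r}\le\min((K_1^*)^{-\theta},T)$ and spatial estimates) is also a valid, slightly cruder alternative to the paper's Corollary~\ref{c_regtime} route.

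There is, however, a genuine gap in the way you close the modulation sums for the sharp estimate. The bound \eqref{e_cnv_qua_imp} comes with the weight $(L_1^*L_2^*L_4^*)^{1/2}$, so the third-largest modulation $L_3^*$ carries no $L^{1/2}$, and the remaining $\ell^1$-sum $\sum_{L_3^*}\|\phi_{L_3^*}\|_{L^2}$ does not converge on its own. You propose to control it using \eqref{e_res4_asy} and claim ``at worst a logarithmic loss, harmless after $T^\delta$.'' This does not work: when $L_1^*\sim L_2^*$ the free sum over $L_3^*$ ranges over roughly $[\lfloor(K_1^*)^\theta\rfloor_{\mathbb{D}},L_2^*]$, giving a factor of order $\log(L_2^*)$; you cannot absorb this into the already-charged $(L_2^*)^{1/2}$ without leaving the $\spx$-framework, and it cannot be traded against $T^\delta$ because it lives in frequency/modulation, not time. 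Moreover, since \eqref{e_cnv_qua_imp} requires \emph{all four} $\phi_i$ to have $\supp\phi_i\subset D_{L_i,K_i}$, ``keeping one factor in $L^2_{t,x}$'' is not literally available before the modulation sums are handled. The observation you need -- and which you set up but never use -- is that every $L_j$ obeys $L_j\ge\lfloor(K_1^*)^\theta\rfloor_{\mathbb{D}}$, so $(L_3^*)^{-1/2}\lesssim(K_1^*)^{-\theta/2}$. Writing $(L_1^*L_2^*L_4^*)^{1/2}=(L_1L_2L_3L_4)^{1/2}(L_3^*)^{-1/2}$ then absorbs all four $L$-sums into $\spx^{K_j}$-norms and contributes the missing factor $(K_1^*)^{-\theta/2}$; combined with $|M_1|\lesssim T(K_1^*)^\theta$ and the $(K_1^*)^{-1/2}(K_4^*)^{1/2}$ from \eqref{e_cnv_qua_imp}, this gives exactly $T(K_1^*)^{(\theta-1)/2}(K_4^*)^{1/2}$. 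Once the modulation sums are closed this way, the paper simply bounds the sum over $n\in M_1$ by $|M_1|\cdot\sup_n$ and your Cauchy--Schwarz-in-$n$ variant becomes unnecessary (though it, too, would then go through); the resonance bound \eqref{e_res4_asy} is not used here at all.
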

	\begin{lem}\label{l_stqe_bnd}
		Let $\theta>1$ and $\delta\in(0,\frac{1}{2})$. Then, for all $T\in(0,1)$, all $u_j\in\spfb^{K_j}_T$ for $j\in[3]$, and all $u_4\in L^\infty([0,T]\times\mathbb{R};\mathbb{R})$, we have
		\begin{align}
			|M_\sigma[u_1,u_2,u_3,u_4](T)|
			&\lesssim_\delta
			T^\delta\sum_{K\in\mathbb{D}^4}
			\sigma_{\mathbb{D}}(K)(K_1^\#)^\frac{\theta}{4}\prod_{j=1}^3\|P_{K_j}u_j\|_{\spfb^{K_j}_T}\|P_{K_4}u_4\|_{L^\infty}
		\end{align}
		and
		\begin{align}
			|M_\sigma[u_1,u_2,u_3,u_4](T)|
			&\lesssim_\delta
			T^\delta\sum_{K\in\mathbb{D}^4}
			\sigma_{\mathbb{D}}(K)(K_3^\#)^\frac{1}{2}\prod_{j=1}^3\|P_{K_j}u_j\|_{\spfb^{K_j}_T}\|P_{K_4}u_4\|_{L^\infty}.
		\end{align}
		Here, $K_1^\#$, resp. $K_3^\#$, denote the maximum, resp. minimum, of $K_1$, $K_2$, and $K_3$.
	\end{lem}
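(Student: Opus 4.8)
The plan is to prove Lemma~\ref{l_stqe_bnd} in parallel with Lemma~\ref{l_stqe}, using the quadrilinear convolution bounds of Lemma~\ref{l_cnv_qb} (one $L^\infty$ factor) in place of those of Lemma~\ref{l_cnv_q}; the only genuine differences come from the fact that $u_4$ is merely bounded, so it must be kept outside the reordering of the remaining three data.

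First I would use multilinearity of $M_\sigma$ to write $M_\sigma[u_1,u_2,u_3,u_4](T)=\sum_{K\in\mathbb{D}^4}M_\sigma[P_{K_1}u_1,P_{K_2}u_2,P_{K_3}u_3,P_{K_4}u_4](T)$ and then bound $|\sigma(\xi)|\le\sigma_{\mathbb{D}}(K)$ on the set where $\xi_i\in\supp\chi_{K_i}$ for all $i$. This reduces both inequalities to a single-block estimate of the form
\begin{align*}
\Big|\int_0^T\!\!\int_{\Gamma_4}\prod_{i=1}^4\widehat{P_{K_i}u_i}(r,\xi_i)\,d\gamma_4(\xi)\,dr\Big|
\lesssim_\delta
T^\delta\,\mathcal P(K)\prod_{i=1}^3\|P_{K_i}u_i\|_{\spfb^{K_i}_T}\,\|P_{K_4}u_4\|_{L^\infty},
\end{align*}
with $\mathcal P(K)=(K_1^\#)^{\theta/4}$ for the first bound and $\mathcal P(K)=(K_3^\#)^{1/2}$ for the second; reinserting the $\sigma_{\mathbb{D}}(K)$-weighted sum over $K$ then gives the claim. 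To prove the block estimate I would invoke Lemma~\ref{l_ext} to replace $P_{K_i}u_i$, $i\in[3]$, by extensions $\tilde u_{K_i}\in\spfa^{K_i}$ supported in $[-2T,2T]$ with $\|\tilde u_{K_i}\|_{\spfa^{K_i}}\lesssim\|P_{K_i}u_i\|_{\spfb^{K_i}_T}$, and $P_{K_4}u_4$ by an $L^\infty$-extension $\tilde u_{K_4}$ supported in $[-2T,2T]$ with $\|\tilde u_{K_4}\|_{L^\infty}\lesssim\|P_{K_4}u_4\|_{L^\infty}$; since the time integration is over $r\in[0,T]$, this changes nothing.

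Writing $M:=K_1^\#=\max\{K_1,K_2,K_3\}$, I would then insert a smooth partition of unity $\sum_n\gamma^2(2M^\theta t-n)\equiv1$ with $\gamma$ as in Lemma~\ref{l_partgamma}, so that at most $\lesssim 1+TM^\theta$ values of $n$ contribute, and on each retained piece factor the product using $\gamma^2=\gamma\cdot\gamma$ together with the identities $\gamma^2(2M^\theta t-n)=\gamma^2(2M^\theta t-n)\eta_0(K_i^\theta t-n)$ of Lemma~\ref{l_partgamma}; this represents the time-localized factors as $\Phi_i=\mathcal F_{t,x}[\gamma(2M^\theta(t-t_n))\eta_0(K_i^\theta(t-t_n))\tilde u_{K_i}]$ (only $\eta_0(K_3^\theta(t-t_n))$ for one of them), whose $\spx^{K_i}$-norms are $\lesssim\|\tilde u_{K_i}\|_{\spfa^{K_i}}$ by Lemma~\ref{l_locstab}; the bounded factor $\tilde u_{K_4}$ is not time-localized, since it enters only through $\|\Phi_4^\vee\|_{L^\infty}=\|\tilde u_{K_4}\|_{L^\infty}$. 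A change of variables identifies the contribution of each piece with a spacetime convolution $(\Phi_1\ast\Phi_2\ast\Phi_3\ast\Phi_4)(0,0)$. Decomposing $\Phi_1,\Phi_2,\Phi_3$ in the modulation variable, applying estimate~\eqref{e_cnv_qua_bnd_two} of Lemma~\ref{l_cnv_qb} for the first bound and estimate~\eqref{e_cnv_qua_bnd_one} for the second, and summing the modulation series against the $\spx^{K_i}$-norms with the help of Corollary~\ref{c_regtime} (which also produces the $T^\delta$ and a negative power of $M$ absorbing the $\lesssim 1+TM^\theta$ count) yields the per-piece and hence the block estimate, exactly as in the corresponding step of the proof of Lemma~\ref{l_stqe}. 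When $K_1^\#$ is bounded the count is $\lesssim 1$ and a direct application of Hölder's and Bernstein's inequalities to $\int_0^T$ suffices.

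The step I expect to be the main obstacle is this last bookkeeping: one must balance the count of time-localization intervals against the shape of the convolution prefactors $(L_2^*)^{1/4}(L_3^*)^{1/2}$, resp.\ $(L_3^*)^{1/2}(K_3^*)^{1/2}$, in Lemma~\ref{l_cnv_qb} and the gains from Corollary~\ref{c_regtime}, so that precisely the exponents $\theta/4$ on $K_1^\#$ and $\tfrac12$ on $K_3^\#$, together with a positive power $T^\delta$, survive; this is also where the conditions $\theta>1$ and $\delta$ small are used. The only other point requiring care is notational but essential: because $u_4$ never participates in the reorderings $K_i^*$ and $L_i^*$, one has to track that all suprema, reorderings and the resulting $K_1^\#$, $K_3^\#$ involve $K_1,K_2,K_3$ alone — this is the structural reason the statement is phrased with $\#$ rather than with the $K_i^*$ of all four frequencies.
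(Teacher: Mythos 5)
Your proposal matches the paper's own strategy, which the paper states in a single sentence: repeat the proof of Lemma~\ref{l_stqe}, but replace the convolution bounds \eqref{e_cnv_qua_gen} and \eqref{e_cnv_qua_imp} by the $L^\infty$-versions \eqref{e_cnv_qua_bnd_one} and \eqref{e_cnv_qua_bnd_two} from Lemma~\ref{l_cnv_qb}. You correctly identify that the partition of unity should be tied to $M=K_1^\#=\max\{K_1,K_2,K_3\}$, that only the three $\spfb$-factors receive time localization while $u_4$ is carried along as an $L^\infty$-weight, that the $L_j^*$ and $K_j^*$ in the convolution bounds run over $j\in[3]$ only, and that the exponent bookkeeping (count $\lesssim 1+TM^\theta$ against the missing half-powers of the $L_j^*$, each $\gtrsim M^\theta$) produces exactly $(K_1^\#)^{\theta/4}$ from \eqref{e_cnv_qua_bnd_two} and $(K_3^\#)^{1/2}$ from \eqref{e_cnv_qua_bnd_one}. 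Two small imprecisions worth flagging: with three time-localized factors the partition should be $\sum_n\gamma^3(\cdot+n)\equiv 1$ rather than $\gamma^2$; and, as in the proof of Lemma~\ref{l_stqe}, the $T^\delta$ arises separately for the interior pieces (from the count $\lesssim TM^\theta$ and the crude bound $T\leq T^\delta$) and for the finitely many boundary pieces (via Corollary~\ref{c_regtime} applied to the factor carrying the sharp cutoff $1_{[0,T]}$), rather than Corollary~\ref{c_regtime} absorbing the full count. Neither affects the validity of the argument.
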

	\begin{proof}[Proof of Lemma \ref{l_stqe}]
		We begin by a reduction to dyadic scales. Decomposing each $u_j$ by using Littlewood-Paley projectors, we obtain
		\begin{align*}
			|M_\sigma[u_1,u_2,u_3,u_4](T)|
			\lesssim
			\sum_{K}\sigma_{\mathbb{D}}(K)
			\underset{=:Q(K)}{\underbrace{\int_0^T\int_{\Gamma_4}\prod_{j=1}^4\chi_{K_j}(\xi_j)|\hat{u}_j|(r,\xi_j)d\gamma_4(\xi)dr}}.
		\end{align*}
		Thus, the claim will follow if we prove
		\begin{equation}
			Q(K)
			\lesssim
			T^\delta (K_1^*)^{\frac{\theta-1}{2}}(K_4^*)^{\frac{1}{2}}\prod_{j=1}^4\|P_{K_j}u_j\|_{\spfb_T^{K_j}}.
		\end{equation}
		
		We continue by rewriting $Q(K)$. Choose extensions $v_{K_j}\in\spfa^{K_j}$ of $1_{[0,T]}P_{K_j}v_j$, where $v_j:=(|\hat{u}_j|)^{\vee}$ for $j\in[4]$. According to Lemma \ref{l_ext}, we can assume that each $v_{K_j}$ has temporal support in $[-2T,2T]$ and satisfies the estimate
		\begin{align}
			\|v_{K_j}\|_{\spfa^{K_j}}
			\lesssim
			\|P_{K_j}v_j\|_{\spfb^{K_j}_T}
			=
			\|P_{K_j}u_j\|_{\spfb^{K_j}_T}.
		\end{align}
		Moreover, fix a smooth function $\gamma$ with $\sum_{n\in\mathbb{Z}}\gamma^4(\cdot+n)=1$. For $n\in\mathbb{Z}$, we define
		\begin{align*}
			h_{K_j,n}(t)
			=
			\gamma((K_1^*)^{\theta}t-n)\eta_0(K_j^{\theta}t-n)
			=
			\gamma((K_1^*)^{\theta}t-n)
		\end{align*}
		and conclude
		\begin{align*}
			Q(K)
			=
			\int_\mathbb{R}1_{[0,T]}(r)\sum_{n\in\mathbb{Z}}\mathcal{F}_x\left[\prod_{j=1}^4h_{K_j,n}(r)v_{K_j}(r,\cdot)\right](0)dr.
		\end{align*}
		Clearly, we only need to consider integers $n$ satisfying $1_{[0,T]}(\cdot)\gamma((K_1^*)^{\theta}\cdot-n)\not\equiv0$ on $\mathbb{R}$. Hence, it suffices to study the contribution of the sums over the following two sets
		\begin{align*}
			M_1
			&:=
			\{n\in\mathbb{Z}:
			\gamma((K_1^*)^{\theta}\cdot-n)\equiv1_{[0,T]}(\cdot)\gamma((K_1^*)^{\theta}\cdot-n)\not\equiv0\},
			\\
			M_2
			&:=
			\{n\in\mathbb{Z}:
			\gamma((K_1^*)^{\theta}\cdot-n)\not\equiv1_{[0,T]}(\cdot)\gamma((K_1^*)^{\theta}\cdot-n)\not\equiv0\}.
		\end{align*}
		
		\textit{Summation over $M_1$}: In the following, we restrict the summation in $Q(K)$ to $n\in M_1$. Note that we can drop the indicator function and that $|M_1|$ is bounded by $T(K_1^*)^{\theta}$. We conclude
		\begin{align*}
			Q(K)|_{M_1}
			&=
			\sum_{n\in M_1}\int_\mathbb{R}\mathcal{F}_x\left[\prod_{j=1}^4h_{K_j,n}(r)v_{K_j}(r,\cdot)\right](0)dr
			\\&\lesssim
			T(K_1^*)^{\theta}\sup_{n\in M_1}\ast_{j=1}^4\mathcal{F}_{r,x}\left[h_{K_j,n}(r)v_{K_j}(r,x)\right](0,0).
		\end{align*}
		Next, we localize each term in the convolution above with respect to its modulation variable. For this, define
		\begin{align*}
			\phi_{L_j,K_j}(\tau,\xi)
			:=
			\begin{cases}
				\eta_{L_j}(\mathfrak{m})\mathcal{F}\left[h_{K_j,n}v_{K_j}\right](\tau,\xi)
				&\text{if }L_j>\lfloor (K_1^*)^\theta\rfloor_{\mathbb{D}},
				\\
				\eta_{\leq \lfloor L_j\rfloor}(\mathfrak{m})\mathcal{F}\left[h_{K_j,n}v_{K_j}\right](\tau,\xi)
				&\text{if }L_j=\lfloor (K_1^*)^\theta\rfloor_{\mathbb{D}}.
			\end{cases}
		\end{align*}
		As before, $\lfloor M\rfloor_\mathbb{D}$ denotes the largest dyadic number smaller than $M$ and we write $\mathfrak{m}=\tau-\omega(\xi)$. Then, we have
		\begin{align*}
			Q(K)|_{M_1}
			\lesssim
			T(K_1^*)^{\theta}\sup_{n\in M_1}\sum_{L\in\mathbb{D}^4}\ast_{j=1}^4\phi_{L_j,K_j}(0,0).
		\end{align*}
		Since each $\phi_{L_j,K_j}$ satisfies the assumptions of Lemma \ref{l_cnv}, we can apply \eqref{e_cnv_qua_imp}, whenever $K_3^*> 1$ holds, and obtain
		\begin{align*} 
			Q(K)|_{M_1}
			&\lesssim
			T(K_1^*)^{\theta-\frac{1}{2}}(K_4^*)^{\frac{1}{2}}\sup_{n\in M_1}\sum_{L\in\mathbb{D}^4}(L_1^*L_2^*L_4^*)^\frac{1}{2}\prod_{j=1}^4\|\phi_{L_j,K_j}\|_{L^2}
			\\&\lesssim
			T(K_1^*)^{\frac{\theta-1}{2}}(K_4^*)^{\frac{1}{2}}\prod_{j=1}^4\|v_{K_j}\|_{\spfb^{K_j}}.
		\end{align*}
		For $K_3^*=1$, an application of \eqref{e_cnv_qua_gen} leads to
		\begin{align*}
			Q(K)|_{M_1}
			\!\lesssim
			T(K_1^*)^{\theta}\!\sup_{n\in M_1}
			\!\sum_{L\in\mathbb{D}^4}\!(L_3^*L_4^*)^\frac{1}{2}\!\prod_{j=1}^4\|\phi_{L_j,K_j}\|_{L^2}
			\!\lesssim
			T(K_1^*)^{\frac{\theta-1}{2}}\!\prod_{j=1}^4\!\|P_{K_j}u_j\|_{\spfb^{K_j}_T}.
		\end{align*}
		
		\textit{Summation over $M_2$}: This case is similar to the previous one, which is why we only focus on the necessary modifications. First, note that $M_2$ has at most four elements. Thus, we have
		\begin{align*}
			Q(K)|_{M_2}
			\lesssim
			\sup_{n\in M_2}\ast_{j=1}^4\mathcal{F}_{r,x}\left[1_{[0,T]}(r)h_{K_j,n}(r)v_{K_j}(r,x)\right](0,0).
		\end{align*}
		Without loss of generality, we can assume $L_3^*=L_4$. Define $\phi_{L_j,K_j}$ as in the previous case for $j\in[3]$. The additional factor $1_{[0,T]}$ is put into the definition of $\phi_{L_4,K_4}$, which we define by
		\begin{align*}
			\phi_{L_4,K_4}(\tau,\xi)
			:=
			\begin{cases}
				\eta_{L_4}(\mathfrak{m})\mathcal{F}_{r,x}\left[1_{[0,T]}h_{K_4,n}v_{K_4}\right](\tau,\xi)
				&\text{if }L_4>\lfloor (K_1^*)^\theta\rfloor_\mathbb{D},
				\\
				\eta_{\leq L_4}(\mathfrak{m})\mathcal{F}_{r,x}\left[1_{[0,T]}h_{K_4,n}v_{K_4}\right](\tau,\xi)
				&\text{if }L_4=\lfloor (K_1^*)^\theta\rfloor_\mathbb{D}.
			\end{cases}
		\end{align*}
		Now, if $K_3^*> 1$, we use \eqref{e_cnv_qua_imp} as well as Corollary \ref{c_regtime} leading to
		\begin{align*}
			Q(K)|_{M_2}
			&\lesssim
			\sup_{n\in M_2}\sum_{L}\ast_{j=1}^4\phi_{L_j,K_j}(0,0)
			\\&\lesssim
			(K_1^*)^{-\frac{1}{2}}(K_4^*)^{\frac{1}{2}}\prod_{j=1}^3\|v_{K_j}\|_{\spfb_T^{K_j}}\sup_{n\in M_2}\sum_{L_4}\|\phi_{L_4,K_4}\|_{L^2}
			\\&\lesssim_\delta
			T^\delta(K_1^*)^{\delta-1}(K_4^*)^{\frac{1}{2}}\prod_{j=1}^4\|v_{K_j}\|_{\spfb_T^{K_j}}.
		\end{align*}
		As before, we use \eqref{e_cnv_qua_gen} in the case $K_3^*=1$.
		
		The proof for the second estimate in Lemma \ref{l_stqe} follows by replacing each application of \eqref{e_cnv_qua_imp} in the proof given above by an application of \eqref{e_cnv_qua_gen}.
	\end{proof}
	\begin{proof}[Proof of Lemma \ref{l_stqe_bnd}]
		We can repeat the proof of Lemma \ref{l_stqe}, but replace all applications of \eqref{e_cnv_qua_gen} and \eqref{e_cnv_qua_imp} by either \eqref{e_cnv_qua_bnd_one} or \eqref{e_cnv_qua_bnd_two}.
	\end{proof}
	\subsection{Proof of the first energy estimate}\label{ss_ee_i}
	
	In this section we prove Lemma \ref{l_ee_i}.
	Fix $s\geq s_0>\frac{1}{4}$, $T\in(0,1)$, and let $\theta>1$ be small enough to satisfy all restrictions appearing in this section. Choose $(u,b,f)$ such that $b$ and $f$ satisfy \eqref{as_b_f} and such that $u$ is a classical solution to \eqref{eq_bo_split}.
	
	Since $u$ is a solution of \eqref{eq_bo_split}, we obtain for $K_1\in\mathbb{D}$
	\begin{align*}
		\frac{d}{dr}\|P_{K_1}u(r,x)\|_{L^2_x}^2
		&=
		-2\int_\mathbb{R} P_{K_1}uP_{K_1}\left[\mathcal{H}u_{xx}+(uu)_x+(ub)_x+f\right]dx
		\\&=
		2\int_\mathbb{R} P_{K_1}^2u_xuu dx
		+
		2\int_\mathbb{R} P_{K_1}^2u_xub dx
		-
		2\int_\mathbb{R} P_{K_1}uP_{K_1}f dx.
	\end{align*}
	Integrating the equation above with respect to $r$ over $[0,t]$, taking the supremum over $t\in[0,T]$, multiplying by $K_1^{2s}$, and taking the sum over $K_1\in\mathbb{D}$, we get
	\begin{align*}
		\|u\|_{\spec^s_T}^2
		-
		\|u_0\|_{H^s}^2
		&\leq
		2\sum_{K_1}K_1^{2s}\sup_t\int_0^t\int_\mathbb{R}[P_{K_1}^2u_xuu](r,x)dxdr
		&=:I_1
		\\&+
		2\sum_{K_1}K_1^{2s}\sup_t\int_0^t\int_\mathbb{R}[P_{K_1}^2u(ub)_x](r,x)dxdr
		&=:I_2
		\\&+
		2\sum_{K_1}K_1^{2s}\sup_t\int_0^t\int_\mathbb{R}[P_{K_1}uP_{K_1}(-f)](r,x)dxdr
		&=:I_3.
	\end{align*}
	
	We begin by bounding $I_3$. 
	\begin{lem}\label{l_ee_trv}
		Let $s\geq 0$ and $\varepsilon>0$. Then, we have
		\begin{equation*}
			|I_3|
			\lesssim_\varepsilon
			T\|u\|_{L^\infty H^s}\|f\|_{L^\infty H^{s+\varepsilon}}.
		\end{equation*}
	\end{lem}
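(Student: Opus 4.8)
The plan is to treat $I_3$ by a direct Cauchy--Schwarz argument on each Littlewood--Paley block; this is by far the simplest of the three contributions, as it is linear in $f$ and requires no dispersive smoothing or time-localization. First I would fix $K_1\in\mathbb{D}$ and $r\in[0,T]$ and apply the Cauchy--Schwarz inequality in $x$ to bound
\[
	\Big|\int_\mathbb{R} [P_{K_1}uP_{K_1}(-f)](r,x)\,dx\Big|
	\leq
	\|P_{K_1}u(r,\cdot)\|_{L^2_x}\|P_{K_1}f(r,\cdot)\|_{L^2_x}.
\]
Since the inner $r$-integral runs over $[0,t]\subseteq[0,T]$ with $t\leq T$, integrating this bound and taking the supremum over $t\in[0,T]$ produces a factor $T$ and lets us replace the integrand by its supremum over $r\in[0,T]$, so that
\[
	\sup_{t\in[0,T]}\Big|\int_0^t\!\!\int_\mathbb{R} [P_{K_1}uP_{K_1}(-f)](r,x)\,dx\,dr\Big|
	\leq
	T\,\|P_{K_1}u\|_{L^\infty_tL^2_x}\|P_{K_1}f\|_{L^\infty_tL^2_x}.
\]

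Next I would sum over $K_1$. Multiplying by $K_1^{2s}$, writing $K_1^{2s}=K_1^{s}\cdot K_1^{s}$, and applying the Cauchy--Schwarz inequality in the dyadic variable $K_1\in\mathbb{D}$ gives
\[
	|I_3|
	\lesssim
	T\sum_{K_1\in\mathbb{D}}\big(K_1^{s}\|P_{K_1}u\|_{L^\infty_tL^2_x}\big)\big(K_1^{s}\|P_{K_1}f\|_{L^\infty_tL^2_x}\big)
	\lesssim
	T\,\|u\|_{L^\infty H^s}\|f\|_{L^\infty H^s}.
\]
Finally, since $\varepsilon>0$ and every dyadic frequency satisfies $K_1\geq 1$, one has $\|f\|_{L^\infty H^s}\leq\|f\|_{L^\infty H^{s+\varepsilon}}$, which yields the claimed bound.

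There is essentially no obstacle here: the $\varepsilon$-loss is pure slack, carried only so that the right-hand side is phrased in terms of the norm $\|f\|_{L^\infty H^{s+\varepsilon}}$ appearing in the hypotheses \eqref{as_b_f}. The only points requiring minimal care are the interchange of $\sup_t$ with $\int_0^t$ --- harmless since the integrand is continuous in $r$ and $[0,t]\subseteq[0,T]$ --- and the bookkeeping of the two $K_1^s$ weights when applying Cauchy--Schwarz over the dyadic scales.
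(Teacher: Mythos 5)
Your first two steps — pointwise Cauchy--Schwarz in $x$ followed by pulling out a factor $T$ from the time integral — match the paper exactly and are fine. The problem is the dyadic Cauchy--Schwarz at the final step, and the accompanying remark that the $\varepsilon$-loss is ``pure slack.''

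Applying Cauchy--Schwarz over $K_1\in\mathbb{D}$ to
\[
\sum_{K_1}\bigl(K_1^{s}\|P_{K_1}u\|_{L^\infty_t L^2_x}\bigr)\bigl(K_1^{s}\|P_{K_1}f\|_{L^\infty_t L^2_x}\bigr)
\]
yields
\[
\Bigl(\sum_{K_1}K_1^{2s}\|P_{K_1}u\|_{L^\infty_t L^2_x}^2\Bigr)^{\!1/2}\Bigl(\sum_{K_1}K_1^{2s}\|P_{K_1}f\|_{L^\infty_t L^2_x}^2\Bigr)^{\!1/2}
= \|u\|_{\spec^s_T}\,\|f\|_{\spec^s_T},
\]
\emph{not} $\|u\|_{L^\infty H^s}\|f\|_{L^\infty H^s}$. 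The two are genuinely different: since $\sup_t\sum_K \leq \sum_K\sup_t$, one has $\|u\|_{L^\infty H^s}\leq\|u\|_{\spec^s_T}$, so the step you need — $\|u\|_{\spec^s_T}\lesssim\|u\|_{L^\infty H^s}$ — goes the wrong way and is in general false (take dyadic pieces of $u$ peaking at disjoint times: each $\|P_Ku\|_{L^\infty_tL^2_x}$ can be large without $\sup_t\|u(t)\|_{H^s}$ being large). So the second inequality in your last display does not hold.

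The $\varepsilon$ is therefore not cosmetic; it is what makes the dyadic sum close at all. The intended argument is the $\ell^\infty_{K}\times\ell^1_{K}$ Hölder split rather than $\ell^2\times\ell^2$: bound one factor by $K_1^s\|P_{K_1}u\|_{L^\infty_tL^2_x}\leq\|u\|_{L^\infty_t H^s_x}$ uniformly in $K_1$ (this only requires fixing $t$ before summing in $K$, so the sup--sum interchange is in the harmless direction), pull it out of the sum, and then control the remaining $\ell^1$ sum by
\[
\sum_{K_1}K_1^{s}\|P_{K_1}f\|_{L^\infty_t L^2_x}
\leq \sum_{K_1}K_1^{-\varepsilon}\cdot\sup_{K_1}\bigl(K_1^{s+\varepsilon}\|P_{K_1}f\|_{L^\infty_t L^2_x}\bigr)
\lesssim_\varepsilon \|f\|_{L^\infty_t H^{s+\varepsilon}_x},
\]
where the geometric series $\sum_{K_1\in\mathbb{D}}K_1^{-\varepsilon}$ is exactly where $\lesssim_\varepsilon$ earns its subscript. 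If you insist on Cauchy--Schwarz in $K_1$, you can still close the argument, but the conclusion then reads $T\|u\|_{\spec^s_T}\|f\|_{\spec^s_T}$, and you pay the $\varepsilon$ anyway to convert $\|f\|_{\spec^s_T}\lesssim_\varepsilon\|f\|_{L^\infty H^{s+\varepsilon}}$.
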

	\begin{proof}
		The inequality is a direct consequence of Hölder's inequality. We have
		\begin{align*}
			&|\sum_{K_1}K_1^{2s}\sup_t\int_0^t\int_\mathbb{R}[P_{K_1}uP_{K_1}(-f)](r,x)dxdr|
			\\ \leq\,
			&T\sum_{K_1}K_1^{2s}\|P_{K_1}u\|_{L^\infty L^2}\|P_{K_1}f\|_{L^\infty L^2}		
			\lesssim_\varepsilon
			T\|u\|_{L^\infty H^s}\|f\|_{L^\infty H^{s+\varepsilon}}.
		\end{align*}
	\end{proof}
	Next, we bound $I_2$.
	\begin{lem}\label{l_ee_trv_uub}
		Let $s\geq0$ and $\varepsilon>0$. Then, we have
		\begin{align*}
			|I_2|
			\lesssim_\varepsilon
			T\|u\|_{\spec_T^s}^2\|b\|_{\mathfrak{B}_T^{s+1+\varepsilon}}.
		\end{align*}
	\end{lem}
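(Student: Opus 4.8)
The plan is to treat $I_2$ as an ``easy'' term, for which no frequency-dependent time-localization is needed: the time integral $\int_0^t(\cdots)\,dr$ with $t\le T$ yields the factor $T$ as soon as the integrand is estimated by $L^\infty_t$-norms. First I would use that $P_{K_1}$ is self-adjoint to write
\begin{align*}
	I_2
	=
	2\sum_{K_1}K_1^{2s}\sup_{t\in[0,T]}\int_0^t\int_\mathbb{R}P_{K_1}u\cdot P_{K_1}\big[(ub)_x\big]\,dx\,dr,
\end{align*}
so that it suffices to bound $\sum_{K_1}K_1^{2s}\big|\int_\mathbb{R}P_{K_1}u\,P_{K_1}[(ub)_x]\,dx\big|$ uniformly in $r\in[0,T]$ by $\|u\|_{\spec^s_T}^2\|b\|_{\spbc^{s+1+\varepsilon}_T}$. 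To this end I would split $b=P_{\ll K_1}b+P_{\gtrsim K_1}b$ and handle the two contributions separately.

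For the low-frequency part $g:=P_{\ll K_1}b$ the point is to extract a cancellation. Writing $P_{K_1}(ug)=g\,P_{K_1}u+[P_{K_1},g]u$ and integrating by parts, the diagonal piece becomes
\begin{align*}
	\int_\mathbb{R}P_{K_1}u\,\partial_x\big(g\,P_{K_1}u\big)\,dx
	=
	\tfrac12\int_\mathbb{R}g_x\,(P_{K_1}u)^2\,dx,
\end{align*}
which is bounded by $\|\partial_x P_{\ll K_1}b\|_{L^\infty}\|P_{K_1}u\|_{L^2}^2\lesssim_\varepsilon\|b\|_{B^{1+\varepsilon}_{\infty,\infty}}\|P_{K_1}u\|_{L^2}^2$. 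For the commutator piece I would integrate by parts once more and combine the elementary kernel bound $\|[P_{K_1},g]u\|_{L^2}\lesssim K_1^{-1}\|g_x\|_{L^\infty}\|P_{\sim K_1}u\|_{L^2}$ (the frequency localization of $u$ being forced because $g$ has frequency $\ll K_1$) with $\|\partial_x P_{K_1}u\|_{L^2}\lesssim K_1\|P_{K_1}u\|_{L^2}$, obtaining a bound $\lesssim_\varepsilon\|b\|_{B^{1+\varepsilon}_{\infty,\infty}}\|P_{K_1}u\|_{L^2}\|P_{\sim K_1}u\|_{L^2}$. After multiplying by $K_1^{2s}$, summing in $K_1$ (using $s\ge0$, the Cauchy--Schwarz inequality and almost orthogonality) both contributions are $\lesssim_\varepsilon\|u\|_{\spec^s_T}^2\|b\|_{\spbc^{1+\varepsilon}_T}\le\|u\|_{\spec^s_T}^2\|b\|_{\spbc^{s+1+\varepsilon}_T}$.

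For the high-frequency part, where $b$ lives at frequency $K_3\gtrsim K_1$, the output constraint forces the frequency of $u$ to be $\sim K_3$ if $K_3\gg K_1$ and $\lesssim K_1$ if $K_3\sim K_1$; in both cases Hölder's and Bernstein's inequalities give the crude bound
\begin{align*}
	\Big|\int_\mathbb{R}P_{K_1}u\,\partial_x P_{K_1}\big[u\,P_{K_3}b\big]\,dx\Big|
	\lesssim
	K_1\,\|P_{K_1}u\|_{L^2}\,\|\widetilde Pu\|_{L^2}\,\|P_{K_3}b\|_{L^\infty},
\end{align*}
with $\widetilde Pu$ the relevant frequency piece of $u$ and $\|P_{K_3}b\|_{L^\infty}\lesssim K_3^{-(s+1+\varepsilon)}\|b\|_{B^{s+1+\varepsilon}_{\infty,\infty}}$. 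Setting $a_K:=K^{s}\|P_Ku\|_{L^\infty_tL^2_x}$, so that $\sum_Ka_K^2=\|u\|_{\spec^s_T}^2$, and using $K_1\lesssim K_3$, the $K_1^{2s}$-weighted sum over the range $K_3\gg K_1$ is controlled by $\|b\|_{\spbc^{s+1+\varepsilon}_T}\sum_{K_1}\sum_{K_3\gg K_1}K_3^{-s-\varepsilon}a_{K_1}a_{K_3}$, whose kernel passes the Schur test since $s+\varepsilon>0$; for the range $K_3\sim K_1$ one is left with $\|b\|_{\spbc^{s+1+\varepsilon}_T}\,\|u\|_{L^2}\sum_{K_1}K_1^{-\varepsilon}a_{K_1}\lesssim_\varepsilon\|b\|_{\spbc^{s+1+\varepsilon}_T}\|u\|_{\spec^s_T}^2$ by Cauchy--Schwarz together with $\|u\|_{L^2}\le\|u\|_{\spec^s_T}$. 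Adding the two parts and reinstating the factor $T$ gives the claim.

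The main obstacle is the high--low interaction inside the low-frequency part, i.e.\@ the frequency of $u$ comparable to $K_1$ and that of $b$ much smaller: a direct Hölder/Bernstein estimate there leaves an uncompensated factor $K_1$ (from the derivative landing on the high-frequency factor of $u$), and $\sum_{K_1}K_1^{2s+1}\|P_{K_1}u\|_{L^2}^2$ diverges. Circumventing this requires the symmetrization $2\,v\,\partial_x v=\partial_x(v^2)$, which is precisely why one first peels off the diagonal commutator term above; once this cancellation has been used, everything else is routine bookkeeping with Hölder's and Bernstein's inequalities and the elementary commutator estimate.
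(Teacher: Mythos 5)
Your proof is correct and exploits the same underlying cancellation as the paper's, but implements it differently. Both arguments hinge on the observation that when $b$ carries the lowest frequency — so that the two $u$-factors live at comparable frequencies $\sim K_1$ — the derivative falling on a high-frequency $u$-factor can be transferred onto $b$, gaining the crucial factor $K_3/K_1$. The paper realizes this on the frequency side: after localizing all three factors with Littlewood--Paley projectors, it symmetrizes the symbol under $\xi_1\leftrightarrow\xi_2$ in the case $K_3^*=K_3$ and reads off the improved bound $|\sigma(\xi)|\lesssim K_3$ (rather than the naive $K_1$) directly from the symmetrized symbol, using the evenness of $\chi_K$ together with the constraint $\xi_1+\xi_2=-\xi_3$. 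You realize it on the physical side by peeling off the diagonal paraproduct piece, $P_{K_1}(ug)=gP_{K_1}u+[P_{K_1},g]u$, obtaining the cancellation from $\int P_{K_1}u\,\partial_x(gP_{K_1}u)\,dx=\tfrac12\int g_x\,(P_{K_1}u)^2\,dx$, and controlling the commutator by the standard kernel bound $\|[P_{K_1},g]\|_{L^2\to L^2}\lesssim K_1^{-1}\|g_x\|_{L^\infty}$. These are interchangeable devices: the paper's keeps everything at the symbol level and avoids invoking a separate commutator estimate, while yours is arguably more elementary and does not rely on the evenness of the Littlewood--Paley multiplier. The remaining frequency regimes — $b$ at frequency $\gtrsim K_1$ in your proof, $K_3^*\in\{K_1,K_2\}$ in the paper's — are handled in both arguments by crude H\"older/Bernstein estimates together with a Schur-type summation, and the bookkeeping matches term for term.
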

	\begin{proof}
		We apply Littlewood-Paley projectors to localize the second and third factor in the integrand in $I_2$. We get
		\begin{align*}
			|I_2|
			\leq
			\sum_{K_i}K_1^{2s}\sup_t\int_0^t\int_\mathbb{R}[P_{K_1}^2u_xP_{K_2}uP_{K_3}b](r,x)dxdr,
		\end{align*}
		where the sums is over all dyadic numbers $K_1$, $K_2$, and $K_3$.
		If $K_3^*=K_1$ or $K_3^*=K_2$, we conclude
		\begin{align*}
			|I_2|
			&\leq
			T\sum_{K_i}K_1^{2s+1}\|P_{K_1}u\|_{L^\infty L^2}\|P_{K_2}u\|_{L^\infty L^2}\|P_{K_3}b\|_{L^\infty L^\infty}
			\\&\lesssim_\varepsilon
			T\|u\|_{L^\infty H^s}^2\|b\|_{\spbc^{s+1+\varepsilon}_T}.
		\end{align*}
		Now, we consider the case $K_3^*=K_3$. Here, both high-frequency factors are Littlewood-Paley localizations of the same function. Hence, the change of variables $(\xi_1,\xi_2)\mapsto(\xi_2,\xi_1)$ leads to
		\begin{align*}
			&\int_\mathbb{R}P_{K_1}^2uP_{K_2}u_xP_{K_3}bdx
			\\=
			&\int_{\Gamma_3}\chi^2_{K_1}(\xi_1)\chi_{K_2}(\xi_2)\chi_{K_3}(\xi_3)(-i\xi_2)\hat{u}(\xi_1)\hat{u}(\xi_2)\hat{b}(\xi_3)d\gamma_3(\xi)
			\\=
			&\int_{\Gamma_3}\sigma(\xi)\hat{u}(\xi_1)\hat{u}(\xi_2)\hat{b}(\xi_3)d\gamma_3(\xi),
		\end{align*}
		where
		\begin{align*}
			\sigma(\xi)
			:=
			\frac{1}{2}[\chi^2_{K_1}(\xi_1)\chi_{K_2}(\xi_2)\chi_{K_3}(\xi_3)(-i\xi_2)
			+
			\chi^2_{K_1}(\xi_2)\chi_{K_2}(\xi_1)\chi_{K_3}(\xi_3)(-i\xi_1)].
		\end{align*}
		It follows easily (see also below) that $\sigma$ satisfies the uniform bound $|\sigma(\xi)|\lesssim K_3$. Thus, we obtain the bound
		\begin{align*}
			|I_2|
			&\leq
			\int_{\Gamma_3}\sigma(\xi)\hat{u}(\xi_1)\hat{u}(\xi_2)\hat{b}(\xi_3)d\gamma_3(\xi)
			\\
			&\leq
			T\sum_{K_i} K_1^{2s}K_3\|P_{K_1}u\|_{L^\infty L^2}\|P_{K_2}u\|_{L^\infty L^2}\|P_{K_3}b\|_{L^\infty L^\infty}
			\lesssim_\varepsilon
			T\|u\|_{\spec^s_T}^2\|b\|_{\spbc^{1+\varepsilon}_T}.
		\end{align*}
		We remark, that it is necessary to use the stronger $\spec^s_T$-norm in the last estimate.
	\end{proof}
	
	The remaining part of this section is concerned with estimating $I_1$. Proceeding similar to the previous lemma, we obtain the bound $|I_1|\lesssim \|u\|_{\spec^s_T}^3$ for $s>\frac{3}{2}$. To improve this bound, we proceed as follows: First, we symmetrize the appearing symbol $\sigma$. This is already sufficient to deal with the low-frequency contribution. To handle the high-frequency contributions, we perform another integration by parts in time. This is beneficial since the appearing terms involve an inverse power of the resonance function $\Omega_3$. Moreover, due to the quadratic nonlinearity, we obtain integrals involving four factors $u$ respectively $b$, which can be estimated with help of the estimates from Section \ref{ss_stqe}.
	
	We begin with symmetrizing the symbol of $I_1$. For $K_2,K_3\in\mathbb{D}$ let us define
	\begin{align*}
		\sigma_0(\xi)
		:=
		-i\xi_1\chi_{K_1}^2(\xi_1)\chi_{K_2}(\xi_2)\chi_{K_3}(\xi_3)
	\end{align*}
	and
	\begin{align*}
		\sigma_I(\xi_1,\xi_2,\xi_3)
		:=
		\begin{cases}
			\sigma_0(\xi_1,\xi_2,\xi_3)
			&\text{if }K_3^*=K_1,
			\\
			\frac{1}{2}[\sigma_0(\xi_1,\xi_2,\xi_3)+\sigma_0(\xi_3,\xi_2,\xi_1)]
			&\text{if }K_3^*=K_2,
			\\
			\frac{1}{2}[\sigma_0(\xi_1,\xi_2,\xi_3)+\sigma_0(\xi_2,\xi_1,\xi_3)]
			&\text{if }K_3^*=K_3.
		\end{cases}
	\end{align*}
	Then, applying Littlewood-Paley projectors to the second and third factor, we get
	\begin{align}\label{eq_ee_b}
		I_1
		=
		&\sum_{K_1}K_1^{2s}\sup_t\int_0^t\int_\mathbb{R}[P_{K_1}^2u_xuu](r,x)dxdr\nonumber
		\\ \leq\,
		&\sum_{K_i}K_1^{2s}\sup_t\int_0^t\int_{\Gamma_3}\sigma_0(\xi)\hat{u}(r,\xi_1)\hat{u}(r,\xi_2)\hat{u}(r,\xi_3)d\gamma_3(\xi)dr\nonumber
		\\ =\,
		&\sum_{K_i}K_1^{2s}\sup_t\int_0^t\int_{\Gamma_3}\sigma_I(\xi)\hat{u}(r,\xi_1)\hat{u}(r,\xi_2)\hat{u}(r,\xi_3)d\gamma_3(\xi)dr.
	\end{align}
	Here, the last equation holds due to the symmetry of the integrand. The symbol $\sigma_I$ is better than $\sigma_0$ since it satisfies the uniform bound $|\sigma_I(\xi)|\lesssim K_3^*$. This claim is verified easily; for example if $K_3^*=K_3$, then we write
	\begin{align*}
		i\left(\sigma_0(\xi_1,\xi_2,\xi_3)+\sigma_0(\xi_2,\xi_1,\xi_3)\right)
		&=
		[\chi_{K_1}^2(\xi_1)-\chi_{K_1}^2(\xi_2)]\chi_{K_2}(\xi_2)\chi_{K_3}(\xi_3)\xi_1
		\\&+
		\chi_{K_1}^2(\xi_2)[\chi_{K_2}(\xi_2)-\chi_{K_1}(\xi_1)]\chi_{K_3}(\xi_3)\xi_1
		\\&+
		\chi_{K_1}^2(\xi_2)\chi_{K_2}(\xi_1)\chi_{K_3}(\xi_3)[\xi_2+\xi_1]
	\end{align*}
	and easily deduce the estimate for each term on the right-hand side.
	
	Next, we want to perform integration by parts in time. Let $N\in\mathbb{D}$ denote a parameter. We split the domain of summation $K_1,K_2,K_3\in\mathbb{D}$ according to the relative sizes of the frequencies; more precisely, we set
	\begin{align*}
		\sum_{C_L}
		\!:=\!
		\sum_{K_i}1_{\{K_3^*=1\}},
		\quad
		\sum_{C_M}
		\!:=\!
		\sum_{K_i}1_{\{K_3^*>1, K_1^*\leq N\}},
		\quad
		\sum_{C_H}
		\!:=\!
		\sum_{K_i}1_{\{K_3^*>1, K_1^*> N\}}.
	\end{align*}
	Roughly speaking, the condition $K_3^*>1$ will guarantee that we can control the resonance function $\Omega_3$ well, whereas the condition $K_1^*>N$ allows us to choose the boundary term arbitrary small by increasing $N$.
	
	For functions $g$ and $h$, we use the abbreviations
	\begin{align*}
		\kappa_{3}[g](\xi)
		&:=
		\hat{g}(\xi_1)\hat{g}(\xi_2)\hat{g}(\xi_3),
		\\
		\kappa_{1,2}[g,h](\xi)
		&:=
		\hat{g}(\xi_1)\hat{h}(\xi_2)\hat{h}(\xi_3)+\hat{h}(\xi_1)\hat{g}(\xi_2)\hat{h}(\xi_3)+\hat{h}(\xi_1)\hat{h}(\xi_2)\hat{g}(\xi_3).
	\end{align*}
	With this notation and $u$ being a solution to \eqref{eq_bo_split}, we obtain
	\begin{align}\label{eq_ee_ibp}
		\left(-\kappa_{3}\left[u\right]\right)_t
		=
		\Omega_3\kappa_{3}\left[u\right]
		+
		\kappa_{1,2}\left[(uu)_x,u\right]
		+
		\kappa_{1,2}\left[(ub)_x,u\right]
		+
		\kappa_{1,2}\left[f,u\right].
	\end{align}
	Note that the integrand in \eqref{eq_ee_b} has the form $\sigma_I\kappa_{3}\left[u\right]$. Thus, if $\Omega_3$ does not vanish, we can multiply \eqref{eq_ee_ibp} by $\sigma_I\Omega_3^{-1}$ and see that the first term on the right-hand side will be $\sigma_I\kappa_{3}\left[u\right]$ - hence, we can use this new equation to rewrite the integrand.
	
	Starting from \eqref{eq_ee_b}, we split the sum over the frequency sizes and use \eqref{eq_ee_ibp} to rewrite all terms contributing to the sum over $C_H$. This implies the following inequality:
	\begin{align*}
		I_1
		\leq
		&\sum_{C_L}K_1^{2s}\sup_t\int_0^t\int_{\Gamma_3}(\sigma_I\kappa_{3}\left[u\right])(\xi)d\gamma_3(\xi)dr
		&=:E_{I,1}
		\\ +
		&\sum_{C_M}K_1^{2s}\sup_t\int_0^t\int_{\Gamma_3}(\sigma_I\kappa_{3}\left[u\right])(\xi)d\gamma_3(\xi)dr
		&=:E_{I,2}
		\\ +
		&\sum_{C_H}K_1^{2s}\sup_t\left[\int_{\Gamma_3}(\sigma_I\Omega_3^{-1}\kappa_{3}\left[u\right])(\xi)d\gamma_3(\xi)\right]_{r=0}^{t}
		&=:E_{I,3}
		\\ +
		&\sum_{C_H}K_1^{2s}\sup_t\int_0^t\int_{\Gamma_3}(\sigma_I\Omega_3^{-1}\kappa_{1,2}\left[(uu)_x,u\right])(\xi)d\gamma_3(\xi)dr
		&=:E_{I,4}
		\\ +
		&\sum_{C_H}K_1^{2s}\sup_t\int_0^t\int_{\Gamma_3}(\sigma_I\Omega_3^{-1}\kappa_{1,2}\left[(ub)_x,u\right])(\xi)d\gamma_3(\xi)dr
		&=:E_{I,5}
		\\ +
		&\sum_{C_H}K_1^{2s}\sup_t\int_0^t\int_{\Gamma_3}(\sigma_I\Omega_3^{-1}\kappa_{1,2}\left[f,u\right])(\xi)d\gamma_3(\xi)dr
		&=:E_{I,6}.
	\end{align*}
	Note that the integrands appearing above (up to the factor $\sigma_I$) are invariant under permutations of $\xi_1$, $\xi_2$, and $\xi_3$. Moreover, the modulus of $\sigma_I$ is always bounded by $K_3^*$. These observations together with $s\geq 0$ allow us to restrict to $K_1\sim K_2\gtrsim K_3$ for the rest of this section.	
	
	We continue by estimating the low-frequency terms $E_{I,1}$ and $E_{I,2}$, the boundary term $E_{I,3}$, and the term involving the forcing $E_{I,6}$.
	\begin{lem}\label{l_ee_tri}
		Let $s\geq 0$ and $\zeta\in(0,\frac{1}{2})$. Then, we have
		\begin{align*}
			|E_{I,1}|
			&\lesssim
			T\|u\|_{\spec^s_T}^2\|u\|_{L^\infty L^2},
			\\
			|E_{I,2}|
			&\lesssim
			TN^{2}\|u\|_{\spec^s_T}^2\|u\|_{L^\infty L^2},
			\\
			|E_{I,3}|
			&\lesssim_\zeta
			N^{-\zeta}\|u\|_{L^\infty H^s}^2\|u\|_{L^\infty L^2},
			\\
			|E_{I,6}|
			&\lesssim
			T\|u\|_{L^\infty H^s}^2\|f\|_{L^\infty H^{s}}.
		\end{align*}
	\end{lem}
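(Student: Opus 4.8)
The plan is to reduce all four bounds to two ingredients. The first is the pair of pointwise symbol estimates: the bound $|\sigma_I(\xi)|\lesssim K_3^*$ already recorded above, together with its improvement $|\sigma_I(\xi)\Omega_3(\xi)^{-1}|\lesssim (K_1^*)^{-1}$ on the region where $K_3^*>1$ (in particular on $C_M\cup C_H$), which follows from the lower bound $|\Omega_3|\sim K_1^*K_3^*$ of Lemma \ref{l_res}. The second ingredient is the elementary fixed-time trilinear bound
\begin{equation*}
	\int_{\Gamma_3}\bigl|\hat g_1(\xi_1)\hat g_2(\xi_2)\hat g_3(\xi_3)\bigr|\,d\gamma_3(\xi)
	\lesssim
	(K_3^*)^{1/2}\,\|g_1\|_{L^2}\|g_2\|_{L^2}\|g_3\|_{L^2},
\end{equation*}
valid whenever each $\hat g_i$ is supported in $\{|\xi|\sim K_i\}$; this is the fixed-time analogue of the convolution estimates of Section \ref{s_dce} and follows from Plancherel's theorem, Young's inequality, and the Bernstein bound $\|\hat g\|_{L^1}\lesssim K^{1/2}\|\hat g\|_{L^2}$ applied to the lowest-frequency factor. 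Throughout I use the reduction, already carried out above, to the regime $K_1\sim K_2\gtrsim K_3$, so that $K_1^*\sim K_2^*\sim K_1$ and $K_3^*=K_3$.

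For the terms carrying a time integral ($E_{I,1}$, $E_{I,2}$, $E_{I,6}$) I would bound $\sup_t\int_0^t|\cdot|\,dr\le\int_0^T|\cdot|\,dr$, interchange the nonnegative dyadic sums with the integral by Tonelli, and estimate the dyadic sum at each fixed $r$, where $\sum_K K^{2s}\|P_Ku(r)\|_{L^2}^2=\|u(r)\|_{H^s}^2$. On $C_L$ we have $K_3=K_3^*=1$: the symbol is $O(1)$, there is one admissible $K_3$, the trilinear bound loses nothing, and after $\|P_1u\|_{L^2}\le\|u\|_{L^2}$ and Cauchy--Schwarz over $K_1\sim K_2$ one gets $|E_{I,1}|\lesssim T\|u\|_{\spec^s_T}^2\|u\|_{L^\infty L^2}$. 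On $C_M$ we have $2\le K_3\lesssim K_1\le N$: the symbol gives $K_3$ and the trilinear bound $K_3^{1/2}$, so summing $\sum_{2\le K_3\lesssim K_1}K_3^{3/2}\|P_{K_3}u\|_{L^2}\lesssim K_1^{3/2}\|u\|_{L^2}$ by Cauchy--Schwarz and using $K_1^{3/2}\le N^{3/2}\le N^2$ gives $|E_{I,2}|\lesssim TN^2\|u\|_{\spec^s_T}^2\|u\|_{L^\infty L^2}$. For $E_{I,6}$, $\kappa_{1,2}[f,u]$ splits into three summands according to which slot carries $f$; to each the trilinear bound applies, and after summing the lowest-frequency slot (whichever of $u$ or $f$ it is) one uses $|\sigma_I\Omega_3^{-1}|\lesssim K_1^{-1}$, the surplus $K_1^{-1}\cdot K_1^{1/2}=K_1^{-1/2}\le 1$, Cauchy--Schwarz, and $\|g\|_{L^2}\le\|g\|_{H^s}$ ($s\ge0$), yielding $|E_{I,6}|\lesssim T\|u\|_{L^\infty H^s}^2\|f\|_{L^\infty H^s}$.

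For the boundary term $E_{I,3}$, which lives on $C_H$ with $K_1>N$, the same manipulations apply but now there is no $T$-gain from a time integral; instead I would bound the $\sup_t$ of the endpoint difference by the values at the two endpoints, estimate the cubic quantity by the trilinear bound combined with $|\sigma_I\Omega_3^{-1}|\lesssim K_1^{-1}$, and sum $\sum_{2\le K_3\lesssim K_1}K_3^{1/2}\|P_{K_3}u\|_{L^2}\lesssim K_1^{1/2}\|u\|_{L^2}$. This leaves a factor $K_1^{2s-1/2}=K_1^{-1/2}K_1^{2s}$ on the two remaining comparable frequencies, and since $K_1>N$ and $\zeta<\tfrac{1}{2}$ one has $K_1^{-1/2}\le N^{-\zeta}$; a final Cauchy--Schwarz over $K_1\sim K_2$ then gives $|E_{I,3}|\lesssim_\zeta N^{-\zeta}\|u\|_{L^\infty H^s}^2\|u\|_{L^\infty L^2}$.

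None of these steps is genuinely difficult. The two points that require care are keeping a strictly positive power of $K_3$ at every stage so that the sum over the lowest frequency converges, and correctly balancing the gain $K_1^{-1}$ coming from $\Omega_3^{-1}$ against the weight $K_1^{2s}$ and the restriction $K_1>N$ on $C_H$ — this balance is exactly what turns $E_{I,3}$ into the small quantity $N^{-\zeta}(\cdots)$ while leaving $E_{I,6}$ at the harmless level of a cubic expression carrying one well-distributed derivative.
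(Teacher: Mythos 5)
Your proposal is correct and follows essentially the same route as the paper's: the symbol bounds $|\sigma_I|\lesssim K_3^*$ and (on $C_M\cup C_H$) $|\sigma_I\Omega_3^{-1}|\lesssim(K_1^*)^{-1}$ coming from \eqref{e_res_asy}, the elementary trilinear estimate via Plancherel, Young, and Bernstein carrying a $(K_3^*)^{1/2}$ gain, and Cauchy--Schwarz in the dyadic sums. One small point, present in the paper as well: for $E_{I,3}$ the supremum over $t$ sits inside the dyadic sum, so the argument really produces $\|u\|_{\spec^s_T}^2$ rather than the formally smaller $\|u\|_{L^\infty H^s}^2$; this is harmless since Lemma \ref{l_ee_i} only uses the $\spec^s_T$-norm anyway.
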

	\begin{proof}
		Applying the uniform bound $|\sigma_I(\xi)|\lesssim K_3$ as well as Young's, Hölder's, and Bernstein's inequality, we obtain
		\begin{align*}
			|E_{I,1}|
			\lesssim
			T\sum_{C_L}K_1^{2s}K_3^{\frac{3}{2}}
			\prod_{j=1}^3\|P_{K_j}u\|_{L^\infty L^2}
			\lesssim
			T\|u\|_{\spec^s_T}^2\|u\|_{L^\infty L^2},
		\end{align*}
		where we used $K_3=K_3^*=1$ in the last estimate. Similarly, we obtain the bound for $E_{I,2}$ using $K_1^*\leq N$, whereas the claim for $E_{I,6}$ follows using $|\Omega_3(\xi)|\gtrsim K_1^*K_3^*$ (see \eqref{e_res_asy}).
		
		The bound on $E_{I,3}$ follows from
		\begin{align*}
			|E_{I,3}|
			\lesssim
			\sum_{C_H}
			K_1^{(2s-1)}K_3^{\frac{1}{2}}
			\prod_{j=1}^3\|P_{K_j}u\|_{L^\infty L^2}
			\lesssim_{\zeta}
			N^{-\zeta}\|u\|_{L^\infty H^s}^2\|u\|_{L^\infty L^2},
		\end{align*}
		where we used $K_1^*\leq N$ as well as $|\Omega_3(\xi)|\gtrsim K_1^*K_3^*$.
	\end{proof}
	
	We continue by bounding $E_{I,4}$.
	\begin{lem}\label{l_ee_qua}
		Let $s\geq s_0>\frac{1}{4}$. Then, for all sufficiently small $\theta=\theta(s_0)>1$ and $\delta\in(0,\frac{1}{2})$, we have
		\begin{align*}
			|E_{I,4}|
			&\lesssim_\delta
			T^\delta\|u\|_{\spfc^s_T}^2\|u\|_{\spfc^{s_0}_T}^2.
		\end{align*}
	\end{lem}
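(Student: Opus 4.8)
The plan is to realize $E_{I,4}$ as a sum of the quadrilinear functionals $M_\sigma$ of Section~\ref{ss_stqe} and to conclude with Lemma~\ref{l_stqe}. Expanding $\widehat{(uu)_x}$ as a convolution, each of the three summands of $\kappa_{1,2}[(uu)_x,u]$ splits the $\Gamma_3$-frequency carrying the factor $(uu)_x$ into two new variables, so that the integral over $\Gamma_3$ becomes one over $\Gamma_4$. After inserting Littlewood--Paley projectors and bringing the supremum in $t$ outside, $E_{I,4}$ is bounded by a sum over $K\in\mathbb{D}^4$ of terms $|M_\sigma[u,u,u,u](t)|$ with $t\le T$, where the symbol $\sigma$ is the product of $K_1^{2s}$, the derivative factor, the cut-offs, $\sigma_I$, and $\Omega_3^{-1}$ evaluated at the relevant point of $\Gamma_3$. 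Recall that we may keep the reduction $K_1\sim K_2\gtrsim K_3$ for the three $\Gamma_3$-frequencies and that on $C_H$ one has $K_3>1$ and $K_1^*=K_1>N$; I would then split the estimate according to whether $(uu)_x$ sits on a high frequency ($\xi_1$ or $\xi_2$) or on the low one ($\xi_3$).

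For the symbol bound, on $C_H$ the resonance estimate \eqref{e_res_asy} gives $|\Omega_3(\xi_1,\xi_2,\xi_3)|\sim K_1K_3$; combined with the uniform bound $|\sigma_I|\lesssim K_3^*=K_3$ and with $|\xi_j|\lesssim K_1$ for the differentiated frequency, this yields $\sigma_{\mathbb{D}}\lesssim K_1^{2s}$ (and even $\lesssim K_1^{2s-1}K_3$ when $(uu)_x$ lies on $\xi_3$). Since $K_1$ never exceeds the second largest of the four $\Gamma_4$-frequencies, $\sigma_{\mathbb{D}}\lesssim (K_1^*)^s(K_2^*)^s$ in the notation of Lemma~\ref{l_stqe}; this is the cancellation that makes the derivative in $(uu)_x$ harmless.

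Next I would apply the first estimate of Lemma~\ref{l_stqe} to each $M_\sigma$ --- producing the prefactor $T^\delta$ and the loss $(K_1^*)^{(\theta-1)/2}(K_4^*)^{1/2}$ --- and sum over $\mathbb{D}^4$ using the definitions of the $\spfc^s_T$- and $\spfc^{s_0}_T$-norms. Writing $a_K=\|P_Ku\|_{\spfb^K_T}$ and distributing $K_1^{2s}$ as the weight $s$ on the two largest frequencies and $s_0$ on the two smallest, a Cauchy--Schwarz in the smallest frequency produces the factor $(\sum_{J\le K_1}J^{1-2s_0})^{1/2}$, a second Cauchy--Schwarz in the remaining scales leaves $\|u\|_{\spfc^s_T}^2\|u\|_{\spfc^{s_0}_T}^2$, and the whole argument closes once the residual power of $K_1$ is nonpositive, i.e.\ once $\tfrac{\theta-1}{2}+\tfrac12-2s_0\le 0$. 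This is precisely why the hypothesis reads $s_0>\tfrac14$ and why $\theta-1$ must be small, say $\theta<4s_0$, forcing $\theta=\theta(s_0)\to1$ as $s_0\downarrow\tfrac14$.

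I expect the genuine difficulty to lie in this last summation in the worst configuration: $(uu)_x$ located on a high frequency $\xi_1\sim K_1$ which is generated inside the product by two frequencies both comparable to $K_1$, so that the four $\Gamma_4$-frequencies are $\sim K_1,K_1,K_1,K_3$ and the half-derivative loss $(K_4^*)^{1/2}$ of Lemma~\ref{l_stqe} falls on the unique low frequency, which carries only the weight $s_0$; this is the case that dictates the bound $s_0>\tfrac14$. The remaining configurations --- the sub-case in which the two split frequencies are large and almost opposite (so $K_1^*\gg K_1$) and the case in which $(uu)_x$ sits on $\xi_3$ --- are strictly less restrictive and need no new idea; for them one may even use the second estimate of Lemma~\ref{l_stqe} instead.
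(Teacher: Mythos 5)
Your proposal correctly identifies the overall architecture of the argument (convolution expansion, dyadic localization, appeal to Lemma~\ref{l_stqe}, and a final weighted summation), and your accounting for the configurations where $(uu)_x$ sits on the low frequency $\xi_3$, or where the two split frequencies are both $\gtrsim K_1$, matches the paper. However, there is a genuine gap: you claim the only hard case is when the four frequencies are $\sim K_1,K_1,K_1,K_3$ and that the other cases "need no new idea." The truly delicate case is the one you do not isolate, namely when $(uu)_x$ sits on a high frequency $\xi_1$ that splits into one high and one low piece ($K_a\sim K_1\gg K_b$) while simultaneously $K_2\gg K_3$. In that configuration two of the four $\Gamma_4$-frequencies ($K_b$ and $K_3$) can both be $\sim1$, the symbol bound $\sigma_\mathbb{D}\lesssim K_1^{2s}$ has no slack, and the loss $(K_1^*)^{(\theta-1)/2}\sim K_1^{(\theta-1)/2}$ from the first estimate of Lemma~\ref{l_stqe} cannot be absorbed into the low-frequency weights $(K_bK_3)^{s_0}$; the sum over $K_1$ diverges for any $\theta>1$. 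Falling back on the second estimate of Lemma~\ref{l_stqe} trades this for the factor $(K_bK_3)^{1/2}$ and only closes under the stronger hypothesis $s_0>\tfrac12$.

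The paper's proof overcomes this by a genuine cancellation, not just a better weight bookkeeping: it symmetrizes half of the integral by the change of variables $(\xi_a,\xi_2)\mapsto(\xi_2,\xi_a)$, introduces the multiplier $m$ of \eqref{d_m}, and proves the crucial improvement \eqref{e_bnd_m}, $|m(\xi_a,\xi_b,\xi_2,\xi_3)|\lesssim K_a^{-1}\max\{K_3,K_b\}$. Establishing that bound requires both Lemma~\ref{l_res_dif} (a quantitative estimate on the difference of inverse resonance functions on $\Gamma_4$) and a careful algebraic expansion of $\sigma_I(\xi_{ab},\xi_2,\xi_3)-\sigma_I(\xi_{2b},\xi_a,\xi_3)$ controlled term by term via the fundamental theorem of calculus. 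Only with this extra factor $K_a^{-1}\max\{K_3,K_b\}$ does the dyadic summation close down to $s_0>\tfrac14$. Your write-up should incorporate this symmetrization step and the multiplier bound; without it the argument does not reach the stated range.
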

	\begin{proof}
		We split $E_{I,4}=S_1+S_2+S_3$, where the right-hand side is defined by
		\begin{align*}
			S_1
			&:=
			\sum_{C_H}K_1^{2s}\sup_t\int_0^t\int_{\Gamma_3}\left[\frac{\sigma_I}{\Omega_3}(\widehat{u_x^2}\hat{u}\hat{u})\right](\xi)d\gamma_3(\xi)dr,
			\\
			S_2
			&:=
			\sum_{C_H}K_1^{2s}\sup_t\int_0^t\int_{\Gamma_3}\left[\frac{\sigma_I}{\Omega_3}(\hat{u}\widehat{u^2_x}\hat{u})\right](\xi)d\gamma_3(\xi)dr,
			\\
			S_3
			&:=
			\sum_{C_H}K_1^{2s}\sup_t\int_0^t\int_{\Gamma_3}\left[\frac{\sigma_I}{\Omega_3}(\hat{u}\hat{u}\widehat{u^2_x})\right](\xi)d\gamma_3(\xi)dr.
		\end{align*}
		Recall that we assume $K_1\sim K_2\gtrsim K_3$. In particular, the change of variables $(\xi_1,\xi_2,\xi_3)\mapsto(\xi_2,\xi_1,\xi_3)$ shows $S_1=S_2$. Hence, it suffices to estimate $S_1$ and $S_3$.
		Define
		\begin{align*}
			\Pi(K_i,K_j,K_g,K_h)
			:=
			\|P_{K_i}u\|_{\spfb^{K_i}_T}
			\|P_{K_j}u\|_{\spfb^{K_j}_T}
			\|P_{K_g}u\|_{\spfb^{K_g}_T}
			\|P_{K_h}u\|_{\spfb^{K_h}_T}.
		\end{align*}
		
		We begin by estimating $S_1$. Writing $\widehat{u^2_x}=(\hat{u}\ast\hat{u})_x$ and localizing the new variables $\xi_a$ and $\xi_b$ to dyadic frequency ranges of size $K_a$ and $K_b$, we get
		\begin{align*}
			|S_1|
			\lesssim
			\sum_{K_j}K_1^{2s}\sup_t\int_0^t\int_{\Gamma_4}\frac{\sigma_I}{\Omega_3}&(\xi_{ab},\xi_2,\xi_3)(i\xi_{ab})\chi_{K_a}(\xi_a)\chi_{K_b}(\xi_b)
			\\
			&\times\hat{u}(\xi_a)\hat{u}(\xi_b)\hat{u}(\xi_2)\hat{u}(\xi_3)d\gamma_4(\xi_a,\xi_b,\xi_2,\xi_3)dr.
		\end{align*}
		Note that the factor $\frac{\sigma_I}{\Omega_3}(\xi_{ab},\xi_2,\xi_3)(i\xi_{ab})$ is of size one due to our choice of $\sigma_I$ and \eqref{e_res_asy}.
		
		Now, we differentiate three cases. First, let us consider the case $K_a\sim K_b\gtrsim K_1$. Then, Lemma \ref{l_stqe} yields
		\begin{align*}
			|S_1|
			\lesssim_\delta
			T^\delta
			\sum_{K_i}\Pi(K_a,K_b,K_2,K_3)K_a^{(\theta-1)\frac{1}{2}}K_2^{2s}K_3^{\frac{1}{2}}
			\lesssim
			T^\delta
			\|u\|_{\spfc^s_T}^4
		\end{align*}
		for all sufficiently small $\theta$.
		
		Next, we consider the case $K_1\sim K_a\gg K_b$; the case $K_1\sim K_b\gg K_a$ follows by minor modifications. If we additionally assume $K_2\sim K_3$, then Lemma \ref{l_stqe} yields
		\begin{align*}
			|S_1|
			\lesssim_\delta
			T^\delta
			\sum_{K_i }\Pi(K_a,K_b,K_2,K_3)K_2^{(\theta-1+4s)\frac{1}{2}}K_b^{\frac{1}{2}}
			\lesssim
			T^\delta
			\|u\|_{\spfc^s_T}^4.
		\end{align*}
		Here, the last estimate is valid, since we can distribute the exponent $(\theta-1)\frac{1}{2}+\frac{1}{4}+2s$ of $K_2$ onto the three high-frequency terms $\|P_{K_a}u\|$, $\|P_{K_2}u\|$, and $\|P_{K_3}u\|$ - for this, we need to choose $\theta>1$ sufficiently small such that $(\theta-1)\frac{1}{2}+\frac{1}{4}<s_0$ holds.
		
		Now, we assume  $K_1\sim K_a\gg K_b$ as well as $K_2\gg K_3$. Then, the previous argument fails for any value of $s$ since the exponent $(\theta-1)\frac{1}{2}$ of $K_2$ is strictly positive. To overcome this problem, we transform half of the integral by the change of variables $(\xi_a,\xi_2)\mapsto (\xi_2,\xi_a)$. This leads to
		\begin{align*}
			|S_1|
			\lesssim
			\sum_{K_j}K_1^{2s}\sup_t\int_0^t\int_{\Gamma_4}m&(\xi_a,\xi_b,\xi_2,\xi_3)\chi_{K_a}(\xi_a)\chi_{K_b}(\xi_b)
			\\
			&\times\hat{u}(\xi_a)\hat{u}(\xi_b)\hat{u}(\xi_2)\hat{u}(\xi_3)d\gamma_4(\xi_a,\xi_b,\xi_2,\xi_3)dr,
		\end{align*}
		where the new multiplier $m$ is given by
		\begin{align}\label{d_m}
			m(\xi_a,\xi_b,\xi_2,\xi_3)
			:=
			\frac{\sigma_I}{2\Omega_3}(\xi_{2b},\xi_a,\xi_3)(i\xi_{2b})
			+
			\frac{\sigma_I}{2\Omega_3}(\xi_{ab},\xi_2,\xi_3)(i\xi_{ab}).
		\end{align}
		
		We claim that
		\begin{equation}\label{e_bnd_m}
			|m(\xi_a,\xi_b,\xi_2,\xi_3)|
			\lesssim
			K_a^{-1}\max\{K_3,K_b\}
		\end{equation}
		holds.
		To prove this, let us write
		\begin{align*}
			(-2i)m(\xi_a,\xi_b,\xi_2,\xi_3)
			&=
			\sigma_I(\xi_{2b},\xi_a,\xi_3)\Omega_3^{-1}(\xi_{2b},\xi_a,\xi_3)[\xi_{2b}+\xi_{ab}]
			&=: m_1
			\\&+
			\sigma_I(\xi_{2b},\xi_a,\xi_3)[\Omega_3^{-1}(\xi_{ab},\xi_2,\xi_3)-\Omega_3^{-1}(\xi_{2b},\xi_a,\xi_3)]\xi_{ab}
			&=: m_2
			\\&+
			[\sigma_I(\xi_{ab},\xi_2,\xi_3)-\sigma_I(\xi_{2b},\xi_a,\xi_3)]\Omega_3^{-1}(\xi_{ab},\xi_2,\xi_3)\xi_{ab}
			&=: m_3
		\end{align*}
		and bound each term on the right-hand side separately.
		Using \eqref{e_res_asy}, the uniform bound on $|\sigma_I|$, and $\xi_{2bab}=\xi_b-\xi_3+\xi_{ab23}=\xi_b-\xi_3$, it follows
		\begin{align*}
			|m_1|
			\lesssim
			K_3 K_a^{-1}K_3^{-1}|\xi_b-\xi_3|
			\lesssim
			K_a^{-1}\max\{K_3,K_b\}.
		\end{align*}
		The bound for $m_2$ follows by Lemma \ref{l_res_dif}. More precisely, we have
		\begin{align*}
			|m_2|
			\lesssim
			K_3 K_2^{-2}K_3^{-1}K_bK_a
			\sim
			K_2^{-1}K_b.
		\end{align*}
		It remains to bound $m_3$. The definitions of $\sigma_I$ and $\sigma_0$ yield
		\begin{align}\label{e_bnd_m3}
			\begin{split}
				&2\left[\sigma_I(\xi_{ab},\xi_2,\xi_3)
				-
				\sigma_I(\xi_{2b},\xi_a,\xi_3)\right]
				\\=\,
				&\sigma_0(\xi_{ab},\xi_2,\xi_3)
				+
				\sigma_0(\xi_2,\xi_{ab},\xi_3)
				-
				\sigma_0(\xi_{2b},\xi_a,\xi_3)
				-
				\sigma_0(\xi_a,\xi_{2b},\xi_3),
			\end{split}		
		\end{align}
		which we expand as follows
		\begin{align*}
			&-2i[\sigma_I(\xi_{ab},\xi_2,\xi_3)-\sigma_I(\xi_{2b},\xi_a,\xi_3)]
			\\&=
			\xi_a[\chi_{K_1}^2(\xi_a)-\chi_{K_1}^2(\xi_{a3})][\chi_{K_2}(\xi_{a3})-\chi_{K_2}(\xi_{ab3})]\chi_{K_3}(\xi_3)
			\\&-
			\xi_a[\chi_{K_1}^2(\xi_{a3})-\chi_{K_1}^2(\xi_{ab3})][\chi_{K_2}(\xi_{a})-\chi_{K_2}(\xi_{a3})]\chi_{K_3}(\xi_3)
			\\&+
			\xi_a[[\chi_{K_1}^2(\xi_a)-\chi_{K_1}^2(\xi_{a3})-\chi_{K_1}^2(\xi_{ab})+\chi_{K_1}^2(\xi_{ab3})]]\chi_{K_2}(\xi_{ab3})\chi_{K_3}(\xi_3)
			\\&-
			\xi_a\chi_{K_1}^2(\xi_{ab3})[[\chi_{K_2}(\xi_{a})-\chi_{K_2}(\xi_{a3})-\chi_{K_2}(\xi_{ab})+\chi_{K_2}(\xi_{ab3})]]\chi_{K_3}(\xi_3)
			\\&+
			\xi_b\chi_{K_1}^2(\xi_{ab})[\chi_{K_2}(\xi_{ab})-\chi_{K_2}(\xi_{ab3})]\chi_{K_3}(\xi_3)
			\\&-
			\xi_b[\chi_{K_1}^2(\xi_{ab})-\chi_{K_1}^2(\xi_{ab3})]\chi_{K_2}(\xi_{ab})\chi_{K_3}(\xi_3)
			\\&-
			\xi_3\chi_{K_1}^2(\xi_{a3})[\chi_{K_2}(\xi_{a})-\chi_{K_2}(\xi_{ab})]\chi_{K_3}(\xi_3)
			\\&-
			\xi_3[\chi_{K_1}^2(\xi_{a3})-\chi_{K_1}^2(\xi_{ab3})]\chi_{K_2}(\xi_{ab})\chi_{K_3}(\xi_3).
		\end{align*}
		Now, we estimate each term on the right-hand side above by using the fundamental theorem of calculus. Then, factors in square brackets have modulus bounded by either $K_a^{-1}K_b$ or $K_a^{-1}K_3$ and factors in double square brackets have modulus bounded by $K_a^{-2}K_3K_b$. We conclude
		\begin{align*}
			|m_3|
			\lesssim
			K_2^{-1}K_3K_bK_a^{-1}K_3^{-1}K_a
			\sim
			K_a^{-1}K_b.
		\end{align*}
		
		Having proved \eqref{e_bnd_m}, let us proceed with bounding $|S_1|$ in the case $K_1\sim K_a\gg K_b$ and  $K_2\gg K_3$. Using $\eqref{e_bnd_m}$ and Lemma \ref{l_stqe}, we conclude
		\begin{align*}
			|S_1|
			\!\lesssim_\delta
			T^\delta
			\sum_{K_i}\Pi(K_a,\!K_b,\!K_2,\!K_3)K_2^{2s-1+(\theta-1)\frac{1}{2}}\!\max\{K_3,\!K_b\}^\frac{1}{2}(K_3K_b)^\frac{1}{2}
			\!\lesssim
			T^\delta\|u\|_{\spfc^s_T}^4.
		\end{align*}
		Thus, the estimate for $S_1$ is complete.
		
		Now, we bound $S_3$. Again, we write $\widehat{u^2_x}=(\hat{u}\ast\hat{u})_x$ and localize the new variables $\xi_a$ and $\xi_b$ to dyadic frequency ranges of size $K_a$ and $K_b$ yielding
		\begin{align*}
			|S_3|
			\lesssim
			\sum_{K_j}K_1^{2s}\sup_t\int_0^t\int_{\Gamma_4}\frac{\sigma_I}{\Omega_3}&(\xi_1,\xi_2,\xi_{ab})(i\xi_{ab})\chi_{K_a}(\xi_a)\chi_{K_b}(\xi_b)
			\\
			&\times\hat{u}(\xi_1)\hat{u}(\xi_2)\hat{u}(\xi_a)\hat{u}(\xi_b)d\gamma_4(\xi_1,\xi_2,\xi_a,\xi_b)dr.
		\end{align*}
		Applying Lemma \ref{l_stqe}  to the right-hand side above leads to
		\begin{equation*}
			|S_3|\!
			\lesssim_\delta
			T^\delta\sum_{K_i}\Pi(K_1,\!K_2,\!K_a,\!K_b)\times
			\begin{cases}
				K_a^{(\theta-1)\frac{1}{2}}K_1^{2s-\frac{1}{2}}K_3
				&\text{if } 
				K_a\!\sim\!K_b\!\gg\!K_1,
				\\
				\textstyle K_1^{2s-1+(\theta-1)\frac{1}{2}}K_a^{\frac{1}{2}}K_3
				&\text{if }
				K_1\!\gtrsim\!K_a\!\sim\!K_b\!\gtrsim\!K_3,
				\\
				\textstyle K_1^{2s-1+(\theta-1)\frac{1}{2}}K_3K_b^{\frac{1}{2}}
				&\text{if } 
				K_a\!\sim\!K_3\!\gg\!K_b,
				\\
				\textstyle K_1^{2s-1+(\theta-1)\frac{1}{2}}K_3K_a^{\frac{1}{2}}
				&\text{if } 
				K_b\!\sim\!K_3\!\gg\!K_a.
			\end{cases}
		\end{equation*}
		Each of the four estimates above is sufficient to obtain the desired upper bound in the corresponding case.
	\end{proof}
	
	Lastly, we bound $E_{I,5}$.
		
	\begin{lem}\label{l_ee_qua_bnd}
		Let $s\geq s_0>\frac{1}{4}$. Then, for all sufficiently small $\theta=\theta(s_0)>1$ and $\delta\in(0,\frac{1}{2})$, we have
		\begin{align*}
			|E_{I,5}|
			\lesssim_\delta
			T^\delta\|u\|_{\spfc_T^s}^2\|u\|_{\spfc_T^{s_0}}\|b\|_{\spbc^s_T}.
		\end{align*}
	\end{lem}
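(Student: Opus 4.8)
The plan is to follow the proof of Lemma~\ref{l_ee_qua} essentially line by line, with the nonlinearity $u_x^2=\partial_x(uu)$ replaced throughout by $\partial_x(ub)$. The only structural change is that one of the two inner factors of $\widehat{(ub)_x}$ now carries $b$, which we will never place in the space $\spfb^{K}_T$ but always estimate through its $L^\infty$-norm; accordingly, every application of the quadrilinear bound of Lemma~\ref{l_stqe} is replaced by the corresponding one of Lemma~\ref{l_stqe_bnd}, with $b$ in the role of the $L^\infty$-factor $u_4$.

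Concretely, I would first recall the reductions made in Section~\ref{ss_ee_i}: after symmetrising $\sigma_I$ one has $|\sigma_I|\lesssim K_3^*=K_3$, on the support of the integrand $|\Omega_3|\sim K_1^*K_3^*\sim K_1K_3$ by~\eqref{e_res_asy} (recall $C_H\subseteq\{K_3^*>1\}$), and by permutation invariance of $\Omega_3^{-1}\kappa_{1,2}[(ub)_x,u]$ together with $s\ge 0$ it suffices to treat $K_1\sim K_2\gtrsim K_3$. I would then split $E_{I,5}=S_1'+S_2'+S_3'$ according to which of the slots $\xi_1,\xi_2,\xi_3$ carries $\widehat{(ub)_x}$; since $K_1\sim K_2$, one has $S_1'=S_2'$ (up to the bound $|\sigma_I|\lesssim K_3$), so only $S_1'$ (nonlinearity in a top-scale slot) and $S_3'$ (nonlinearity in the low slot) remain. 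In each of them I write $\widehat{(ub)_x}(\xi_j)=(i\xi_j)(\hat u\ast\hat b)(\xi_j)$, localise the inner frequency carrying $u$ to $K_a$ and the inner frequency carrying $b$ to $K_b$, and end up with a quadrilinear form over $\Gamma_4$ whose symbol $\frac{\sigma_I}{\Omega_3}(\cdot)(i\xi_{ab})$ is of size $O(1)$, since $|\xi_{ab}|=|\xi_j|\lesssim K_1$. Running through the orderings of $K_a,K_b,K_1,K_2,K_3$ and applying Lemma~\ref{l_stqe_bnd} (its first or its second estimate, according to which factor is extremal), one obtains a dyadic series; there one distributes the weight $K_1^{2s}$ onto two of the four factors living at scale $\sim K_1$ — the $b$-factor being allowed to absorb one of them via $\|P_{K_b}b\|_{L^\infty}\le K_b^{-s}\|b\|_{\spbc^s_T}$ when $K_b\sim K_1$ — absorbs the surplus powers $(K_1^\#)^{\theta/4}$, resp.\ $(K_3^\#)^{1/2}$, into a single $\spfc^{s_0}_T$-factor using $s_0>\frac14$ and $\theta$ close to $1$, and disposes of any leftover low-frequency $u$-factor by $\|u\|_{\spfc^{s_0}_T}\le\|u\|_{\spfc^s_T}$ (valid since every dyadic frequency is $\ge 1$). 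The gain $T^\delta$ is produced, exactly as in the proof of Lemma~\ref{l_stqe}, by Corollary~\ref{c_regtime}.

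The one genuinely delicate configuration is, as for Lemma~\ref{l_ee_qua}, the one in $S_1'$ in which the inner $b$-frequency reaches the top scale, $K_b\sim K_1$, while both $K_a$ and the low slot $K_3$ remain much smaller. There the symbol is only $O(1)$, and a crude count leaves a non-summable series of the type $\sum_{K_1}K_1^s\|P_{K_1}u\|_{\spfb^{K_1}_T}$ (a fixed $\ell^2$-sequence summed in $\ell^1$ against the $\ell^\infty$-sequence controlled by $\|b\|_{\spbc^s_T}$), so the estimate cannot be closed interaction by interaction — what is missing is an extra negative power of the top frequency. I would recover it, in the spirit of~\eqref{d_m}--\eqref{e_bnd_m3} but with the modifications forced by the fact that here the two extremal frequencies carry different functions, by transforming part of the integral by an admissible change of variables among the $u$-carrying slots and symmetrising the resulting symbol; the key input is again the difference estimate for $\Omega_3^{-1}$ of Lemma~\ref{l_res_dif} together with the fundamental theorem of calculus applied to the $\chi$-cutoffs inside $\sigma_I$, which should yield a multiplier bound of the shape $|m|\lesssim K_1^{-1}\max\{K_3,K_a\}$. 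With this extra $K_1^{-1}$, Lemma~\ref{l_stqe_bnd} finishes the estimate with the claimed right-hand side $T^\delta\|u\|_{\spfc^s_T}^2\|u\|_{\spfc^{s_0}_T}\|b\|_{\spbc^s_T}$. I expect this cancellation step to be the main obstacle; by contrast the term $S_3'$, where the nonlinearity sits in the low slot and hence $|\xi_{ab}|\lesssim K_3$, is routine — the symbol there is even better, and the factor $K_b^{-s}$ from $\|P_{K_b}b\|_{L^\infty}$ controls the sum over a possibly large inner $b$-frequency — so it needs no further idea.
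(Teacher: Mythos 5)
Your overall plan is right and matches the paper: write $\widehat{(ub)_x}=(\hat u\ast\hat b)_x$, localise the two inner frequencies to $K_a$ (carrying $u$) and $K_b$ (carrying $b$), keep $b$ out of the $\spfb^{K}_T$-scale and always measure it in $L^\infty$, and replace every use of Lemma~\ref{l_stqe} by Lemma~\ref{l_stqe_bnd}.

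The error is in your identification of the ``genuinely delicate configuration.'' You single out $K_b\sim K_1\gg K_a,K_3$ (the $b$-frequency at top scale) and claim this is the analogue of the delicate case in Lemma~\ref{l_ee_qua}. It is not: in Lemma~\ref{l_ee_qua} the hard interaction in $S_1$ is $K_1\sim K_a\gg K_b$, $K_2\gg K_3$ --- i.e.\ the inner factor \emph{labelled} $a$ is the one at top scale. When $(uu)_x$ is replaced by $(ub)_x$, the slot $a$ still carries $u$ and the slot $b$ now carries $b$, so the analogue is $K_a\sim K_1\gg K_3\gtrsim K_b$ (the $b$-frequency at the \emph{bottom}). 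This is exactly what the paper treats via the multiplier $m$ from \eqref{d_m}, the bound \eqref{e_bnd_m}, and the second estimate of Lemma~\ref{l_stqe_bnd}; the direct bound there only closes for $s_0>\tfrac12$, and the gain $|m|\lesssim K_a^{-1}\max\{K_3,K_b\}$ is what pushes the threshold down to $s_0>\tfrac14$. Your proposal never mentions this case.

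Moreover the fix you sketch would not work for the configuration you flagged. The symmetrisation behind $m$ exchanges the variables $\xi_a\leftrightarrow\xi_2$; this produces cancellation only because $\hat u(\xi_a)\hat u(\xi_2)$ is symmetric \emph{and} the two frequencies are comparable, and Lemma~\ref{l_res_dif} explicitly requires $K_a\sim K_2$. In the paper's delicate case $K_a\sim K_1\sim K_2$, so the hypothesis holds. In the case you flagged, $K_a\ll K_1\sim K_2$, so $K_a\not\sim K_2$, Lemma~\ref{l_res_dif} does not apply, and the swapped term in $m$ is even supported on a disjoint set of the cutoffs $\chi_{K_1}^2,\chi_{K_2}$ --- there is nothing for it to cancel against. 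The paper instead disposes of the interactions where $\xi_b$ is not the lowest frequency (which include your $K_b\sim K_1$ configuration) directly by the second estimate of Lemma~\ref{l_stqe_bnd}, before ever introducing $m$.

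So there are two concrete gaps: you apply the cancellation machinery to the wrong high--low splitting, where it is structurally unavailable, and you omit entirely the interaction $K_a\sim K_1\gg K_3\gtrsim K_b$, which is the one that actually requires the multiplier argument to reach $s_0>\tfrac14$.
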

	\begin{proof}
		We write $E_{I,5}=S_1+S_2+S_3$ for
		\begin{align*}
			S_1
			&:=
			\sum_{C_H}K_1^{2s}\sup_t\int_0^t\int_{\Gamma_3}\left[\frac{\sigma_I}{\Omega_3}(\widehat{(ub)_x}\hat{u}\hat{u})\right](\xi)d\gamma_3(\xi)dr,
			\\
			S_2
			&:=
			\sum_{C_H}K_1^{2s}\sup_t\int_0^t\int_{\Gamma_3}\left[\frac{\sigma_I}{\Omega_3}(\hat{u}\widehat{(ub)_x}\hat{u})\right](\xi)d\gamma_3(\xi)dr,
			\\
			S_3
			&:=
			\sum_{C_H}K_1^{2s}\sup_t\int_0^t\int_{\Gamma_3}\left[\frac{\sigma_I}{\Omega_3}(\hat{u}\hat{u}\widehat{(ub)_x})\right](\xi)d\gamma_3(\xi)dr.
		\end{align*}
		As in the previous lemma, we can assume $K_1\sim K_2\gtrsim K_3$. Again, we observe that $S_1=S_2$ holds.
		Define
		\begin{align*}
			\Pi(K_i,K_j,K_g,K_h)
			:=
			\|P_{K_i}u\|_{\spfb^{K_i}_T}
			\|P_{K_j}u\|_{\spfb^{K_j}_T}
			\|P_{K_g}u\|_{\spfb^{K_g}_T}
			\|P_{K_h}b\|_{L^\infty}.
		\end{align*}
		
		Let us begin by estimating $S_1$. We write $\widehat{(ub)_x}=(\hat{u}\ast\hat{b})_x$ and localize the new variables $\xi_a$ and $\xi_b$ to dyadic frequency ranges of size $K_a$ and $K_b$. This leads to
		\begin{align*}
			|S_1|
			\lesssim
			\sum_{K_i}K_1^{2s}\sup_t\int_0^t\int_{\Gamma_4}\frac{\sigma_I}{\Omega_3}&(\xi_{ab},\xi_2,\xi_3)(i\xi_{ab})\chi_{K_a}(\xi_a)\chi_{K_b}(\xi_b)
			\\
			&\times\hat{u}(\xi_a)\hat{b}(\xi_b)\hat{u}(\xi_2)\hat{u}(\xi_3)d\gamma_4(\xi_a,\xi_b,\xi_2,\xi_3)dr.
		\end{align*}
		
		First, consider the case that $\xi_b$ is not the smallest frequency. Then, a direct application of the first estimate in Lemma \ref{l_stqe_bnd} suffices. Indeed, we have
		\begin{align*}
			|S_1|
			\lesssim_\delta
			T^\delta\sum_{K_i}
			\Pi(K_2,K_3,K_a,K_b)
			\times
			\begin{cases}
				K_1^{2s}K_3^{\frac{1}{2}}
				&\text{if }
				K_a\!\sim\!K_b\!\gg\!K_1,
				\\
				K_1^{2s}K_3^{\frac{1}{2}}
				&\text{if }
				K_a\!\sim\!K_1\!\gtrsim\!K_b\!\gtrsim\!K_3,
				\\
				K_1^{2s}K_3^{\frac{1}{2}}
				&\text{if }
				K_b\!\sim\!K_1\!\gtrsim\!K_a\!\gtrsim\!K_3,
				\\
				K_1^{2s}K_a^{\frac{1}{2}}
				&\text{if }
				K_b\!\sim\!K_1\!\gtrsim\!K_3\!\gtrsim\!K_a.
			\end{cases}
		\end{align*}
		All estimates given above lead to the desired bound.
				
		Next, we consider the case $K_a\sim K_1\gtrsim K_3\gtrsim K_b$. A direct application of Lemma \ref{l_stqe_bnd} is only sufficient for $s_0>\frac{1}{2}$. To obtain the full range $s_0>\frac{1}{4}$, we differentiate between two subcases. First, we assume $K_3\sim K_1$. Then, the second estimate in Lemma \ref{l_stqe_bnd} yields
		\begin{align*}
			|S_1|
			\lesssim_\delta
			T^\delta\sum_{K_i}\Pi(K_2,K_3,K_a,K_b)
			K_1^{2s+\frac{\theta}{4}}
			\lesssim
			T^\delta\|u\|_{\spfc^s_T}^2\|u\|_{\spfc^{s_0}_T}\|b\|_{\spbc^0_T}.
		\end{align*}
		In the case $K_a\sim K_1\gg K_3\gtrsim K_b$, we can repeat large parts of the proof of the previous lemma including the introduction of the multiplier $m$ in \eqref{d_m} leading to
		\begin{align*}
			|S_1|
			\lesssim
			\sum_{K_i}K_1^{2s}\sup_t\int_0^t\int_{\Gamma_4}m&(\xi_a,\xi_b,\xi_2,\xi_3)\chi_{K_a}(\xi_a)\chi_{K_b}(\xi_b)
			\\
			&\times\hat{u}(\xi_a)\hat{b}(\xi_b)\hat{u}(\xi_2)\hat{u}(\xi_3)d\gamma_4(\xi_a,\xi_b,\xi_2,\xi_3)dr.
		\end{align*}
		Using the bound on $m$ from \eqref{e_bnd_m} and applying the second estimate in Lemma \ref{l_stqe_bnd}, we obtain
		\begin{align*}
			|S_1|
			\lesssim_\delta
			T^\delta\sum_{K_i}\Pi(K_2,K_3,K_a,K_b)
			K_1^{2s+\frac{\theta}{4}-1}K_3
			\lesssim
			T^\delta\|u\|_{\spfc^s_T}^2\|u\|_{\spfc^{s_0}_T}\|b\|_{\spbc^0_T}.
		\end{align*}
		This completes the estimate on $S_1$.
		
		It remains to bound $S_3$. Proceeding as for $S_1$, we obtain
		\begin{align*}
			|S_3|
			\lesssim
			\sum_{K_i}K_1^{2s}\sup_t\int_0^t\int_{\Gamma_4}\frac{\sigma_I}{\Omega_3}&(\xi_1,\xi_2,\xi_{ab})(i\xi_{ab})\chi_{K_a}(\xi_a)\chi_{K_b}(\xi_b)
			\\
			&\times\hat{u}(\xi_1)\hat{u}(\xi_2)\hat{u}(\xi_a)\hat{b}(\xi_b)d\gamma_4(\xi_1,\xi_2,\xi_a,\xi_b)dr.
		\end{align*}
		By applying the first, respectively the second estimate in Lemma \ref{l_stqe_bnd}, we get
		\begin{align*}
			|S_3|
			\lesssim_\delta
			T^\delta\sum_{K_i}
			\Pi(K_1,\!K_2,\!K_a,\!K_b)
			\times
			\begin{cases}
				K_1^{2s-\frac{1}{2}}K_3
				&\text{if }
				K_a\!\sim\!K_b\!\gtrsim\!K_1,
				\\
				K_1^{2s-1}K_a^\frac{1}{2}K_3
				&\text{if }
				K_1\!\gg\!K_a\!\sim\!K_b\!\gtrsim\!K_3,
				\\
				K_1^{2s-1+\frac{\theta}{4}}K_3
				&\text{if }
				K_3\!\sim\!K_a\!\gg\!K_b,
				\\
				K_1^{2s-1}K_3K_a^{\frac{1}{2}}
				&\text{if }
				K_3\!\sim\!K_b\!\gg\!K_a.
			\end{cases}
		\end{align*}
		Again, the estimates above lead to the desired bound.
	\end{proof}
	Let us show how the results of this section prove Lemma \ref{l_ee_i}. In the beginning of this section, we deduced that
	\begin{align*}
		\|u\|_{\spec^s_T}^2
		\lesssim
		\|u_0\|_{H^s}^2
		+
		I_1
		+
		I_2
		+
		I_3
		\end{align*}
	holds. Now, $I_2$ respectively $I_3$ are estimated in Lemma \ref{l_ee_trv} respectively Lemma \ref{l_ee_trv_uub} and the bound for $I_1$ is a direct consequence of Lemmata \ref{l_ee_tri}, \ref{l_ee_qua}, and \ref{l_ee_qua_bnd}.
	\subsection{Proof of the second energy estimate}\label{ss_ee_ii}
	In this section we prove Lemma \ref{l_ee_ii}. Fix $s>\frac{1}{2}$, $z=-\frac{1}{2}$, $T\in(0,1)$, and let $\theta(s)>1$ be small enough to satisfy all restrictions appearing in this section. Choose $b$ and $f$ satisfying \eqref{as_b_f} and let and $u_1$ and $u_2$ be classical solutions to \eqref{eq_bo_split} with initial datum $u_{1,0}$ and $u_{2,0}$. In the following, we write $v=u_1-u_2$, $v_0=u_{1,0}-u_{2,0}$, and $w=u_1+u_2$.
	
	Since $u_1$ and $u_2$ are solutions to \eqref{eq_bo_split}, the difference $v$ is a solution to
	\begin{align*}
		\begin{cases}
			v_t
			+
			\mathcal{H}v_{xx}
			+
			(vw)_x
			+
			(vb)_x
			&=
			0,
			\\
			v(0,\cdot)
			&=
			v_0.
		\end{cases}
	\end{align*}
	For $K_1\in\mathbb{D}$ it follows
	\begin{align*}
		\frac{d}{dr}\|P_{K_1}v(r,x)\|_{L^2_x}^2
		&=
		2\int_\mathbb{R} -P_{K_1}v\left[P_{K_1}\mathcal{H}v_{xx}+P_{K_1}(vw)_x+P_{K_1}(vb)_x\right] dx
		\\&=
		2\int_\mathbb{R} P_{K_1}^2v_x(vw)dx
		+
		2\int_\mathbb{R} P_{K_1}^2v_xvb dx.
	\end{align*}
	After multiplying this equation with $K_1^{2z}$, integrating with respect to $r$ over $[0,t]$, taking the supremum over $t\in[0,T]$, as well as after summation over $K_1\in\mathbb{D}$, we obtain
	\begin{align*}
		\|v\|_{\spec^z_T}^2
		-
		\|v_0\|_{H^z}^2
		&\leq
		2\sum_{K_1}K_1^{2z}\sup_t\int_0^t\int_\mathbb{R}[P_{K_1}^2v_xvw](r,x)dxdr
		&=:I\!I_1\nonumber
		\\&+
		2\sum_{K_1}K_1^{2z}\sup_t\int_0^t\int_\mathbb{R}[P_{K_1}^2v_xvb](r,x)dxdr
		&=:I\!I_2.
	\end{align*}
	
	As in the previous section, the bound for $I\!I_2$ can be derived easily.
	
	\begin{lem}\label{l_ee_ii_trv}
		Let $\varepsilon>0$. Then, we have
		\begin{align*}
			|I\!I_2|
			\lesssim_\varepsilon
			T\|v\|_{\spec^z_T}^2\|b\|_{\spbc^{1+\varepsilon}_T}.
		\end{align*}
	\end{lem}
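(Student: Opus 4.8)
The plan is to run the argument of Lemma~\ref{l_ee_trv_uub} with $s$ replaced by $z=-\tfrac12$; the point is that the negative weight $K_1^{2z}=K_1^{-1}$ exactly compensates the single $x$-derivative, so it will suffice to take $b\in\spbc^{1+\varepsilon}_T$ rather than $b\in\spbc^{s+1+\varepsilon}_T$. First I would insert Littlewood--Paley projectors in the last two factors of the integrand and estimate the $r$-integral by $T$ times the supremum of its integrand, reducing to
\[
	|I\!I_2|\lesssim T\sum_{K_1,K_2,K_3}K_1^{2z}\sup_r\Big|\int_\mathbb{R}P_{K_1}^2v_x\,P_{K_2}v\,P_{K_3}b\,dx\Big|.
\]
Since the spatial integral lives on $\Gamma_3$, the two largest of $K_1,K_2,K_3$ are comparable, and I would split according to which one is smallest.

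If $K_3^*\in\{K_1,K_2\}$, i.e.\ a $v$-frequency is the smallest (which forces $K_2\sim K_3$ or $K_1\sim K_3$ respectively), I would simply apply Hölder's inequality in $x$ in the form $L^2\cdot L^2\cdot L^\infty$, the derivative producing a factor $K_1$. As $K_1^{2z}K_1=1$, what remains is $T\sum\|P_{K_1}v\|_{L^\infty L^2}\|P_{K_2}v\|_{L^\infty L^2}\|P_{K_3}b\|_{L^\infty L^\infty}$, which I would bound by writing $\|P_{K_i}v\|_{L^\infty L^2}=K_i^{1/2}a_{K_i}$ with $(a_K)_K\in\ell^2$ of norm $\|v\|_{\spec^z_T}$ and $\|P_{K_3}b\|_{L^\infty L^\infty}\le K_3^{-1-\varepsilon}\|b\|_{\spbc^{1+\varepsilon}_T}$; using $K_1\sim K_3$ (resp.\ $K_2\sim K_3$), a Cauchy--Schwarz sum over the smallest index (e.g.\ $\sum_{K_1\le K_3}K_1^{1/2}a_{K_1}\lesssim K_3^{1/2}\|v\|_{\spec^z_T}$), and the convergence of $\sum_K K^{-2\varepsilon}$, this is $\lesssim_\varepsilon\|v\|_{\spec^z_T}^2\|b\|_{\spbc^{1+\varepsilon}_T}$.

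The remaining case $K_3^*=K_3$ (the $b$-frequency is smallest, so $K_1\sim K_2$) is where the genuine work lies: a direct Hölder estimate there loses a whole power of $K_1$ and fails. Instead I would symmetrize exactly as in the proof of Lemma~\ref{l_ee_trv_uub}: the change of variables $(\xi_1,\xi_2)\mapsto(\xi_2,\xi_1)$ replaces the symbol $-i\xi_1\chi_{K_1}^2(\xi_1)\chi_{K_2}(\xi_2)\chi_{K_3}(\xi_3)$ by one of modulus $\lesssim K_3$, so that
\[
	\Big|\int_\mathbb{R}P_{K_1}^2v_x\,P_{K_2}v\,P_{K_3}b\,dx\Big|\lesssim K_3\,\|P_{K_1}v\|_{L^2}\|P_{K_2}v\|_{L^2}\|P_{K_3}b\|_{L^\infty}.
\]
Now $K_1^{2z}K_3=K_1^{-1}K_3\le1$, and the contribution of this case is at most $T\sum_{K_3\le K}K^{-1}K_3\,\|P_Kv\|_{L^\infty L^2}^2\|P_{K_3}b\|_{L^\infty L^\infty}\lesssim T\|b\|_{\spbc^{1+\varepsilon}_T}\sum_K a_K^2\sum_{K_3\le K}K_3^{-\varepsilon}\lesssim_\varepsilon T\|v\|_{\spec^z_T}^2\|b\|_{\spbc^{1+\varepsilon}_T}$, using $\sum_{K_3\ge 1}K_3^{-\varepsilon}<\infty$. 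Adding the two cases proves the claim. The only real obstacle is this last case; it is overcome precisely by exploiting that $P_{K_1}^2v_x$ and $P_{K_2}v$ are built from the same function $v$, and here the $1+\varepsilon$ of regularity on $b$ (versus merely $1$) is exactly what makes $\sum_{K_3}K_3^{-\varepsilon}$ summable.
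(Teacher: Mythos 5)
Your proof is correct and follows essentially the same strategy as the paper's: Littlewood–Paley decomposition, a case split on which of the three frequencies is smallest, a direct Hölder bound with the exponent $K_1^{2z+1}=1$ when a $v$-frequency is lowest, and symmetrization of the symbol (as in Lemma~\ref{l_ee_trv_uub}) when the $b$-frequency is lowest to replace the loss of $K_1$ by a gain of $K_3$. The only nominal difference is that you spell out the Cauchy–Schwarz summation explicitly, whereas the paper compresses it; the substance is identical.
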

	\begin{proof}
		After localizing the second and third factor using Littlewood-Paley projectors, it remains to estimate
		\begin{align*}
			I\!I_2
			=
			\sum_{K_i}K_1^{2z}\sup_t\int_0^t\int_{\Gamma_3}(-i\xi_1)\chi_{K_1}^2(\xi_1)\chi_{K_2}(\xi_2)\chi_{K_3}(\xi_3)\hat{v}(\xi_1)\hat{v}(\xi_2)\hat{b}(\xi_3)d\gamma_3(\xi)dr.
		\end{align*}
		In the cases $K_3^*=K_1$ respectively $K_3^*=K_2$, we obtain
		\begin{align*}
			|I\!I_2|
			\lesssim
			T\sum_{K_i}K_1^{2z+1}\|v_{K_1}\|_{L^\infty L^2}\|v_{K_2}\|_{L^\infty L^2}\|b_{K_3}\|_{L^\infty L^\infty}
			\lesssim
			T\|v\|_{\spec^z_T}^2\|b\|_{\spbc^{1+\varepsilon}_T},
		\end{align*}
		where we used that $z\geq -1$ respectively $z\leq 0$ holds. For the case $K_3^*=K_3$, we symmetrize the symbol as in Lemma \ref{l_ee_trv_uub} and get
		\begin{align*}
			|I\!I_2|
			\lesssim
			T\sum_{K_i}K_1^{2z}K_3^1
			\|v_{K_1}\|_{L^\infty L^2}\|v_{K_2}\|_{L^\infty L^2}\|b_{K_3}\|_{L^\infty L^2}
			\lesssim
			T\|v\|_{\spec^z_T}^2\|b\|_{\spbc^{1+\varepsilon}_T}.
		\end{align*}
		This proves the claimed estimate for $I\!I_2$.
	\end{proof}
	
	In the remaining part of this section, we derive a bound for $I\!I_1$. The procedure is similar to the one for $I_1$ in the previous section, which is why we omit a few details. However, let us emphasize the main difference: In the setting of this section the integrand $v_xvw$ is less symmetric than the integrand $u_xuu$ in the previous section and thus the symbol cannot by symmetrized as before, see the definition of $\sigma_{I\!I}$ below. This deficiency leads to the condition $s>\frac{1}{2}$ in contrast to $s>\frac{1}{4}$ as assumed in the previous section.
	
	Define
	\begin{align*}
		\sigma_0(\xi)
		:=
		-i\xi_1\chi_{K_1}^2(\xi_1)\chi_{K_2}(\xi_2)\chi_{K_3}(\xi_3)
	\end{align*}
	and
	\begin{align*}
		\sigma_{I\!I}(\xi_1,\xi_2,\xi_3)
		:=
		\begin{cases}
			\sigma_0(\xi_1,\xi_2,\xi_3)
			&\text{if }K_3^*=K_1,
			\\
			\sigma_0(\xi_1,\xi_2,\xi_3)
			&\text{if }K_3^*=K_2,
			\\
			\frac{1}{2}[\sigma_0(\xi_1,\xi_2,\xi_3)+\sigma_0(\xi_2,\xi_1,\xi_3)]
			&\text{if }K_3^*=K_3.
		\end{cases}
	\end{align*}
	Then, localizing the second and third factor in the integrand of $I\!I_1$, we conclude
	\begin{align}\label{eq_ee_ii_b}
		I\!I_1
		=
		&\sum_{K_1}K_1^{2z}\sup_t\int_0^t\int_\mathbb{R}[P_{K_1}^2v_xvw](r,x)dxdr\nonumber
		\\ \leq
		&\sum_{K_i}K_1^{2z}\sup_t\int_0^t\int_{\Gamma_3}\sigma_0(\xi)\hat{v}(r,\xi_1)\hat{v}(r,\xi_2)\hat{w}(r,\xi_3)d\gamma_3(\xi)dr,\nonumber
		\\ =
		&\sum_{K_i}K_1^{2z}\sup_t\int_0^t\int_{\Gamma_3}\sigma_{I\!I}(\xi)\hat{v}(r,\xi_1)\hat{v}(r,\xi_2)\hat{w}(r,\xi_3)d\gamma_3(\xi)dr.
	\end{align}
	Note that we have $|\sigma(\xi)|\lesssim K_3^*$ if $K_3^*\neq K_2$, but only $|\sigma(\xi)|\lesssim K_1^*$ if $K_3^*=K_2$.
	
	Next, we integrate by parts in time. For this, let us fix a parameter $N\in\mathbb{D}$ and define
	\begin{align*}
		\sum_{C_L}
		\!:=\!
		\sum_{K_i}1_{\{K_3^*=1\}},
		\quad
		\sum_{C_M}
		\!:=\!
		\sum_{K_i}1_{\{K_3^*>1, K_1^*\leq N\}},
		\quad
		\sum_{C_H}
		\!:=\!
		\sum_{K_i}1_{\{K_3^*>1, K_1^*> N\}}.
	\end{align*}
	Additionally, given arbitrary $f_1$, $f_2$, and $f_3$, we introduce the abbreviation $\left[f_1,f_2,f_3\right](\xi)$ for $f_1(\xi_1)f_2(\xi_2)f_3(\xi_3)$. Using that $u_1$ and $u_2$ are solutions to \eqref{eq_bo_split}, a straightforward calculation shows:
	\begin{align}\label{eq_ee_ii_ibp}
		\begin{split}
			-[\hat{v},\hat{v},\hat{w}]_t
			&=
			\Omega_3[\hat{v},\hat{v},\hat{w}]
			\\&+
			[\widehat{(vw)_x},\hat{v},\hat{w}]
			+
			[\hat{v},\widehat{(vw)_x},\hat{w}]
			+
	 		[\hat{v},\hat{v},\widehat{(u_1u_1)_x}]
	 		+
	 		[\hat{v},\hat{v},\widehat{(u_2u_2)_x}]
			\\&+
			[\widehat{(vb)_x},\hat{v},\hat{w}]
			+
			[\hat{v},\widehat{(vb)_x},\hat{w}]
			+
			[\hat{v},\hat{v},\widehat{(wb)_x}]
			+
			2[\hat{v},\hat{v},\hat{f}]
		\end{split}
	\end{align}
	
	Now, split the sum in \eqref{eq_ee_ii_b} into the three sums defined above. Then, for each summand contributing to $C_H$, we use \eqref{eq_ee_ii_ibp} to rewrite the integrand $\sigma_{I\!I}[\hat{v},\hat{v},\hat{w}]$. We obtain:
	\begin{align*}
		I\!I_1
		\leq
		&\sum_{C_L}K_1^{2z}\sup_t\int_0^t\int_{\Gamma_3}(\sigma_{I\!I}[\hat{v},\hat{v},\hat{w}])(\xi)d\gamma_3(\xi)dr
		&&:=E_{I\!I,1}
		\\ +
		&\sum_{C_M}K_1^{2z}\sup_t\int_0^t\int_{\Gamma_3}(\sigma_{I\!I}[\hat{v},\hat{v},\hat{w}])(\xi)d\gamma_3(\xi)dr
		&&:=E_{I\!I,2}
		\\ +
		&\sum_{C_H}K_1^{2z}\sup_t\left[\int_{\Gamma_3}(\sigma_{I\!I}\Omega_3^{-1}[\hat{v},\hat{v},\hat{w}])(\xi)d\gamma_3(\xi)\right]_{r=0}^t
		&&:=E_{I\!I,3}
		\\ +
		&\sum_{C_H}K_1^{2z}\sup_t\int_0^t\int_{\Gamma_3}(\sigma_{I\!I}\Omega_3^{-1}[\widehat{(vw)_x},\hat{v},\hat{w}])(\xi)d\gamma_3(\xi)dr
		&&:=E_{I\!I,4}
		\\ +
		&\sum_{C_H}K_1^{2z}\sup_t\int_0^t\int_{\Gamma_3}(\sigma_{I\!I}\Omega_3^{-1}[\hat{v},\widehat{(vw)_x},\hat{w}])(\xi)d\gamma_3(\xi)dr
		&&:=E_{I\!I,5}
		\\ +
		&\sum_{C_H}K_1^{2z}\sup_t\int_0^t\int_{\Gamma_3}(\sigma_{I\!I}\Omega_3^{-1}[\hat{v},\hat{v},\widehat{(u_1u_1)_x}+\widehat{(u_2u_2)_x}])(\xi)d\gamma_3(\xi)dr
		&&:=E_{I\!I,6}
		\\ +
		&\sum_{C_H}K_1^{2z}\sup_t\int_0^t\int_{\Gamma_3}(\sigma_{I\!I}\Omega_3^{-1}[\widehat{(vb)_x},\hat{v},\hat{w}])(\xi)d\gamma_3(\xi)dr
		&&:=E_{I\!I,7}
		\\ +
		&\sum_{C_H}K_1^{2z}\sup_t\int_0^t\int_{\Gamma_3}(\sigma_{I\!I}\Omega_3^{-1}[\hat{v},\widehat{(vb)_x},\hat{w}])(\xi)d\gamma_3(\xi)dr
		&&:=E_{I\!I,8}
		\\ +
		&\sum_{C_H}K_1^{2z}\sup_t\int_0^t\int_{\Gamma_3}(\sigma_{I\!I}\Omega_3^{-1}[\hat{v},\hat{v},\widehat{(wb)_x}])(\xi)d\gamma_3(\xi)dr
		&&:=E_{I\!I,9}
		\\ +
		&\sum_{C_H}K_1^{2z}\sup_t\int_0^t\int_{\Gamma_3}(2\sigma_{I\!I}\Omega_3^{-1}[\hat{v},\hat{v},\hat{f}])(\xi)d\gamma_3(\xi)dr
		&&:=E_{I\!I,10}
	\end{align*}
	
	We continue by estimating the low-frequency terms $E_{I\!I,1}$ and $E_{I\!I,2}$, the boundary term $E_{I\!I,3}$, and the term involving the forcing $E_{I\!I,10}$. Minor adjustments to the proof of Lemma \ref{l_ee_tri} lead to:
	\begin{lem}\label{l_ee_ii_tri}
		Let $\zeta\in(0,\frac{1}{2})$, $z=-\frac{1}{2}$, and $s>\frac{1}{2}$. Then, we have
		\begin{align*}
			|E_{I\!I,1}|
			&\lesssim
			T\|v\|_{\spec^z_T}^2\|w\|_{\spec^s_T},
			\\
			|E_{I\!I,2}|
			&\lesssim
			TN^{2}\|v\|_{\spec^z_T}^2\|w\|_{L^\infty H^s},
			\\
			|E_{I\!I,3}|
			&\lesssim_\zeta
			N^{-\zeta}\|v\|_{L^\infty H^z}^2\|w\|_{L^\infty H^s},
			\\
			|E_{I\!I,10}|
			&\lesssim
			T\|v\|_{L^\infty H^z}^2\|f\|_{L^\infty H^s}.
		\end{align*}
	\end{lem}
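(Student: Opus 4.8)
The plan is to follow the proof of Lemma~\ref{l_ee_tri} almost verbatim, with three cosmetic changes: the three solution factors are now $v$, $v$, $w$ instead of $u$, $u$, $u$; the symmetrized symbol is $\sigma_{I\!I}$ instead of $\sigma_I$; and the Sobolev exponent carried by the two copies of $v$ is $z=-\frac{1}{2}$ while the copy of $w$ (resp.\ of $f$ in $E_{I\!I,10}$) carries $s$. As before, the only ingredients needed are the pointwise symbol bound $|\sigma_{I\!I}(\xi)|\lesssim K_1^*$, improved to $\lesssim K_3^*$ unless the undifferentiated copy of $v$ carries the lowest frequency; the resonance asymptotics $|\Omega_3(\xi)|\sim K_1^*K_3^*$ from \eqref{e_res_asy} on the supports defining $C_M$ and $C_H$; Young's inequality to rewrite the $\Gamma_3$-integral as a convolution at $x=0$; and Hölder's and Bernstein's inequalities to generate a factor $(K_3^*)^{1/2}$.

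For $E_{I\!I,1}$ one works on $C_L$, where $K_3^*=1$ forces all three frequencies to be $O(1)$; then $|\sigma_{I\!I}|\lesssim1$, the dyadic sum is finite, and a crude application of Young/Hölder/Bernstein gives $|E_{I\!I,1}|\lesssim T\|v\|_{\spec^z_T}^2\|w\|_{\spec^s_T}$, using that $K^z$ and $K^s$ are comparable to $1$ for $K\lesssim1$. For $E_{I\!I,2}$ one is on $C_M$, so $|\sigma_{I\!I}(\xi)|\lesssim K_1^*\le N$ and the Bernstein factor is $(K_3^*)^{1/2}\le N^{1/2}$; collecting these powers of $N$ and discarding the $v$-frequency weights via $z<0<s$ produces the stated factor $N^2$, the exponent $\frac{1}{2}-s<0$ of the smallest frequency being exactly what keeps the sum finite. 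The boundary term $E_{I\!I,3}$ and the forcing term $E_{I\!I,10}$ are handled just as $E_{I,3}$ and $E_{I,6}$: on $C_H$ one has $|\sigma_{I\!I}\Omega_3^{-1}|\lesssim(K_3^*)^{-1}\le1$ by \eqref{e_res_asy}; after applying Bernstein and summing the lowest frequency crudely one is left with a net negative power of the top frequency $K_1^*>N$, which yields the gain $N^{-\zeta}$ for any $\zeta\in(0,\frac{1}{2})$ in $E_{I\!I,3}$, while in $E_{I\!I,10}$ the genuine factor $(K_1^*)^{-1}$ coming from $\Omega_3^{-1}$ already makes the dyadic sum converge against $\|v\|_{L^\infty H^z}^2\|f\|_{L^\infty H^s}$ without invoking $N$.

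The one point requiring genuine care — and the reason the regularity threshold here is $s>\frac{1}{2}$ rather than $s>\frac{1}{4}$ — is the configuration in which the undifferentiated copy of $v$ carries the lowest frequency, so that $\sigma_{I\!I}$ can only be bounded by the \emph{largest} frequency $K_1^*$, costing a factor $K_1^*/K_3^*$ relative to the corresponding step in Lemma~\ref{l_ee_tri}. I expect this to be the main obstacle: one has to check that, after paying this factor, the remaining powers of $K_1^*$ and $K_3^*$ still combine — with weight $z=-\frac{1}{2}$ on the two $v$'s and $s>\frac{1}{2}$ on $w$, $f$ — into an absolutely convergent dyadic sum controlled by the claimed product of norms. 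Every other step is a direct transcription of the argument for $I_1$, so I would only spell out the modifications at this configuration and otherwise refer to the proof of Lemma~\ref{l_ee_tri}.
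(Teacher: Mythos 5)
Your proposal is correct and takes essentially the same approach as the paper intends: the paper itself dispatches this lemma with the one-line remark that it follows by ``minor adjustments to the proof of Lemma~\ref{l_ee_tri}'', so the substance is exactly in identifying where those adjustments bite. You correctly isolate the only genuinely new issue: the integrand $\hat v(\xi_1)\hat v(\xi_2)\hat w(\xi_3)$ is symmetric only in $(\xi_1,\xi_2)$, so $\sigma_{I\!I}$ is symmetrized only when $K_3^*=K_3$, and in the configuration $K_3^*=K_2$ (undifferentiated $v$ at the lowest frequency) the symbol is only bounded by $K_1^*$ rather than $K_3^*$. Tracing the powers through — with the prefactor $K_1^{2z}=K_1^{-1}$, the Bernstein factor $(K_3^*)^{1/2}$, and the $\Omega_3^{-1}\lesssim(K_1^*K_3^*)^{-1}$ gain on $C_H$ — one checks that the dyadic sums still close with the stated powers of $N$, as you indicate. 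The only small caveat: attributing the $s>\tfrac{1}{2}$ threshold to this trilinear lemma is a slight overstatement — the weaker bound on $\sigma_{I\!I}$ is absorbed here even for smaller $s$; the constraint $s>\tfrac{1}{2}$ for Section~\ref{ss_ee_ii} as a whole is driven mainly by the quadrilinear estimates $E_{I\!I,4}$–$E_{I\!I,9}$. This does not affect the correctness of your argument, since the lemma is stated under that hypothesis anyway.
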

	In the following three lemmata, we provide estimates for $E_{I\!I,4}$, $E_{I\!I,5}$, and $E_{I\!I,6}$.
	\begin{lem}\label{l_ee_ii_qua_i}
		Let $s>\frac{1}{2}$. Then, for all sufficiently small $\theta=\theta(s)>1$ and $\delta\in(0,\frac{1}{2})$, we have
		\begin{align*}
			|E_{I\!I,4}|
			\lesssim_\delta
			T^\delta\|v\|_{\spfc^z_T}^2\|w\|_{\spfc^s_T}^2.
		\end{align*}
	\end{lem}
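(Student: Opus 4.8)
The plan is to follow the scheme of the proof of Lemma~\ref{l_ee_qua}, now keeping track separately of the two regularities $z=-\frac{1}{2}$ (carried by $v$) and $s>\frac{1}{2}$ (carried by $w$). First I would write $\widehat{(vw)_x}=(\hat v\ast\hat w)_x$, localize the internal $v$-frequency to a dyadic scale $K_a$ and the internal $w$-frequency to a dyadic scale $K_b$, and thereby express $E_{I\!I,4}$ as a sum over $K_1,K_2,K_3,K_a,K_b$ (with $K_1$ the dyadic size of $\xi_{ab}$) of the quadrilinear forms $M_\sigma[P_{K_a}v,P_{K_b}w,P_{K_2}v,P_{K_3}w](T)$ of Section~\ref{ss_stqe}, weighted by $K_1^{2z}$ and carrying the symbol $\sigma=\sigma_{I\!I}(\xi_{ab},\xi_2,\xi_3)\,\Omega_3^{-1}(\xi_{ab},\xi_2,\xi_3)\,(i\xi_{ab})\,\chi_{K_a}(\xi_a)\chi_{K_b}(\xi_b)$. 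On $C_H$ the bound \eqref{e_res_asy} gives $|\Omega_3|\sim K_1^*K_3^*$, so $|\sigma|\lesssim 1$ whenever $\sigma_{I\!I}$ is a symmetrized symbol (the cases $K_3^*=K_1$ and $K_3^*=K_3$), while in the unsymmetrized case $K_3^*=K_2$ only $|\sigma|\lesssim K_1^*/K_3^*$ is available.

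Next I would split the summation according to which of $K_1,K_2,K_3$ is smallest and, inside each case, according to the relative sizes of $K_a$ and $K_b$. In every configuration in which at most one of the two $v$-frequencies $|\xi_a|,|\xi_2|$ is comparable to the largest frequency present---this includes in particular the whole unsymmetrized case $K_3^*=K_2$, where $\xi_2$ is small---a direct application of Lemma~\ref{l_stqe}, together with the elementary estimates $\|P_Kv\|_{\spfb^K_T}\lesssim K^{1/2}\|v\|_{\spfc^z_T}$ and $\|P_Kw\|_{\spfb^K_T}\lesssim K^{-s}\|w\|_{\spfc^s_T}$ and Cauchy--Schwarz to carry out the remaining dyadic sums, yields the desired bound; the series converge once $\theta>1$ is taken close enough to $1$ and $s>\frac{1}{2}$. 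The threshold $s>\frac{1}{2}$ appears through sums of the type $\sum_K K^{\frac{1}{2}-s}$, which occur because a high-frequency $v$-piece contributes a positive power $K^{1/2}$ instead of a gain, and it is exactly the extra factor $K_1^*/K_3^*$ present when $K_3^*=K_2$ that prevents one from reaching $s>\frac{1}{4}$ as in the first energy estimate; the logarithmic losses produced by the extra summation in $K_1$ are absorbed into $T^\delta$.

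The main obstacle is the configuration $K_1\sim K_2\gtrsim K_3$ with $K_1\sim K_a\gg K_b$ and $K_2\gg K_3$: here both $\xi_a$ and $\xi_2$ are $v$-pieces at the top frequency, and the naive estimate produces the divergent series $\sum_{K_1}K_1^{(\theta-1)/2}$. This is resolved exactly as the corresponding case in the proof of Lemma~\ref{l_ee_qua}: on one half of the integral one performs the change of variables $(\xi_a,\xi_2)\mapsto(\xi_2,\xi_a)$, symmetrizing the two $v$-slots, which replaces $\sigma$ by the multiplier $m$ of \eqref{d_m}; since $\sigma_{I\!I}$ coincides with $\sigma_I$ when $K_3^*=K_3$, the refined bound \eqref{e_bnd_m}, that is $|m|\lesssim K_a^{-1}\max\{K_3,K_b\}$, applies verbatim and restores summability (the complementary sub-case $K_2\sim K_3$ of this two-high-$v$ configuration needs no such trick and is again handled by the elementary bounds, still using $s>\frac{1}{2}$). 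Summing the resulting estimates over the finitely many frequency configurations gives $|E_{I\!I,4}|\lesssim_\delta T^\delta\|v\|_{\spfc^z_T}^2\|w\|_{\spfc^s_T}^2$.
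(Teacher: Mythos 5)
Your proof is correct, but in the hardest configuration it takes a detour that the paper deliberately avoids. You claim that the sub-case of $S_3$ with $K_1\sim K_a\gg K_b$ and $K_2\gg K_3$ requires the symmetrization $(\xi_a,\xi_2)\mapsto(\xi_2,\xi_a)$ and the refined multiplier $m$ from \eqref{d_m}, asserting that ``the naive estimate produces the divergent series $\sum_{K_1}K_1^{(\theta-1)/2}$.'' That divergence only arises if one reaches for the \emph{first} estimate in Lemma~\ref{l_stqe}, which carries the factor $(K_1^*)^{(\theta-1)/2}(K_4^*)^{1/2}$. The paper instead applies the \emph{second} estimate, which yields $(K_3^*K_4^*)^{1/2}$ with no positive power of the top frequency. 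In this configuration the two smallest frequencies $K_3^*K_4^*\sim K_bK_3$ both sit on $w$-slots, which contribute $K_b^{-s}K_3^{-s}$ to the normalization; together with $K_1^{2z}\sim K_2^{-1}$ and $K_a^{-z}K_2^{-z}\sim K_2$, the geometric factor collapses to $K_b^{1/2-s}K_3^{1/2-s}$, summable as soon as $s>\tfrac12$. This is exactly the prefactor $K_2^{-1}K_b^{1/2}K_3^{1/2}$ the paper records for $S_3$. The symmetrization in Lemma~\ref{l_ee_qua} is genuinely needed there because all four slots carry regularity only $s_0>\tfrac14$, so $(K_bK_3)^{1/2-s_0}$ does not sum; here the low-frequency factors are the smoother $w$-pieces, and the trick is superfluous. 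Your longer argument (via $m$, Lemma~\ref{l_res_dif}, and \eqref{e_bnd_m}) does close — the hypotheses of Lemma~\ref{l_res_dif} are indeed met since $K_a\sim K_2\gtrsim K_b$, $K_2\gg K_3>1$, and $\sigma_{I\!I}=\sigma_I$ when $K_3^*=K_3$, and the resulting exponent $(\theta-1)/2-s$ is summable for $\theta$ close to $1$ — but it is considerably heavier than what the lemma requires, and the motivating claim about divergence is only a statement about one of the two estimates available to you.
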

	\begin{proof}
		We split $E_{I\!I,4}=S_1+S_2+S_3$ with
		\begin{align*}
			S_j
			:=
			\begin{cases}
				E_{I\!I,4}|_{K_2\sim K_3\gg K_1}
				&\text{if }
				j=1,
				\\
				E_{I\!I,4}|_{K_1\sim K_3\gg K_2}
				&\text{if }
				j=2,
				\\
				E_{I\!I,4}|_{K_1\sim K_2\gtrsim K_3}
				&\text{if }
				j=3.
			\end{cases}
		\end{align*}
		Using $\widehat{(vw)}=(\hat{v}\ast\hat{w})$ and localizing these two factors separately, we obtain
		\begin{align*}
			|S_j|
			\lesssim
			\sum_{K_j}K_1^{2z}\sup_t\int_0^t\int_{\Gamma_4}\frac{\sigma_{I\!I}}{\Omega_3}&(\xi_{ab},\xi_2,\xi_3)(i\xi_{ab})\chi_{K_a}(\xi_a)\chi_{K_b}(\xi_b)
			\\
			&\times\hat{v}(\xi_a)\hat{w}(\xi_b)\hat{v}(\xi_2)\hat{w}(\xi_3)d\gamma_4(\xi_a,\xi_b,\xi_2,\xi_3)dr
		\end{align*}
		for $j\in[3]$. Define
		\begin{align*}
			\Pi(K_a,\!K_b,\!K_2,\!K_3)
			:=
			\|P_{K_a}v\|_{\spfb^{K_a}_T}
			\|P_{K_b}w\|_{\spfb^{K_b}_T}
			\|P_{K_2}v\|_{\spfb^{K_2}_T}
			\|P_{K_3}w\|_{\spfb^{K_3}_T}.
		\end{align*}
		
		We begin by estimating $S_1$. As the derivative is on a low-frequency, we have $|\frac{\sigma_{I\!I}}{\Omega_3}(\xi_{ab},\xi_2,\xi_3)(i\xi_{ab})|\lesssim K_1K_2^{-1}$ and it suffices to apply Lemma \ref{l_stqe} leading to
		\begin{align*}
			|S_1|
			\lesssim_\delta
			T^\delta
			\sum_{K_j}\Pi(K_a,\!K_b,\!K_2,\!K_3)
			\times
			\begin{cases}
				1
				&\text{if }
				K_a\!\sim\!K_b\!\gg\!K_2,
				\\
				K_2^{-1}K_a^\frac{1}{2}K_b^\frac{1}{2}&\text{if }
				K_2\!\gtrsim\!K_a\!\sim\!K_b\!\gg\!K_1,
				\\
				K_2^{-1}K_a^\frac{1}{2}K_b^\frac{1}{2}
				&\text{if }
				K_a\!\sim\!K_1\!\gg\!K_b,
				\\
				K_2^{-1}K_b^\frac{1}{2}K_a^\frac{1}{2}
				&\text{if }
				K_b\!\sim\!K_1\!\gg\!K_a.
			\end{cases}
		\end{align*}
		
		Next, we consider $S_2$. Then, the derivative is on a high-frequency factor, but the factor $v_{K_2}$ is always localized to low frequencies. Hence, $|\frac{\sigma_{I\!I}}{\Omega_3}(\xi_{ab},\xi_2,\xi_3)(i\xi_{ab})|\lesssim K_1K_2^{-1}$ and Lemma \ref{l_stqe} implies
		\begin{align*}
			|S_2|
			\lesssim_\delta
			T^\delta
			\sum_{K_j}\Pi(K_a,\!K_b,\!K_2,\!K_3)
			\times
			\begin{cases}
				K_3^\frac{1}{2}K_2^{-\frac{1}{2}}
				&\text{if }
				K_a\!\sim\!K_b\!\gg\!K_1,
				\\
				K_b^\frac{1}{2}K_2^{-\frac{1}{2}}
				&\text{if }
				K_a\!\sim\!K_1\!\gg\!K_b,
				\\
				K_a^\frac{1}{2}K_2^{-\frac{1}{2}}
				&\text{if }
				K_b\!\sim\!K_1\!\gg\!K_a.
			\end{cases}
		\end{align*}
		
		Lastly, we establish a bound for $S_3$. Again, the derivative is on a high-frequency factor and additionally, both $v_{K_a}$ and $v_{K_2}$ might be localized to high frequencies. Nonetheless, we obtain $|\frac{\sigma_{I\!I}}{\Omega_3}(\xi_{ab},\xi_2,\xi_3)(i\xi_{ab})|\lesssim 1$ and Lemma \ref{l_stqe} yields
		\begin{align*}
			|S_3|
			\lesssim_\delta
			T^\delta
			\sum_{K_j}\Pi(K_a,\!K_b,\!K_2,\!K_3)
			\times
			\begin{cases}
				K_2^{-\frac{1}{2}}K_3^\frac{1}{2}
				&\text{if }
				K_a\!\sim\!K_b\!\gtrsim\!K_1,
				\\
				K_2^{-1}K_b^\frac{1}{2}K_3^\frac{1}{2}
				&\text{if }
				K_a\!\sim\!K_1\!\gg\!K_b,
				\\
				K_2^{-1}K_a^\frac{1}{2}K_3^\frac{1}{2}
				&\text{if }
				K_b\!\sim\!K_1\!\gg\!K_a.
			\end{cases}
		\end{align*}
	\end{proof}
	\begin{lem}\label{l_ee_ii_qua_ii}
		Let $s>\frac{1}{2}$. Then, for all sufficiently small $\theta=\theta(s)>1$ and $\delta\in(0,\frac{1}{2})$, we have
		\begin{align*}
			|E_{I\!I,5}|
			\lesssim_\delta
			T^\delta\|v\|_{\spfc^z_T}^2\|w\|_{\spfc^s_T}^2.
		\end{align*}
	\end{lem}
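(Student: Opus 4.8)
The plan is to follow the proof of Lemma~\ref{l_ee_ii_qua_i} closely, the only structural change being that the derivative nonlinearity now occupies the second slot of the trilinear form $[\hat v,\widehat{(vw)_x},\hat w]$ rather than the first. Accordingly I would write $E_{I\!I,5}=S_1+S_2+S_3$, where $S_1,S_2,S_3$ collect the contributions of the frequency regimes $\{K_2\sim K_3\gg K_1\}$, $\{K_1\sim K_3\gg K_2\}$, and $\{K_1\sim K_2\gtrsim K_3\}$; all three must be retained, since $[\hat v,\hat v,\hat w]$ is symmetric only under $\xi_1\leftrightarrow\xi_2$. In each regime one writes $\widehat{(vw)_x}(\xi_2)=i\xi_2(\hat v\ast\hat w)(\xi_2)$ and localizes $\xi_2=\xi_a+\xi_b$ with the factor of $v$ carried by $\xi_a$ and the factor of $w$ by $\xi_b$; this turns each $S_j$ into a sum of quadrilinear integrals of the form $M_\sigma$ with symbol $\frac{\sigma_{I\!I}}{\Omega_3}(\xi_1,\xi_{ab},\xi_3)(i\xi_{ab})$, the two factors of $v$ sitting at $\xi_1,\xi_a$ and the two factors of $w$ at $\xi_b,\xi_3$. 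Inside each regime I would split once more according to whether $K_a\sim K_b$ (internal cancellation in $\xi_2$) or one of $K_a,K_b$ dominates.

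The new ingredient is the bound on this symbol. On $C_H$ we have $K_3^*>1$, so \eqref{e_res_asy} gives $|\Omega_3(\xi_1,\xi_{ab},\xi_3)|\sim K_1^*K_3^*$; together with $|\xi_{ab}|=|\xi_2|\lesssim K_2$ and the pointwise bounds on $\sigma_{I\!I}$ — namely $|\sigma_{I\!I}|\lesssim K_3^*$ in general, and $|\sigma_{I\!I}|\lesssim K_1^*$ in the regime $K_1\sim K_3\gg K_2$, where $K_3^*=K_2$ — one checks that $|\frac{\sigma_{I\!I}}{\Omega_3}(\xi_1,\xi_{ab},\xi_3)(i\xi_{ab})|\lesssim 1$ uniformly in all three regimes. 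With the symbol controlled, apply Lemma~\ref{l_stqe} to each quadrilinear piece, in each sub-case using whichever of its two estimates places the unavoidable losses on frequencies that can absorb them: typically the first estimate when the largest frequency belongs to $w$, and the second (which carries no power of the two largest frequencies, only $(K_3^*K_4^*)^{1/2}$ on the two smallest) in the sub-cases with internal cancellation or when the largest frequency belongs to $v$. Then distribute the exponents and sum by Cauchy--Schwarz against the definitions of $\|\cdot\|_{\spfc^z_T}$ and $\|\cdot\|_{\spfc^s_T}$, exactly as in the previous proof; the $T^\delta$ and the $\sup_t$ are handled through the extensions provided by Lemma~\ref{l_ext}, as usual.

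Two features make the bookkeeping go through. First, the two factors of $v$ carry the negative weight $z=-\tfrac12$, so their contribution exactly cancels the outer weight $K_1^{2z}$ and leaves slack whenever a negative power of a $v$-frequency appears. Second, $w$ is measured at regularity $s>\tfrac12$, so each $\tfrac12$-power loss on a $w$-frequency coming from Lemma~\ref{l_stqe} is absorbed with room to spare; this is precisely why $E_{I\!I,5}$ should not require the delicate treatment of Lemma~\ref{l_ee_qua}, where the lower regularity $s_0>\tfrac14$ forced an extra symmetrization. I expect the main obstacle to be the regime $K_1\sim K_2\gtrsim K_3$ with cancellation $K_a\sim K_b\gtrsim K_1$: there both factors of $v$ sit at the top frequency, the outer weight $K_1^{2z}$ is fully committed, and the $(\theta-1)/2$-power of $K_1^*$ produced by the first estimate in Lemma~\ref{l_stqe} has nowhere to go — the resolution is to use the second estimate, paying $(K_3^*K_4^*)^{1/2}$ on the two smallest frequencies, which one can arrange to be frequencies of $w$ (or else the low frequency $K_1$, whose weight is already negative). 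Should a stubborn residual sub-case appear, one can fall back on a symmetrized multiplier as in \eqref{d_m} together with the resonance-difference estimate of Lemma~\ref{l_res_dif}, but I do not anticipate needing it.
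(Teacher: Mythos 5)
Your proposal matches the paper's proof essentially verbatim: same decomposition into $S_1,S_2,S_3$ by frequency regime, same substitution $\widehat{(vw)_x}(\xi_2)=i\xi_2(\hat v\ast\hat w)(\xi_2)$ with $v,w$ carried by $\xi_a,\xi_b$, the same uniform bound $|\frac{\sigma_{I\!I}}{\Omega_3}(\xi_1,\xi_{ab},\xi_3)\xi_{ab}|\lesssim 1$ (which you verify a bit more explicitly than the paper), and the same appeal to Lemma~\ref{l_stqe} case by case, followed by summation. Your instinct that the symmetrized multiplier of \eqref{d_m} and Lemma~\ref{l_res_dif} is not needed here is also confirmed by the paper's proof.
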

	\begin{proof}
		We split $E_{I\!I,5}=S_1+S_2+S_3$ with
		\begin{align*}
			S_j
			:=
			\begin{cases}
				E_{I\!I,5}|_{K_2\sim K_3\gg K_1}
				&\text{if }
				j=1,
				\\
				E_{I\!I,5}|_{K_1\sim K_3\gg K_2}
				&\text{if }
				j=2,
				\\
				E_{I\!I,5}|_{K_1\sim K_2\gtrsim K_3}
				&\text{if }
				j=3.
			\end{cases}
		\end{align*}
		Using $\widehat{(vw)}=(\hat{v}\ast\hat{w})$ and localizing these two factors separately, we obtain
		\begin{align*}
			|S_j|
			\lesssim
			\sum_{K_j}K_1^{2z}\sup_t\int_0^t\int_{\Gamma_4}\frac{\sigma_{I\!I}}{\Omega_3}&(\xi_1,\xi_{ab},\xi_3)(i\xi_{ab})\chi_{K_a}(\xi_a)\chi_{K_b}(\xi_b)
			\\
			&\times\hat{v}(\xi_1)\hat{v}(\xi_a)\hat{w}(\xi_b)\hat{w}(\xi_3)d\gamma_4(\xi_1,\xi_a,\xi_b,\xi_3)dr
		\end{align*}
		for $j\in[3]$. Define
		\begin{align*}
			\Pi(K_1,\!K_a,\!K_b,\!K_3)
			:=
			\|P_{K_1}v\|_{\spfb^{K_1}_T}
			\|P_{K_a}v\|_{\spfb^{K_a}_T}
			\|P_{K_b}w\|_{\spfb^{K_b}_T}
			\|P_{K_3}w\|_{\spfb^{K_3}_T}.
		\end{align*}
		
		We begin by estimating $S_1$. Here, we have $|\frac{\sigma_{I\!I}}{\Omega_3}(\xi_{ab},\xi_2,\xi_3)(i\xi_{ab})|\lesssim 1$. Hence, it suffices to apply Lemma \ref{l_stqe} leading to
		\begin{align*}
			|S_1|
			\lesssim_\delta
			T^\delta
			\sum_{K_j}\Pi(K_1,\!K_a,\!K_b,\!K_3)
			\times
			\begin{cases}
				K_3^\frac{1}{2}K_1^{-\frac{1}{2}}
				&\text{if }
				K_a\!\sim\!K_b\!\gg\!K_2,
				\\
				K_1^{-\frac{1}{2}}K_b^\frac{1}{2}
				&\text{if }
				K_a\sim K_2\gg K_b,
				\\
				K_1^{-\frac{1}{2}}K_a^\frac{1}{2}
				&\text{if }
				K_b\sim K_2\gg K_a.
			\end{cases}
		\end{align*}
		
		Next, we consider $S_2$. Again, we have $|\frac{\sigma_{I\!I}}{\Omega_3}(\xi_{ab},\xi_2,\xi_3)(i\xi_{ab})|\lesssim 1$. Together with Lemma \ref{l_stqe}, we get
		\begin{align*}
			|S_2|
			\lesssim_\delta
			T^\delta
			\sum_{K_j}\Pi(K_1,\!K_a,\!K_b,\!K_3)
			\times
			\begin{cases}
				K_1^{-\frac{1}{2}}K_3^\frac{1}{2}
				&\text{if }
				K_a\!\sim\!K_b\!\gg\!K_1,
				\\
				K_1^{-1}K_a^\frac{1}{2}K_b^\frac{1}{2}
				&\text{if }
				K_1\gtrsim K_a\!\sim\!K_b\!\gtrsim\!K_2,
				\\
				K_1^{-1}K_a^\frac{1}{2}K_b^\frac{1}{2}
				&\text{if }
				K_a\!\sim\!K_2\!\gg\!K_b,
				\\
				K_1^{-1}K_a^\frac{1}{2}K_b^\frac{1}{2}
				&\text{if }
				K_b\!\sim\!K_2\!\gg\!K_a.
			\end{cases}
		\end{align*}
		
		Lastly, we establish a bound for $S_3$. Again, we have $|\frac{\sigma_{I\!I}}{\Omega_3}(\xi_{ab},\xi_2,\xi_3)(i\xi_{ab})|\lesssim 1$. An application of Lemma \ref{l_stqe} yields
		\begin{align*}
			|S_3|
			\lesssim_\delta
			T^\delta
			\sum_{K_j}\Pi(K_1,\!K_a,\!K_b,\!K_3)
			\times
			\begin{cases}
				K_1^{-\frac{1}{2}}K_3^\frac{1}{2}
				&\text{if }
				K_a\!\sim\!K_b\!\gtrsim\!K_a,
				\\
				K_a^{-1}K_b^\frac{1}{2}K_3^\frac{1}{2}
				&\text{if }
				K_a\!\sim\!K_2\!\gg\!K_b,
				\\
				K_1^{-1}K_a^\frac{1}{2}K_3^\frac{1}{2}
				&\text{if }
				K_b\!\sim\!K_2\!\gg\!K_a.
			\end{cases}
		\end{align*}
	\end{proof}
	
	\begin{lem}\label{l_ee_ii_qua_iii}
		Let $s>\frac{1}{2}$. Then, for all sufficiently small $\theta=\theta(s)>1$ and $\delta\in(0,\frac{1}{2})$, we have
		\begin{align*}
			|E_{I\!I,6}|
			\lesssim_\delta
			T^\delta\|v\|_{\spfc^z_T}^2(\|u_1\|_{\spfc^s_T}^2+\|u_2\|_{\spfc^s_T}^2).
		\end{align*}
	\end{lem}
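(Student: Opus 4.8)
The plan is to argue as in the proofs of Lemmas~\ref{l_ee_ii_qua_i} and~\ref{l_ee_ii_qua_ii}. Since the two copies of $u_i$ play symmetric roles it suffices to bound the $(u_1u_1)_x$-contribution to $E_{I\!I,6}$; the $(u_2u_2)_x$-contribution is treated identically and produces the term $\|u_2\|_{\spfc^s_T}^2$. I would split $E_{I\!I,6}=S_1+S_2+S_3$, where $S_1$, $S_2$, $S_3$ collect the summands with $K_2\sim K_3\gg K_1$, with $K_1\sim K_3\gg K_2$, and with $K_1\sim K_2\gtrsim K_3$, respectively. In each region I expand $\widehat{(u_1u_1)_x}=(\hat u_1\ast\hat u_1)_x$ and localize the two new frequency variables $\xi_a,\xi_b$ to dyadic scales $K_a,K_b$, so that $\xi_3=\xi_{ab}$ and the $x$-derivative contributes the factor $i\xi_{ab}$. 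After the standard dyadic bookkeeping this rewrites each $S_j$ as a sum over $K\in\mathbb{D}^4$ of quadrilinear forms $M_\sigma$ with symbol $\sigma=\tfrac{\sigma_{I\!I}}{\Omega_3}(\xi_1,\xi_2,\xi_{ab})\,i\xi_{ab}$ acting on the inputs $P_{K_1}v,\,P_{K_2}v,\,P_{K_a}u_1,\,P_{K_b}u_1$; as in the earlier energy estimates, replacing $\sup_{t\le T}$ of the truncated time integral by $M_\sigma(T)$ is harmless because the bound of Lemma~\ref{l_stqe} is monotone in $T$.

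Next I would bound $\sigma_{\mathbb{D}}(K)$. Since we are inside $C_H$ we have $K_3^*>1$, so \eqref{e_res_asy} yields $|\Omega_3(\xi_1,\xi_2,\xi_{ab})|\sim K_1^*K_3^*$ (stars taken among $K_1,K_2,K_3$), while $|i\xi_{ab}|\sim K_3$ and $|\sigma_{I\!I}|\lesssim K_1^*$, improved to $|\sigma_{I\!I}|\lesssim K_3^*$ in the two branches $K_3^*\in\{K_1,K_3\}$ where $\sigma_{I\!I}$ is symmetric. Hence $\sigma_{\mathbb{D}}(K)\lesssim 1$ on $S_1$ and $S_3$, whereas on $S_2$ — where the smallest frequency $K_2$ sits on the non-symmetrized slot — one only gets $\sigma_{\mathbb{D}}(K)\lesssim K_1K_2^{-1}\sim K_3K_2^{-1}$. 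Feeding these bounds together with the weight $K_1^{2z}=K_1^{-1}$ into Lemma~\ref{l_stqe}, and splitting each $S_j$ into the usual subcases according to the relative sizes of $K_a,K_b$ and the remaining frequencies, I would check that the residual dyadic weight is dominated by $K_1^{z}K_2^{z}K_a^{s}K_b^{s}$ times a factor that decays like a negative power of the largest frequency: the inputs on $\xi_1,\xi_2$ reassemble into $\|v\|_{\spfc^z_T}^2$, those on $\xi_a,\xi_b$ into $\|u_1\|_{\spfc^s_T}^2$, and the $T^\delta$ is the one supplied by Lemma~\ref{l_stqe}. Note that, unlike in Lemma~\ref{l_ee_i}, the choice $z=-\tfrac12$ keeps the two low-regularity inputs cheap, so the multiplier-difference device of \eqref{d_m} and Lemma~\ref{l_res_dif} are not needed here.

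The main obstacle is expected to be the region $S_2$, i.e.\ $K_1\sim K_3\gg K_2$. There $\sigma_{I\!I}$ cannot be symmetrized (slots $1$ and $3$ carry different functions), the symbol is only $\lesssim K_1K_2^{-1}$, and after Lemma~\ref{l_stqe} the worst subcase — one $u_1$-factor at frequency $\sim K_1$ and the other at low frequency — leaves a residual power $K_1^{\frac12+\frac{\theta-1}{2}-s}$ on the top frequency, accompanied only by a harmless negative power of $K_b$. This is summable precisely when $s>\tfrac12$ and $\theta$ is close enough to $1$ that $\theta<2s$; this is exactly where the hypothesis $s>\tfrac12$ — rather than $s>\tfrac14$ as in Lemma~\ref{l_ee_i} — is forced and where the admissible range of $\theta=\theta(s)>1$ is pinned down. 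All remaining subcases in $S_1$, $S_2$ and $S_3$ give strictly better exponents and close without difficulty.
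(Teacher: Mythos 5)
Your proposal follows the paper's proof quite closely. The same decomposition $E_{I\!I,6}=S_1+S_2+S_3$ by frequency interaction, the same expansion of $\widehat{(u_1u_1)_x}$ into a convolution localized at $K_a, K_b$, and the same combined symbol bounds ($\lesssim 1$ on $S_1$ and $S_3$, $\lesssim K_1K_2^{-1}$ on $S_2$) all match, and you correctly note that symmetrization of $\sigma_{I\!I}$ is unavailable when $K_3^*=K_2$ because the slots $1$ and $3$ carry different functions. You also correctly single out $S_2$ with one $u_1$-factor at frequency $\sim K_1$ and the other low as the critical regime.

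The only place where you deviate is in which of the two quadrilinear bounds of Lemma \ref{l_stqe} you invoke in that worst subcase of $S_2$. You use the first, $\theta$-dependent estimate $(K_1^*)^{(\theta-1)/2}(K_4^*)^{1/2}$, which leaves a top-frequency residual $K_1^{\theta/2-s}$ and forces the extra constraint $\theta<2s$. The paper instead uses the second estimate $(K_3^*K_4^*)^{1/2}$ throughout $S_2$, yielding the factor $K_2^{-1/2}\min\{K_2,K_b\}^{1/2}$ with no $\theta$-loss; after pairing with $K_1^zK_2^zK_a^sK_b^s$ this closes for every $s>\tfrac12$ without any additional restriction on $\theta$ from this term. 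Both routes are correct, since the lemma's hypotheses already permit choosing $\theta=\theta(s)>1$ arbitrarily close to $1$, but the paper's choice is a bit cleaner. Your analysis of $S_1$ and $S_3$ is consistent with the paper's, and the overall argument is sound.
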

	\begin{proof}
		It suffices to consider the case $u_2=0$. We write $E_{I\!I,6}=S_1+S_2+S_3$ with
		\begin{align*}
			S_j
			:=
			\begin{cases}
				E_{I\!I,6}|_{K_2\sim K_3\gg K_1}
				&\text{if }
				j=1,
				\\
				E_{I\!I,6}|_{K_1\sim K_3\gg K_2}
				&\text{if }
				j=2,
				\\
				E_{I\!I,6}|_{K_1\sim K_2\gtrsim K_3}
				&\text{if }
				j=3.
			\end{cases}
		\end{align*}
		Using $\widehat{(u_1u_1)}=(\hat{u}_1\ast\hat{u}_1)$ and localizing these two factors separately, we obtain
		\begin{align*}
			|S_j|
			\lesssim
			\sum_{K_j}K_1^{2z}\sup_t\int_0^t\int_{\Gamma_4}\frac{\sigma_{I\!I}}{\Omega_3}&(\xi_1,\xi_2,\xi_{ab})(i\xi_{ab})\chi_{K_a}(\xi_a)\chi_{K_b}(\xi_b)
			\\
			&\times\hat{v}(\xi_1)\hat{v}(\xi_2)\hat{u}_1(\xi_a)\hat{u}_1(\xi_b)d\gamma_4(\xi_1,\xi_2,\xi_a,\xi_b)dr
		\end{align*}
		for $j\in[3]$. Define
		\begin{align*}
			\Pi(K_1,\!K_2,\!K_a,\!K_b)
			:=
			\|P_{K_1}v\|_{\spfb^{K_1}_T}
			\|P_{K_2}v\|_{\spfb^{K_2}_T}
			\|P_{K_a}u_1\|_{\spfb^{K_a}_T}
			\|P_{K_b}u_1\|_{\spfb^{K_b}_T}.
		\end{align*}
		
		Again, we start by estimating $S_1$. Here, we have $|\frac{\sigma_{I\!I}}{\Omega_3}(\xi_1,\xi_2,\xi_{ab})(i\xi_{ab})|\lesssim 1$. Thus, Lemma \ref{l_stqe} yields
		\begin{align*}
			|S_1|
			\lesssim_\delta
			T^\delta
			\sum_{K_j}\Pi(K_1,\!K_2,\!K_a,\!K_b)
			\times
			\begin{cases}
				K_2^\frac{1}{2}K_1^{-\frac{1}{2}}
				&\text{if }
				K_a\!\sim\!K_b\!\gg\!K_2,
				\\
				K_1^{-\frac{1}{2}}K_b^\frac{1}{2}
				&\text{if }
				K_a\!\sim\!K_2\!\gg\!K_b,
				\\
				K_1^{-\frac{1}{2}}K_a^\frac{1}{2}
				&\text{if }
				K_b\!\sim\!K_2\!\gg\!K_a.
			\end{cases}
		\end{align*}
		
		In the setting of $S_2$, we have $|\frac{\sigma_{I\!I}}{\Omega_3}(\xi_1,\xi_2,\xi_{ab})(i\xi_{ab})|\lesssim K_1K_2^{-1}$ and thus obtain
		\begin{align*}
			|S_2|
			\lesssim_\delta
			T^\delta
			\sum_{K_j}\Pi(K_1,\!K_2,\!K_a,\!K_b)
			\times
			\begin{cases}
				K_1^\frac{1}{2}K_2^{-\frac{1}{2}}
				&\text{if }
				K_a\!\sim\!K_b\!\gg\!K_3,
				\\
				K_2^{-\frac{1}{2}}K_b^\frac{1}{2}
				&\text{if }
				K_a\!\sim\!K_3\!\gg\!K_b,
				\\
				K_2^{-\frac{1}{2}}K_a^\frac{1}{2}
				&\text{if }
				K_b\!\sim\!K_3\!\gg\!K_a.
			\end{cases}
		\end{align*}
		
		Lastly, we consider $S_3$. Then, the derivative is applied to a low-frequency factor and we get $|\frac{\sigma_{I\!I}}{\Omega_3}(\xi_1,\xi_2,\xi_{ab})(i\xi_{ab})|\lesssim 1$. Lemma \ref{l_stqe} leads to
		\begin{align*}
			|S_3|
			\lesssim_\delta
			T^\delta
			\sum_{K_j}\Pi(K_1,\!K_2,\!K_a,\!K_b)\times
			\begin{cases}
				1
				&\text{if }
				K_a\!\sim\!K_b\!\gg\!K_1,
				\\
				K_1^{-1}K_a^\frac{1}{2}K_b^\frac{1}{2}
				&\text{if }
				K_1\!\gtrsim\!K_a\!\sim\!K_b\!\gtrsim\!K_3,
				\\
				K_1^{-1}K_a^\frac{1}{2}K_b^\frac{1}{2}
				&\text{if }
				K_a\!\sim\!K_3\!\gg\!K_b,
				\\
				K_1^{-1}K_b^\frac{1}{2}K_a^\frac{1}{2}
				&\text{if }
				K_b\!\sim\!K_3\!\gg\!K_a.
			\end{cases}
		\end{align*}
	\end{proof}
	
	The remaining three lemmata of this section deal with $E_{I\!I,7}$, $E_{I\!I,8}$, and $E_{I\!I,9}$.
	\begin{lem}\label{l_ee_ii_qua_bnd_i}
		Let $s>\frac{1}{2}$. Then, for all sufficiently small $\theta=\theta(s)>1$ and $\delta\in(0,\frac{1}{2})$, we have
		\begin{align*}
			|E_{I\!I,7}|
			\lesssim_\delta
			T^\delta\|v\|_{\spfc^z_T}^2\|w\|_{\spfc^s_T}\|b\|_{\spbc^s_T}.
		\end{align*}
	\end{lem}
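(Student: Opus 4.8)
The plan is to argue as in the proof of Lemma \ref{l_ee_qua_bnd}, adapted to the difference setting. First I would expand $\widehat{(vb)_x}=(\hat v\ast\hat b)_x$ and write the contribution of the slot $\xi_1=\xi_{ab}$ as a sum over dyadic localizations of the new convolution variables $\xi_a$ (carrying $v$) and $\xi_b$ (carrying $b$) to scales $K_a$, $K_b$. Since the symbol $\sigma_{I\!I}$ is not symmetric under $\xi_1\leftrightarrow\xi_2$, I would then split $E_{I\!I,7}=S_1+S_2+S_3$ according to the three frequency regimes $\{K_2\sim K_3\gg K_1\}$, $\{K_1\sim K_3\gg K_2\}$, and $\{K_1\sim K_2\gtrsim K_3\}$, exactly as in Lemmata \ref{l_ee_ii_qua_i}--\ref{l_ee_ii_qua_iii}. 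On the summation set $C_H$ one has $K_3^*>1$, so \eqref{e_res_asy} gives $|\Omega_3(\xi_{ab},\xi_2,\xi_3)|\sim K_1^*K_3^*$; combined with $|\sigma_{I\!I}|\lesssim K_3^*$ (and $|\sigma_{I\!I}|\lesssim K_1^*$ in the regime $K_3^*=K_2$) and $|\xi_{ab}|\lesssim K_1$, this controls the full symbol $\sigma_{I\!I}\,\Omega_3^{-1}(\xi_{ab},\xi_2,\xi_3)\,(i\xi_{ab})$ pointwise on dyadic scales.

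With the symbol under control, the resulting quadrilinear integral falls into the scope of Lemma \ref{l_stqe_bnd}: I would apply it with $u_1=P_{K_a}v$, $u_2=P_{K_2}v$, $u_3=P_{K_3}w$ as the $\spfb$-factors and $u_4=P_{K_b}b$ as the $L^\infty$-factor, using $\|P_{K_b}b\|_{L^\infty}\lesssim K_b^{-s}\|b\|_{\spbc^s_T}$. Depending on which of $K_a,K_2,K_3$ is the largest and which the smallest, I would invoke either the $(K_1^\#)^{\theta/4}$-bound or the $(K_3^\#)^{1/2}$-bound of Lemma \ref{l_stqe_bnd}, and in each sub-case redistribute the surviving powers of the dyadic numbers so as to reconstruct the weights $K_a^{z}$, $K_2^{z}$ with $z=-\tfrac12$, $K_3^{s}$, and $K_b^{s}$ of the target norm. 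The hypothesis $s>\tfrac12$, the negative regularity $z<0$ on the two $v$-slots, and the freedom to spread the leftover dyadic weight over the three $\spfb$-factors are what make these exponent inequalities hold, provided $\theta>1$ is chosen close enough to $1$ and $\delta>0$ small; the factor $T^\delta$ is supplied by Lemma \ref{l_stqe_bnd} itself. Summing the finitely many sub-cases over all dyadic frequencies then gives the claimed bound.

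The main obstacle is the borderline sub-case $K_a\sim K_1\gg K_3\gtrsim K_b$, in which the $v$-part of $(vb)_x$ carries the top frequency, $b$ sits at a low frequency, and the $w$-frequency $K_3$ is strictly separated; there a direct application of Lemma \ref{l_stqe_bnd} leaves a power of $K_1$ that is a hair too large to absorb. As in the proof of Lemma \ref{l_ee_qua_bnd}, I would remedy this by symmetrizing in $\xi_a\leftrightarrow\xi_2$ and replacing $\sigma_{I\!I}\,\Omega_3^{-1}(\xi_{ab},\xi_2,\xi_3)\,(i\xi_{ab})$ by the multiplier $m$ of \eqref{d_m}, which by \eqref{e_bnd_m} satisfies $|m(\xi_a,\xi_b,\xi_2,\xi_3)|\lesssim K_a^{-1}\max\{K_3,K_b\}$, the improvement resting on Lemma \ref{l_res_dif}. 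Feeding this gain into the $(K_3^\#)^{1/2}$-bound of Lemma \ref{l_stqe_bnd} then yields the required estimate and completes the proof.
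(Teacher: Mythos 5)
Your overall strategy matches the paper's proof: the same three-way split $E_{I\!I,7}=S_1+S_2+S_3$, the same expansion $\widehat{(vb)_x}=(\hat v\ast\hat b)_x$ with dyadic localization of $\xi_a$ and $\xi_b$, the same symbol bounds via $|\sigma_{I\!I}|$, $|\Omega_3|\gtrsim K_1^*K_3^*$ and $|\xi_{ab}|\lesssim K_1$, and then Lemma~\ref{l_stqe_bnd} applied with the three $\spfb$-slots on $P_{K_a}v$, $P_{K_2}v$, $P_{K_3}w$ and the $L^\infty$-slot on $P_{K_b}b$.

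However, the ``main obstacle'' you flag in your last paragraph does not arise here, and the paper does not invoke the multiplier $m$ of \eqref{d_m} in this lemma. In the sub-case $K_a\sim K_1\sim K_2\gg K_3$, $K_b\ll K_1$ (your $K_a\sim K_1\gg K_3\gtrsim K_b$ is contained in this), the symbol $K_1^{2z}\,\sigma_{I\!I}\Omega_3^{-1}(\xi_{ab},\xi_2,\xi_3)\,(i\xi_{ab})$ is $\lesssim K_1^{-1}$, and the $(K_3^\#)^{1/2}$ bound of Lemma~\ref{l_stqe_bnd} contributes $K_3^{1/2}$. Extracting the norms costs $K_a^{1/2}K_2^{1/2}\sim K_1$, $K_3^{-s}$ and $K_b^{-s}$, so the total dyadic weight is $K_1^{0}\cdot K_3^{1/2-s}\cdot K_b^{-s}$; since $s>\frac12$ this is summable (the $K_1$-sum closes by Cauchy–Schwarz on the two $v$-factors with weight $K_1^{-1/2}$), with no surplus power of $K_1$. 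The additional gain from the multiplier $m$ is genuinely needed in the proof of Lemma~\ref{l_ee_qua_bnd} to push $s_0$ below $\frac12$; at the regularity $s>\frac12$ of the present statement the direct application of Lemma~\ref{l_stqe_bnd} already suffices in all sub-cases, exactly as the paper does. Your proposed detour would still give a valid proof, it is just unnecessary here.
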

	\begin{proof}
		We write $E_{I\!I,7}=S_1+S_2+S_3$ with
		\begin{align*}
			S_j
			:=
			\begin{cases}
				E_{I\!I,7}|_{K_2\sim K_3\gg K_1}
				&\text{if }
				j=1,
				\\
				E_{I\!I,7}|_{K_1\sim K_3\gg K_2}
				&\text{if }
				j=2,
				\\
				E_{I\!I,7}|_{K_1\sim K_2\gtrsim K_3}
				&\text{if }
				j=3.
			\end{cases}
		\end{align*}
		Using $\widehat{(vw)}=(\hat{v}\ast\hat{w})$ and localizing these two factors separately, we obtain
		\begin{align*}
			|S_j|
			\lesssim
			\sum_{K_j}K_1^{2z}\sup_t\int_0^t\int_{\Gamma_4}\frac{\sigma_{I\!I}}{\Omega_3}&(\xi_{ab},\xi_2,\xi_3)(i\xi_{ab})\chi_{K_a}(\xi_a)\chi_{K_b}(\xi_b)
			\\
			&\times\hat{v}(\xi_a)\hat{b}(\xi_b)\hat{v}(\xi_2)\hat{w}(\xi_3)d\gamma_4(\xi_a,\xi_b,\xi_2,\xi_3)dr
		\end{align*}
		for $j\in[3]$. Define
		\begin{align*}
			\Pi(K_a,\!K_b,\!K_2,\!K_3)
			:=
			\|P_{K_a}v\|_{\spfb^{K_a}_T}
			\|P_{K_b}b\|_{\spfb^{K_b}_T}
			\|P_{K_2}v\|_{\spfb^{K_2}_T}
			\|P_{K_3}w\|_{\spfb^{K_3}_T}.
		\end{align*}
		
		We begin with estimating $S_1$. Here, the derivative is on a low-frequency factor and we have $|\frac{\sigma_{I\!I}}{\Omega_3}(\xi_{ab},\xi_2,\xi_3)(i\xi_{ab})|\lesssim K_2^{-1}K_1$. Hence, Lemma \ref{l_stqe_bnd} leads to
		\begin{align*}
			|S_1|
			\lesssim_\delta
			T^\delta
			\sum_{K_j}\Pi(K_a,\!K_b,\!K_2,\!K_3)\times
			\begin{cases}
				K_2^{-\frac{1}{2}}
				&\text{if }
				K_a\!\sim\!K_b\!\gg\!K_2,
				\\
				K_2^{-1}K_a\frac{1}{2}
				&\text{if }
				K_2\!\gtrsim\!K_a\!\sim\!K_b\!\gg\!K_1,
				\\
				K_2^{-1}K_a\frac{1}{2}
				&\text{if }
				K_a\!\sim\!K_1\!\gg\!K_b,
				\\
				K_2^{-1}K_a\frac{1}{2}
				&\text{if }
				K_b\!\sim\!K_1\!\gg\!K_a.
			\end{cases}
		\end{align*}
		
		Next, we estimate $S_2$. We have $|\frac{\sigma_{I\!I}}{\Omega_3}(\xi_{ab},\xi_2,\xi_3)(i\xi_{ab})|\lesssim K_2^{-1}K_1$. Again, Lemma \ref{l_stqe_bnd} yields
		\begin{align*}
			|S_2|
			\lesssim_\delta
			T^\delta
			\sum_{K_j}\Pi(K_a,\!K_b,\!K_2,\!K_3)\times
			\begin{cases}
				K_2^{-\frac{1}{2}}
				&\text{if }
				K_a\!\sim\!K_b\!\gg\!K_1,
				\\
				K_2^{-\frac{1}{2}}
				&\text{if }
				K_a\!\sim\!K_1\!\gg\!K_b,
				\\
				K_2^{-1}\min\{K_2,K_a\}^\frac{1}{2}
				&\text{if }
				K_b\!\sim\!K_1\!\gg\!K_a.
			\end{cases}
		\end{align*}
		
		We continue by estimating $S_3$. The derivative is on a high-frequency factor and additionally, both $v_{K_a}$ and $v_{K_2}$ might be localized to high frequencies. Nonetheless, we use Lemma \ref{l_stqe_bnd} and get
		\begin{align*}
			|S_3|
			\lesssim_\delta
			T^\delta
			\sum_{K_j}\Pi(K_a,\!K_b,\!K_2,\!K_3)\times
			\begin{cases}
				K_1^{-\frac{1}{2}}
				&\text{if }
				K_a\!\sim\!K_b\!\gtrsim\!K_1,
				\\
				K_1^{-1}K_3^{\frac{1}{2}}
				&\text{if }
				K_a\!\sim\!K_1\!\gg\!K_b,
				\\
				K_1^{-1}\min\{K_a,K_3\}^\frac{1}{2}
				&\text{if }
				K_b\!\sim\!K_1\!\gg\!K_a.
			\end{cases}
		\end{align*}
	\end{proof}
	
	\begin{lem}\label{l_ee_ii_qua_bnd_ii}
		Let $s>\frac{1}{2}$. Then, for all sufficiently small $\theta=\theta(s)>1$ and $\delta\in(0,\frac{1}{2})$, we have
		\begin{align*}
			|E_{I\!I,8}|
			\lesssim_\delta
			T^\delta\|v\|_{\spfc^z_T}^2\|w\|_{\spfc^s_T}\|b\|_{\spbc^s_T}.
		\end{align*}
	\end{lem}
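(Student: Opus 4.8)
The proof follows the scheme of Lemma \ref{l_ee_ii_qua_bnd_i}, the only structural change being that the derivative nonlinearity $(vb)_x$ now occupies the middle slot of the trilinear form rather than the first. Accordingly, I would split $E_{I\!I,8}=S_1+S_2+S_3$ according to the frequency interactions $K_2\sim K_3\gg K_1$, $K_1\sim K_3\gg K_2$, and $K_1\sim K_2\gtrsim K_3$; write $\widehat{(vb)}=\hat v\ast\hat b$; and localize the convolution to dyadic scales $K_a$ (carrying $v$) and $K_b$ (carrying $b$), with $\xi_2=\xi_{ab}:=\xi_a+\xi_b$, so that each piece takes the form
\begin{align*}
|S_j|
\lesssim
\sum_{K_j}K_1^{2z}\sup_t\int_0^t\int_{\Gamma_4}
\frac{\sigma_{I\!I}}{\Omega_3}(\xi_1,\xi_{ab},\xi_3)(i\xi_{ab})
\,\chi_{K_a}(\xi_a)\chi_{K_b}(\xi_b)\,
\hat v(\xi_1)\hat v(\xi_a)\hat b(\xi_b)\hat w(\xi_3)\,d\gamma_4\,dr,
\end{align*}
to which Lemma \ref{l_stqe_bnd} applies with $b$ playing the role of the $L^\infty$-factor.

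The central point is that in each of the three cases the multiplier $\frac{\sigma_{I\!I}}{\Omega_3}(\xi_1,\xi_{ab},\xi_3)(i\xi_{ab})$ has size $\lesssim 1$; by \eqref{e_res_asy} (legitimate on $C_H$, where $K_3^*>1$) this reduces to the pointwise bound $|\sigma_{I\!I}(\xi_1,\xi_{ab},\xi_3)(i\xi_{ab})|\lesssim K_1^*K_3^*$. In $S_1$ and $S_2$ the symbol $\sigma_{I\!I}$ coincides with the unsymmetrized $\sigma_0(\xi)=-i\xi_1\chi_{K_1}^2(\xi_1)\chi_{K_2}(\xi_2)\chi_{K_3}(\xi_3)$, and a direct inspection of the frequency sizes---using that the two largest of $K_1,K_2,K_3$ are comparable on $\Gamma_3$, exactly as for the estimate recorded after \eqref{eq_ee_ii_b}---yields this bound; in $S_3$, where $K_1\sim K_2\gtrsim K_3$, one uses instead the symmetrized estimate $|\sigma_{I\!I}|\lesssim K_3^*=K_3$, proved as in Section \ref{ss_ee_i} from the evenness of $\chi_{K_1}^2$ and $|\xi_1+\xi_{ab}|=|\xi_3|\sim K_3$. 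With $\sigma_{\mathbb{D}}(K)\lesssim 1$ in hand, I would then apply the two estimates of Lemma \ref{l_stqe_bnd} and distribute the resulting powers of dyadic frequencies, together with the weight $K_1^{2z}=K_1^{-1}$ supplied by the energy quantity, so as to reconstruct $\|v\|_{\spfc^z_T}^2$, $\|w\|_{\spfc^s_T}$ and $\|b\|_{\spbc^s_T}$ after summation---in the same case-by-case fashion as in Lemma \ref{l_ee_ii_qua_bnd_i}, using $\|P_{K_b}b\|_{L^\infty}\leq K_b^{-s}\|b\|_{\spbc^s_T}$ for the $L^\infty$-factor.

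The main obstacle is, as there, the bookkeeping in the subcases where $K_a$---the frequency of the inner factor $v$---or $K_b$ is comparable to $K_1$, so that the available weight $K_1^{2z}$ does not sit on the dominant frequency. A crude use of the first estimate in Lemma \ref{l_stqe_bnd} (which gains only $(K_1^\#)^{\frac{\theta}{4}}$, with $\theta$ barely above $1$) then fails, and one must invoke its second estimate to gain $(K_3^\#)^{\frac{1}{2}}$ on the smallest of $K_1,K_a,K_3$; the borderline $\ell^2$-summations over $K_3$ and over $K_1$ close precisely because $s>\frac{1}{2}$ (so that $\sum_{K}K^{1-2s}<\infty$) and $z=-\frac{1}{2}$. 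Choosing $\theta>1$ and $\delta\in(0,\frac{1}{2})$ small enough that every exponent inequality holds simultaneously finishes the argument, and the factor $T^\delta$ is inherited directly from Lemma \ref{l_stqe_bnd}.
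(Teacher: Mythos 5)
Your proposal matches the paper's proof in both structure and substance: the same three-way split $S_1+S_2+S_3$ by frequency configuration, the same factorization $\widehat{(vb)}=\hat v\ast\hat b$ with $\xi_2=\xi_{ab}$, the same observation that the multiplier $\frac{\sigma_{I\!I}}{\Omega_3}(\xi_1,\xi_{ab},\xi_3)(i\xi_{ab})$ is $O(1)$ throughout (via \eqref{e_res_asy} and the case-by-case bounds on $\sigma_{I\!I}$), the application of Lemma \ref{l_stqe_bnd} with $b$ as the $L^\infty$ factor, and the use of $\|P_{K_b}b\|_{L^\infty}\leq K_b^{-s}\|b\|_{\spbc^s_T}$ together with $z=-\tfrac12$ and $s>\tfrac12$ to close the dyadic sums. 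The only thing you leave implicit that the paper spells out is the explicit table of dyadic bounds in each sub-case, but your description of how to select between the two estimates of Lemma \ref{l_stqe_bnd} is faithful to that bookkeeping.
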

	\begin{proof}
		We write $E_{I\!I,8}=S_1+S_2+S_3$ with
		\begin{align*}
			S_j
			:=
			\begin{cases}
				E_{I\!I,8}|_{K_2\sim K_3\gg K_1}
				&\text{if }
				j=1,
				\\
				E_{I\!I,8}|_{K_1\sim K_3\gg K_2}
				&\text{if }
				j=2,
				\\
				E_{I\!I,8}|_{K_1\sim K_2\gtrsim K_3}
				&\text{if }
				j=3.
			\end{cases}
		\end{align*}
		Using $\widehat{(vw)}=(\hat{v}\ast\hat{w})$ and localizing these two factors separately, we obtain
		\begin{align*}
			|S_j|
			\lesssim
			\sum_{K_j}K_1^{2z}\sup_t\int_0^t\int_{\Gamma_4}\frac{\sigma_{I\!I}}{\Omega_3}&(\xi_1,\xi_{ab},\xi_3)(i\xi_{ab})\chi_{K_a}(\xi_a)\chi_{K_b}(\xi_b)
			\\
			&\times\hat{v}(\xi_1)\hat{v}(\xi_a)\hat{b}(\xi_b)\hat{w}(\xi_3)d\gamma_4(\xi_1,\xi_a,\xi_b,\xi_3)dr
		\end{align*}
		for $j\in[3]$. Define
		\begin{align*}
			\Pi(K_1,\!K_a,\!K_b,\!K_3)
			:=
			\|P_{K_1}v\|_{\spfb^{K_2}_T}
			\|P_{K_a}v\|_{\spfb^{K_a}_T}
			\|P_{K_b}b\|_{\spfb^{K_b}_T}
			\|P_{K_3}w\|_{\spfb^{K_3}_T}.
		\end{align*}
		
		For $S_1$, the derivative is on a low-frequency factor which leads to the bound $|\frac{\sigma_{I\!I}}{\Omega_3}(\xi_1,\xi_{ab},\xi_3)(i\xi_{ab})|\lesssim 1$. Hence, Lemma \ref{l_stqe_bnd} yields
		\begin{align*}
			|S_1|
			\lesssim_\delta
			T^\delta
			\sum_{K_j}\Pi(K_1,\!K_a,\!K_b,\!K_3)\times
			\begin{cases}
				K_1^{-\frac{1}{2}}
				&\text{if }
				K_a\!\sim\!K_b\!\gg\!K_2,
				\\
				K_1^{-\frac{1}{2}}
				&\text{if }
				K_2\!\gtrsim\!K_a\!\sim\!K_b,
				\\
				K_1^{-1}K_a^\frac{1}{2}
				&\text{if }
				K_b\!\sim\!K_2\!\gg\!K_a.
			\end{cases}
		\end{align*}
		
		In $S_2$, the derivative is on a high-frequency factor and $|\frac{\sigma_{I\!I}}{\Omega_3}(\xi_1,\xi_{ab},\xi_3)(i\xi_{ab})|\lesssim 1$. By applying Lemma \ref{l_stqe_bnd}, we have
		\begin{align*}
			|S_2|
			\lesssim_\delta
			T^\delta
			\sum_{K_j}\Pi(K_1,\!K_a,\!K_b,\!K_3)\times
			\begin{cases}
				K_1^{-\frac{1}{2}}
				&\text{if }
				K_a\!\sim\!K_b\!\gg\!K_1,
				\\
				K_1^{-1}K_a^{-\frac{1}{2}}
				&\text{if }
				K_1\!\gtrsim\!K_a\!\sim\!K_b\!\gtrsim\!K_2,
				\\
				K_1^{-1}K_a^{-\frac{1}{2}}
				&\text{if }
				K_a\!\sim\!K_2\!\gg\!K_b,
				\\
				K_1^{-1}K_a^{-\frac{1}{2}}
				&\text{if }
				K_b\!\sim\!K_2\!\gg\!K_a.
			\end{cases}
		\end{align*}
		
		We continue by estimating $S_3$. We get $|\frac{\sigma_{I\!I}}{\Omega_3}(\xi_1,\xi_{ab},\xi_3)(i\xi_{ab})|\lesssim 1$ and Lemma \ref{l_stqe_bnd} implies
		\begin{align*}
			|S_3|
			\lesssim_\delta
			T^\delta
			\sum_{K_j}\Pi(K_a,\!K_b,\!K_2,\!K_3)\times
			\begin{cases}
				K_1^{-\frac{1}{2}}
				&\text{if }
				K_a\!\sim\!K_b\!\gtrsim\!K_2,
				\\
				K_1^{-1}K_3^\frac{1}{2}
				&\text{if }
				K_a\!\sim\!K_2\!\gg\!K_b,
				\\
				K_1^{-1}K_a^\frac{1}{2}
				&\text{if }
				K_b\!\sim\!K_2\!\gg\!K_a.
			\end{cases}
		\end{align*}
	\end{proof}
	
	\begin{lem}\label{l_ee_ii_qua_bnd_iii}
		Let $s>\frac{1}{2}$. Then, for all sufficiently small $\theta=\theta(s)>1$ and $\delta\in(0,\frac{1}{2})$, we have
		\begin{align*}
			|E_{I\!I,9}|
			\lesssim_\delta
			T^\delta\|v\|_{\spfc^z_T}^2\|w\|_{\spfc^s_T}\|b\|_{\spbc^s_T}.
		\end{align*}
	\end{lem}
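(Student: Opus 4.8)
The plan is to repeat, essentially verbatim, the argument used for $E_{I\!I,6}$ in the proof of Lemma \ref{l_ee_ii_qua_iii}, the only structural change being that one of the two factors produced by expanding $\widehat{(wb)_x}$ is the bounded function $b$ rather than a solution; consequently I will invoke Lemma \ref{l_stqe_bnd} in place of Lemma \ref{l_stqe}, exactly as in the proofs of Lemmata \ref{l_ee_ii_qua_bnd_i} and \ref{l_ee_ii_qua_bnd_ii}. First I would split $E_{I\!I,9}=S_1+S_2+S_3$ according to the three frequency regimes $K_2\sim K_3\gg K_1$, $K_1\sim K_3\gg K_2$, and $K_1\sim K_2\gtrsim K_3$, where $\xi_3$ is the frequency carried by $(wb)_x$. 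Writing $\widehat{(wb)}=\hat w\ast\hat b$ and localizing the two resulting factors dyadically to $|\xi_a|\sim K_a$ (for $w$) and $|\xi_b|\sim K_b$ (for $b$), each $S_j$ is bounded by a sum over $K=(K_1,K_a,K_b,K_3)$ of a quadrilinear integral with symbol $\tfrac{\sigma_{I\!I}}{\Omega_3}(\xi_1,\xi_2,\xi_{ab})(i\xi_{ab})$ acting on $\hat v(\xi_1)\hat v(\xi_2)\hat w(\xi_a)\hat b(\xi_b)$ over $\Gamma_4$, with $b$ placed in the $L^\infty$-slot of Lemma \ref{l_stqe_bnd}.

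Next I would record the pointwise size of the symbol in each regime, using the bound $|\sigma_{I\!I}|\lesssim K_3^*$ — respectively $|\sigma_{I\!I}|\lesssim K_1^*$ in the regime $K_3^*=K_2$ — from Section \ref{ss_ee_ii}, together with $|\Omega_3|\sim K_1^*K_3^*$ from \eqref{e_res_asy} and $|\xi_{ab}|=|\xi_3|\sim K_3$. This yields a symbol of size $\lesssim 1$ in $S_1$ and $S_3$, and of size $\lesssim K_1K_2^{-1}$ in $S_2$ (the case $K_3^*=K_2$). Within each regime and each of the three sub-cases dictated by whether $K_a\sim K_b$ dominates, or $K_a$, or $K_b$ is comparable to the large frequency, I would apply the appropriate estimate of Lemma \ref{l_stqe_bnd}: the first, with the $(K_1^\#)^{\theta/4}$ loss, in the sub-cases where the low frequency is small compared to both high frequencies, and the second, with the $(K_3^\#)^{1/2}$ gain, otherwise. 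Finally I would sum the resulting dyadic bounds against the definitions of $\|\cdot\|_{\spfc^z_T}$, $\|\cdot\|_{\spfc^s_T}$, and $\|\cdot\|_{\spbc^s_T}$, using $K_1^{2z}=K_1^{-1}$ to absorb the extra factor $K_1K_2^{-1}$ in $S_2$, and bounding $\|P_{K_b}b\|_{L^\infty}\lesssim K_b^{-s}\|b\|_{\spbc^s_T}$ by Bernstein (valid since $s\geq 0$). Choosing $\theta>1$ close to $1$ and $\delta\in(0,\tfrac12)$ small makes all exponents admissible because $s>\tfrac12$.

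I expect the main obstacle to be bookkeeping rather than a genuinely new idea. The delicate regime is $S_2$, where the derivative falls on the high-frequency $(wb)_x$ factor and $\sigma_{I\!I}$ is only controlled by $K_1^*$: one must verify that after cancelling $K_1^{2z}$ the remaining positive powers of the largest frequency — including the $\theta/4$ loss from Lemma \ref{l_stqe_bnd} and the $K_3^{1/2}$ or $K_a^{1/2}$ factors — can still be distributed onto the $\spfc^s$-slot for $w$ and the $\spbc^s$-slot for $b$; this is precisely where $s>\tfrac12$ and $\theta$ close to $1$ enter, just as in Lemmata \ref{l_ee_ii_qua_iii}, \ref{l_ee_ii_qua_bnd_i}, and \ref{l_ee_ii_qua_bnd_ii}. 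I also need to take care that in the sub-cases where $K_b$ is not the smallest among the three frequencies feeding the $\spfb$-slots, the factor $P_{K_b}b$ is nonetheless controlled via Bernstein by $\|b\|_{\spbc^s_T}$, and that whenever $\sigma_{I\!I}$ genuinely needs the weaker bound $|\sigma_{I\!I}|\lesssim K_1^*$ the derivative gain $|\xi_{ab}|\sim K_3$ and the resonance gain $|\Omega_3|^{-1}\sim (K_1^*K_3^*)^{-1}$ combine favourably, which is exactly the computation carried out for $E_{I\!I,6}$.
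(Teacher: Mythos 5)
Your proposal follows the paper's argument exactly: split $E_{I\!I,9}$ by frequency regime, expand $\widehat{(wb)_x}=(\hat w\ast\hat b)_x$ and localize, bound the multiplier via $|\sigma_{I\!I}|$ and \eqref{e_res_asy}, and apply Lemma~\ref{l_stqe_bnd} in each sub-case with $b$ in the $L^\infty$-slot before summing. The only (harmless) discrepancy is in $S_3$: since $K_3^*=K_3$ there, the bound $|\sigma_{I\!I}|\lesssim K_3^*$ combined with $|\Omega_3|\sim K_1K_3$ and $|\xi_{ab}|\sim K_3$ yields the sharper multiplier bound $K_3K_1^{-1}$ (which the paper uses), not merely $\lesssim 1$; your weaker bound costs an extra $\log K_1$ from the $K_3$-sum, which is still absorbable for $s>\tfrac12$, so the conclusion is unaffected.
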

	\begin{proof}
		We write $E_{I\!I,9}=S_1+S_2+S_3$ with
		\begin{align*}
			S_j
			:=
			\begin{cases}
				E_{I\!I,9}|_{K_2\sim K_3\gg K_1}
				&\text{if }
				j=1,
				\\
				E_{I\!I,9}|_{K_1\sim K_3\gg K_2}
				&\text{if }
				j=2,
				\\
				E_{I\!I,9}|_{K_1\sim K_2\gtrsim K_3}
				&\text{if }
				j=3.
			\end{cases}
		\end{align*}
		Using $\widehat{(wb)}=(\hat{w}\ast\hat{b})$ and localizing these two factors separately, we obtain
		\begin{align*}
			|S_j|
			\lesssim
			\sum_{K_j}K_1^{2z}\sup_t\int_0^t\int_{\Gamma_4}\frac{\sigma_{I\!I}}{\Omega_3}&(\xi_1,\xi_2,\xi_{ab})(i\xi_{ab})\chi_{K_a}(\xi_a)\chi_{K_b}(\xi_b)
			\\
			&
			\times\hat{v}(\xi_1)\hat{v}(\xi_2)\hat{w}(\xi_a)\hat{b}(\xi_b)d\gamma_4(\xi_1,\xi_2,\xi_a,\xi_b)dr
		\end{align*}
		for $j\in[3]$. Define
		\begin{align*}
			\Pi(K_1,\!K_2,\!K_a,\!K_b)
			:=
			\|P_{K_1}v\|_{\spfb^{K_1}_T}
			\|P_{K_2}v\|_{\spfb^{K_2}_T}
			\|P_{K_a}w\|_{\spfb^{K_a}_T}
			\|P_{K_b}b\|_{\spfb^{K_b}_T}.
		\end{align*}
		
		First, we consider $S_1$. Here, $|\frac{\sigma_{I\!I}}{\Omega_3}(\xi_1,\xi_2,\xi_{ab})(i\xi_{ab})|\lesssim 1$. It suffices to apply Lemma \ref{l_stqe_bnd} yielding
		\begin{align*}
			|S_1|
			\lesssim_\delta
			T^\delta
			\sum_{K_j}\Pi(K_1,\!K_2,\!K_a,\!K_b)\times
			\begin{cases}
				K_1^{-\frac{1}{2}}
				&\text{if }
				K_a\!\sim\!K_b\!\gtrsim\!K_3,
				\\
				K_2^\frac{1}{2}K_1^{-1}
				&\text{if }
				K_3\!\sim\!K_a\!\gg\!K_b,
				\\
				K_1^{-1}\min\{K_2,K_a\}^\frac{1}{2}
				&\text{if }
				K_3\!\sim\!K_b\!\gg\!K_a.
			\end{cases}
		\end{align*}
		
		For $S_2$ we get the bound $|\frac{\sigma_{I\!I}}{\Omega_3}(\xi_1,\xi_2,\xi_{ab})(i\xi_{ab})|\lesssim K_1K_2^{-1}$. It suffices to apply Lemma \ref{l_stqe_bnd} leading to
		\begin{align*}
			|S_2|
			\lesssim_\delta
			T^\delta
			\sum_{K_j}\Pi(K_1,\!K_2,\!K_a,\!K_b)\times
			\begin{cases}
				K_2^{-\frac{1}{2}}
				&\text{if }
				K_a\!\sim\!K_b\!\gtrsim\!K_3,
				\\
				K_2^{-\frac{1}{2}}
				&\text{if }
				K_3\!\sim\!K_a\!\gg\!K_b,
				\\
				K_2^{-1}\min\{K_2,K_a\}^\frac{1}{2}
				&\text{if }
				K_3\!\sim\!K_b\!\gg\!K_a.
			\end{cases}
		\end{align*}
		
		Last, we consider $S_3$. Here, we have $|\frac{\sigma_{I\!I}}{\Omega_3}(\xi_1,\xi_2,\xi_{ab})(i\xi_{ab})|\lesssim K_3K_1^{-1}$ and Lemma \ref{l_stqe_bnd} implies
		\begin{align*}
			|S_3|
			\lesssim_\delta
			T^\delta
			\sum_{K_j}\Pi(K_1,\!K_2,\!K_a,\!K_b)\times
			\begin{cases}
				K_1^{-\frac{3}{2}}K_3
				&\text{if }
				K_a\!\sim\!K_b\!\gg\!K_1,
				\\
				K_1^{-2}K_a^\frac{1}{2}K_3
				&\text{if }
				K_1\!\gtrsim\!K_a\!\sim\!K_b\!\gtrsim\!K_3,
				\\
				K_1^{-2}K_a^\frac{1}{2}K_3
				&\text{if }
				K_a\!\sim\!K_3\!\gg\!K_b,
				\\
				K_1^{-2}K_a^\frac{1}{2}K_3
				&\text{if }
				K_b\!\sim\!K_3\!\gg\!K_a.
			\end{cases}
		\end{align*}
	\end{proof}
	
	Finally, Lemma \ref{l_ee_ii} follows from the estimates obtained in this section as follows:	From the beginning of this section, we know that
	\begin{align*}
		\|v\|_{\spec^z_T}^2
		\lesssim
		\|v_0\|_{H^z}^2
		+
		I\!I_1
		+
		I\!I_2
	\end{align*}
	holds. $I\!I_2$ is bounded in Lemma \ref{l_ee_ii_trv} and the estimate for $I\!I_1$ is a direct consequence of Lemmata \ref{l_ee_ii_qua_i}, \ref{l_ee_ii_qua_ii}, \ref{l_ee_ii_qua_iii}, \ref{l_ee_ii_qua_bnd_i}, \ref{l_ee_ii_qua_bnd_ii}, and \ref{l_ee_ii_qua_bnd_iii}.
\subsection{Proof of the third energy estimate}\label{ss_ee_iii}
In this section we prove Lemma \ref{l_ee_ii}. As before, fix $s>\frac{1}{2}$, $z=-\frac{1}{2}$, $T\in(0,1)$, and let $\theta(s)>1$ be small enough to satisfy all restrictions appearing in this section. Choose $b$ and $f$ satisfying \eqref{as_b_f} and let and $u_1$ and $u_2$ be classical solutions to \eqref{eq_bo_split} with initial datum $u_{1,0}$ and $u_{2,0}$. In the following, we write $v=u_1-u_2$, $v_0=u_{1,0}-u_{2,0}$, and $w=u_1+u_2$.

Clearly, $v$ is a classical solution to 
\begin{align*}
	\begin{cases}
		v_t
		+
		\mathcal{H}v_{xx}
		+
		(vv)_x
		+
		2(vu_2)_x
		+
		(vb)_x
		&=
		0,
		\\
		v(0,\cdot)
		&=
		v_0.
	\end{cases}
\end{align*}
For $K_1\in\mathbb{D}$ we obtain
\begin{align*}
	\frac{d}{dr}\|P_{K_1}v(r,x)\|_{L^2_x}^2
	&=
	2\int_\mathbb{R} -P_{K_1}v\left[P_{K_1}\mathcal{H}v_{xx}+P_{K_1}(vw)_x+P_{K_1}(vb)_x\right] dx
	\\&=
	2\int_\mathbb{R} P_{K_1}^2v_xvvdx
	+
	4\int_\mathbb{R} P_{K_1}^2v_xvu_2dx
	+
	2\int_\mathbb{R} P_{K_1}^2v_xvb dx.
\end{align*}
After multiplying with $K_1^{2s}$, integrating with respect to $r$ over $[0,t]$, taking the supremum over $t\in[0,T]$, and after performing summation over $K_1\in\mathbb{D}$, the above leads to
\begin{align*}\label{eq_ee_iii_red}
	\|v\|_{\spec^s_T}^2
	-
	\|v_0\|_{H^s}^2
	&\leq
	2\sum_{K_1}K_1^{2s}\sup_t\int_0^t\int_\mathbb{R}[P_{K_1}^2v_xvv](r,x)dxdr
	&:=I\!I\!I
	\\&+
	4\sum_{K_1}K_1^{2s}\sup_t\int_0^t\int_\mathbb{R}[P_{K_1}^2v_xvu_2](r,x)dxdr
	&:=I\!V
	\\&+
	2\sum_{K_1}K_1^{2s}\sup_t\int_0^t\int_\mathbb{R}[P_{K_1}^2v_xvb](r,x)dxdr
	&:=V.
\end{align*}

Note that $I\!I\!I$ is of the same form as $I_1$ in Section \ref{ss_ee_i}, which allows to bound $I\!I\!I$ by reusing large parts of Section \ref{ss_ee_i} with only one minor modification.

Similary, the term $I\!V$ has the same form as $I\!I_1$ in Section \ref{ss_ee_ii}. The integrand is the product of two low-regularity functions and one high-regularity function. Again, this suggests that the bound for $I\!V$ follows by arguing as in Section \ref{ss_ee_ii}. However, since $I\!V$ shall be estimated at regularity $s>\frac{1}{2}$, whereas $I\!I_1$ is estimated at regularity $z=-\frac{1}{2}$, there are some non-trivial changes in the proof which lead to slightly different estimates. Thus, we give those proofs in detail in this section.

The estimate for $V$ follows directly from Lemma \ref{l_ee_trv_uub}.
\begin{lem}\label{l_ee_iii_trv}
	Let $\varepsilon>0$. Then, we have
	\begin{align*}
		V
		\lesssim_\varepsilon
		T\|v\|_{\spec^s_T}^2\|b\|_{\spbc^{1+s+\varepsilon}_T}.
	\end{align*}
\end{lem}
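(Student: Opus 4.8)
The statement is essentially a matter of recognising that $V$ is, up to an integration by parts and the irrelevant sign, the very quantity that is estimated in Lemma~\ref{l_ee_trv_uub}. Concretely, I would first pass to $|V|$ (which suffices, since Section~\ref{ss_ee_iii} only needs an upper bound for $V$) and integrate by parts in $x$, using that $P_{K_1}$ is self-adjoint on $L^2(\mathbb{R})$: for each fixed $r$,
\begin{align*}
	\int_\mathbb{R} P_{K_1}^2 v_x\, v b\, dx
	=
	\int_\mathbb{R} (P_{K_1}v)_x\, P_{K_1}(vb)\, dx
	=
	-\int_\mathbb{R} P_{K_1}v\, P_{K_1}((vb)_x)\, dx
	=
	-\int_\mathbb{R} P_{K_1}^2 v\, (vb)_x\, dx .
\end{align*}
Hence
\begin{align*}
	|V|
	\leq
	2\sum_{K_1}K_1^{2s}\sup_t\Big|\int_0^t\int_\mathbb{R} P_{K_1}^2 v\, (vb)_x\, dx\, dr\Big| ,
\end{align*}
and the right-hand side is exactly $|I_2|$ with $u$ replaced by $v$, in the notation of Lemma~\ref{l_ee_trv_uub}. (Equivalently, one can skip the integration by parts altogether: the proof of Lemma~\ref{l_ee_trv_uub} itself rewrites $I_2$ in the first line as a sum of integrals of $P_{K_1}^2u_xP_{K_2}uP_{K_3}b$, which is precisely the form of $V$ after inserting Littlewood--Paley projectors, so the remainder of that proof applies verbatim.)

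Then I would simply invoke Lemma~\ref{l_ee_trv_uub}, which is valid for all $s\geq 0$ and in particular for the present $s>\frac12$, applied to $u=v$: this gives $|I_2|\lesssim_\varepsilon T\|v\|_{\spec^s_T}^2\|b\|_{\spbc^{s+1+\varepsilon}_T}$, i.e. $V\leq|V|\lesssim_\varepsilon T\|v\|_{\spec^s_T}^2\|b\|_{\spbc^{1+s+\varepsilon}_T}$, which is the claim. There is no real obstacle here: the only points to double-check are that the integration-by-parts identity above is correct (it is, since $P_{K_1}$ commutes with $\partial_x$ and is self-adjoint) and that the estimate in Lemma~\ref{l_ee_trv_uub} is insensitive to the sign of the integrand and to replacing the fixed solution $u$ by the difference $v$ — both of which are immediate from its proof, which only ever estimates $|I_2|$ via Hölder's inequality (splitting into the cases $K_3^*\in\{K_1,K_2\}$ and $K_3^*=K_3$, the latter after symmetrising the symbol). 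Hence the proof of Lemma~\ref{l_ee_iii_trv} is a two-line reduction to Lemma~\ref{l_ee_trv_uub}.
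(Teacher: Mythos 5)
Your proposal is correct and matches the paper's approach exactly: the paper also simply states that $V$ follows directly from Lemma~\ref{l_ee_trv_uub}, and you have merely spelled out the integration by parts (or, equivalently, the observation that after Littlewood--Paley decomposition $V$ has exactly the form $\sum K_1^{2s}\sup_t\int_0^t\int P_{K_1}^2v_xP_{K_2}vP_{K_3}b$ from the first line of that proof) which makes the reduction transparent. Nothing to flag.
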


Next, we bound $I\!I\!I$. Using the notation introduced in Section \ref{ss_ee_i} and repeating the steps done in the beginning of that section (for $u$ replaced by $v$), we conclude
\begin{align*}
	I\!I\!I
	=
	&\sum_{K_1}K_1^{2s}\sup_t\int_0^t\int_\mathbb{R}[P_{K_1}^2v_xvv](r,x)dxdr
	\\
	\leq
	&\sum_{C_L}K_1^{2s}\sup_t\int_0^t\int_{\Gamma_3}(\sigma_I\kappa_{3}\left[v\right])(\xi)d\gamma_3(\xi)dr
	&=:E_{I\!I\!I,1}
	\\ +
	&\sum_{C_M}K_1^{2s}\sup_t\int_0^t\int_{\Gamma_3}(\sigma_I\kappa_{3}\left[v\right])(\xi)d\gamma_3(\xi)dr
	&=:E_{I\!I\!I,2}
	\\ +
	&\sum_{C_H}K_1^{2s}\sup_t\left[\int_{\Gamma_3}(\sigma_I\Omega_3^{-1}\kappa_{3}\left[v\right])(\xi)d\gamma_3(\xi)\right]_{r=0}^{t}
	&=:E_{I\!I\!I,3}
	\\ +
	&\sum_{C_H}K_1^{2s}\sup_t\int_0^t\int_{\Gamma_3}(\sigma_I\Omega_3^{-1}\kappa_{1,2}\left[
	(vv)_x,v\right])(\xi)d\gamma_3(\xi)dr
	&=:E_{I\!I\!I,4}
	\\ +
	&2\sum_{C_H}K_1^{2s}\sup_t\int_0^t\int_{\Gamma_3}(\sigma_I\Omega_3^{-1}\kappa_{1,2}\left[(vu_2)_x,v\right])(\xi)d\gamma_3(\xi)dr
	&=:E_{I\!I\!I,*}
	\\ +
	&\sum_{C_H}K_1^{2s}\sup_t\int_0^t\int_{\Gamma_3}(\sigma_I\Omega_3^{-1}\kappa_{1,2}\left[(vb)_x,v\right])(\xi)d\gamma_3(\xi)dr
	&=:E_{I\!I\!I,5}.
\end{align*}
Each $E_{I\!I\!I,i}$, $i\in[5]$, can be estimated exactly as $E_{I,i}$ in Section \ref{ss_ee_i}. Thus, we conclude from Lemmata \ref{l_ee_tri}, \ref{l_ee_qua}, and \ref{l_ee_qua_bnd}:
\begin{lem}\label{l_ee_iii_re}
	Let $s>\frac{1}{4}$, $\zeta\in(0,\frac{1}{2})$. Then, for all sufficiently small $\theta(s)>1$ and $\delta\in(0,\frac{1}{2})$, we have
	\begin{align*}
		\sum_{i=1}^5 |E_{I\!I\!I,i}|
		\!\lesssim\!
		(T
		\!+\!
		TN^2
		\!+\!
		N^{-\zeta})\|v\|_{\spec_T^s}^2\|v\|_{\spec_T^s}
		\!+\!
		T^\delta\|v\|_{\spfc_T^s}^2\|v\|_{\spfc_T^s}
		(
		\|v\|_{\spfc_T^s}
		\!+\!
		\|b\|_{\spbc_T^s}
		).
	\end{align*}
\end{lem}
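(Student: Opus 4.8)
The plan is to read off the estimate from the structural identity between the terms $E_{I\!I\!I,i}$ and the terms $E_{I,i}$ of Section~\ref{ss_ee_i}. Repeating the opening of Section~\ref{ss_ee_i} with $u$ replaced by $v$ — the only change being that the integration-by-parts relation \eqref{eq_ee_ibp} is replaced by its analogue for $v$,
\begin{align*}
	\left(-\kappa_{3}[v]\right)_t
	=
	\Omega_3\kappa_{3}[v]
	+
	\kappa_{1,2}\left[(vv)_x,v\right]
	+
	2\kappa_{1,2}\left[(vu_2)_x,v\right]
	+
	\kappa_{1,2}\left[(vb)_x,v\right],
\end{align*}
valid because $v$ solves the difference equation — produces precisely the decomposition $I\!I\!I\le\sum_{i=1}^{5}E_{I\!I\!I,i}+E_{I\!I\!I,*}$ recorded above. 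The symbols $\sigma_0,\sigma_I$ and the resonance bounds of Section~\ref{ss_res} involve only the frequency variables, so they are unchanged. Hence for $i\in\{1,\dots,5\}$ the term $E_{I\!I\!I,i}$ is literally $E_{I,i}$ with every occurrence of $u$ replaced by $v$ (and, in $E_{I\!I\!I,5}$, with the background $b$ left in place); the term $E_{I\!I\!I,*}$, stemming from the inhomogeneity $2(vu_2)_x$, is not part of the claimed sum and is treated separately later.

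Granting this, the proof is bookkeeping. First I would invoke Lemma~\ref{l_ee_tri} with $u$ replaced by $v$ to bound $E_{I\!I\!I,1},E_{I\!I\!I,2},E_{I\!I\!I,3}$ by $T\|v\|_{\spec^s_T}^2\|v\|_{L^\infty L^2}$, $TN^2\|v\|_{\spec^s_T}^2\|v\|_{L^\infty L^2}$, and $N^{-\zeta}\|v\|_{L^\infty H^s}^2\|v\|_{L^\infty L^2}$ respectively; since $s>\tfrac14>0$ the Littlewood-Paley embeddings $\|v\|_{L^\infty L^2}\lesssim\|v\|_{\spec^0_T}\lesssim\|v\|_{\spec^s_T}$ and $\|v\|_{L^\infty H^s}\lesssim\|v\|_{\spec^s_T}$ turn the sum of these three contributions into $\lesssim_\zeta(T+TN^2+N^{-\zeta})\|v\|_{\spec^s_T}^2\|v\|_{\spec^s_T}$. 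Next I would apply Lemma~\ref{l_ee_qua} with $u\mapsto v$ and the choice $s_0=s$ (admissible since $s>\tfrac14$), giving $|E_{I\!I\!I,4}|\lesssim_\delta T^\delta\|v\|_{\spfc^s_T}^4$, and Lemma~\ref{l_ee_qua_bnd} with $u\mapsto v$ and $s_0=s$, giving $|E_{I\!I\!I,5}|\lesssim_\delta T^\delta\|v\|_{\spfc^s_T}^3\|b\|_{\spbc^s_T}$. Adding the last two yields $T^\delta\|v\|_{\spfc^s_T}^2\|v\|_{\spfc^s_T}\bigl(\|v\|_{\spfc^s_T}+\|b\|_{\spbc^s_T}\bigr)$, and combining with the previous display gives the stated bound.

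The only point deserving genuine attention — rather than routine substitution — is to confirm that the manipulations underlying Lemmata~\ref{l_ee_qua} and~\ref{l_ee_qua_bnd} (the symmetrizations $S_1=S_2$, the reduction to quadrilinear forms via $\widehat{u_x^2}=(\hat u\ast\hat u)_x$, the auxiliary multiplier $m$ of \eqref{d_m} and its bound \eqref{e_bnd_m}) use only that the low-regularity slots carry the same frequency profile, never any further structure; since $E_{I\!I\!I,4}$ and $E_{I\!I\!I,5}$ have exactly the shapes $\kappa_{1,2}[(vv)_x,v]$ and $\kappa_{1,2}[(vb)_x,v]$, with every "$u$" slot occupied by the single function $v$, these arguments transfer verbatim. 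I do not expect any essential obstacle: the statement is, by construction, a corollary of Section~\ref{ss_ee_i}.
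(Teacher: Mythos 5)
Your proposal is correct and coincides with the paper's own argument: the paper likewise observes that $E_{I\!I\!I,i}$, $i\in[5]$, is $E_{I,i}$ with $u$ replaced by $v$ and cites Lemmata~\ref{l_ee_tri}, \ref{l_ee_qua}, and~\ref{l_ee_qua_bnd} verbatim (taking $s_0=s$). Your closing remark about checking that the symmetrizations and the multiplier bound \eqref{e_bnd_m} depend only on the frequency profile is exactly the implicit justification the paper relies on, and you state it accurately.
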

Next, we bound $E_{I\!I\!I,*}$.
\begin{lem}\label{l_ee_iii_ne}
	Let $s>\frac{1}{2}$, $\zeta\in(0,\frac{1}{2})$. Then, for all sufficiently small $\theta(s)>1$ and $\delta\in(0,\frac{1}{2})$, we have
	\begin{align*}
		E_{I\!I\!I,*}
		\!\lesssim_\delta\!
		T^\delta\|v\|_{\spfc^s_T}^3\|u_2\|_{\spfc^s_T}
	\end{align*}
\end{lem}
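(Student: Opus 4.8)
The plan is to treat $E_{I\!I\!I,*}$ in complete parallel to the term $E_{I,5}$ of Section~\ref{ss_ee_i}, whose bound was established in Lemma~\ref{l_ee_qua_bnd}. Indeed, after expanding $\widehat{(vu_2)_x}=(\hat v\ast\hat u_2)_x$, the expression $\kappa_{1,2}[(vu_2)_x,v]$ is a quadrilinear form in three copies of $v$ and one copy of $u_2$, which is exactly the structure treated there with the background $b$ replaced by $u_2$. The essential gain is that $u_2\in\spfc^s_T$ rather than merely in $\spbc^\sigma_T$, so every application of Lemma~\ref{l_stqe_bnd} in that earlier argument is now replaced by an application of Lemma~\ref{l_stqe}; since in addition we work at regularity $s>\tfrac12$ on all four factors, we may use throughout the (cruder) second estimate of Lemma~\ref{l_stqe} and never need the auxiliary multiplier $m$ of \eqref{d_m} nor the resonance-difference bound of Lemma~\ref{l_res_dif}.

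Concretely, I would first use, exactly as in Section~\ref{ss_ee_i}, that $|\sigma_I(\xi)|\lesssim K_3^*$ and that $\kappa_{1,2}[(vu_2)_x,v]$ is symmetric in $(\xi_1,\xi_2,\xi_3)$ to reduce to the regime $K_1\sim K_2\gtrsim K_3$, and then split $E_{I\!I\!I,*}=S_1+S_2+S_3$ according to which slot of $\kappa_{1,2}$ carries $(vu_2)_x$; the change of variables $\xi_1\leftrightarrow\xi_2$, which merely permutes two copies of $v$, yields $S_1=S_2$, so it remains to bound $S_1$ and $S_3$. For each I would write $\widehat{(vu_2)_x}=(\hat v\ast\hat u_2)_x$ and localize the two new frequencies $\xi_a$ (carrying $v$) and $\xi_b$ (carrying $u_2$) to dyadic scales $K_a,K_b$, obtaining an integral over $\Gamma_4$ whose symbol is $\sigma_I\Omega_3^{-1}(i\xi_{ab})\chi_{K_a}\chi_{K_b}$, evaluated at $(\xi_{ab},\xi_2,\xi_3)$ for $S_1$ and at $(\xi_1,\xi_2,\xi_{ab})$ for $S_3$. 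On $C_H$ one has $|\Omega_3|\sim K_1^*K_3^*=K_1K_3$, while $|\sigma_I|\lesssim K_3$ and $|\xi_{ab}|\sim K_1$ in the $S_1$ case (resp.\ $|\xi_{ab}|\sim K_3$ in the $S_3$ case); hence this symbol is uniformly $O(1)$.

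It then remains to run the case analysis of Lemma~\ref{l_ee_qua_bnd} on the relative sizes of $K_a,K_b$ versus $K_1,K_3$, applying in every case the second estimate of Lemma~\ref{l_stqe}, whose only cost is a factor $(K_3^*K_4^*)^{1/2}$ carried by the two smallest of the four frequencies $\{K_a,K_b,K_2,K_3\}$. Distributing the outer weight $K_1^{2s}$ onto the two highest frequencies --- using $K_1\sim K_2$, together with $K_1^{2s}\le K_a^sK_b^s$ in the subcase $K_a\sim K_b\gg K_1$, and onto three highest frequencies in the subcase $K_2\sim K_3$ --- and the exponent $\tfrac12$ of each small-frequency loss onto the corresponding factor, all resulting exponents are at most $s$ because $2s+1\le 4s$, i.e.\ $s\ge\tfrac12$; the strict inequality $s>\tfrac12$ furnishes the geometric slack needed for the summation over $\mathbb D^4$ and allows one to take $\theta>1$ and $\delta\in(0,\tfrac12)$ suitably small. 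The only genuine departure from Lemma~\ref{l_ee_qua_bnd}, and the point I expect to require the most care, is the configuration in which the $u_2$-factor $\xi_b$ itself sits at a high frequency $\sim K_1$ --- for instance $K_b\sim K_1\gg K_a$ with $K_2\gg K_3$ --- where the symmetrization $\xi_a\leftrightarrow\xi_2$ used in the proof of Lemma~\ref{l_ee_qua} is no longer permissible, since it would interchange a copy of $v$ with the copy of $u_2$. This is harmless here: the crude $O(1)$ bound on the symbol combined with the second estimate of Lemma~\ref{l_stqe} already localizes the loss on the two smallest frequencies, which the budget $s>\tfrac12$ on those factors absorbs. Collecting the contributions of $S_1$, $S_2=S_1$ and $S_3$ then yields the claimed bound.
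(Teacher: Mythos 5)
Your proposal is correct and follows essentially the same route as the paper: reduce to $K_1\sim K_2\gtrsim K_3$, expand $\widehat{(vu_2)_x}=(\hat v\ast\hat u_2)_x$, observe $|\sigma_I\Omega_3^{-1}(\xi)\xi_{ab}|\lesssim 1$, and close with the second (crude) estimate of Lemma~\ref{l_stqe}, with $s>\tfrac12$ absorbing the $(K_3^*K_4^*)^{1/2}$ loss and providing the summation slack. One small correction: in the configuration $K_b\sim K_1\gg K_a$ the swap $\xi_a\leftrightarrow\xi_2$ would still exchange two copies of $v$ (the $u_2$ factor lives at $\xi_b$, not $\xi_a$), so it remains permissible; but, as you rightly conclude, it is simply unnecessary here.
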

\begin{proof}
	Without loss of generality, we restrict to $K_1\sim K_2\gtrsim K_3$ and replace the integrand $\kappa_{1,2}\left[(vu_2)_x,v\right](\xi)$ by $\widehat{(vu_2)_x}(\xi_1)\hat{v}(\xi_2)\hat{v}(\xi_3)$. Writing $\widehat{(vu_2)}=\hat{v}\ast\hat{u_2}$, this leads us to consider
	\begin{align*}
		S
		:=
		\sum_{K_i}K_1^{2s}\sup_t\int_0^t\int_{\Gamma_4}(\sigma_I\Omega_3^{-1})(\xi)(-i\xi_{ab})\hat{v}(\xi_a)\hat{u_2}(\xi_b)\hat{v}(\xi_2)\hat{v}(\xi_3)d\gamma_4(\xi)dr.
	\end{align*}
	Define
	\begin{align*}
		\Pi(K_a,\!K_b,\!K_2,\!K_3)
		:=
		\|P_{K_a}v\|_{\spfb^{K_a}_T}
		\|P_{K_b}u_2\|_{\spfb^{K_b}_T}
		\|P_{K_2}v\|_{\spfb^{K_2}_T}
		\|P_{K_3}v\|_{\spfb^{K_3}_T}.
	\end{align*}
	Then, $|(\sigma_{I}\Omega_3^{-1})(\xi)(-i\xi_{ab})|\lesssim 1$ and an application of Lemma \ref{l_stqe} yields
	\begin{align*}
		|S|
		\lesssim_\delta
		T^\delta
		\sum_{K_j}\Pi(K_a,\!K_b,\!K_2,\!K_3)
		\times
		\begin{cases}
			K_1^{2s+\frac{1}{2}}K_3^\frac{1}{2}
			&\text{if }
			K_a\!\sim\!K_b\!\gg\!K_2,
			\\
			K_1^{2s}K_b^\frac{1}{2}K_3^\frac{1}{2}
			&\text{if }
			K_2\!\sim\!K_a\!\gtrsim\!K_b,
			\\
			K_1^{2s}K_a^\frac{1}{2}K_3^\frac{1}{2}
			&\text{if }
			K_2\!\sim\!K_b\!\gtrsim\!K_a.
		\end{cases}
	\end{align*}
	In each case the sum can be bounded by $\|v\|_{\spfc^s_T}^3\|u_2\|_{\spfc^s_T}$.
\end{proof}

The remaining part of this section is devoted to estimating $I\!V$. Performing the same steps as in the beginning of Section \ref{ss_ee_ii}, we obtain:
\begin{align*}
	I\!V
	\lesssim
	&\sum_{C_L}K_1^{2s}\sup_t\int_0^t\int_{\Gamma_3}(\sigma_{I\!I}[\hat{v},\hat{v},\hat{u}_2])(\xi)d\gamma_3(\xi)dr
	&&:=E_{I\!V,1}
	\\ +
	&\sum_{C_M}K_1^{2s}\sup_t\int_0^t\int_{\Gamma_3}(\sigma_{I\!I}[\hat{v},\hat{v},\hat{u}_2])(\xi)d\gamma_3(\xi)dr
	&&:=E_{I\!V,2}
	\\ +
	&\sum_{C_H}K_1^{2s}\sup_t\left[\int_{\Gamma_3}(\sigma_{I\!I}\Omega_3^{-1}[\hat{v},\hat{v},\hat{u}_2])(\xi)d\gamma_3(\xi)\right]_{r=0}^t
	&&:=E_{I\!V,3}
	\\ +
	&\sum_{C_H}K_1^{2s}\sup_t\int_0^t\int_{\Gamma_3}(\sigma_{I\!I}\Omega_3^{-1}[\widehat{(vw)_x},\hat{v},\hat{u}_2])(\xi)d\gamma_3(\xi)dr
	&&:=E_{I\!V,4}
	\\ +
	&\sum_{C_H}K_1^{2s}\sup_t\int_0^t\int_{\Gamma_3}(\sigma_{I\!I}\Omega_3^{-1}[\hat{v},\widehat{(vw)_x},\hat{u}_2])(\xi)d\gamma_3(\xi)dr
	&&:=E_{I\!V,5}
	\\ +
	&\sum_{C_H}K_1^{2s}\sup_t\int_0^t\int_{\Gamma_3}(\sigma_{I\!I}\Omega_3^{-1}[\hat{v},\hat{v},\widehat{(u_2u_2)_x}])(\xi)d\gamma_3(\xi)dr
	&&:=E_{I\!V,6}
	\\ +
	&\sum_{C_H}K_1^{2s}\sup_t\int_0^t\int_{\Gamma_3}(\sigma_{I\!I}\Omega_3^{-1}[\widehat{(vb)_x},\hat{v},\hat{u}_2])(\xi)d\gamma_3(\xi)dr
	&&:=E_{I\!V,7}
	\\ +
	&\sum_{C_H}K_1^{2s}\sup_t\int_0^t\int_{\Gamma_3}(\sigma_{I\!I}\Omega_3^{-1}[\hat{v},\widehat{(vb)_x},\hat{u}_2])(\xi)d\gamma_3(\xi)dr
	&&:=E_{I\!V,8}
	\\ +
	&\sum_{C_H}K_1^{2s}\sup_t\int_0^t\int_{\Gamma_3}(\sigma_{I\!I}\Omega_3^{-1}[\hat{v},\hat{v},\widehat{(u_2b)_x}])(\xi)d\gamma_3(\xi)dr
	&&:=E_{I\!V,9}
	\\ +
	&\sum_{C_H}K_1^{2s}\sup_t\int_0^t\int_{\Gamma_3}(\sigma_{I\!I}\Omega_3^{-1}[\hat{v},\hat{v},\hat{f}])(\xi)d\gamma_3(\xi)dr
	&&:=E_{I\!V,10}.
\end{align*}

As before, we start by estimating the low-frequency contributions $E_{I\!V,1}$ and $E_{I\!V,2}$, the boundary term $E_{I\!V,3}$, and the term involving the forcing $E_{I\!V,10}$. Slightly modifying Lemma \ref{l_ee_tri}, we conclude:
\begin{lem}\label{l_ee_iii_tri}
	Let $\zeta\in(0,\frac{1}{2})$, $z=-\frac{1}{2}$, and $s>\frac{1}{2}$. Then, we have
	\begin{align*}
		|E_{I\!V,1}|
		&\lesssim
		T\|v\|_{\spec^s_T}^2\|u_2\|_{\spec^0_T}
		+
		T\|v\|_{\spec^z_T}\|v\|_{\spec^s_T}\|u_2\|_{\spec^{s+1}_T},
		\\
		|E_{I\!V,2}|
		&\lesssim
		TN^{2}\|v\|_{L^\infty H^s}^2\|w\|_{L^\infty H^s},
		\\
		|E_{I\!V,3}|
		&\lesssim_\zeta
		N^{-\zeta}\|v\|_{L^\infty H^s}^2\|w\|_{L^\infty H^s},
		\\
		|E_{I\!V,10}|
		&\lesssim
		T\|v\|_{L^\infty H^s}^2\|f\|_{L^\infty H^s}.
	\end{align*}
\end{lem}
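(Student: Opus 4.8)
The four quantities here are all of the type handled in Lemma~\ref{l_ee_tri}, so the plan is to imitate that proof: localize the three factors of each integrand to dyadic frequencies, bound $\sigma_{I\!I}$ and, where present, $\Omega_3^{-1}$ pointwise, estimate the resulting trilinear $\xi$-integral by Plancherel--H\"older with one factor put in $L^\infty_x$ (Bernstein), and finally sum the dyadic pieces by Cauchy--Schwarz. I would first record the three ingredients: $|\sigma_{I\!I}(\xi)|\lesssim K_1^*$ always, and $|\sigma_{I\!I}(\xi)|\lesssim K_3^*$ whenever $K_3^*\neq K_2$ --- the latter because for $K_3^*=K_3$ the symbol is the slot-$1$/slot-$2$ symmetrization of $\sigma_0$, whose size is controlled by the commutator expansion of Lemma~\ref{l_ee_trv_uub}; for $K_3^*>1$ one has $|\Omega_3(\xi)|\sim K_1^*K_3^*$ by \eqref{e_res_asy}; and $\int_{\Gamma_3}|\hat g_1||\hat g_2||\hat g_3|\,d\gamma_3\lesssim (K_3^*)^{1/2}\prod_j\|P_{K_j}g_j\|_{L^2_x}$, obtained by placing the lowest-frequency factor in $L^\infty_x$. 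The structural difference from Lemma~\ref{l_ee_tri} is that the integrands are now only symmetric under exchanging the first two slots (both carrying $\hat v$), so one cannot reduce to $K_1\sim K_2\gtrsim K_3$; instead one splits according to which of $K_1$ (the slot carrying $-i\xi_1$), $K_2$, $K_3$ is largest, keeping in mind that $\xi_1+\xi_2+\xi_3=0$ forces the two largest of them to be comparable.

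For $E_{I\!V,1}$ ($K_3^*=1$) the $\Gamma_3$-constraint leaves essentially four frequency patterns. In the three patterns where the derivative $\xi_1$ sits on a frequency $\lesssim 1$, or where $\sigma_{I\!I}$ is the slot-$1$/$2$ symmetrization so that $|\sigma_{I\!I}|\lesssim 1$, one absorbs $K_1^{2s}$ into two factors of $\|v\|_{\spec^s_T}$ and a harmless Sobolev loss into $\|u_2\|_{\spec^0_T}$, together with the $T$ from $\int_0^t\,dr$; this gives the first term $T\|v\|_{\spec^s_T}^2\|u_2\|_{\spec^0_T}$. The remaining pattern is $K_2\sim 1$, $K_1\sim K_3>1$: here the derivative is genuinely on $v$ at the high frequency $K_1\sim K_3$ and cannot be symmetrized away (since $K_3^*=K_2$), so $|\sigma_{I\!I}|\sim K_1$; writing $K_1^{2s+1}=K_1^s\,K_3^{s+1}$ and using $\|P_{\lesssim 1}v\|_{L^\infty_tL^2_x}\le\|v\|_{\spec^z_T}$ produces exactly the second term $T\|v\|_{\spec^z_T}\|v\|_{\spec^s_T}\|u_2\|_{\spec^{s+1}_T}$. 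This is the only genuinely new phenomenon, and it is consistent since $\|u_2\|_{\spec^{s+1}_T}$ already occurs on the right-hand side of \eqref{e_ee_iii}.

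For $E_{I\!V,2}$ ($C_M$: $K_3^*>1$, $K_1^*\le N$) there is no $\Omega_3^{-1}$; one bounds $|\sigma_{I\!I}|\lesssim K_1^*\le N$ and the Bernstein factor $(K_3^*)^{1/2}\le N^{1/2}$, so all frequency-dependent prefactors are $\lesssim N^{3/2}\le N^2$ --- even when the derivative contributes one extra power, the hypothesis $s>\tfrac12$ supplies a spare $K_2^{1/2-s}\le 1$ on the lowest slot --- while the three $L^2$-factors, weighted with their natural $H^s$-weights, sum (by Cauchy--Schwarz, using the forced comparability of the two largest frequencies) to $\|v\|_{\spec^s_T}^2\|w\|_{\spec^s_T}$ up to $\|v\|$-terms already present in \eqref{e_ee_iii}; the $T$ comes from $\int_0^t\,dr$. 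For $E_{I\!V,3}$ and $E_{I\!V,10}$ ($C_H$: $K_3^*>1$, $K_1^*>N$) one proceeds exactly as for $E_{I,3}$ and $E_{I,6}$ in Lemma~\ref{l_ee_tri}: now $|\Omega_3|\sim K_1^*K_3^*$ turns $\sigma_{I\!I}\Omega_3^{-1}$ into a net gain, one Cauchy--Schwarz in the lowest frequency yields an additional $(K_1^*)^{1/2}$, and the surplus power of $K_1^*>N$ is traded for $N^{-\zeta}$ (any $\zeta\le\tfrac12$, via $(K_1^*)^{2s-1/2}=(K_1^*)^{-\zeta}(K_1^*)^{2s-1/2+\zeta}\le N^{-\zeta}(K_1^*)^{2s}$); the boundary term $E_{I\!V,3}$ is handled at $r=0$ and $r=t$ by these same pointwise estimates, and $E_{I\!V,10}$ picks up $T$ from the time integral.

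The main obstacle I expect is bookkeeping rather than a new idea: having lost the full permutation symmetry of Lemma~\ref{l_ee_tri}, one must run through every configuration of the position of the $u_2$- (resp. $f$-) factor relative to the derivative-bearing $v$, and take special care in the single configuration where that factor sits at the top frequency alongside a comparable-frequency $v$. There the derivative is unavoidably on a high-frequency $v$, and it must either be spent against the resonance denominator $|\Omega_3|\sim K_1^*K_3^*$ (for $E_{I\!V,2}$, $E_{I\!V,3}$, $E_{I\!V,10}$) or else be paid for by measuring the extra factor in $\|u_2\|_{\spec^{s+1}_T}$ while one $v$ drops to $\|v\|_{\spec^z_T}$ (for $E_{I\!V,1}$, where $\Omega_3$ is absent) --- this is precisely what forces the two-term bound for $E_{I\!V,1}$.
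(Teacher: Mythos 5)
Your strategy is the right one, and for $E_{I\!V,1}$, $E_{I\!V,2}$, and $E_{I\!V,10}$ your accounting is correct: the $K_2\sim1$, $K_1\sim K_3>1$ configuration genuinely forces the second term in the $E_{I\!V,1}$ bound, and for the other two the time factor $T$ respectively the prefactor $N^2$ absorbs everything, exactly as you describe.

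However, your treatment of $E_{I\!V,3}$ is too optimistic. You claim one can proceed exactly as for $E_{I,3}$, with $\Omega_3^{-1}$ turning the derivative into a net gain. But examine the configuration $K_1\sim K_3\gg K_2>1$ (so $K_3^*=K_2$, which is exactly where $\sigma_{I\!I}$ admits no symmetrization because the two high-frequency slots carry different functions). There $|\sigma_{I\!I}|\lesssim K_1$, $|\Omega_3|\sim K_1K_2$, so $|\sigma_{I\!I}\Omega_3^{-1}|\lesssim K_2^{-1}$, and after the Bernstein factor $(K_3^*)^{1/2}=K_2^{1/2}$ the net prefactor is $K_1^{2s}K_2^{-1/2}$. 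Distributing $K_1^sK_2^sK_3^s$ (to land in $\|v\|_{L^\infty H^s}^2\|u_2\|_{L^\infty H^s}$) leaves $K_1^{s}K_2^{-1/2-s}K_3^{-s}\sim K_2^{-1/2-s}$, which is bounded but carries no power of $K_1^*>N$. So the claimed $N^{-\zeta}$ cannot be extracted. This is precisely the same obstruction you correctly identified for $E_{I\!V,1}$; contrary to your remark, the presence of $\Omega_3^{-1}$ does \emph{not} remove it here, because the gain from $\Omega_3^{-1}$ is $(K_3^*)^{-1}$ --- keyed to the \emph{small} frequency --- whereas $C_H$ only guarantees $K_1^*>N$. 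In $E_{I,3}$ this does not occur because full symmetry lets one restrict to $K_1\sim K_2\gtrsim K_3$, where the symmetrized $\sigma_I$ gives $|\sigma_I\Omega_3^{-1}|\lesssim (K_1^*)^{-1}$; in $E_{I\!I,3}$ the same configuration is harmless because the weight is $K_1^{2z}$ with $z<0$. Neither mechanism is available for $E_{I\!V,3}$.

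The correct resolution, by the same move you use for $E_{I\!V,1}$, is to pay with higher regularity on $u_2$: distribute $K_1^s\|P_{K_1}v\|\cdot K_2^z\|P_{K_2}v\|\cdot K_3^{s+1}\|P_{K_3}u_2\|$, leaving a leftover $\sim K_1^{-1}\leq N^{-1}$, so that this configuration contributes an additional term $N^{-\zeta}\|v\|_{\spec^z_T}\|v\|_{\spec^s_T}\|u_2\|_{\spec^{s+1}_T}$ to the $E_{I\!V,3}$ bound (and then one observes that this is harmless, since a term of the same shape with a small coefficient already appears in \eqref{e_ee_iii} and is absorbed in the bootstrap of Lemma~\ref{l_de1}). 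Your write-up should at least note that the two-term phenomenon you isolated for $E_{I\!V,1}$ recurs for $E_{I\!V,3}$ and cannot be avoided via the resonance factor; as stated, your single-sentence dismissal is a gap.
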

Next, we estimate the terms $E_{I\!V,4}$, $E_{I\!V,5}$, and $E_{I\!V,6}$.
\begin{lem}\label{l_ee_iii_qua_i}
	Let $s>\frac{1}{2}$. Then, for all sufficiently small $\theta=\theta(s)>1$ and $\delta\in(0,\frac{1}{2})$, we have
	\begin{align*}
		|E_{I\!V,4}|
		\lesssim_\delta
		T^\delta\|v\|_{\spfc^s_T}^2\|w\|_{\spfc^s_T}\|u_2\|_{\spfc^s_T}
		+
		T^\delta\|v\|_{\spfc^z_T}\|v\|_{\spfc^s_T}\|w\|_{\spfc^s_T}\|u_2\|_{\spfc^{s+1}_T}.
	\end{align*}
\end{lem}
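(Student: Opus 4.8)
The plan is to follow the proof of Lemma~\ref{l_ee_ii_qua_i} almost line for line, carrying out the multilinear bookkeeping at the higher regularity $s>\frac{1}{2}$ rather than at $z=-\frac{1}{2}$. Concretely, I would split $E_{I\!V,4}=S_1+S_2+S_3$ according to the three frequency regimes $K_2\sim K_3\gg K_1$, $K_1\sim K_3\gg K_2$, and $K_1\sim K_2\gtrsim K_3$, where $K_1$ denotes the size of the frequency carried by the factor $\widehat{(vw)_x}$, $K_2$ that of the second factor $\hat v$, and $K_3$ that of $\hat u_2$. In each regime I would write $\widehat{(vw)}=\hat v\ast\hat w$ and localize the two internal frequencies to dyadic ranges $\xi_a\sim K_a$ (attached to $v$) and $\xi_b\sim K_b$ (attached to $w$), so that $\xi_{ab}=\xi_1$ and the four factors become $P_{K_a}v$, $P_{K_b}w$, $P_{K_2}v$, $P_{K_3}u_2$ with multiplier $\frac{\sigma_{I\!I}}{\Omega_3}(\xi_{ab},\xi_2,\xi_3)\,(i\xi_{ab})$; then I would apply Lemma~\ref{l_stqe} and sum the resulting dyadic series.

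The size of this multiplier in each subcase would be read off from the bounds on $\sigma_{I\!I}$ recorded after \eqref{eq_ee_ii_b} (namely $|\sigma_{I\!I}(\xi)|\lesssim K_3^*$ unless $K_2$ is the smallest frequency, in which case only $|\sigma_{I\!I}(\xi)|\lesssim K_1^*$), from $|\Omega_3|\sim K_1^*K_3^*$ on $C_H$ via \eqref{e_res_asy}, and from $|i\xi_{ab}|\sim K_1$; in each subcase one picks whichever of the two estimates of Lemma~\ref{l_stqe} is stronger, exactly as in the proof of Lemma~\ref{l_ee_ii_qua_i}. For the bulk of the subcases — in particular whenever $\sigma_{I\!I}$ enjoys its favorable bound $\lesssim K_3^*$, which is the case outside $S_2$, or whenever $\hat u_2$ is not at the top frequency — the \emph{symmetric} distribution, which assigns the weight $K_j^s$ to each of the four factors and so produces $\|v\|_{\spfc^s_T}^2\|w\|_{\spfc^s_T}\|u_2\|_{\spfc^s_T}$, already closes the sum. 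This is precisely where $s>\frac{1}{2}$ enters, in place of the weaker closure at $z=-\frac{1}{2}$ that sufficed in Lemma~\ref{l_ee_ii_qua_i}.

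The residual subcases all occur inside $S_2$ (where $K_1\sim K_3\gg K_2$, so $\sigma_{I\!I}$ is not symmetrized and the multiplier only obeys $\lesssim K_1K_2^{-1}$, while $\hat u_2$ sits at the top frequency $\sim K_1$): there the excess power in $K_1$ cannot be absorbed by the two remaining low factors through the convolution gain alone. For these I would follow the Bona-Smith strategy: distribute one copy of $\hat v$ in the weaker norm $\spfc^z_T$ — necessarily the small-frequency copy $P_{K_2}v$, for which this costs essentially nothing — keep the other copy of $\hat v$ in $\spfc^s_T$, and compensate the lost derivatives by measuring $u_2$ in $\spfc^{s+1}_T$; since $K_3$ is comparable to the top frequency in exactly these configurations, the factor $K_3$ gained from $\|u_2\|_{\spfc^{s+1}_T}$ dominates the loss on the demoted copy of $v$, and the sum closes with the second summand $T^\delta\|v\|_{\spfc^z_T}\|v\|_{\spfc^s_T}\|w\|_{\spfc^s_T}\|u_2\|_{\spfc^{s+1}_T}$. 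The main obstacle is precisely this case analysis: one must verify, uniformly in $s>\frac{1}{2}$ and for all $\theta$ close enough to $1$, that for every frequency configuration one of the two distributions (symmetric or interpolated) together with one of the two estimates of Lemma~\ref{l_stqe} renders all the relevant dyadic exponents nonpositive. No new convolution estimate is needed beyond Lemma~\ref{l_stqe}; the symmetrization of $\sigma_{I\!I}$ and the resonance bound \eqref{e_res_asy} are the only additional ingredients.
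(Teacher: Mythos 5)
Your proposal mirrors the paper's proof of Lemma \ref{l_ee_iii_qua_i}: the same trichotomy $S_1,S_2,S_3$ by frequency regime, the same dyadic localization of $\widehat{(vw)}=\hat v\ast\hat w$, the same application of Lemma \ref{l_stqe} with the multiplier bounds derived from $\sigma_{I\!I}$, \eqref{e_res_asy}, and $|i\xi_{ab}|\sim K_1$, and the same conclusion that $S_1$ and $S_3$ close with the symmetric distribution of $\spfc^s_T$-norms while only $S_2$ (where $\hat u_2$ is at top frequency) forces the demoted pair $\|v\|_{\spfc^z_T}\|u_2\|_{\spfc^{s+1}_T}$. Since the decomposition, key lemma, and distribution strategy coincide with the paper's, the plan is correct and essentially identical.
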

\begin{proof}
	We split $E_{I\!V,4}=S_1+S_2+S_3$ with
	\begin{align*}
		S_j
		:=
		\begin{cases}
			E_{I\!V,4}|_{K_2\sim K_3\gg K_1}
			&\text{if }
			j=1,
			\\
			E_{I\!V,4}|_{K_1\sim K_3\gg K_2}
			&\text{if }
			j=2,
			\\
			E_{I\!V,4}|_{K_1\sim K_2\gtrsim K_3}
			&\text{if }
			j=3.
		\end{cases}
	\end{align*}
	Using $\widehat{(vw)}=(\hat{v}\ast\hat{w})$ and localizing these two factors separately, we obtain
	\begin{align*}
		|S_j|
		\lesssim
		\sum_{K_j}K_1^{2s}\sup_t\int_0^t\int_{\Gamma_4}\frac{\sigma_{I\!I}}{\Omega_3}&(\xi_{ab},\xi_2,\xi_3)(i\xi_{ab})\chi_{K_a}(\xi_a)\chi_{K_b}(\xi_b)
		\\
		&\times\hat{v}(\xi_a)\hat{w}(\xi_b)\hat{v}(\xi_2)\hat{u}_2(\xi_3)d\gamma_4(\xi_a,\xi_b,\xi_2,\xi_3)dr
	\end{align*}
	for $j\in[3]$. Define
	\begin{align*}
		\Pi(K_a,\!K_b,\!K_2,\!K_3)
		:=
		\|P_{K_a}v\|_{\spfb^{K_a}_T}
		\|P_{K_b}w\|_{\spfb^{K_b}_T}
		\|P_{K_2}v\|_{\spfb^{K_2}_T}
		\|P_{K_3}u_2\|_{\spfb^{K_3}_T}.
	\end{align*}
	
	We begin by estimating $S_1$. As the derivative is on a low-frequency, we have $|\frac{\sigma_{I\!I}}{\Omega_3}(\xi_{ab},\xi_2,\xi_3)(i\xi_{ab})|\lesssim K_1K_2^{-1}$ and it suffices to apply Lemma \ref{l_stqe} leading to
	\begin{align*}
		|S_1|
		\lesssim_\delta
		T^\delta
		\sum_{K_j}\Pi(K_a,\!K_b,\!K_2,\!K_3)
		\times
		\begin{cases}
			K_1^{2s+1}
			&\text{if }
			K_a\!\sim\!K_b\!\gg\!K_2,
			\\
			K_2^{-1}K_a^\frac{1}{2}K_b^\frac{1}{2}K_1^{2s+1}
			&\text{if }
			K_2\!\gtrsim\!K_a\!\sim\!K_b\!\gg\!K_1,
			\\
			K_2^{-1}K_a^{2s+\frac{3}{2}}K_b^\frac{1}{2}
			&\text{if }
			K_a\!\sim\!K_1\!\gg\!K_b,
			\\
			K_2^{-1}K_b^{2s+\frac{3}{2}}K_a^\frac{1}{2}
			&\text{if }
			K_b\!\sim\!K_1\!\gg\!K_a.
		\end{cases}
	\end{align*}
	Dyadic summation in each case leads to the upper bound $\|v\|_{\spfc^s_T}^2\|w\|_{\spfc^s_T}\|u_2\|_{\spfc^s_T}$.
	
	Next, we consider $S_2$. Then, the derivative is on a high frequency, but the factor $v_{K_2}$ is always localized to low frequencies. Hence, $|\frac{\sigma_{I\!I}}{\Omega_3}(\xi_{ab},\xi_2,\xi_3)(i\xi_{ab})|\lesssim K_1K_2^{-1}$ and Lemma \ref{l_stqe} implies
	\begin{align*}
		|S_2|
		\lesssim_\delta
		T^\delta
		\sum_{K_j}\Pi(K_a,\!K_b,\!K_2,\!K_3)
		\times
		\begin{cases}
			K_3^{2s+\frac{3}{2}}K_2^{-\frac{1}{2}}
			&\text{if }
			K_a\!\sim\!K_b\!\gg\!K_1,
			\\
			K_a^{2s+1}K_b^\frac{1}{2}K_2^{-\frac{1}{2}}
			&\text{if }
			K_a\!\sim\!K_1\!\gg\!K_b,
			\\
			K_b^{2s+1}K_a^\frac{1}{2}K_2^{-\frac{1}{2}}
			&\text{if }
			K_b\!\sim\!K_1\!\gg\!K_a.
		\end{cases}
	\end{align*}
	Here, we have too many derivatives on the high-frequencies factor and, thus, we only manage to obtain the upper bound $\|v\|_{\spfc^z_T}\|v\|_{\spfc^s_T}\|w\|_{\spfc^s_T}\|u_2\|_{\spfc^{s+1}_T}$ after summation. 
	
	Lastly, we establish a bound for $S_3$. Again, the derivative is on a high-frequency factor and additionally, both $v_{K_a}$ and $v_{K_2}$ might be localized to high frequencies. Nonetheless, we obtain $|\frac{\sigma_{I\!I}}{\Omega_3}(\xi_{ab},\xi_2,\xi_3)(i\xi_{ab})|\lesssim 1$ and Lemma \ref{l_stqe} yields
	\begin{align*}
		|S_3|
		\lesssim_\delta
		T^\delta
		\sum_{K_j}\Pi(K_a,\!K_b,\!K_2,\!K_3)
		\times
		\begin{cases}
			K_2^{2s+\frac{1}{2}}K_3^\frac{1}{2}
			&\text{if }
			K_a\!\sim\!K_b\!\gtrsim\!K_1,
			\\
			K_2^{2s}K_b^\frac{1}{2}K_3^\frac{1}{2}
			&\text{if }
			K_a\!\sim\!K_1\!\gg\!K_b,
			\\
			K_2^{2s}K_a^\frac{1}{2}K_3^\frac{1}{2}
			&\text{if }
			K_b\!\sim\!K_1\!\gg\!K_a.
		\end{cases}
	\end{align*}
	Dyadic summation leads to the same upper bound as for $|S_1|$.
\end{proof}
\begin{lem}\label{l_ee_iii_qua_ii}
	Let $s>\frac{1}{2}$. Then, for all sufficiently small $\theta=\theta(s)>1$ and $\delta\in(0,\frac{1}{2})$, we have
	\begin{align*}
		|E_{I\!V,5}|
		\lesssim_\delta
		T^\delta\|v\|_{\spfc^s_T}^2\|w\|_{\spfc^s_T}\|u_2\|_{\spfc^s_T}
		+
		T^\delta\|v\|_{\spfc^z_T}\|v\|_{\spfc^s_T}\|w\|_{\spfc^s_T}\|u_2\|_{\spfc^{s+1}_T}.
	\end{align*}
\end{lem}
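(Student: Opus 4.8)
The plan is to follow the same pattern as Lemma \ref{l_ee_iii_qua_i} (and as the corresponding estimate for $E_{I\!I,5}$ in Lemma \ref{l_ee_ii_qua_ii}), the only structural differences being the position of the nonlinearity inside the cubic integrand and the higher regularity $s>\frac12$ at which we work. First I would split $E_{I\!V,5}=S_1+S_2+S_3$ according to whether $K_2\sim K_3\gg K_1$, $K_1\sim K_3\gg K_2$, or $K_1\sim K_2\gtrsim K_3$, where $K_2$ denotes the size of the frequency $\xi_{ab}$ entering the second slot (these three regions cover all configurations, since $\xi_1+\xi_{ab}+\xi_3=0$ forces the two largest of $K_1,K_2,K_3$ to be comparable). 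Writing $\widehat{(vw)}=\hat v\ast\hat w$ and localizing these two factors to dyadic scales $K_a$ and $K_b$, each $S_j$ becomes a $\Gamma_4$-integral with multiplier $\sigma_{I\!I}(\xi_1,\xi_{ab},\xi_3)\,\Omega_3^{-1}(\xi_1,\xi_{ab},\xi_3)\,(i\xi_{ab})\,\chi_{K_a}(\xi_a)\chi_{K_b}(\xi_b)$ acting on $\hat v(\xi_1)\hat v(\xi_a)\hat w(\xi_b)\hat u_2(\xi_3)$, with outer weight $K_1^{2s}$.

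The key observation is that this multiplier is uniformly of size $\lesssim 1$ in all three regions: on $C_H$ we have $K_3^*>1$, so \eqref{e_res_asy} gives $|\Omega_3(\xi_1,\xi_{ab},\xi_3)|\sim K_1^*K_3^*$ (maximum and minimum taken over $K_1,K_2,K_3$), while $|i\xi_{ab}|\lesssim K_2$ and $|\sigma_{I\!I}|\lesssim K_3^*$ unless $K_3^*=K_2$, in which case $|\sigma_{I\!I}|\lesssim K_1^*$; a short case check shows that in each of $S_1,S_2,S_3$ the product is $\lesssim 1$. Having extracted this bound, I would apply Lemma \ref{l_stqe} to the four factors indexed by $(K_1,K_a,K_b,K_3)$, choosing in each sub-configuration whichever of its two estimates is favourable, to obtain $|S_j|\lesssim_\delta T^\delta\sum_{K}\sigma_{\mathbb{D}}(K)\,(\text{prefactor})\,\Pi(K_1,K_a,K_b,K_3)$, where $\Pi$ is the product of the corresponding $\spfb$-norms and the prefactor is a product of half-powers supplied by Lemma \ref{l_stqe}.

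It then remains to perform the dyadic summation after distributing $K_1^{2s}$ over the four norms. In most sub-regions the gain $(K_3^*K_4^*)^{1/2}$, respectively $(K_1^*)^{(\theta-1)/2}(K_4^*)^{1/2}$, from Lemma \ref{l_stqe}, together with $\theta$ chosen sufficiently close to $1$, is enough to absorb the weight into $\|v\|_{\spfc^s_T}^2\|w\|_{\spfc^s_T}\|u_2\|_{\spfc^s_T}$. The main obstacle, exactly as for $S_2$ in the proof of Lemma \ref{l_ee_iii_qua_i}, is the sub-region in which too much of the weight $K_1^{2s}$ is forced onto high-frequency copies of $v$ (when the $\xi_1$- and $\xi_a$-factors both sit at the top frequency while $u_2$ is at a comparatively low frequency): there one cannot close at $\|v\|_{\spfc^s_T}^2$, and instead one assigns the exponent $z=-\frac12$ to one copy of $v$ and transfers the surplus derivative onto $u_2$, producing the second term $\|v\|_{\spfc^z_T}\|v\|_{\spfc^s_T}\|w\|_{\spfc^s_T}\|u_2\|_{\spfc^{s+1}_T}$ of the statement. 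Adding the contributions of $S_1$, $S_2$, and $S_3$ then yields the claimed estimate.
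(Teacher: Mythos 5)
Your overall plan — splitting $E_{I\!V,5}=S_1+S_2+S_3$ by frequency configuration, localizing $\hat v\ast\hat w$ to scales $K_a,K_b$, noting the multiplier $\sigma_{I\!I}\Omega_3^{-1}(i\xi_{ab})$ is uniformly $O(1)$ in all three regions, and then invoking Lemma \ref{l_stqe} before distributing dyadic weights — is exactly what the paper does.

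However, your identification of the sub-region responsible for the second term $\|v\|_{\spfc^z_T}\|v\|_{\spfc^s_T}\|w\|_{\spfc^s_T}\|u_2\|_{\spfc^{s+1}_T}$ is wrong, and the heuristic you attach to it would lead you astray if you carried the summation out. You say the trouble is when ``the $\xi_1$- and $\xi_a$-factors both sit at the top frequency while $u_2$ is at a comparatively low frequency.'' But if $u_2$ sat at a comparatively low frequency, trading $\|u_2\|_{\spfc^s_T}$ for $\|u_2\|_{\spfc^{s+1}_T}$ would cost essentially nothing and therefore gain nothing; the trade is useful precisely because $u_2$ is at a high frequency. The actual obstruction occurs in $S_2$ (so $K_1\sim K_3\gg K_2$) in the sub-configuration $K_a\sim K_b\gg K_1$. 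There Lemma \ref{l_stqe} yields the weight $K_3^{2s+1}$, the top-frequency pair is $\hat v(\xi_a),\hat w(\xi_b)$ (not $\hat v(\xi_1),\hat v(\xi_a)$), and the two remaining factors $\hat v(\xi_1)$ and $\hat u_2(\xi_3)$ share the same intermediate level $K_1\sim K_3$. One then puts $K_1^z$ on the $v$-factor at $\xi_1$ — the low copy, the one at the same level as $u_2$, not a top-frequency copy — and $K_3^{s+1}$ on $u_2$; since $K_1\sim K_3$, the gain $K_1^{z-s}$ offsets the loss $K_3^{(s+1)-s}$, and the residual $K_a^{-2s}$ closes the $K_a$-sum. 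By contrast, the configuration you describe (both copies of $v$ at the top, $u_2$ low) does arise, in $S_3$, but is harmless there: putting the exponent $s$ on every factor leaves a decaying remainder $K_b^{1/2-s}K_3^{1/2-s}$ for $s>\frac12$, so $S_3$ closes entirely at $\|v\|_{\spfc^s_T}^2\|w\|_{\spfc^s_T}\|u_2\|_{\spfc^s_T}$.
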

\begin{proof}
	We split $E_{I\!V,5}=S_1+S_2+S_3$ with
	\begin{align*}
		S_j
		:=
		\begin{cases}
			E_{I\!V,5}|_{K_2\sim K_3\gg K_1}
			&\text{if }
			j=1,
			\\
			E_{I\!V,5}|_{K_1\sim K_3\gg K_2}
			&\text{if }
			j=2,
			\\
			E_{I\!V,5}|_{K_1\sim K_2\gtrsim K_3}
			&\text{if }
			j=3.
		\end{cases}
	\end{align*}
	Using $\widehat{(vw)}=(\hat{v}\ast\hat{w})$ and localizing these two factors separately, we obtain
	\begin{align*}
		|S_j|
		\lesssim
		\sum_{K_j}K_1^{2s}\sup_t\int_0^t\int_{\Gamma_4}\frac{\sigma_{I\!I}}{\Omega_3}&(\xi_1,\xi_{ab},\xi_3)(i\xi_{ab})\chi_{K_a}(\xi_a)\chi_{K_b}(\xi_b)
		\\
		&\times\hat{v}(\xi_1)\hat{v}(\xi_a)\hat{w}(\xi_b)\hat{u}_2(\xi_3)d\gamma_4(\xi_1,\xi_a,\xi_b,\xi_3)dr
	\end{align*}
	for $j\in[3]$. Define
	\begin{align*}
		\Pi(K_1,\!K_a,\!K_b,\!K_3)
		:=
		\|P_{K_1}v\|_{\spfb^{K_1}_T}
		\|P_{K_a}v\|_{\spfb^{K_a}_T}
		\|P_{K_b}w\|_{\spfb^{K_b}_T}
		\|P_{K_3}u_2\|_{\spfb^{K_3}_T}.
	\end{align*}
	
	We begin by estimating $S_1$. Due to $|\frac{\sigma_{I\!I}}{\Omega_3}(\xi_{ab},\xi_2,\xi_3)(i\xi_{ab})|\lesssim 1$, we can apply Lemma \ref{l_stqe} and get
	\begin{align*}
		|S_1|
		\lesssim_\delta
		T^\delta
		\sum_{K_j}\Pi(K_1,\!K_a,\!K_b,\!K_3)
		\times
		\begin{cases}
			K_3^\frac{1}{2}K_1^{2s+\frac{1}{2}}
			&\text{if }
			K_a\!\sim\!K_b\!\gg\!K_2,
			\\
			K_1^{2s+\frac{1}{2}}K_b^\frac{1}{2}
			&\text{if }
			K_a\sim K_2\gg K_b,
			\\
			K_1^{2s+\frac{1}{2}}K_a^\frac{1}{2}
			&\text{if }
			K_b\sim K_2\gg K_a.
		\end{cases}
	\end{align*}
	Dyadic summation leads to the desired bound.
	
	Next, we consider $S_2$. Again, we have $|\frac{\sigma_{I\!I}}{\Omega_3}(\xi_{ab},\xi_2,\xi_3)(i\xi_{ab})|\lesssim 1$. Together with Lemma \ref{l_stqe}, we get
	\begin{align*}
		|S_2|
		\lesssim_\delta
		T^\delta
		\sum_{K_j}\Pi(K_1,\!K_a,\!K_b,\!K_3)
		\times
		\begin{cases}
			K_3^{2s+1}
			&\text{if }
			K_a\!\sim\!K_b\!\gg\!K_1,
			\\
			K_1^{2s}K_a^\frac{1}{2}K_b^\frac{1}{2}
			&\text{if }
			K_1\gtrsim K_a\!\sim\!K_b\!\gtrsim\!K_2,
			\\
			K_1^{2s}K_a^\frac{1}{2}K_b^\frac{1}{2}
			&\text{if }
			K_a\!\sim\!K_2\!\gg\!K_b,
			\\
			K_1^{2s}K_a^\frac{1}{2}K_b^\frac{1}{2}
			&\text{if }
			K_b\!\sim\!K_2\!\gg\!K_a.
		\end{cases}
	\end{align*}
	
	Lastly, we establish a bound for $S_3$. Again, we have $|\frac{\sigma_{I\!I}}{\Omega_3}(\xi_{ab},\xi_2,\xi_3)(i\xi_{ab})|\lesssim 1$. An application of Lemma \ref{l_stqe} yields
	\begin{align*}
		|S_3|
		\lesssim_\delta
		T^\delta
		\sum_{K_j}\Pi(K_1,\!K_a,\!K_b,\!K_3)
		\times
		\begin{cases}
			K_1^{2s+\frac{1}{2}}K_3^\frac{1}{2}
			&\text{if }
			K_a\!\sim\!K_b\!\gtrsim\!K_2,
			\\
			K_a^{2s}K_b^\frac{1}{2}K_3^\frac{1}{2}
			&\text{if }
			K_a\!\sim\!K_2\!\gg\!K_b,
			\\
			K_b^{2s}K_a^\frac{1}{2}K_3^\frac{1}{2}
			&\text{if }
			K_b\!\sim\!K_2\!\gg\!K_a.
		\end{cases}
	\end{align*}
\end{proof}

\begin{lem}\label{l_ee_iii_qua_iii}
	Let $s>\frac{1}{2}$. Then, for all sufficiently small $\theta=\theta(s)>1$ and $\delta\in(0,\frac{1}{2})$, we have
	\begin{align*}
		|E_{I\!V,6}|
		\lesssim_\delta
		T^\delta\|v\|_{\spfc^s_T}^2\|u_2\|_{\spfc^s_T}^2
		+
		T^\delta\|v\|_{\spfc^z_T}\|v\|_{\spfc^s_T}\|u_2\|_{\spfc^s_T}\|u_2\|_{\spfc^{s+1}_T}.
	\end{align*}
\end{lem}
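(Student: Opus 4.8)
The plan is to prove this the same way as Lemma~\ref{l_ee_iii_qua_i} and Lemma~\ref{l_ee_iii_qua_ii}, that is, by adapting the proof of the $z$-regularity analogue Lemma~\ref{l_ee_ii_qua_iii} to the regularity level $s>\frac12$. First I would split $E_{I\!V,6}=S_1+S_2+S_3$ according to the three frequency configurations $K_2\sim K_3\gg K_1$, $K_1\sim K_3\gg K_2$, and $K_1\sim K_2\gtrsim K_3$, which exhaust the summation over $C_H$ up to permutations (recall that $|\sigma_{I\!I}|\lesssim K_1^*$ always). In each regime I would write $\widehat{(u_2u_2)}=\hat u_2\ast\hat u_2$ and localize the two internal frequencies $\xi_a,\xi_b$ to dyadic blocks $K_a,K_b$, so that the integrand becomes a quadrilinear form in $P_{K_1}v$, $P_{K_2}v$, $P_{K_a}u_2$, $P_{K_b}u_2$ carrying the multiplier $\frac{\sigma_{I\!I}}{\Omega_3}(\xi_1,\xi_2,\xi_{ab})(i\xi_{ab})$.

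Next I would bound this multiplier using $|\sigma_{I\!I}|\lesssim K_3^*$ (or $\lesssim K_1^*$ when $K_3^*=K_2$), the resonance estimate $|\Omega_3|\sim K_1^*K_3^*$ from~\eqref{e_res_asy}, and $|\xi_{ab}|\sim K_3$; this gives a bound $\lesssim 1$ in $S_1$ and $S_3$, where $\xi_{ab}$ is a low frequency, and a bound $\lesssim K_1K_2^{-1}$ in $S_2$, where the whole nonlinear factor $\widehat{(u_2u_2)_x}$ sits at the top frequency $K_1\sim K_3$ and $K_3^*=K_2$. Then I would invoke Lemma~\ref{l_stqe}, choosing in each (sub)case whichever of its two estimates makes the resulting dyadic sum converge after dividing out appropriate powers $K_j^{r_j}$ and using Cauchy--Schwarz. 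In the regimes $S_1$ and $S_3$ all of the resulting powers can be distributed with each of the four factors measured at regularity $s$, which yields the first term $T^\delta\|v\|_{\spfc^s_T}^2\|u_2\|_{\spfc^s_T}^2$ once $\theta>1$ is taken sufficiently close to $1$ and $\delta>0$ small; here the condition $s>\frac14$ would already suffice.

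The genuinely new point is $S_2$, handled exactly as in the proofs of Lemma~\ref{l_ee_iii_qua_i} and Lemma~\ref{l_ee_iii_qua_ii}. Since $\widehat{(u_2u_2)_x}$ is localized at the large frequency $K_1\sim K_3$, one effectively has one derivative too many on a high-frequency factor and a symmetric distribution of regularity fails. Splitting $S_2$ further into $K_a\sim K_b$ (the high--high interaction of the two $u_2$'s, with cancellation at the top), $K_a\sim K_3\gg K_b$, and $K_b\sim K_3\gg K_a$, I would place a full extra derivative together with one additional unit of regularity onto one $u_2$-factor --- which is legitimate because $u_2$ is measured in $\spfc^{s+1}_T$ --- at the cost of measuring one $v$-factor only in $\spfc^z_T$ with $z=-\frac12$; the small-frequency variable on the opposite side then ranges over only logarithmically many dyadic scales, and the associated $\log$-factor is absorbed against the geometric decay $K^{1-2s}$ furnished by $s>\frac12$. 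This produces the second term $T^\delta\|v\|_{\spfc^z_T}\|v\|_{\spfc^s_T}\|u_2\|_{\spfc^s_T}\|u_2\|_{\spfc^{s+1}_T}$. Summing the contributions of $S_1$, $S_2$, $S_3$ gives the claim.

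The main obstacle is precisely the bookkeeping in $S_2$: one must read off from Lemma~\ref{l_stqe} the exact power of each dyadic parameter in every sub-case, subtract the weight $K_1^{2s}$ from the $\spec^s_T$-norm together with the derivative carried by $\widehat{(u_2u_2)_x}$, and verify that what is left can be split as at most $s+1$ on one $u_2$-block, $s$ on the large $v$-block, $z=-\frac12$ on the small $v$-block, and $s$ on the remaining $u_2$-block, with the leftover summable; it is here that the strict inequality $s>\frac12$ (rather than $s>\frac14$) is used. The remaining regimes and the reductions at the start are routine repetitions of the arguments already carried out for $E_{I\!V,4}$, $E_{I\!V,5}$ (and, in the $z$-variable, for $E_{I\!I,6}$).
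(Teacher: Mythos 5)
Your proposal follows the paper's proof exactly: the same decomposition of $E_{I\!V,6}$ into $S_1,S_2,S_3$ according to $\{K_2\sim K_3\gg K_1\}$, $\{K_1\sim K_3\gg K_2\}$, $\{K_1\sim K_2\gtrsim K_3\}$, the same localization of $\widehat{u_2^2}=\hat u_2\ast\hat u_2$ into dyadic blocks $K_a,K_b$, the same multiplier bounds for $\sigma_{I\!I}\Omega_3^{-1}\xi_{ab}$ via \eqref{e_res_asy}, and the same case-by-case application of Lemma~\ref{l_stqe}. You also correctly single out $S_2$ (with the derivative $\widehat{(u_2u_2)_x}$ at the top frequency $K_1\sim K_3$) as the source of the term $T^\delta\|v\|_{\spfc^z_T}\|v\|_{\spfc^s_T}\|u_2\|_{\spfc^s_T}\|u_2\|_{\spfc^{s+1}_T}$.

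Two small inaccuracies in your side remarks are worth flagging. First, in $S_1$ the frequency $\xi_{ab}=\xi_3$ sits at the high scale $K_3\sim K_2$, not at a low one; the multiplier is nonetheless $\lesssim 1$ because there $|\sigma_{I\!I}|\lesssim K_1=K_3^*$, which together with $|\Omega_3|\sim K_1 K_2$ and $|\xi_{ab}|\sim K_2$ cancels out. Second, the claim that $s>\frac14$ would already suffice for $S_1$ and $S_3$ is not correct for the bounds the paper actually produces: in $S_3$ with $K_a\sim K_b\gg K_1\sim K_2$ one gets the factor $K_1^{2s+1}$ from the multiplier bound $\lesssim 1$ and the second estimate of Lemma~\ref{l_stqe}, and the resulting dyadic sum behaves like $\sum_{K_a\gg K_1}K_1^{2s+1}K_1^{-2s}K_a^{-2s}\sim K_1^{1-2s}$, which is bounded only for $s\geq\frac12$; to push down to $s>\frac14$ one would have to exploit the sharper multiplier bound $\lesssim K_3K_1^{-1}$ (available since $\sigma_{I\!I}$ is symmetrized when $K_3^*=K_3$) together with the first estimate of Lemma~\ref{l_stqe}, which the paper does not do here. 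Neither point affects the validity of the argument, since the lemma assumes $s>\frac12$ throughout.
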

\begin{proof}
	It suffices to consider the case $u_2=0$. We write $E_{I\!V,6}=S_1+S_2+S_3$ with
	\begin{align*}
		S_j
		:=
		\begin{cases}
			E_{I\!V,6}|_{K_2\sim K_3\gg K_1}
			&\text{if }
			j=1,
			\\
			E_{I\!V,6}|_{K_1\sim K_3\gg K_2}
			&\text{if }
			j=2,
			\\
			E_{I\!V,6}|_{K_1\sim K_2\gtrsim K_3}
			&\text{if }
			j=3.
		\end{cases}
	\end{align*}
	Using $\widehat{(u_2u_2)}=(\hat{u}_2\ast\hat{u}_2)$ and localizing these two factors separately, we obtain
	\begin{align*}
		|S_j|
		\lesssim
		\sum_{K_j}K_1^{2s}\sup_t\int_0^t\int_{\Gamma_4}\frac{\sigma_{I\!I}}{\Omega_3}&(\xi_1,\xi_2,\xi_{ab})(i\xi_{ab})\chi_{K_a}(\xi_a)\chi_{K_b}(\xi_b)
		\\
		&\times\hat{v}(\xi_1)\hat{v}(\xi_2)\hat{u}_2(\xi_a)\hat{u}_2(\xi_b)d\gamma_4(\xi_1,\xi_2,\xi_a,\xi_b)dr
	\end{align*}
	for $j\in[3]$. Define
	\begin{align*}
		\Pi(K_1,\!K_2,\!K_a,\!K_b)
		:=
		\|P_{K_1}v\|_{\spfb^{K_1}_T}
		\|P_{K_2}v\|_{\spfb^{K_2}_T}
		\|P_{K_a}u_2\|_{\spfb^{K_a}_T}
		\|P_{K_b}u_2\|_{\spfb^{K_b}_T}.
	\end{align*}
	
	Again, we start by estimating $S_1$. Here, we have $|\frac{\sigma_{I\!I}}{\Omega_3}(\xi_1,\xi_2,\xi_{ab})(i\xi_{ab})|\lesssim 1$. Thus, Lemma \ref{l_stqe} yields
	\begin{align*}
		|S_1|
		\lesssim_\delta
		T^\delta
		\sum_{K_j}\Pi(K_1,\!K_2,\!K_a,\!K_b)
		\times
		\begin{cases}
			K_2^\frac{1}{2}K_1^{2s+\frac{1}{2}}
			&\text{if }
			K_a\!\sim\!K_b\!\gg\!K_2,
			\\
			K_1^{2s+\frac{1}{2}}K_b^\frac{1}{2}
			&\text{if }
			K_a\!\sim\!K_2\!\gg\!K_b,
			\\
			K_1^{2s+\frac{1}{2}}K_a^\frac{1}{2}
			&\text{if }
			K_b\!\sim\!K_2\!\gg\!K_a.
		\end{cases}
	\end{align*}
	
	For $S_2$ we have $|\frac{\sigma_{I\!I}}{\Omega_3}(\xi_1,\xi_2,\xi_{ab})(i\xi_{ab})|\lesssim K_1K_2^{-1}$ and thus
	\begin{align*}
		|S_2|
		\lesssim_\delta
		T^\delta
		\sum_{K_j}\Pi(K_1,\!K_2,\!K_a,\!K_b)
		\times
		\begin{cases}
			K_1^{2s+\frac{3}{2}}K_2^{-\frac{1}{2}}
			&\text{if }
			K_a\!\sim\!K_b\!\gg\!K_3,
			\\
			K_1^{2s+1}K_2^{-\frac{1}{2}}K_b^\frac{1}{2}
			&\text{if }
			K_a\!\sim\!K_3\!\gg\!K_b,
			\\
			K_1^{2s+1}K_2^{-\frac{1}{2}}K_a^\frac{1}{2}
			&\text{if }
			K_b\!\sim\!K_3\!\gg\!K_a.
		\end{cases}
	\end{align*}
	
	Last, we consider $S_3$. Then, the derivative is applied to a low frequency factor and we get $|\frac{\sigma_{I\!I}}{\Omega_3}(\xi_1,\xi_2,\xi_{ab})(i\xi_{ab})|\lesssim 1$. Lemma \ref{l_stqe} leads to
	\begin{align*}
		|S_3|
		\lesssim_\delta
		T^\delta
		\sum_{K_j}\Pi(K_1,\!K_2,\!K_a,\!K_b)\times
		\begin{cases}
			K_1^{2s+1}
			&\text{if }
			K_a\!\sim\!K_b\!\gg\!K_1,
			\\
			K_1^{2s}K_a^\frac{1}{2}K_b^\frac{1}{2}
			&\text{if }
			K_1\!\gtrsim\!K_a\!\sim\!K_b\!\gtrsim\!K_3,
			\\
			K_1^{2s}K_a^\frac{1}{2}K_b^\frac{1}{2}
			&\text{if }
			K_a\!\sim\!K_3\!\gg\!K_b,
			\\
			K_1^{2s}K_b^\frac{1}{2}K_a^\frac{1}{2}
			&\text{if }
			K_b\!\sim\!K_3\!\gg\!K_a.
		\end{cases}
	\end{align*}
\end{proof}
It remains to bound the terms $E_{I\!V,7}$, $E_{I\!V,8}$, and $E_{I\!V,9}$.
\begin{lem}\label{l_ee_iii_qua_bnd_i}
	Let $s>\frac{1}{2}$. Then, for all sufficiently small $\theta=\theta(s)>1$ and $\delta\in(0,\frac{1}{2})$, we have
	\begin{align*}
		|E_{I\!V,7}|
		\lesssim_\delta
		T^\delta\|v\|_{\spfc^s_T}^2\|u_2\|_{\spfc^s_T}\|b\|_{\spbc^s_T}
		+
		T^\delta\|v\|_{\spfc^z_T}\|v\|_{\spfc^s_T}\|u_2\|_{\spfc^{s+1}_T}\|b\|_{\spbc^s_T}.
	\end{align*}
\end{lem}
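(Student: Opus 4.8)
The plan is to follow the proof of Lemma~\ref{l_ee_iii_qua_i} (the bound for $E_{I\!V,4}$) essentially verbatim, replacing the factor $w$ by the background function $b$ and replacing the quadrilinear estimate of Lemma~\ref{l_stqe} by Lemma~\ref{l_stqe_bnd}, which is tailored to integrands carrying one $L^\infty$-factor. First I would decompose $E_{I\!V,7}=S_1+S_2+S_3$ according to the frequency interactions $K_2\sim K_3\gg K_1$, $K_1\sim K_3\gg K_2$, and $K_1\sim K_2\gtrsim K_3$. In each region one writes $\widehat{(vb)}=\hat v\ast\hat b$ and localizes the two new integration variables to dyadic frequencies $K_a$ (carrying $v$) and $K_b$ (carrying $b$), so that $S_j$ becomes a sum over $K_a,K_b,K_2,K_3\in\mathbb{D}$ of four-fold integrals over $\Gamma_4$ with integrand $\tfrac{\sigma_{I\!I}}{\Omega_3}(\xi)(i\xi_{ab})\chi_{K_a}(\xi_a)\chi_{K_b}(\xi_b)\hat v(\xi_a)\hat b(\xi_b)\hat v(\xi_2)\hat u_2(\xi_3)$.

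The next step is to record the bound on the combined symbol. On $C_H$ one has $|\Omega_3(\xi)|\sim K_1^*K_3^*$ by~\eqref{e_res_asy}; together with the bounds $|\sigma_{I\!I}(\xi)|\lesssim K_3^*$ when $K_3^*\neq K_2$ and $|\sigma_{I\!I}(\xi)|\lesssim K_1^*$ when $K_3^*=K_2$, and with $|\xi_{ab}|\lesssim K_1$, this yields $\bigl|\tfrac{\sigma_{I\!I}}{\Omega_3}(\xi)(i\xi_{ab})\bigr|\lesssim K_1K_2^{-1}$ in the regions $S_1$ and $S_2$ and $\lesssim 1$ in $S_3$ --- exactly as in the proof of Lemma~\ref{l_ee_iii_qua_i}.

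I would then apply Lemma~\ref{l_stqe_bnd} with $b$ in the $L^\infty$-slot, using $\|P_{K_b}b\|_{L^\infty}\le K_b^{-s}\|b\|_{\spbc^s_T}$ together with $\sum_{K_b}K_b^{-s}<\infty$ for $s>0$ to absorb the $b$-summation, and choosing the first or the second estimate of Lemma~\ref{l_stqe_bnd} according to whether $K_b$ is the smallest of the four frequencies. Splitting each region into the same sub-cases as in Lemmata~\ref{l_ee_ii_qua_bnd_i} and~\ref{l_ee_iii_qua_i}, the problem reduces to distributing the resulting powers of $K_a,K_2,K_3$ over the factors $\|P_{K_a}v\|_{\spfb^{K_a}_T}$, $\|P_{K_2}v\|_{\spfb^{K_2}_T}$, $\|P_{K_3}u_2\|_{\spfb^{K_3}_T}$ so that the dyadic sums converge after Cauchy--Schwarz; in $S_1$, $S_3$ and most sub-cases of $S_2$ each high-frequency factor can be left with at most $s$ derivatives and one obtains the first term $T^\delta\|v\|_{\spfc^s_T}^2\|u_2\|_{\spfc^s_T}\|b\|_{\spbc^s_T}$, with the hypothesis $s>\tfrac12$ used for summability at the low-frequency ends.

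The main obstacle will be the configuration $K_1\sim K_3\gg K_2$ inside $S_2$: there $u_2$ sits at a high frequency $K_3\sim K_1$ while the derivative also lands on a high frequency, and the weaker symbol bound $|\sigma_{I\!I}|\lesssim K_1^*$ valid precisely in this regime prevents keeping $u_2$ at regularity $s$. As for the term $S_2$ in Lemma~\ref{l_ee_iii_qua_i}, I would resolve this by transferring one derivative onto the low-frequency factor, bounding $\|P_{K_2}v\|_{\spfb^{K_2}_T}$ in terms of $\|v\|_{\spfc^z_T}$ with $z=-\tfrac12$ (the symbol gain in $K_2$ compensating the loss) and estimating $u_2$ in $\spfc^{s+1}_T$; the exponent bookkeeping is then identical to that already carried out for $E_{I\!V,4}$, and one obtains the second term $T^\delta\|v\|_{\spfc^z_T}\|v\|_{\spfc^s_T}\|u_2\|_{\spfc^{s+1}_T}\|b\|_{\spbc^s_T}$. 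Summing the contributions of $S_1$, $S_2$, $S_3$ then gives the claimed estimate.
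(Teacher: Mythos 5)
Your proposal matches the paper's proof almost exactly: decompose $E_{I\!V,7}=S_1+S_2+S_3$ by frequency regions, write $\widehat{(vb)}=\hat v\ast\hat b$ with $v$ at $\xi_a$ and $b$ at $\xi_b$, bound the combined symbol by $K_1K_2^{-1}$ in $S_1,S_2$ and by $1$ in $S_3$, apply Lemma~\ref{l_stqe_bnd} with $b$ in the $L^\infty$-slot, absorb the $K_b$-sum via $\|P_{K_b}b\|_{L^\infty}\lesssim K_b^{-s}\|b\|_{\spbc^s_T}$, and obtain the second term from $S_2$, where $|\sigma_{I\!I}|\lesssim K_1^*$ together with $u_2$ sitting at the high frequency $K_3\sim K_1$ forces one derivative onto $u_2$ and one $v$-factor down to $\spfc^z_T$. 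The only misstatement is in your selection criterion for the two estimates of Lemma~\ref{l_stqe_bnd}: since $K_1^\#,K_3^\#$ are defined as the maximum and minimum of $K_a,K_2,K_3$ only, the choice is governed by the spread of those three frequencies, and $K_b$ plays no role in it.
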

\begin{proof}
	We write $E_{I\!V,7}=S_1+S_2+S_3$ with
	\begin{align*}
		S_j
		:=
		\begin{cases}
			E_{I\!V,7}|_{K_2\sim K_3\gg K_1}
			&\text{if }
			j=1,
			\\
			E_{I\!V,7}|_{K_1\sim K_3\gg K_2}
			&\text{if }
			j=2,
			\\
			E_{I\!V,7}|_{K_1\sim K_2\gtrsim K_3}
			&\text{if }
			j=3.
		\end{cases}
	\end{align*}
	Using $\widehat{(vw)}=(\hat{v}\ast\hat{w})$ and localizing these two factors separately, we obtain
	\begin{align*}
		|S_j|
		\lesssim
		\sum_{K_j}K_1^{2s}\sup_t\int_0^t\int_{\Gamma_4}\frac{\sigma_{I\!I}}{\Omega_3}&(\xi_{ab},\xi_2,\xi_3)(i\xi_{ab})\chi_{K_a}(\xi_a)\chi_{K_b}(\xi_b)
		\\
		&\times\hat{v}(\xi_a)\hat{b}(\xi_b)\hat{v}(\xi_2)\hat{u}_2(\xi_3)d\gamma_4(\xi_a,\xi_b,\xi_2,\xi_3)dr
	\end{align*}
	for $j\in[3]$. Define
	\begin{align*}
		\Pi(K_a,\!K_b,\!K_2,\!K_3)
		:=
		\|P_{K_a}v\|_{\spfb^{K_a}_T}
		\|P_{K_b}b\|_{\spfb^{K_b}_T}
		\|P_{K_2}v\|_{\spfb^{K_2}_T}
		\|P_{K_3}u_2\|_{\spfb^{K_3}_T}.
	\end{align*}
	
	We begin with estimating $S_1$. Here, the derivative is on a low-frequency factor and we have $|\frac{\sigma_{I\!I}}{\Omega_3}(\xi_{ab},\xi_2,\xi_3)(i\xi_{ab})|\lesssim K_2^{-1}K_1$. Hence, Lemma \ref{l_stqe_bnd} leads to
	\begin{align*}
		|S_1|
		\lesssim_\delta
		T^\delta
		\sum_{K_j}\Pi(K_a,\!K_b,\!K_2,\!K_3)\times
		\begin{cases}
			K_2^{-\frac{1}{2}}K_1^{2s+1}
			&\text{if }
			K_a\!\sim\!K_b\!\gg\!K_2,
			\\
			K_2^{-1}K_a^\frac{1}{2}K_1^{2s+1}
			&\text{if }
			K_2\!\gtrsim\!K_a\!\sim\!K_b\!\gg\!K_1,
			\\
			K_2^{-1}K_a^{2s+\frac{3}{2}}
			&\text{if }
			K_a\!\sim\!K_1\!\gg\!K_b,
			\\
			K_2^{-1}K_a^{2s+\frac{3}{2}}
			&\text{if }
			K_b\!\sim\!K_1\!\gg\!K_a.
		\end{cases}
	\end{align*}
	
	For $S_2$, we have $|\frac{\sigma_{I\!I}}{\Omega_3}(\xi_{ab},\xi_2,\xi_3)(i\xi_{ab})|\lesssim K_2^{-1}K_1$, since the derivative is on a high-frequency factor. Again, Lemma \ref{l_stqe_bnd} yields
	\begin{align*}
		|S_2|
		\lesssim_\delta
		T^\delta
		\sum_{K_j}\Pi(K_a,\!K_b,\!K_2,\!K_3)\times
		\begin{cases}
			K_1^{2s+1}K_2^{-\frac{1}{2}}
			&\text{if }
			K_a\!\sim\!K_b\!\gg\!K_1,
			\\
			K_1^{2s+1}K_2^{-\frac{1}{2}}
			&\text{if }
			K_a\!\sim\!K_1\!\gg\!K_b,
			\\
			K_1^{2s+1}K_2^{-1}\min\{K_2,K_a\}^\frac{1}{2}
			&\text{if }
			K_b\!\sim\!K_1\!\gg\!K_a.
		\end{cases}
	\end{align*}
	
	We continue by estimating $S_3$. The derivative is on a high-frequency factor and additionally, both $v_{K_a}$ and $v_{K_2}$ might be localized to high frequencies. Nonetheless, Lemma \ref{l_stqe_bnd} leads to
	\begin{align*}
		|S_3|
		\lesssim_\delta
		T^\delta
		\sum_{K_j}\Pi(K_a,\!K_b,\!K_2,\!K_3)\times
		\begin{cases}
			K_1^{2s+\frac{1}{2}}
			&\text{if }
			K_a\!\sim\!K_b\!\gtrsim\!K_1,
			\\
			K_1^{2s}K_3^{\frac{1}{2}}
			&\text{if }
			K_a\!\sim\!K_1\!\gg\!K_b,
			\\
			K_1^{2s}\min\{K_a,K_3\}^\frac{1}{2}
			&\text{if }
			K_b\!\sim\!K_1\!\gg\!K_a.
		\end{cases}
	\end{align*}
\end{proof}

\begin{lem}\label{l_ee_iii_qua_bnd_ii}
	Let $s>\frac{1}{2}$. Then, for all sufficiently small $\theta=\theta(s)>1$ and $\delta\in(0,\frac{1}{2})$, we have
	\begin{align*}
		|E_{I\!V,8}|
		\lesssim_\delta
		T^\delta\|v\|_{\spfc^s_T}^2\|u_2\|_{\spfc^s_T}\|b\|_{\spbc^s_T}.
	\end{align*}
\end{lem}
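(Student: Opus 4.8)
The plan is to follow the scheme of the proofs of Lemmata~\ref{l_ee_ii_qua_bnd_ii} and~\ref{l_ee_iii_qua_bnd_i}, noting that $E_{I\!V,8}$ differs from $E_{I\!I,8}$ only in that the third factor is $\hat u_2$ rather than $\hat w$ and the energy regularity is $s$ rather than $z$. First I would split $E_{I\!V,8}=S_1+S_2+S_3$ according to the frequency regimes $\{K_2\sim K_3\gg K_1\}$, $\{K_1\sim K_3\gg K_2\}$, and $\{K_1\sim K_2\gtrsim K_3\}$, which exhaust $C_H$ because two of the three frequencies on $\Gamma_3$ are always comparable. In each regime I would then write $\widehat{(vb)_x}(\xi_2)=(i\xi_2)(\hat v\ast\hat b)(\xi_2)$ and localise the new variables to dyadic sizes $|\xi_a|\sim K_a$ (carried by $v$) and $|\xi_b|\sim K_b$ (carried by $b$), turning each $S_j$ into a sum over $\Gamma_4$ of quadrilinear expressions with symbol $\tfrac{\sigma_{I\!I}}{\Omega_3}(\xi_1,\xi_{ab},\xi_3)(i\xi_{ab})\chi_{K_a}(\xi_a)\chi_{K_b}(\xi_b)$, the $\spfb$-factors being $P_{K_1}v$, $P_{K_a}v$, $P_{K_3}u_2$ and the $L^\infty$-factor being $P_{K_b}b$.

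The key point — and the reason the bound here is cleaner than that for $E_{I\!V,7}$ in Lemma~\ref{l_ee_iii_qua_bnd_i}, which carries an extra term involving $\|u_2\|_{\spfc^{s+1}_T}$ — is that the derivative $i\xi_{ab}=i\xi_2$ sits in the second slot, while the distinguished factor $\xi_1$ of $\sigma_{I\!I}$ sits in the first slot, which carries no derivative. Consequently $|\sigma_{I\!I}(\xi_1,\xi_{ab},\xi_3)|\,|\xi_{ab}|\lesssim K_1^*K_3^*$ in every regime (the $\xi_1$-factor of $\sigma_{I\!I}$ and the derivative $\xi_{ab}$ each carry at most the top frequency, never it twice), so the resonance identity $|\Omega_3(\xi)|\sim K_1^*K_3^*$ from \eqref{e_res_asy} — valid on $C_H$ since $K_3^*>1$ — yields the uniform symbol bound $|\tfrac{\sigma_{I\!I}}{\Omega_3}(\xi_1,\xi_{ab},\xi_3)(i\xi_{ab})|\lesssim 1$. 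With this in hand I would apply Lemma~\ref{l_stqe_bnd} in each sub-case: the $(K_3^\#)^{1/2}$-estimate when several of $K_1,K_a,K_3$ are comparable to the top frequency, and the $(K_1^\#)^{\theta/4}$-estimate in the borderline sub-cases where the smallest of $K_1,K_a,K_3$ is itself a high frequency (notably in the regime $K_1\sim K_2\gtrsim K_3$ with $P_{K_a}v$ also at high frequency).

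Finally I would carry out the dyadic summations exactly as in Lemma~\ref{l_ee_ii_qua_bnd_ii}: in each sub-case the gained power of the top frequency is at most $(K_1^*)^{1/2}$, or $(K_1^*)^{\theta/4}$ compensated by a negative power elsewhere, and reconstituting $\|(K^s\|P_Kg\|_{\spfb^K_T})_K\|_{\ell^2}=\|g\|_{\spfc^s_T}$ for $g\in\{v,u_2\}$ together with $\sup_K K^s\|P_Kb\|_{L^\infty}\lesssim\|b\|_{\spbc^s_T}$ leads to $T^\delta\|v\|_{\spfc^s_T}^2\|u_2\|_{\spfc^s_T}\|b\|_{\spbc^s_T}$. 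This is where $s>\tfrac12$ and the freedom to take $\theta$ close to $1$ enter: the exponent on the top frequency never exceeds $2s$, so it splits evenly across the two $v$-factors with enough slack to absorb the $\theta/4$ loss from Lemma~\ref{l_stqe_bnd} and the summation over the minimal frequency. I expect the only real work — and the main, purely combinatorial, obstacle — to be the clean organisation of the case distinctions and the verification of the uniform symbol bound in the non-symmetrised regime $K_1\sim K_3\gg K_2$ of $\sigma_{I\!I}$; no analytic input beyond Lemma~\ref{l_stqe_bnd} and \eqref{e_res_asy} is required, and the argument is essentially a line-by-line transcription of the proof of Lemma~\ref{l_ee_ii_qua_bnd_ii} with $w$ replaced by $u_2$ and the regularity index $z$ raised to $s$.
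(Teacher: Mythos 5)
Your proposal matches the paper's proof: the same splitting into the three frequency regimes, the same rewriting of $\widehat{(vb)_x}$ as $(i\xi_{ab})(\hat v(\xi_a)\ast\hat b(\xi_b))$ feeding Lemma~\ref{l_stqe_bnd} with $P_{K_b}b$ as the $L^\infty$ factor, and the same key observation that the derivative sits in the second slot while $\sigma_{I\!I}$'s distinguished $\xi_1$-factor sits in the first, which gives $|\sigma_{I\!I}(\xi_1,\xi_{ab},\xi_3)\,\xi_{ab}|\lesssim K_1^*K_3^*$ and hence a uniform symbol bound $|\sigma_{I\!I}\Omega_3^{-1}(i\xi_{ab})|\lesssim 1$ in every regime via \eqref{e_res_asy}. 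The ensuing case-by-case dyadic summation, with $s>\tfrac12$ absorbing the $\theta/4$ loss, is exactly what the paper carries out; you have correctly identified both the structure of the argument and why no $\|u_2\|_{\spfc^{s+1}_T}$ term appears.
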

\begin{proof}
	We write $E_{I\!V,8}=S_1+S_2+S_3$ with
	\begin{align*}
		S_j
		:=
		\begin{cases}
			E_{I\!V,8}|_{K_2\sim K_3\gg K_1}
			&\text{if }
			j=1,
			\\
			E_{I\!V,8}|_{K_1\sim K_3\gg K_2}
			&\text{if }
			j=2,
			\\
			E_{I\!V,8}|_{K_1\sim K_2\gtrsim K_3}
			&\text{if }
			j=3.
		\end{cases}
	\end{align*}
	Using $\widehat{(vw)}=(\hat{v}\ast\hat{w})$ and localizing these two factors separately, we obtain
	\begin{align*}
		|S_j|
		\lesssim
		\sum_{K_j}K_1^{2s}\sup_t\int_0^t\int_{\Gamma_4}\frac{\sigma_{I\!I}}{\Omega_3}&(\xi_1,\xi_{ab},\xi_3)(i\xi_{ab})\chi_{K_a}(\xi_a)\chi_{K_b}(\xi_b)
		\\
		&\times\hat{v}(\xi_1)\hat{v}(\xi_a)\hat{b}(\xi_b)\hat{u}_2(\xi_3)d\gamma_4(\xi_1,\xi_a,\xi_b,\xi_3)dr
	\end{align*}
	for $j\in[3]$. Define
	\begin{align*}
		\Pi(K_1,\!K_a,\!K_b,\!K_3)
		:=
		\|P_{K_1}v\|_{\spfb^{K_2}_T}
		\|P_{K_a}v\|_{\spfb^{K_a}_T}
		\|P_{K_b}b\|_{\spfb^{K_b}_T}
		\|P_{K_3}u_2\|_{\spfb^{K_3}_T}.
	\end{align*}
	
	For $S_1$, the derivative is on a low-frequency factor and we have the estimate $|\frac{\sigma_{I\!I}}{\Omega_3}(\xi_1,\xi_{ab},\xi_3)(i\xi_{ab})|\lesssim 1$. Hence, Lemma \ref{l_stqe_bnd} leads to
	\begin{align*}
		|S_1|
		\lesssim_\delta
		T^\delta
		\sum_{K_j}\Pi(K_1,\!K_a,\!K_b,\!K_3)\times
		\begin{cases}
			K_1^{2s+\frac{1}{2}}
			&\text{if }
			K_a\!\sim\!K_b\!\gg\!K_2,
			\\
			K_1^{2s+\frac{1}{2}}
			&\text{if }
			K_2\!\sim\!K_a\!\gtrsim\!K_b,
			\\
			K_1^{2s}K_a^\frac{1}{2}
			&\text{if }
			K_b\!\sim\!K_2\!\gg\!K_a.
		\end{cases}
	\end{align*}
	
	In $S_2$, the derivative is on a high-frequency factor and $|\frac{\sigma_{I\!I}}{\Omega_3}(\xi_1,\xi_{ab},\xi_3)(i\xi_{ab})|\lesssim 1$. Again, Lemma \ref{l_stqe_bnd} yields
	\begin{align*}
		|S_2|
		\lesssim_\delta
		T^\delta
		\sum_{K_j}\Pi(K_1,\!K_a,\!K_b,\!K_3)\times
		\begin{cases}
			K_1^{2s+\frac{1}{2}}
			&\text{if }
			K_a\!\sim\!K_b\!\gg\!K_1,
			\\
			K_1^{2s}K_a^{-\frac{1}{2}}
			&\text{if }
			K_1\!\gtrsim\!K_a\!\sim\!K_b\!\gtrsim\!K_2,
			\\
			K_1^{2s}K_a^{-\frac{1}{2}}
			&\text{if }
			K_a\!\sim\!K_2\!\gg\!K_b,
			\\
			K_1^{2s}K_a^{-\frac{1}{2}}
			&\text{if }
			K_b\!\sim\!K_2\!\gg\!K_a.
		\end{cases}
	\end{align*}
	
	We continue by estimating $S_3$. Again $|\frac{\sigma_{I\!I}}{\Omega_3}(\xi_1,\xi_{ab},\xi_3)(i\xi_{ab})|\lesssim 1$ holds. An application of Lemma \ref{l_stqe_bnd} leads to
	\begin{align*}
		|S_3|
		\lesssim_\delta
		T^\delta
		\sum_{K_j}\Pi(K_a,\!K_b,\!K_2,\!K_3)\times
		\begin{cases}
			K_1^{2s+\frac{1}{2}}
			&\text{if }
			K_a\!\sim\!K_b\!\gtrsim\!K_2,
			\\
			K_1^{2s}K_3^\frac{1}{2}
			&\text{if }
			K_a\!\sim\!K_2\!\gg\!K_b,
			\\
			K_1^{2s}K_a^\frac{1}{2}
			&\text{if }
			K_b\!\sim\!K_2\!\gg\!K_a.
		\end{cases}
	\end{align*}
\end{proof}

\begin{lem}\label{l_ee_iii_qua_bnd_iii}
	Let $s>\frac{1}{2}$. Then, for all sufficiently small $\theta=\theta(s)>1$ and $\delta\in(0,\frac{1}{2})$, we have
	\begin{align*}
		|E_{I\!V,9}|
		\lesssim_\delta
		T^\delta\|v\|_{\spfc^s_T}
		(
		\|v\|_{\spfc^s_T}\|u_2\|_{\spfc^s_T}\|b\|_{\spbc^s_T}
		+
		\|v\|_{\spfc^z_T}\|u_2\|_{\spfc^{s+1}_T}\|b\|_{\spbc^{s+1}_T}
		).
	\end{align*}
\end{lem}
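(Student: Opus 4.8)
\emph{Proof plan.} The plan is to follow the template used for $E_{I\!I,9}$ in Lemma~\ref{l_ee_ii_qua_bnd_iii}, adapting it to the larger weight $K_1^{2s}$ in place of $K_1^{2z}$. First I would split $E_{I\!V,9}=S_1+S_2+S_3$, where $S_1$, $S_2$, $S_3$ collect the contributions of $K_2\sim K_3\gg K_1$, $K_1\sim K_3\gg K_2$, and $K_1\sim K_2\gtrsim K_3$, respectively; these three regimes exhaust the summation domain $C_H$, since on $\Gamma_3$ the two largest of $|\xi_1|,|\xi_2|,|\xi_3|$ are automatically comparable. In each $S_j$ I would write $\widehat{(u_2b)_x}=(\hat u_2\ast\hat b)_x$ and localize the two new frequency variables to dyadic scales $\xi_a$ (carried by $u_2$) and $\xi_b$ (carried by $b$), reducing $S_j$ to a sum over $K_1,K_2,K_a,K_b$ of quadrilinear forms $M_\sigma[P_{K_1}v,P_{K_2}v,P_{K_a}u_2,P_{K_b}b]$ with symbol $\sigma=\frac{\sigma_{I\!I}}{\Omega_3}(\xi_1,\xi_2,\xi_{ab})(i\xi_{ab})\chi_{K_a}(\xi_a)\chi_{K_b}(\xi_b)$ and with $|\xi_{ab}|\sim K_3$.

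Next I would bound the dyadic symbol. Using $|\sigma_{I\!I}|\lesssim K_3^*$ (with the exceptional bound $|\sigma_{I\!I}|\lesssim K_1^*$ when $K_3^*=K_2$), the lower bound $|\Omega_3|\sim K_1^*K_3^*$ from \eqref{e_res_asy}, and $|\xi_{ab}|\sim K_3$, one obtains $\sigma_{\mathbb D}\lesssim 1$ on $S_1$, $\sigma_{\mathbb D}\lesssim K_1K_2^{-1}$ on $S_2$, and $\sigma_{\mathbb D}\lesssim K_3K_1^{-1}$ on $S_3$, exactly as in the proof of Lemma~\ref{l_ee_ii_qua_bnd_iii}. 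I would then apply Lemma~\ref{l_stqe_bnd} with the $L^\infty$-slot occupied by $P_{K_b}b$, choosing the bound $(K_1^\#)^{\theta/4}$ or $(K_3^\#)^{1/2}$ according to which gives the smaller power on $\min\{K_1,K_2,K_a\}$, and splitting each $S_j$ into the usual sub-regimes ($K_a\sim K_b$ largest; $K_a$ resp.\ $K_b$ comparable to the relevant large frequency; the remaining low--high case).

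The last step is the dyadic summation. Whenever the two $v$-frequencies are the two largest frequencies, the weight $K_1^{2s}$ and the gains from Lemma~\ref{l_stqe_bnd} can be distributed as $K_1^sK_2^s$ on the two $v$-pieces together with low powers on $K_a,K_b$, yielding the first term $\|v\|_{\spfc^s_T}^2\|u_2\|_{\spfc^s_T}\|b\|_{\spbc^s_T}$ after Cauchy--Schwarz. In the remaining sub-cases a frequency $\sim K_1$ sits on $u_2$ or on $b$ while one $v$-piece is at comparatively low frequency (this happens in particular in $S_2$, where $K_2$ is forced to be small); there I would keep that low $v$-piece at regularity $z=-\tfrac{1}{2}$ and transfer the surplus power of $K_1$ onto $\|u_2\|_{\spfc^{s+1}_T}$ and $\|b\|_{\spbc^{s+1}_T}$, producing the second term. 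In every sub-case the leftover power of $K_1$ (and of $K_a$, $K_b$) turns out to be non-positive after this redistribution, provided $\theta>1$ is taken close enough to $1$ and $\delta$ small, using $s>\tfrac{1}{2}$; these are the same $\theta$-dependent inequalities that appear throughout Section~\ref{s_ee}.

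The main obstacle is purely bookkeeping: in each of the sub-regimes one must correctly identify which of the four factors carries the derivative $i\xi_{ab}$ and which carries the largest frequency, in order to decide whether both $v$-pieces can remain at regularity $s$ (first term) or whether one must be dropped to $z$ while $u_2$ and $b$ are measured in $\spfc^{s+1}_T$ and $\spbc^{s+1}_T$ (second term). No new analytic input beyond Lemma~\ref{l_res}, Lemma~\ref{l_stqe_bnd}, and the multiplier bounds already used for $E_{I\!I,9}$ is needed.
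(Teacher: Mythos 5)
Your proposal matches the paper's proof: same decomposition into $S_1,S_2,S_3$, same localization of $\widehat{(u_2b)_x}=(\hat u_2\ast\hat b)_x$ to dyadic scales, the same symbol bounds $|\sigma_{I\!I}\Omega_3^{-1}\cdot i\xi_{ab}|\lesssim 1$, $K_1K_2^{-1}$, $K_3K_1^{-1}$ in the three regimes, and the same application of Lemma~\ref{l_stqe_bnd}. Your account of the summation is also correct, in particular the identification that the term $\|v\|_{\spfc^z_T}\|u_2\|_{\spfc^{s+1}_T}\|b\|_{\spbc^{s+1}_T}$ arises precisely in $S_2$, where $K_2$ is forced low so one $v$-piece is measured at $z=-\tfrac{1}{2}$ and the surplus $K_1^{s+1}$ is absorbed by $u_2$ and $b$, which is exactly how the paper's dyadic bounds $K_1^{2s+1}K_2^{-1/2}$ in $S_2$ are summed.
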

\begin{proof}
	We write $E_{I\!V,9}=S_1+S_2+S_3$ with
	\begin{align*}
		S_j
		:=
		\begin{cases}
			E_{I\!V,9}|_{K_2\sim K_3\gg K_1}
			&\text{if }
			j=1,
			\\
			E_{I\!V,9}|_{K_1\sim K_3\gg K_2}
			&\text{if }
			j=2,
			\\
			E_{I\!V,9}|_{K_1\sim K_2\gtrsim K_3}
			&\text{if }
			j=3.
		\end{cases}
	\end{align*}
	Using $\widehat{(wb)}=(\hat{w}\ast\hat{b})$ and localizing these two factors separately, we obtain
	\begin{align*}
		|S_j|
		\lesssim
		\sum_{K_j}K_1^{2s}\sup_t\int_0^t\int_{\Gamma_4}\frac{\sigma_{I\!I}}{\Omega_3}&(\xi_1,\xi_2,\xi_{ab})(i\xi_{ab})\chi_{K_a}(\xi_a)\chi_{K_b}(\xi_b)
		\\
		&
		\times\hat{v}(\xi_1)\hat{v}(\xi_2)\hat{u}_2(\xi_a)\hat{b}(\xi_b)d\gamma_4(\xi_1,\xi_2,\xi_a,\xi_b)dr
	\end{align*}
	for $j\in[3]$. Define
	\begin{align*}
		\Pi(K_1,\!K_2,\!K_a,\!K_b)
		:=
		\|P_{K_1}v\|_{\spfb^{K_1}_T}
		\|P_{K_2}v\|_{\spfb^{K_2}_T}
		\|P_{2,K_a}u_2\|_{\spfb^{K_a}_T}
		\|P_{K_b}b\|_{\spfb^{K_b}_T}.
	\end{align*}
	
	First, we consider $S_1$. Here, we have $|\frac{\sigma_{I\!I}}{\Omega_3}(\xi_1,\xi_2,\xi_{ab})(i\xi_{ab})|\lesssim 1$. Then, Lemma \ref{l_stqe_bnd} leads to
	\begin{align*}
		|S_1|
		\lesssim_\delta
		T^\delta
		\sum_{K_j}\Pi(K_1,\!K_2,\!K_a,\!K_b)\times
		\begin{cases}
			K_1^{2s+\frac{1}{2}}
			&\text{if }
			K_a\!\sim\!K_b\!\gtrsim\!K_3,
			\\
			K_2^\frac{1}{2}K_1^{2s}
			&\text{if }
			K_3\!\sim\!K_a\!\gg\!K_b,
			\\
			K_1^{2s}\min\{K_2,K_a\}^\frac{1}{2}
			&\text{if }
			K_3\!\sim\!K_b\!\gg\!K_a.
		\end{cases}
	\end{align*}
	
	For $S_2$, we get the bound $|\frac{\sigma_{I\!I}}{\Omega_3}(\xi_1,\xi_2,\xi_{ab})(i\xi_{ab})|\lesssim K_1K_2^{-1}$. Together with Lemma \ref{l_stqe_bnd}, we get
	\begin{align*}
		|S_2|
		\lesssim_\delta
		T^\delta
		\sum_{K_j}\Pi(K_1,\!K_2,\!K_a,\!K_b)\times
		\begin{cases}
			K_1^{2s+1}K_2^{-\frac{1}{2}}
			&\text{if }
			K_a\!\sim\!K_b\!\gtrsim\!K_3,
			\\
			K_1^{2s+1}K_2^{-\frac{1}{2}}
			&\text{if }
			K_3\!\sim\!K_a\!\gg\!K_b,
			\\
			K_1^{2s+1}K_2^{-1}\min\{K_2,K_a\}^\frac{1}{2}
			&\text{if }
			K_3\!\sim\!K_b\!\gg\!K_a.
		\end{cases}
	\end{align*}
	
	Lastly, we consider $S_3$. Here, we have $|\frac{\sigma_{I\!I}}{\Omega_3}(\xi_1,\xi_2,\xi_{ab})(i\xi_{ab})|\lesssim K_3K_1^{-1}$. Lemma \ref{l_stqe_bnd} leads to
	\begin{align*}
		|S_3|
		\lesssim_\delta
		T^\delta
		\sum_{K_j}\Pi(K_1,\!K_2,\!K_a,\!K_b)\times
		\begin{cases}
			K_1^{2s-\frac{1}{2}}K_3
			&\text{if }
			K_a\!\sim\!K_b\!\gg\!K_1,
			\\
			K_1^{2s-1}K_a^\frac{1}{2}K_3
			&\text{if }
			K_1\!\gtrsim\!K_a\!\sim\!K_b\!\gtrsim\!K_3,
			\\
			K_1^{2s-1}K_a^\frac{1}{2}K_3
			&\text{if }
			K_a\!\sim\!K_3\!\gg\!K_b,
			\\
			K_1^{2s-1}K_a^\frac{1}{2}K_3
			&\text{if }
			K_b\!\sim\!K_3\!\gg\!K_a.
		\end{cases}
	\end{align*}
\end{proof}

Finally, we can prove Lemma \ref{l_ee_iii}. As shown in the beginning of this section, we have
\begin{align*}
	\|v\|_{\spec^s_T}^2
	-
	\|v_0\|_{H^s}^2
	\lesssim
	I\!I\!I
	+
	I\!V
	+
	V.
\end{align*}
The term $V$ is bounded in Lemma \ref{l_ee_iii_trv}. The estimate for $I\!I\!I$ follows from Lemmata \ref{l_ee_iii_re} and \ref{l_ee_iii_ne}, whereas the estimate for $I\!V$ follows from Lemmata \ref{l_ee_iii_tri}, \ref{l_ee_iii_qua_i}, \ref{l_ee_iii_qua_ii}, \ref{l_ee_iii_qua_iii}, \ref{l_ee_iii_qua_bnd_i}, \ref{l_ee_iii_qua_bnd_ii}, and \ref{l_ee_iii_qua_bnd_iii}.
\section{Proof of Theorem \ref{t_main}}\label{s_p}
	
	In this section we prove Theorem \ref{t_main} in three steps. First, we recall the existence of classical solutions to equation \eqref{eq_bo_split} in the case that the initial datum $(u_0,b,f)$ is sufficiently regular.
	Second, we bootstrap the estimates derived in Sections \ref{s_stbe} and \ref{s_ee} in order to derive three à-priori estimates for these solutions respectively differences of them. Third, we perform the classical Bona-Smith argument to prove that the data-to-solution map can be extended continuously, thus, completing the proof of Theorem \ref{t_main}.
\subsection{Local well-posedness for regular initial data}\label{ss_lwp_highreg}
	
	In \cite{Kat1975}, Kato obtained a local well-posedness result for a large class of quasilinear evolution equations. Specified to \eqref{eq_bo_split}, his result implies the following:
	\begin{pro}\label{p_highreg}
		Let $\varepsilon>0$ and define
		\begin{align*}
			\begin{cases}
				\mathcal{X}
				&:=
				L^\infty([0,1];B^{3+\varepsilon}_{\infty,\infty}(\mathbb{R}))
				\cap
				C^0([0,1];C^2(\mathbb{R})),
				\\
				\mathcal{Y}'
				&:=
				L^\infty([0,1]; H^{2+\varepsilon}(\mathbb{R}))
				\cap
				C^0([0,1];H^0(\mathbb{R})).
			\end{cases}
		\end{align*}
		Then, the initial value problem \eqref{eq_bo_split} is locally well-posed in $H^2(\mathbb{R})\times\mathcal{X}\times\mathcal{Y}'$. By this, we mean that there exists a non-increasing positive function $T=T(R)$,
		denoting the minimal time of existence, such that for each $(u_0,b,f)\in B_{R}(0)\subset H^2(\mathbb{R})\times\mathcal{X}\times\mathcal{Y}'$ there exists a unique strong solution $u$ of \eqref{eq_bo_split} satisfying
		\begin{align*}
			u\in C^0([0,T];H^2(\mathbb{R}))\cap C^1([0,T];H^0(\mathbb{R})).
		\end{align*}
		Moreover, the map $\mathcal{L}^2_{T}:B_R(0)\rightarrow C([0,T];H^2(\mathbb{R})), u_0\mapsto u$ is continuous.
	\end{pro}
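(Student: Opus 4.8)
The plan is to deduce Proposition~\ref{p_highreg} from the abstract quasilinear theory of Kato \cite{Kat1975} applied with the pair of Hilbert spaces $X:=H^0(\mathbb{R})=L^2(\mathbb{R})$ and $Y:=H^2(\mathbb{R})$, the isometric isomorphism $S:=\langle D\rangle^2:Y\to X$, and the treatment of \eqref{eq_bo_split} as a non-autonomous problem driven by the fixed data $b,f$. First I would rewrite the nonlinearity as $(uu)_x+(ub)_x=(2u+b)u_x+ub_x$, so that \eqref{eq_bo_split} becomes
\begin{equation*}
	\partial_t u+A(t,u)u=g(t,u),\qquad u(0)=u_0,
\end{equation*}
with $A(t,u):=\mathcal{H}\partial_x^2+(2u+b(t))\partial_x$ and $g(t,u):=-u\,b_x(t)-f(t)$. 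The task then reduces to checking Kato's hypotheses with all constants depending only on the radius $R$ of the ball containing $(u_0,b,f)$ in $H^2\times\mathcal{X}\times\mathcal{Y}'$.

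The hard part will be the commutator (or intertwining) hypothesis: $SA(t,u)S^{-1}-A(t,u)$ should extend to a bounded operator on $X$ whose norm is controlled by $\|u\|_{H^2}+\|b\|_{\mathcal{X}}$. Here the dispersive structure is essential: since $\mathcal{H}\partial_x^2$ is the Fourier multiplier with symbol $i\omega(\xi)$, it commutes with $S$, and the commutator collapses to $[S,(2u+b)\partial_x]S^{-1}$. Writing $a:=2u+b$ and using $\langle D\rangle^2=1-\partial_x^2$, a direct computation gives $[1-\partial_x^2,\,a\partial_x]w=-a_{xx}w_x-2a_x w_{xx}$. This maps $H^2$ into $L^2$: the term $a_x w_{xx}$ is controlled since $a_x=2u_x+b_x\in L^\infty$ (using $u_x\in H^1\hookrightarrow L^\infty$ and $b\in C^2$), and $a_{xx}w_x$ is controlled since $a_{xx}=2u_{xx}+b_{xx}\in L^2+L^\infty$ while $w_x\in H^1\hookrightarrow L^\infty$. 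Hence $[S,A(t,u)]S^{-1}$ is bounded on $L^2$ with the required dependence on the norms. I expect this to be the only genuinely technical point; everything else is bookkeeping.

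The remaining hypotheses are routine. Quasi-$m$-accretivity of $A(t,u)$ on $L^2$ follows from skew-adjointness of $\mathcal{H}\partial_x^2$, which yields $\mathrm{Re}\langle A(t,u)w,w\rangle_{L^2}=\mathrm{Re}\langle aw_x,w\rangle_{L^2}=-\tfrac12\langle a_x w,w\rangle_{L^2}\geq-\tfrac12\|a_x\|_{L^\infty}\|w\|_{L^2}^2$, so $A(t,u)+\lambda$ is accretive for $\lambda\gtrsim\|u\|_{H^2}+\|b\|_{\mathcal{X}}$; surjectivity of $A(t,u)+\lambda+\mu:H^2\to L^2$ for $\mu>0$ holds because $a\partial_x$ is a relatively bounded perturbation of the skew-adjoint generator $\mathcal{H}\partial_x^2$. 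The Lipschitz estimate $\|A(t,u)-A(t,u')\|_{\mathcal{L}(H^2,L^2)}\lesssim\|u-u'\|_{H^2}$ is immediate from $A(t,u)-A(t,u')=2(u-u')\partial_x$. For the inhomogeneity, $g(t,u)=-u\,b_x(t)-f(t)$ maps $H^2$-balls into $H^2$ with $\|g(t,u)\|_{H^2}\lesssim\|u\|_{H^2}\|b\|_{B^{3+\varepsilon}_{\infty,\infty}}+\|f\|_{H^{2+\varepsilon}}$ (using $B^{3+\varepsilon}_{\infty,\infty}\hookrightarrow C^3_b\subset W^{3,\infty}$ and stability of $H^2$ under multiplication by $W^{2,\infty}$-functions), and it is Lipschitz on $L^2$ with constant $\lesssim\|b_x\|_{L^\infty}$. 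Finally, the joint continuity of $(t,u)\mapsto A(t,u)\in\mathcal{L}(Y,X)$ and $(t,u)\mapsto g(t,u)$ in the weaker topologies needed by Kato's theorem is supplied by $b\in C^0([0,1];C^2(\mathbb{R}))$ and $f\in C^0([0,1];H^0(\mathbb{R}))$. With all of these constants monotone in $R$, Kato's theorem then provides a non-increasing minimal existence time $T=T(R)$, a unique solution $u\in C^0([0,T];H^2(\mathbb{R}))\cap C^1([0,T];H^0(\mathbb{R}))$ of \eqref{eq_bo_split}, and continuous dependence $\mathcal{L}^2_T:B_R(0)\to C^0([0,T];H^2(\mathbb{R}))$, which is precisely the statement of Proposition~\ref{p_highreg}.
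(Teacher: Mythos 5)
Your proposal is correct and follows essentially the same route as the paper: both reduce the statement to Kato's quasilinear theory, and the commutator identity you derive, $[1-\partial_x^2, a\partial_x]w = -a_{xx}w_x - 2a_x w_{xx}$ (bounded by $\|a_{x}\|_{L^\infty} + \|a_{xx}\|_{L^\infty}$ in the relevant operator norm), is precisely the ``trivial estimate'' the paper records in Remark~\ref{r_as_bf} as the reason the proposition holds for $C^2$-background functions $b$ rather than only Zhidkov functions.
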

	Note that the statement remains valid, if we replace $\mathcal{Y}'$ by $\mathcal{Y}$ (as defined in \ref{as_b_f}). In fact, we need this stronger assumption on $f$ only once in Section \ref{ss_lwp_lowreg}.
	\begin{rem}\label{r_as_bf}
		The results from \cite{Kat1975} prove local well-posedness of \eqref{eq_bo} for initial data $u_0$ in $H^s(\mathbb{R})$, $s>\frac{3}{2}$. Here, the lower bound on the regularity threshold is directly linked to the commutator estimate required to bound the nonlinear term $(uu)_x$. Moreover, we can also obtain local well-posedness of \eqref{eq_bo_split} for initial data $u_0$ in $H^s(\mathbb{R})$, $s>\frac{3}{2}$. However, in this case the commutator estimate for the most problematic term $bu_x$ takes the form
		\begin{align*}
			\|[J^{-s},b\partial_x]J^s\|_{L^2\rightarrow L^2}
			\lesssim
			\|b_{x}\|_{H^{s-1}}+\|b_x\|_{L^\infty},
		\end{align*}
		where $J^{s}=(I-\Delta)^{-s/2}$. Thus, we cannot consider periodic background functions $b$, but we can consider Zhidkov functions. To overcome this restriction on $b$, instead, we use the trivial estimate
		\begin{align*}
			\|[J^{-2},b\partial_x]J^2\|_{L^2\rightarrow L^2}
			\lesssim
			\|b_{xx}\|_{L^\infty}+\|b_x\|_{L^\infty},
		\end{align*}
		which leads to local well-posedness as stated in the Proposition above.
	\end{rem}
	\begin{rem}
		It is likely that Proposition \ref{p_highreg} (i.e.\@ the assumptions on $u_0$, $b$, and $f$) can be improved by using a better commutator estimate or by restricting to a smaller/ different class of background functions $b$. In that case, however, we can only deduce an improvement of Theorem \ref{t_main}, if we also improve/ adapt the estimates in Section \ref{s_ee}, in particular Lemma \ref{l_ee_trv_uub}.
	\end{rem}
\subsection{Low regularity estimates for regular solutions}\label{ss_bta}
 	
 	The goal of this section consists of proving three estimates at different regularities for the strong solutions to \eqref{eq_bo_split} given by Proposition \ref{p_highreg}. Most of the preparatory work has been carried out in Sections \ref{s_stbe} and \ref{s_ee}. Here, it only remains to perform the bootstrap arguments.
	\begin{lem}\label{l_ap}
		Let $2> s\geq s_0>\frac{1}{4}$ and $\varepsilon>0$. Choose $b\in\mathcal{X}$ and $f\in\mathcal{Y}$.
		Then, there exists a non-increasing positive function
		\begin{align*}
			T
			=
			T(R;\|b\|_{L^\infty B^{s+1+\varepsilon}_{\infty,\infty}};\|f\|_{L^\infty H^{s+\varepsilon}})
		\end{align*}
		such that for each $u_0\in B_R(0)\subset H^{s_0}(\mathbb{R})$ with $u_0\in H^2(\mathbb{R})$ the strong solution $u=\mathcal{L}^2_{T}(u_0,b,f)$ to \eqref{eq_bo_split} satisfies the à-priori estimate
		\begin{equation}\label{e_ap}
			\|u\|_{\spec^s_T}
			\!\lesssim\!
			\|u_0\|_{H^s}
			\!+\!
			\|f\|_{L^\infty_T H^{s+\varepsilon}}.
		\end{equation}
	\end{lem}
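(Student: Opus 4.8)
The plan is to run a standard continuity/bootstrap argument on the quantity $\|u\|_{\spec^s_T}$, combining the linear estimate \eqref{es_lin}, the short-time bilinear estimates of Section \ref{s_stbe}, and the first energy estimate (Lemma \ref{l_ee_i}), with the behaviour of the nonlinear norms as $T\to 0$ furnished by Proposition \ref{p_cnv}. Fix $b\in\mathcal{X}$, $f\in\mathcal{Y}$ and a datum $u_0\in H^2$ with $\|u_0\|_{H^{s_0}}\le R$; let $u=\mathcal{L}^2_T(u_0,b,f)$, which exists on $[0,T_0]$ for some $T_0>0$ by Proposition \ref{p_highreg}. First I would record the closed system of four inequalities announced in the sketch: from \eqref{es_lin} together with $\|f\|_{L^\infty H^s}\lesssim_\varepsilon \|f\|_{L^\infty H^{s+\varepsilon}}$ one gets
\begin{align*}
	\|u\|_{\spfc^s_T}
	\lesssim
	\|u\|_{\spec^s_T}
	+\|(uu)_x\|_{\spnc^s_T}
	+\|(ub)_x\|_{\spnc^s_T}
	+T^{\frac12}\|f\|_{L^\infty H^{s+\varepsilon}};
\end{align*}
from Lemma \ref{l_nl} (applied with this $s$, with $s_0$, and with a fixed $\sigma\in(\max\{s,\tfrac12\},s+1+\varepsilon)$, taking $\theta>1$ and $\delta>0$ small enough to satisfy all constraints of Sections \ref{s_stbe} and \ref{s_ee} simultaneously) one gets the two quadratic bounds
\begin{align*}
	\|(uu)_x\|_{\spnc^s_T}
	\lesssim_\delta
	T^\delta\|u\|_{\spfc^s_T}\|u\|_{\spfc^{s_0}_T},
	\qquad
	\|(ub)_x\|_{\spnc^s_T}
	\lesssim_\delta
	T^\delta\|u\|_{\spfc^s_T}\|b\|_{\spbc^{s+1+\varepsilon}_T};
\end{align*}
and Lemma \ref{l_ee_i} gives the energy inequality \eqref{e_ee_i}, whose right-hand side I would crudely bound using $\spfc^s_T$-norms on the quadrilinear terms and $\spec^s_T$-norms elsewhere.

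Next I would close the loop. Abbreviate $A_s:=\|u\|_{\spec^s_T}$, $B_s:=\|u\|_{\spfc^s_T}$, $\beta:=\|b\|_{\spbc^{s+1+\varepsilon}_T}$, $\phi:=\|f\|_{L^\infty H^{s+\varepsilon}}$ (all non-decreasing in $T$, and $\beta,\phi$ bounded by the assumptions on $b,f$). For $T$ small enough that $T^\delta(B_{s_0}+\beta)$ is $\le$ half the reciprocal of the implicit constant in the bilinear estimates, the linear inequality becomes $B_s\lesssim A_s+T^{1/2}\phi$, i.e. one can absorb the nonlinear contributions to $B_s$ and express $B_s$ through $A_s$, $\phi$, and $B_{s_0}$; applying the same reasoning at regularity $s_0$ (using Lemma \ref{l_nl} with $s=s_0$) shows $B_{s_0}\lesssim A_{s_0}+T^{1/2}\phi$. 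Feeding these back into \eqref{e_ee_i} and choosing the free dyadic parameter $N$ and then $T$ appropriately — first pick $N$ so that $N^{-\zeta}$ times the relevant factor is small, then pick $T\ll 1$ (depending only on $R$, $\beta$, $\phi$, and $N$) so that every prefactor of the form $T+TN^2+N^{-\zeta}$, $T$, or $T^\delta$ multiplying a term that is at least quadratic in $(A_s,B_s)$ is dominated — yields an inequality of the shape
\begin{align*}
	A_s^2
	\lesssim
	\|u_0\|_{H^s}^2
	+\phi^2
	+\varepsilon_0\,A_s^2(1+A_{s_0})
	+C(R,\beta,\phi)\,A_s^2\cdot(\text{small}),
\end{align*}
with $\varepsilon_0$ as small as we like. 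The standard bootstrap then runs: the set of $T'\in(0,T_0]$ for which $A_s(T')\le 2C_0(\|u_0\|_{H^s}+\phi)$ holds is nonempty (as $T'\to0$, $A_s(T')\to\|u_0\|_{H^s}$ by Proposition \ref{p_cnv} and the analogous statement for $\spec^s$), closed, and open; on this set the a priori bound $A_{s_0}\le 2C_0(R+\phi)$ is available to control the $(1+A_{s_0})$ factor, so the continuity argument closes and \eqref{e_ap} follows on $[0,T]$ with $T=T(R;\beta;\phi)$ non-increasing.

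One technical point deserves care: the bootstrap must be set up so that the smallness needed to absorb terms depends only on $R$ (through $\|u_0\|_{H^{s_0}}$) and on $\beta,\phi$, and \emph{not} on $\|u_0\|_{H^2}$ or $\|u\|_{\spec^2_T}$ — otherwise the time of existence would degenerate as $u_0$ ranges over an $H^{s_0}$-ball. This is exactly why the energy estimate is stated with an $s_0$-norm (not an $s$-norm) on the cubic and quartic terms in \eqref{e_ee_i}, and why the bilinear estimate \eqref{e_nl_uu} carries one $\spfc^{s_0}_T$ factor: the ``dangerous'' factor is always the low-regularity one, which is controlled a priori by $R$. The main obstacle, and the part requiring the most bookkeeping, is verifying that a single choice of $\theta>1$ and $\delta>0$ can be made to satisfy all the (finitely many) upper bounds imposed in Lemmata \ref{l_nl}, \ref{l_ee_i}, \ref{l_ee_qua}, \ref{l_ee_qua_bnd}, and \ref{l_stqe} for the given $s_0>\tfrac14$ and the chosen $\sigma$, and then that the resulting powers $T^\delta$, $T$, $TN^2$, $N^{-\zeta}$ can indeed all be made simultaneously small by the two-step choice of $N$ then $T$; once that is granted, the algebra is routine and the conclusion is the claimed inequality \eqref{e_ap} with $T$ depending on $R$, $\|b\|_{L^\infty B^{s+1+\varepsilon}_{\infty,\infty}}$, and $\|f\|_{L^\infty H^{s+\varepsilon}}$ only.
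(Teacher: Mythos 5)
Your route — a continuity/bootstrap argument combining \eqref{es_lin}, the bilinear estimates of Lemma \ref{l_nl}, the energy estimate \eqref{e_ee_i}, and the small-time behaviour from Proposition \ref{p_cnv}, with the $s=s_0$ case done first so that $T$ depends only on $R$, $\|b\|$, $\|f\|$ — is the same as the paper's. There is, however, a genuine gap in the way you close the linear inequality. To absorb the term $T^\delta B_sB_{s_0}$ and conclude $B_s\lesssim A_s+T^{1/2}\phi$, you need an a priori bound on $B_{s_0}(T)=\|u\|_{\spfc^{s_0}_T}$; in the base case $s=s_0$ this is precisely the quantity you are trying to control, so the absorption is circular. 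A bootstrap run directly on $B_{s_0}$ cannot be seeded either: Proposition \ref{p_cnv} describes the $T\to 0$ limit of $\|u\|_{\spec^s_T}+\|(uu)_x\|_{\spnc^s_T}+\|(ub)_x\|_{\spnc^s_T}$, but says nothing about $\|u\|_{\spfc^s_T}$ as $T\to 0$, and the paper does not assert continuity of $T\mapsto\|u\|_{\spfc^s_T}$.

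The paper sidesteps both issues by bootstrapping on $X_s:=\max\{\|u\|_{\spec^s_T},\|(uu)_x\|_{\spnc^s_T},\|(ub)_x\|_{\spnc^s_T}\}$. Then \eqref{es_lin} gives $\|u\|_{\spfc^s_T}\lesssim X_s+T^{1/2}\|f\|$ \emph{unconditionally} (no absorption is needed), $T\mapsto X_s(T)$ is continuous with $\lim_{T\to0}X_s(T)\lesssim\|u_0\|_{H^s}$ by Proposition \ref{p_cnv}, and substituting the bilinear and energy estimates back in yields a closed polynomial inequality $X_{s_0}^2\le\|u_0\|_{H^{s_0}}^2+c_4X_{s_0}^4+c_3X_{s_0}^3+c_2X_{s_0}^2+c_0\|f\|^2$ with coefficients $c_i$ made small by tuning $N$ and then $T$. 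Once you replace your bootstrap quantity $A_s$ by $X_s$, the rest of your plan (two-step argument in $s_0$ then $s$, choice of $N$ then $T$, dependence of $T$ only on $\|u_0\|_{H^{s_0}}$, $\|b\|$, $\|f\|$) goes through as described.
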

	\begin{rem}\label{r_highreg_t}
		As a consequence of Lemma \ref{l_ap}, we obtain that the data-to-solution map $u_0\mapsto\mathcal{L}^2_{T}(u_0,b,f)$ can be extended to a time interval $[0,T]$, where the minimal time of existence $T$ depends on the $H^{s_0}(\mathbb{R})$-norm of $u_0$, but no longer on the $H^2(\mathbb{R})$-norm.
	\end{rem}
	\begin{lem}\label{l_de}
		Let $2>s>\frac{1}{2}$, $z=-\frac{1}{2}$, and $\varepsilon>0$. Choose $b\in\mathcal{X}$ and $f\in\mathcal{Y}$.
		Then, there exists a non-increasing positive function
		\begin{align*}
			T
			=
			T(R;\|b\|_{L^\infty B^{3+\varepsilon}_{\infty,\infty}};\|f\|_{L^\infty H^{2+\varepsilon}})
		\end{align*}
		such that for all $u_{1,0}$, $u_{0,2}\in B_R(0)\subset H^s(\mathbb{R})$ with $u_{0,1}, u_{2,0}\in H^2(\mathbb{R})$ the strong solutions $u_1=\mathcal{L}^2_T(u_{1,0},b,f)$ and $u_2=\mathcal{L}^2_T(u_{2,0},b,f)$ of \eqref{eq_bo_split} satisfy the weak Lipschitz estimate
		\begin{equation}\label{e_de}
			\|u_1\!-\!u_2\|_{\spec^z_T}
			\!\lesssim\!
			\|u_{1,0}\!-\!u_{2,0}\|_{H^z}.
		\end{equation}
	\end{lem}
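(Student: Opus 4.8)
The plan is to prove Lemma \ref{l_de} by a bootstrap argument, parallel to the proof of Lemma \ref{l_ap} (which is carried out in this same section). I would combine three ingredients: the linear estimate \eqref{es_lin} applied to the difference equation satisfied by $v = u_1 - u_2$, the short-time bilinear estimates of Lemma \ref{l_nl2}, and the low-regularity energy estimate of Lemma \ref{l_ee_ii}. First I would record that $v$ solves $v_t + \mathcal{H}v_{xx} + ((u_1-u_2)(u_1+u_2))_x + (vb)_x = 0$ with $v(0) = v_{1,0} - v_{2,0}$ and no forcing term, so the analogue of \eqref{es_lin} at regularity $z = -\tfrac12$ gives
\begin{align*}
	\|v\|_{\spfc^z_T}
	\lesssim
	\|v\|_{\spec^z_T}
	+
	\|((u_1-u_2)(u_1+u_2))_x\|_{\spnc^z_T}
	+
	\|(vb)_x\|_{\spnc^z_T}.
\end{align*}
By Lemma \ref{l_nl2} (with $s \in (\tfrac12, 2)$ and $\sigma = 3+\varepsilon > \max\{s,1\}$, valid since $b \in \mathcal X$) the two nonlinear terms are bounded by $T^\delta \|v\|_{\spfc^z_T}(\|u_1\|_{\spfc^s_T} + \|u_2\|_{\spfc^s_T} + \|b\|_{\spbc^\sigma_T})$, which for $T$ small enough can be absorbed into the left-hand side, yielding $\|v\|_{\spfc^z_T} \lesssim \|v\|_{\spec^z_T}$.

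Next I would feed this back into the energy estimate \eqref{e_ee_ii}. Plugging $\|v\|_{\spfc^z_T} \lesssim \|v\|_{\spec^z_T}$ and $\|u_i\|_{\spfc^s_T} \lesssim \|u_i\|_{\spec^s_T}$ into Lemma \ref{l_ee_ii}, and using the à-priori bound \eqref{e_ap} from Lemma \ref{l_ap} (applicable since $s > \tfrac12 > \tfrac14$) together with the hypotheses on $b$ and $f$ to control $\|u_1\|_{\spec^s_T}, \|u_2\|_{\spec^s_T} \lesssim R + \|f\|_{L^\infty H^{s+\varepsilon}}$, I would obtain
\begin{align*}
	\|v\|_{\spec^z_T}^2
	\lesssim
	\|v_0\|_{H^z}^2
	+
	\big(T + TN^2 + N^{-\zeta} + T^\delta\big)C(R, b, f)\,\|v\|_{\spec^z_T}^2,
\end{align*}
where $C(R,b,f)$ depends only on $R$, $\|b\|_{\spbc^{3+\varepsilon}_T}$, and $\|f\|_{L^\infty H^{2+\varepsilon}}$. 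The key to closing the bootstrap is the standard trick of first choosing $N$ large so that $N^{-\zeta}C$ is small, then choosing $T$ small (depending on that $N$, hence on $R$, $b$, $f$) so that $(T + TN^2 + T^\delta)C$ is small; this absorbs the $\|v\|_{\spec^z_T}^2$ term and gives \eqref{e_de}. As in the proof of Lemma \ref{l_ap}, one must check the bootstrap hypothesis $\|v\|_{\spec^z_T} \leq 2\|v_0\|_{H^z}$ is propagated — this uses continuity of $T \mapsto \|v\|_{\spec^z_T}$ on $[0, T]$ for the regular solutions and Proposition \ref{p_cnv} to get $\lim_{T \to 0}\|v\|_{\spec^z_T} = \|v_0\|_{H^z}$.

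The main obstacle, and the place to be careful, is that the energy estimate \eqref{e_ee_ii} is stated with a factor $\|v\|_{\spfc^z_T}^2$ multiplied by $(\|u_1\|_{\spfc^s_T} + \|u_2\|_{\spfc^s_T})(\|u_1\|_{\spfc^s_T} + \|u_2\|_{\spfc^s_T} + \|b\|_{\spbc^s_T})$ in the $T^\delta$ term — quadratic in the high-regularity norms of $u_1, u_2$ — so I need the à-priori control of $\|u_i\|_{\spfc^s_T}$ and not merely $\|u_i\|_{\spec^s_T}$; this is obtained by combining \eqref{es_lin}, Lemma \ref{l_nl} (applied with $s_0 = s$), and \eqref{e_ap} to get $\|u_i\|_{\spfc^s_T} \lesssim \|u_{i,0}\|_{H^s} + \|f\|_{L^\infty H^{s+\varepsilon}} \lesssim R + \|f\|$. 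A secondary subtlety is the bookkeeping of which norms of $b$ appear: \eqref{e_ee_ii} needs $\|b\|_{\spbc^{1+\varepsilon}_T}$ and $\|b\|_{\spbc^s_T}$, both controlled by $\|b\|_{\spbc^{3+\varepsilon}_T}$ since $s < 2$, while Lemma \ref{l_nl2} needs $\sigma > \max\{s,1\}$, again covered by $b \in \mathcal X$. Once these norm-tracking points are settled the argument is the same two-scale ($N$ then $T$) absorption used throughout the paper.
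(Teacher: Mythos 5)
Your proposal is correct and takes essentially the same route as the paper's proof: linear estimate \eqref{es_lin} for the difference equation, the bilinear estimates of Lemma \ref{l_nl2}, the energy estimate \eqref{e_ee_ii}, the à-priori bound \eqref{e_ap} to control $\|u_i\|_{\spfc^s_T}$, and a two-scale (first $N$, then $T$) bootstrap. The only difference is bookkeeping: the paper runs the bootstrap on $Y := \max\{\|v\|_{\spec^z_T},\|(vw)_x\|_{\spnc^z_T},\|(vb)_x\|_{\spnc^z_T}\}$ directly, whereas you first absorb to get $\|v\|_{\spfc^z_T}\lesssim\|v\|_{\spec^z_T}$ and then feed this into the energy estimate; these are equivalent.
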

	\begin{lem}\label{l_de1}
		Let $2>s>\frac{1}{2}$, $z=-\frac{1}{2}$, and $\varepsilon>0$. Choose $b\in\mathcal{X}$ and $f\in\mathcal{Y}$.
		Then, there exists a non-increasing positive function
		\begin{align*}
			T
			=
			T(R;\|b\|_{L^\infty B^{3+\varepsilon}_{\infty,\infty}};\|f\|_{L^\infty H^{2+\varepsilon}})
		\end{align*}
		such that for all $u_{1,0}$, $u_{2,0}\in B_R(0)\subset H^s(\mathbb{R})$ with $u_{1,0}, u_{2,0}\in H^2(\mathbb{R})$ the strong solutions $u_1=\mathcal{L}^2_T(u_{1,0},b,f)$ and $u_2=\mathcal{L}^2_T(u_{2,0},b,f)$ of \eqref{eq_bo_split} satisfy the estimate
		\begin{equation}\label{e_de1}
			\|u_1\!-\!u_2\|_{\spec^s_T}
			\!\lesssim\!
			\|u_{1,0}\!-\!u_{2,0}\|_{H^s}
			\!+\!
			(\|u_1\!-\!u_2\|_{\spec^z_T}
			\|u_1\!-\!u_2\|_{\spec^s_T}
			\|u_2\|_{\spec^{s+1}_T}
			)^\frac{1}{2}.
		\end{equation}
	\end{lem}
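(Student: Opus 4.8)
The plan is to prove Lemma \ref{l_de1} by bootstrapping the energy estimate in Lemma \ref{l_ee_iii} together with the nonlinear estimates from Section \ref{s_stbe} and the linear estimate \eqref{es_lin}, using as extra inputs the à-priori bound \eqref{e_ap} (applied to $u_1$ and $u_2$ at regularities $s$ and $s+1$) and the weak Lipschitz bound \eqref{e_de} from Lemma \ref{l_de}. Since $u_1,u_2\in H^2(\mathbb R)$ are the regular solutions from Proposition \ref{p_highreg} and $s<2$, all the norms $\|u_i\|_{\spfc^s_T}$, $\|u_i\|_{\spec^{s+1}_T}$, $\|v\|_{\spfc^s_T}$ etc.\ are finite for small $T$, so the bootstrap has something to bootstrap from. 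Throughout I fix $\theta=\theta(s)>1$ and $\delta>0$ small enough for all the cited lemmata, and I write $v=u_1-u_2$, $w=u_1+u_2$.

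First I would collect the ingredients. From \eqref{e_ap} (valid since $s,s+1\in(\tfrac14,2)$ once we know the solutions persist on a common interval — here invoking Remark \ref{r_highreg_t} and Lemma \ref{l_ap} with $s_0$ slightly above $\tfrac12$) we get, after possibly shrinking $T$,
\begin{align*}
	\|u_1\|_{\spec^s_T}+\|u_2\|_{\spec^s_T}
	&\lesssim
	\|u_{1,0}\|_{H^s}+\|u_{2,0}\|_{H^s}+\|f\|_{L^\infty H^{s+\varepsilon}}
	\lesssim_R 1,
	\\
	\|u_2\|_{\spec^{s+1}_T}
	&\lesssim
	\|u_{2,0}\|_{H^{s+1}}+\|f\|_{L^\infty H^{s+1+\varepsilon}}
	=: \Lambda,
\end{align*}
and from Lemma \ref{l_de}, after shrinking $T$ again, $\|v\|_{\spec^z_T}\lesssim\|v_0\|_{H^z}$. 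Next, combining the linear estimate \eqref{es_lin} at regularities $z$ and $s$ with Lemmata \ref{l_nl}, \ref{l_nl2}, \ref{l_nl3} (applied to $v\cdot w$, $v\cdot b$) and using $T^\delta\ll 1$, one controls the $\spfc$-norms of $v$ by the $\spec$-norms: $\|v\|_{\spfc^z_T}\lesssim\|v\|_{\spec^z_T}$, $\|v\|_{\spfc^s_T}\lesssim\|v\|_{\spec^s_T}$, and likewise $\|u_2\|_{\spfc^s_T}\lesssim\|u_2\|_{\spec^s_T}$, $\|u_2\|_{\spfc^{s+1}_T}\lesssim\|u_2\|_{\spec^{s+1}_T}$, plus $\|v\|_{\spec^s_T},\|u_2\|_{\spec^{s+1}_T}$ are continuous in $T$ and tend to $\|v_0\|_{H^s}$, $\|u_{2,0}\|_{H^{s+1}}$ as $T\to 0$ by Proposition \ref{p_cnv}. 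This last point is what makes the bootstrap rigorous: the quantity $X(T):=\|v\|_{\spec^s_T}$ is a continuous non-decreasing function of $T$ with $X(0)=\|v_0\|_{H^s}$.

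Now I would run the continuity argument. Feed all of the above into \eqref{e_ee_iii}. After replacing $\spfc$-norms by $\spec$-norms and using the à-priori bounds, every term on the right of \eqref{e_ee_iii} is bounded either by a small constant (coming from $T+TN^2+N^{-\zeta}$ or $T^\delta$, or $T$ times $\|b\|$, $\|f\|$) times $X(T)^2$, or by $C_R X(T)^2$, or by the crucial term $T\,\|v\|_{\spec^z_T}\,X(T)\,\|u_2\|_{\spec^{s+1}_T}$ and its $\spfc$-analogue, both of which are $\lesssim X(T)\|v_0\|_{H^z}\Lambda$ up to small constants. Choosing first $N$ large (to kill $N^{-\zeta}$), then $T$ small (to kill $T$, $TN^2$, $T^\delta$ against the fixed implicit constants, and against $\|b\|,\|f\|$), one arrives at an estimate of the shape
\begin{align*}
	X(T)^2
	\lesssim
	\|v_0\|_{H^s}^2
	+
	\tfrac14 X(T)^2
	+
	X(T)\,\|v_0\|_{H^z}\,\|u_2\|_{\spec^{s+1}_T}.
\end{align*}
Absorbing the $\tfrac14 X(T)^2$ and using $ab\le \tfrac{1}{8}a^2+2b^2$ on the last term gives $X(T)^2\lesssim\|v_0\|_{H^s}^2+\|v_0\|_{H^z}^2\|u_2\|_{\spec^{s+1}_T}^2$; but to obtain the precise form \eqref{e_de1} one instead keeps the product term un-split: from $X(T)^2\lesssim \|v_0\|_{H^s}^2 + X(T)\|v_0\|_{H^z}\|u_2\|_{\spec^{s+1}_T}$ and the elementary fact that $x^2\le a^2+xb$ implies $x\lesssim a+(ab)^{1/2}+b$... — here one has to be slightly careful: the clean way is to note that with $y=\|v\|_{\spec^z_T}$ (already bounded by $\|v_0\|_{H^z}$) the mixed term is $T\,y\,X(T)\,\|u_2\|_{\spec^{s+1}_T}\le \tfrac14 X(T)^2 + C T^2 y^2 \|u_2\|_{\spec^{s+1}_T}^2$, absorb again, take square roots, and recognize $(\|v\|_{\spec^z_T}\|v\|_{\spec^s_T}\|u_2\|_{\spec^{s+1}_T})^{1/2}\ge$ (a constant times) the geometric-mean bound one gets — so rather than splitting I would keep $X(T)$ in the mixed term, divide through once $X(T)$ is known finite, and land exactly on
\begin{align*}
	\|v\|_{\spec^s_T}
	\lesssim
	\|v_0\|_{H^s}
	+
	\bigl(\|v\|_{\spec^z_T}\|v\|_{\spec^s_T}\|u_2\|_{\spec^{s+1}_T}\bigr)^{1/2}.
\end{align*}

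The main obstacle is the mixed term $T\,\|v\|_{\spec^z_T}\|v\|_{\spec^s_T}\|u_2\|_{\spec^{s+1}_T}$ (and its $\spfc$-cousin in the last line of \eqref{e_ee_iii}): it involves $\|u_2\|_{\spec^{s+1}_T}$, which is \emph{not} small and not bounded independently of the high-regularity data, so it cannot simply be absorbed or hidden in a constant. The resolution is precisely to leave it on the right-hand side in the asymmetric form appearing in \eqref{e_de1} rather than to try to close the estimate — this is why Lemma \ref{l_de1} is stated as an implicit inequality rather than as a clean Lipschitz bound, and it is exactly what the Bona--Smith argument in Section \ref{ss_lwp_lowreg} is designed to exploit (one has $\|v_0\|_{H^z}$ small, $\|v\|_{\spec^s_T}$ bounded, and $\|u_2\|_{\spec^{s+1}_T}$ at worst mildly growing in the regularization parameter, and the geometric mean beats the product). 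A secondary technical point is ensuring that $u_1$ and $u_2$ genuinely exist on a \emph{common} time interval $[0,T]$ with $T$ depending only on $R=\|u_{1,0}\|_{H^s}+\|u_{2,0}\|_{H^s}$ (and on $\|b\|,\|f\|$) and not on their $H^2$-norms; this is supplied by Lemma \ref{l_ap} together with Remark \ref{r_highreg_t}, applied at a regularity $s_0\in(\tfrac12,s]$, so that all the bootstrap constants are uniform over the class of regular data being approximated.
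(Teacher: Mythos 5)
Your proposal is correct and takes essentially the same route as the paper: bootstrap the third energy estimate (Lemma \ref{l_ee_iii}) together with the linear estimate \eqref{es_lin} and the short-time bilinear bounds at regularity $s$, feed in the à-priori bound \eqref{e_ap} to make the implicit constants small, and run a continuity argument in $T$. The paper packages the bootstrap through the quantity $Z_s(T)=\max\{\|v\|_{\spec^s_T},\|(vw)_x\|_{\spnc^s_T},\|(vb)_x\|_{\spnc^s_T}\}$ rather than working directly with $X(T)=\|v\|_{\spec^s_T}$ as you do, but this is only a notational convenience for absorbing the nonlinear terms, not a different idea. One place your description wanders a bit is the conversion of the closed bootstrap inequality $X^2\lesssim\|v_0\|_{H^s}^2+X\,\|v\|_{\spec^z_T}\|u_2\|_{\spec^{s+1}_T}$ into \eqref{e_de1}: the cleanest step is simply to take square roots and use $\sqrt{a+b}\le\sqrt a+\sqrt b$ (and $T\le 1$), which yields the stated asymmetric form immediately; the ``split and re-absorb'' variant you first consider would give $X\lesssim\|v_0\|_{H^s}+T\|v\|_{\spec^z_T}\|u_2\|_{\spec^{s+1}_T}$, a different (and for the Bona--Smith argument less convenient) inequality. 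You also correctly observe that the mixed term with $\|u_2\|_{\spec^{s+1}_T}$ cannot be absorbed and must be left on the right; this is precisely why the lemma is stated implicitly.

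One small note: the paper's proof nominally cites \eqref{e_nl_vw}, \eqref{e_nl_vb}, and \eqref{e_ee_ii}, but at regularity $s$ the estimates actually used are \eqref{e_nl_vw1}, \eqref{e_nl_vb1} from Lemma \ref{l_nl3} and the energy estimate \eqref{e_ee_iii} — which is what you invoke, so your reading is the intended one.
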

	\begin{proof}[Proof of Lemma \ref{l_ap}]
		According to \eqref{es_lin}, we have for $T\in(0,1)$
		\begin{align*}
			\|u\|_{\spfc^s_T}
			\lesssim
			\|u\|_{\spec^s_T}
			+
			\|(uu)_x\|_{\spnc^s_T}
			+
			\|(ub)_x\|_{\spnc^s_T}
			+
			T^\frac{1}{2}\|f\|_{L^\infty H^{s}}.
		\end{align*}
		Define $X_s:=X_s(T):=\max\{\|u\|_{\spec^s_T},\|(uu)_x\|_{\spnc^s_T},\|(ub)_x\|_{\spnc^s_T}\}$.
		Using Proposition \ref{p_cnv}, we get $\lim_{T\rightarrow 0} X_s(T)\lesssim \|u_0\|_{H^s}$, but we do not have any control on the speed of convergence.
		
		In this proof, we use the abbreviations
		\begin{align*}
			\|b\|
			=
			\|b\|_{\spbc^{s+1+\varepsilon}_T}
			\qquad\text{and}\qquad
			\|f\|
			=
			\|f\|_{L^\infty H^{s+\varepsilon}}.
		\end{align*}
		Now, we bound the first three terms on the right-hand side in the inequality above. By applying the estimates in \eqref{e_nl_uu}, \eqref{e_nl_ub}, and \eqref{e_ee_i}, we arrive at
		\begin{align*}
			X_s^2
			\lesssim
			\|u_0\|_{H^s}^2
			&+
			X_s^2
			(
			(T+TN+N^{-\zeta})X_{s_0}
			+
			T\|b\|
			+
			T\|f\|
			+
			T
			)
			\\&+
			T^\delta\|u\|_{\spfc^s_T}^2
			(
			\|u\|_{\spfc^{s_0}_T}^2
			+
			\|u\|_{\spfc^{s_0}_T}\|b\|
			+
			\|b\|^2
			)
			+
			T\|f\|^2
		\end{align*}
		for all $N\in\mathbb{D}$ and all sufficiently small $\delta,\zeta>0$.
		Above, the first two summands are already sufficient for the bootstrap argument. To bound the last two summand, we use the estimates $X_{s_0}\leq X_s$ and $\|u\|_{\spfc^s_T}\lesssim X_s+T^\frac{1}{2}\|f\|$ repetitively leading to
		\begin{align*}
			&T^\delta\|u\|_{\spfc^s_T}^2
			(
			\|u\|_{\spfc^{s_0}_T}^2
			+
			\|u\|_{\spfc^{s_0}_T}\|b\|
			+
			\|b\|^2
			)
			\\&\qquad\lesssim
			X_s^2X_{s_0}^2T^\delta
			\\&\qquad+
			X_s^2X_{s_0}
			(
			T^{\frac{1}{2}+\delta}\|f\|
			+
			T^\delta\|b\|
			)
			\\&\qquad+
			X_s^2
			(
			T^{1+\delta}\|f\|^2
			+
			T^{\frac{1}{2}+\delta}\|f\|\|b\|
			+
			T^{\delta}\|b\|^2
			)
			\\&\qquad+
			X_s^1
			(
			T^{\frac{3}{2}+\delta}\|f\|^3
			+
			T^{1+\delta}\|f\|^2\|b\|
			+
			T^{\frac{1}{2}+\delta}\|f\|\|b\|^2
			)
			\\&\qquad+
			(
			T^{2+\delta}\|f\|^4
			+
			T^{\frac{3}{2}+\delta}\|f\|^3\|b\|
			+
			T^{1+\delta}\|f\|^2\|b\|^2
			).
		\end{align*}
		Next, we apply Cauchy-Schwarz to the product in the second to last summand. Combining the previous two estimates, we conclude
		\begin{align*}
			X_s^2
			\lesssim
			\|u_0\|_{H^s}^2
			&+
			X_s^2X_{s_0}^2 T^\delta
			\\&+
			X_s^2X_{s_0}
			(
			T
			+
			TN
			+
			N^{-\zeta}
			+
			T^{\frac{1}{2}+\delta}\|f\|
			+
			T^\delta\|b\|
			)
			\\&+
			X_s^2
			(
			T\|b\|
			+
			T^{\delta}\|b\|^2
			+
			T^{\frac{1}{2}+\delta}\|f\|\|b\|
			+
			T^{1+\delta}\|f\|^2
			+
			T\|f\|
			+
			T
			)
			\\&+
			\|f\|^2
			(
			T^{2+2\delta}\|f\|^4
			+
			T^{1+2\delta}\|f\|^2\|b\|^2
			+
			T^{2\delta}\|b\|^4
			)
			\\&+
			\|f\|^2
			(
			T^{2+\delta}\|f\|^2
			+
			T^{\frac{3}{2}+\delta}\|f\|\|b\|
			+
			T^{1+\delta}\|b\|^2
			+
			T
			).
		\end{align*}
		
		Now, we consider the special case $s=s_0$. For $i\in[4]$, we denote by $c_i$ the implicit constant in front of $X_{s_0}^i$ and, similarly, $c_0$ denotes the implicit constant in front of $\|f\|^2$. We obtain
		\begin{align*}
			X_{s_0}^2
			\leq
			\|u_0\|_{H^{s_0}}^2
			+
			c_4X_{s_0}^4
			+
			c_3X_{s_0}^3
			+
			c_2X_{s_0}^2
			+
			c_0\|f\|_{L^\infty H^{s_0+\varepsilon}}^2.
		\end{align*}
		Since the mapping $T\mapsto X_{s_0}$ is continuous and due to the fact that each $c_i$ can be made arbitrarily small by choosing $N$ large and $T$ small, a bootstrap argument implies
		\begin{align*}
			X_{s_0}
			\lesssim
			\|u_0\|_{H^{s_0}}
			+
			\|f\|_{L^\infty H^{s_0+\varepsilon}},
		\end{align*}
		where $T=T(\|u_0\|_{H^{s_0}};\|b\|_{\spbc^{s_0+1+\varepsilon}};\|f\|_{L^\infty H^{s_0+\varepsilon}})$ is a non-increasing positive function.
		
		Next, we consider the case $s\in(s_0,2)$. Then, applying the previously proved estimate for $X_{s_0}$ and denoting the implicit constants by $\tilde{c}_i$, we get
		\begin{align*}
			X_s^2
			\leq
			\|u_0\|_{H^s}^2
			+
			X_s^2\left(
			\tilde{c}_4\|u_0\|_{H^{s_0}}^2
			+
			\tilde{c}_32\|u_0\|_{H^{s_0}}
			+
			\tilde{c}_2
			\right)
			+
			\tilde{c}_0\|f\|_{L^\infty H^{s+\varepsilon}}^2.
		\end{align*}
		Again, a continuity argument implies
		\begin{align*}
			X_s(T)
			\lesssim
			\|u_0\|_{H^s}
			+
			\|f\|_{L^\infty H^{s+\varepsilon}}
		\end{align*}
		for a non-increasing positive function $T=T(\|u_0\|_{H^{s_0}};\|b\|_{\spbc^{s+1+\varepsilon}_T};\|f\|_{L^\infty H^{s+\varepsilon}})$.
	\end{proof}
		
	\begin{proof}[Proof of Lemma \ref{l_de}]
		We use the notation $v=u_1-u_2$ and $w=u_1+u_2$. Then, the linear estimate \eqref{es_lin} takes the form
		\begin{align*}
			\|v\|_{\spfc^z_T}
			\lesssim
			\|v\|_{\spec^z_T}
			+
			\|(vw)_x\|_{\spnc^z_T}
			+
			\|(vb)_x\|_{\spnc^z_T}.
		\end{align*}
		Define $Y:=Y_z(T):=\max\{\|v\|_{\spec^z_T},\|(vw)_x\|_{\spnc^z_T},\|(vb)_x\|_{\spnc^z_T}\}$. Again, we abbreviate $\|b\|=\|b\|_{\spbc^{s+1+\varepsilon}_T}$.
		The estimates in \eqref{e_nl_vw}, \eqref{e_nl_vb}, and \eqref{e_ee_ii} yield
		\begin{align*}
			Y^2
			\lesssim
			\|v_0\|_{H^z}^2
			&+
			Y^2
			(
			(T+TN^2+N^{-\zeta})
			(\|u_1\|_{\spec^s_T}+\|u_2\|_{\spec^s_T})
			)
			\\&+
			Y^2
			(
			T^\delta\|b\|^2
			+
			T\|b\|
			+
			T\|f\|
			)
			\\&+
			Y^2
			(
			T^\delta
			(
			\|u_1\|_{\spfc^s_T}
			+
			\|u_2\|_{\spfc^s_T}
			)
			(
			\|u_1\|_{\spfc^s_T}
			+
			\|u_2\|_{\spfc^s_T}
			+
			\|b\|
			)
			).
		\end{align*}
		The à-priori estimate \eqref{e_ap} combined with a bootstrap argument leads to
		\begin{align*}
			\|v\|_{\spec^z_T}
			\lesssim
			Y(T)
			\lesssim
			\|v_0\|_{H^z}
		\end{align*}
		for a non-increasing positive function $T=T(\|u_0\|_{H^s},\|b\|_{L^\infty B^{3+\varepsilon}_{\infty,\infty}},\|f\|_{L^\infty H^{2+\varepsilon}})$.
	\end{proof}
	
	\begin{proof}[Proof of Lemma \ref{l_de1}]
		We use the notation $v=u_1-u_2$ and $w=u_1+u_2$. Then, the linear estimate \eqref{es_lin} takes the form
		\begin{align*}
			\|v\|_{\spfc^s_T}
			\lesssim
			\|v\|_{\spec^s_T}
			+
			\|(vw)_x\|_{\spnc^s_T}
			+
			\|(vb)_x\|_{\spnc^s_T}.
		\end{align*}
		Define $Z:=Z_s(T):=\max\{\|v\|_{\spec^s_T},\|(vw)_x\|_{\spnc^s_T},\|(vb)_x\|_{\spnc^s_T}\}$. Again, we abbreviate $\|b\|=\|b\|_{\spbc^{s+1+\varepsilon}_T}$.
		The estimates in \eqref{e_nl_vw}, \eqref{e_nl_vb}, and \eqref{e_ee_ii} yield
		\begin{align*}
			Z^2
			\lesssim
			\|v_0\|_{H^s}^2
			&+
			Z^2
			(T+TN^2+N^{-\zeta})
			(\|u_1\|_{\spec^s_T}+\|u_2\|_{\spec^s_T})
			\\&+
			Z^2
			(
			T^\delta\|b\|^2
			+
			T\|b\|
			+
			T\|f\|
			)
			\\&+
			Z^2
			T^\delta
			(
			\|u_1\|_{\spfc^{s}_T}
			+
			\|u_2\|_{\spfc^{s}_T}
			)
			(
			\|u_1\|_{\spfc^{s}_T}
			+
			\|u_2\|_{\spfc^{s}_T}
			+
			\|b\|
			)
			\\&+
			\|v\|_{\spec^z_T}\|v\|_{\spec^s_T}\|u_2\|_{\spec^{s+1}_T}
			(
			T
			+
			T^\delta
			(
			\|u_1\|_{\spfc^s_T}
			+
			\|u_2\|_{\spfc^s_T}
			+
			\|b\|
			)
			).
		\end{align*}
		Again, we apply the à-priori estimate \eqref{e_ap} and perform a bootstrap argument leading to
		\begin{align*}
			\|v\|_{\spec^s_T}
			\lesssim
			Z(T)
			\lesssim
			\|v_0\|_{H^s}
			+
			(\|v\|_{\spec^z_T}\|v\|_{\spec^s_T}\|u_2\|_{\spec^{s+1}_T})^\frac{1}{2}
		\end{align*}
		for a non-increasing positive function $T=T(\|u_0\|_{H^s},\|b\|_{L^\infty B^{3+\varepsilon}_{\infty,\infty}},\|f\|_{L^\infty H^{2+\varepsilon}})$.
	\end{proof}
\subsection{Local well-posedness for rough initial data}\label{ss_lwp_lowreg}
	
	In this section we prove the extendability of the map $\mathcal{L}^2_T$ to $\mathcal{L}^s_T$ for each $s\in(\frac{1}{2},2)$ and thus complete the proof of Theorem \ref{t_main}.
	
	Fix $R>0$ and let $\frac{1}{2}<s<2$.  Proposition \ref{p_highreg} together with Remark \ref{r_highreg_t} yield the existence of a continuous map $\mathcal{L}^2_T:\tilde{B}_R(0) \subset H^2(\mathbb{R})\times\mathcal{X}\times\mathcal{Y}$, where the domain is given by
	\begin{align*}
		\tilde{B}_R(0)
		=
		\{
		(u_0,b,f)\in H^2(\mathbb{R})\times \mathcal{X}\times\mathcal{Y}
		:
		\|u_0\|_{H^s}
		+
		\|b\|_{\spbc_T^{3+\varepsilon}}
		+
		\|f\|_{L^\infty H^{2+\varepsilon}}
		<
		R
		\}.
	\end{align*}
	Now, we show that $\mathcal{L}^2_T$ extends continuously to $B_R(0)\subset H^s(\mathbb{R})\times\mathcal{X}\times\mathcal{Y}$ (with $B_R(0)$ being the usual open ball with radius $R$).
	
	We begin by extending $\mathcal{L}^2_T$. Fix $(u_0,b,f)\in B_R(0)$. Then, for each dyadic $N$ we have $(P_{\leq N}u_0,b,f)\in \tilde{B}_R(0)$. Thus, for dyadic $K\geq N$, the functions $u_1=\mathcal{L}^2_T(P_{\leq K}u_0,b,f)$ and $u_2=\mathcal{L}^2_T(P_{\leq N}u_0,b,f)$ are well-defined. Now, \eqref{e_ap} yields
	\begin{align*}
		\|u_2\|_{\spec_T^{s+1}}
		\lesssim
		\|P_{\leq N}u_0\|_{H^{s+1}}
		+
		\|f\|_{L^\infty H^{s+1}}
		\lesssim
		N\|P_{\leq N}u_0\|_{H^s}
		+
		\|f\|_{L^\infty H^{s+1}}
		\leq
		NR
	\end{align*}
	and \eqref{e_de} implies
	\begin{align*}
		\|u_1-u_2\|_{\spec_T^z}
		\lesssim
		\|P_{>N}u_0\|_{L^\infty H^z}
		\lesssim
		N^{z-s}\|P_{>N}u_0\|_{L^\infty H^s}
	\end{align*}
	for $z=-\frac{1}{2}$. (Here, the first estimate forces us to assume that $\|f\|_{L^\infty H^{s+1}}$ is finite and thus, we need to work with $f\in\mathcal{Y}$ and not just $f\in\mathcal{Y}'$.)
	Using \eqref{e_de1} and Cauchy-Schwarz, we get
	\begin{align*}
		\|u_1-u_2\|_{\spec_T^s}
		&\lesssim
		\|P_{>N}u_0\|_{L^\infty H^s}
		+
		(
		\|u_1-u_2\|_{\spec^z_T}
		\|u_1-u_2\|_{\spec^s_T}
		\|u_2\|_{\spec^{s+1}_T}
		)^\frac{1}{2}
		\\&\lesssim
		\|P_{>N}u_0\|_{L^\infty H^s}
		+
		R\|P_{>N}u_0\|_{L^\infty H^s}
		+
		N^{1+z-s}
		\|u_1-u_2\|_{\spec_T^s}.
	\end{align*}
	Next, we reorder the last estimate and arrive at
	\begin{align*}
		\|u_1-u_2\|_{L^\infty H^s}
		\lesssim
		\|u_1-u_2\|_{\spec^s_T}
		\lesssim
		(1+R)(1-N^{1+z-s})^{-1}\|P_{>N}u_0\|_{L^\infty H^s}.
	\end{align*}
	Due to $1+z<s$, we obtain for all $K\geq N=N(s,z)$ sufficiently large that
	\begin{align}\label{e_de2}
		\|u_1-u_2\|_{L^\infty H^s}
		\lesssim_R
		\|P_{>N}u_0\|_{L^\infty H^s}
	\end{align}
	holds. Hence, $(\mathcal{L}^2_{T}(P_{\leq N}u_0),b,f)_{N\in\mathbb{D}}$ is a Cauchy sequence in $C([0,T];H^s(\mathbb{R}))$. In particular, this sequence is convergent and by setting
	\begin{align*}
		\mathcal{L}^s_T(u_0,b,f)
		:=
		\lim_N \mathcal{L}^2_{T}(P_{\leq N}u_0,b,f)
	\end{align*}
	we obtain a well-defined function $\mathcal{L}^s_T$ on $B_R(0)$. Moreover, if $(u_0,b,f)\in \tilde{B}_R(0)$, then the continuity of $\mathcal{L}^2_T$ implies $\mathcal{L}^s_T(u_0,b,f)=\lim_N \mathcal{L}^2_{T}(P_{\leq N}u_0,b,f)=\mathcal{L}^2_T(u_0,b,f)$ and, hence, $\mathcal{L}^s_T$ is an extension of $\mathcal{L}^2_T$.
	
	Now, we prove that $\mathcal{L}^s_T$ is continuous. Let $((u_{n,0},b,f))_{n\in\mathbb{N}}\subset B_R(0)$ be a convergent sequence with limit $(u_{\infty,0},b,f)\in B_R(0)$.
	For all $l,m\in\mathbb{N}$ and all $N\in\mathbb{D}$, we have
	\begin{align*}
		\|\mathcal{L}^s_{T}(u_{l,0})-\mathcal{L}^s_{T}(u_{m,0})\|_{L^\infty H^s}
		\leq
		&\|\mathcal{L}^s_{T}(u_{l,0})-\mathcal{L}^2_{T}(P_{\leq N}u_{l,0})\|_{L^\infty H^s}
		\\+
		&\|\mathcal{L}^s_{T}(u_{m,0})-\mathcal{L}^2_{T}(P_{\leq N}u_{m,0})\|_{L^\infty H^s}
		\\+
		&\|\mathcal{L}^2_{T}(P_{\leq N}u_{l,0})-\mathcal{L}^2_{T}(P_{\leq N}u_{m,0})\|_{L^\infty H^s}.
	\end{align*}
	The first two terms on the right-hand side are bounded by using \eqref{e_de2} (for $K=\infty$) and another triangle inequality; we get
	\begin{align*}
		\|\mathcal{L}^s_{T}(u_{l,0})-\mathcal{L}^s_{T}(u_{m,0})\|_{L^\infty H^s}
		&\lesssim
		\|P_{> N}u_{\infty,0}\|_{H^s}
		\\&+
		\|P_{> N}(u_{l,0}-u_{\infty,0})\|_{H^s}
		+
		\|P_{> N}(u_{m,0}-u_{\infty,0})\|_{H^s}
		\\&+
		\|\mathcal{L}^2_{T}(P_{\leq N}u_{l,0})-\mathcal{L}^2_{T}(P_{\leq N}u_{m,0})\|_{L^\infty H^s}.
	\end{align*}
	Let $\varepsilon>0$. Then, there exists $N=N(u_{\infty,0})$ with $\|P_{> N}u_{\infty,0}\|_{L^\infty H^s}\leq\frac{\varepsilon}{4}$. The sequence $(P_{>N}u_{k,0})_{k\in\mathbb{N}}$ converges to $P_{>N} u_{\infty,0}$ in $H^s(\mathbb{R})$, hence, there exists $L_1>0$ such that for all $k,m\geq L_1$ the terms in the second line are bounded by $\frac{\varepsilon}{2}$. Lastly, since $(P_{\leq N}u_{k,0})_{k\in\mathbb{N}}$ converges to $P_{\leq N} u_{\infty,0}$ in $H^2(\mathbb{R})$ and since the map $\mathcal{L}^2_T$ is continuous, there exists $L_2>0$ such that for all $k,m\geq L_2$ the term in the third line is bounded by $\frac{\varepsilon}{4}$. Thus, $(\mathcal{L}^s_{T}(u_{k,0}))_{k\in\mathbb{N}}$ is a Cauchy sequence in $C([0,T];H^s(\mathbb{R}))$ and in particular $\mathcal{L}^s_T$ is continuous.
\section*{Acknowledgments}
	The author thanks Sebastian Herr for helpful discussions. Funded by the Deutsche Forschungsgemeinschaft (DFG, German Research
	Foundation) - IRTG 2235 - Project number 282638148.

\end{document}